\documentclass{article}
\usepackage[%
journal=FoCM,    
lang=british,   
]{ems-journal}

\makeatletter
\renewcommand*\ems@ps@titlepage@FoCM{%
  \def\@oddhead{}%
  \def\@evenhead{}%
}
\makeatother

\usepackage{lineno}
\usepackage{breqn}
\usepackage{derivative}
\usepackage{placeins}
\usepackage{caption, subcaption}
\usepackage{multirow}
\usepackage{geometry}
\usepackage{bm}
\usepackage{enumitem}
\usepackage[title]{appendix}
\usepackage{adjustbox}
\allowdisplaybreaks

\newcommand{\ud}{\,\mathrm{d}}
\newcommand{\mF}{\,\mathcal{F}}
\newcommand{\tu}{\,\tilde{u}}
\newcommand{\bu}{\,\overline{u}}
\newcommand{\bvu}{\,\overline{\mathbf{u}}}
\newcommand{\vu}{\,\mathbf{u}}
\newcommand{\vecv}{\,\mathbf{v}}
\newcommand{\vw}{\,\mathbf{w}}
\newcommand{\vf}{\,\mathbf{f}}
\newcommand{\vx}{\,\mathbf{x}}
\newcommand{\vn}{\,\mathbf{n}}
\newcommand{\hvf}{\,\hat{\mathbf{f}}}
\newcommand{\vH}{\,\mathbf{H}}
\newcommand{\vh}{\,\mathbf{h}}
\newcommand{\vz}{\,\mathbf{z}}
\newcommand{\bQ}{\,\mathbb{Q}}
\newcommand{\hf}{\,\hat{f}}
\theoremstyle{plain}
\newtheorem{theorem}{Theorem}[section]
\newtheorem{lemma}[theorem]{Lemma}
\newtheorem{proposition}[theorem]{Proposition}
\newtheorem{corollary}[theorem]{Corollary}

\theoremstyle{definition}
\newtheorem{definition}[theorem]{Definition}
\newtheorem{remark}[theorem]{Remark}

\newtheorem{expl}[theorem]{Example}

\numberwithin{equation}{section}

\begin{document}

\title{A class of high-order discontinuous-Galerkin methods satisfying infinitely many entropy conditions with provable error estimates and strong convergence for general nonlinear conservation laws \\[0.5em]
{\textnormal{\normalsize September 3, 2026}}}
\titlemark{A class of high-order DG methods satisfying infinitely many entropy conditions}


%

\emsauthor{1}{
  \givenname{Yuanzhe}
  \surname{Wei}
  \mrid{}
  \zblid{}
  \orcid{}}{Y.~Wei}
\emsauthor{2}{
  \givenname{Chi-Wang}
  \surname{Shu}
  \mrid{242268}
  \zblid{shu.chi-wang}
  \orcid{0000-0001-7720-9564}}{C.-W.~Shu}

\Emsaffil{1}{
  \department{Division of Applied Mathematics}
  \organisation{Brown University}
  \rorid{05gq02987}
  \address{Box F, 182 George Street}
  \zip{02912}
  \city{Providence, RI}
  \country{USA}
  \affemail{yuanzhe\_wei@brown.edu}}
\Emsaffil{2}{
  \department{Division of Applied Mathematics}
  \organisation{Brown University}
  \rorid{05gq02987}
  \address{Box F, 182 George Street}
  \zip{02912}
  \city{Providence, RI}
  \country{USA}
  \affemail{chi-wang\_shu@brown.edu}}

\keywords{discontinuous Galerkin methods, entropy inequalities, nonlinear conservation laws,
error estimates, compensated compactness}

\begin{abstract}
We propose a novel framework for deriving semi-discrete discontinuous-Galerkin (DG) methods using operator semigroups for scalar conservation laws, then apply it to construct a class of high-order OFDG-type schemes \cite{OFDG} satisfying infinitely many local entropy inequalities with general E-fluxes on non-uniform meshes. Such schemes are further generalized to systems of conservation laws in any number of space dimensions by using entropy stable numerical fluxes in the sense of \cite{CHEN2017427}. Finally, we prove optimal error estimates for smooth solutions to nonlinear scalar conservation laws, and prove strong convergence for discontinuous solutions to strictly convex conservation laws via compensated compactness.
\end{abstract}

\maketitle

\section{Introduction}
Hyperbolic conservation laws arise throughout continuum mechanics and describe the propagation of nonlinear waves that may develop discontinuities even from smooth initial data. For such problems, a numerical method must reconcile several competing objectives: high-order accuracy in smooth regions, nonlinear stability near shocks, consistency with the entropy condition that selects the physically relevant weak solution, and convergence as the mesh is refined. Discontinuous Galerkin (DG) methods are particularly attractive in this setting because they combine high-order polynomial approximation, local conservation and geometric flexibility. Since the development of Runge--Kutta discontinuous Galerkin (RKDG) methods for nonlinear conservation laws \cite{CS2}, substantial effort has therefore been devoted to understanding their stability, entropy properties, and error estimates; see, for example, \cite{J,Z2005,CHEN2017427,WS1} and the references therein.

Hyperbolic conservation laws generally admit infinitely many weak solutions. For scalar conservation laws, the only physically relevant solution, called the \emph{entropy solution}, is a special weak solution uniquely characterized by a family of conditions known as \emph{entropy inequalities} \cite{Dafermos}. Any numerical method that preserves sufficiently many entropy inequalities on the discrete level is particularly favorable since it can often be shown to converge strongly to the entropy solution following DiPerna's argument\footnote{If strong convergence cannot be proved, a weaker result is still valid by the Lax–Wendroff theorem \cite{RJL}, which states that if the numerical solution converges strongly with bounded total variation, then the limit is the unique entropy solution.}, see \cite{DiPerna1983,CCL}. First order monotone schemes inherit entropy inequalities for a broad class of convex entropies \cite{Harten}, whereas the situation for high-order approximations is more delicate. A classical semi-discrete DG method with a monotone numerical flux satisfies a local entropy inequality for the square entropy \cite{J}, but quadratic entropy alone does not encode the full family of entropy conditions associated with the underlying conservation law. On the other hand, the entropy-stable DG method developed in \cite{CHEN2017427} admits an arbitrarily prescribed entropy other than the square entropy, but still cannot include more than one entropy. These developments motivate the following question: can one construct a high-order DG method that retains the familiar structure of the classical method while enforcing substantially more entropy inequalities, without sacrificing optimal accuracy for smooth solutions?

The starting point of this paper is an operator interpretation of the classical DG method. Let $S_t$ denote the entropy-solution semigroup of the conservation law and let $\pi$ denote the cellwise $L^2$-projection onto the DG space. We show that the classical semi-discrete DG method with the Godunov flux can be recovered from the infinitesimal action of the composite operator $\pi S_{\Delta t}$ as $\Delta t\to0$. This viewpoint separates two mechanisms that are usually intertwined in the standard weak-form derivation: the exact evolution $S_{\Delta t}$ carries the complete entropy structure of the conservation law, while the projection back to the finite-dimensional polynomial space determines which part of that structure survives in the discretization. In particular, the $L^2$-projection is compatible with the square entropy, so that the classical DG scheme admits the square entropy inequality, which was first proved in \cite{J}. This observation suggests modifying the projection in order to enforce additional entropy inequalities.

Following this idea, we introduce a nonlinear projection\footnote{This nonlinear projection resembles the limiter in \cite{Sunlimiter}.} that contracts both the square entropy and an arbitrarily prescribed strictly convex entropy. Its infinitesimal operator leads to a DG method consisting of the classical DG spatial operator plus a cellwise damping term acting only on the nonconstant modes. We then derive computable upper bounds for the damping coefficient. The resulting schemes preserve cell averages and conservation and, with general E-fluxes, satisfy local entropy inequalities for the prescribed entropy. More importantly, by choosing bounds that are uniform over suitable classes of convex entropies, the same numerical solution satisfies infinitely many local entropy inequalities. We also introduce simplified damping coefficients whose structure is closely related to the oscillation-free discontinuous Galerkin (OFDG) methodology of \cite{OFDG,OFDGsystem}, while retaining the entropy estimates needed in our analysis.

The operator construction extends beyond one-dimensional scalar equations. For one-dimensional systems, we formulate the method for a general strictly convex entropy pair and entropy-stable numerical fluxes in the sense of \cite{CHEN2017427}. We further extend the framework to systems in arbitrary space dimensions on general convex, shape-regular meshes. In this setting, the additional damping again acts on the deviation from the element average, and entropy stability at element interfaces is supplied by numerical fluxes satisfying the entropy-stability condition used in \cite{CHEN2017427}. Thus the same basic mechanism---entropy-compatible evolution followed by a suitably controlled projection---provides a unified construction for scalar equations and systems.

A central issue is whether the additional dissipation required by the entropy conditions degrades the high-order accuracy of DG methods. We show that it does not. For smooth solutions of general nonlinear scalar conservation laws on quasi-uniform meshes, the proposed semi-discrete schemes achieve the optimal $L^2$ error estimate $O(h^{k+1})$ when an upwind flux is used, and an $O(h^{k+1/2})$ estimate for a general monotone flux. The proof builds on the framework developed for nonlinear conservation laws in \cite{Z2005}, together with estimates showing that the new damping terms are sufficiently high order in smooth regions.

We also study convergence when the exact solution contains discontinuities. For strictly convex scalar fluxes, assuming a uniform $L^\infty$ bound on the numerical solutions and any E-flux, we prove strong convergence of the proposed approximations to the entropy solution. The argument uses the compensated-compactness framework of Tartar \cite{Tartar1979} and the convergence theory for approximate solutions of conservation laws developed by DiPerna \cite{DiPerna1983}. In particular, the entropy dissipation built into the schemes (which is absent from the classical DG or OFDG) provides the necessary estimates for proving the compactness of entropy productions in $H^{-1}_{\mathrm{loc}}$.

The main contributions of this paper can therefore be summarized as follows: we give an operator-semigroup derivation of the classical semi-discrete DG method; construct high-order DG schemes satisfying infinitely many local entropy inequalities; extend the construction to systems and multiple space dimensions; establish optimal-order error estimates for smooth scalar solutions; and prove strong convergence for discontinuous solutions of strictly convex scalar conservation laws. Numerical experiments are included to examine the accuracy and robustness of the proposed damping mechanisms and to illustrate the effect of the parameters in practice.

The class of DG schemes developed in this paper can be summarized as follows:
\begin{description}
    \item[\textbf{1D scalar:}] In Section~\ref{section: scheme 1D scalar} we introduce three schemes for solving 1D scalar conservation laws. They are named \eqref{scheme B}, \eqref{scheme C} and \eqref{scheme D} after their damping coefficients $B_j$, $C_j$ and $D_j$. If the damping coefficients are chosen according to Corollary \ref{corollary: infinite entropy 2}, \eqref{scheme B} and \eqref{scheme C} would satisfy infinitely many entropy inequalities. Moreover, \eqref{scheme D} also satisfies infinitely many entropy inequalities if the numerical solution remains bounded. Because \eqref{scheme D} is the easiest to implement, it is adopted in the numerical tests in Section \ref{section: numerical results}.

    \item[\textbf{1D system:}] In Section~\ref{section: scheme 1D system} we introduce three schemes for solving 1D systems of conservation laws. Exactly parallel to the 1D scalar case, they are named \eqref{scheme B'}, \eqref{scheme C'} and \eqref{scheme D'} after their damping coefficients $B'_j$, $C'_j$ and $D'_j$. We use \eqref{scheme D'} for the numerical tests in Section \ref{section: numerical results}.

    \item[\textbf{Multidimensional system:}] In Section~\ref{section: scheme dD system} we introduce two schemes for solving systems of conservation laws in higher space dimension. Same as before, they are named \eqref{scheme B''} and \eqref{scheme D''} after their damping coefficients $B''_j$ and $D''_j$. We use \eqref{scheme D''} for the numerical tests in Section \ref{section: numerical results}.
\end{description}

We shall use the general term \emph{Operator DG} to denote any of these schemes mentioned above, since they are derived from our operator framework. They are closely related to the OFDG methods in the sense that they all contain a cellwise damping term acting on the nonconstant modes; the only difference is that, in our schemes, the damping coefficients have an additional term for controlling the oscillations on the interior of each cell, which provides the entropy estimates needed in our analysis. The remainder of the paper is organized as follows. Section~\ref{section: Godunov DG} develops the operator framework and reviews the classical DG method. Section~\ref{section: scheme 1D scalar} constructs the Operator DG schemes for one-dimensional scalar conservation laws. Sections~\ref{section: scheme 1D system} and \ref{section: scheme dD system} extend the framework to one-dimensional and multidimensional systems. Section~\ref{section: error estimate} proves the error estimates for smooth scalar solutions, and Section~\ref{section: strong convergence} establishes strong convergence via compensated compactness. The numerical results are presented thereafter, followed by concluding remarks in Section~\ref{section: concluding remarks}, and supplementary materials in the appendix concerning the HLL-based operator construction and sharper estimates for constants appearing in the entropy bounds.

\section{An operator framework for the classical DG method}\label{section: Godunov DG}
Consider the Cauchy problem for a nonlinear scalar conservation law in one space dimension (with $I$ being any interval):
\begin{equation}\label{HCL 1D scalar}
\begin{cases} 
\mathcal{U}_t(x,t)+f(\mathcal{U}(x,t))_x=0,\quad x\in I, \quad t\ge0.\\
\mathcal{U}(x,0)=\mathcal{U}_0(x).
\end{cases} 
\end{equation}
To solve \eqref{HCL 1D scalar} numerically, the interval $I$ is divided into $N$ disjoint sub-intervals $I_1,I_2,\cdots,I_N$, where $I_j=[x_{j-\frac{1}{2}},x_{j+\frac{1}{2}}]$, $|I_j|=x_{j+\frac{1}{2}}-x_{j-\frac{1}{2}}=h_j$, $I=\cup_{j=1}^N I_j$. For any function $v(x)$ that is continuous on $I_j$, 
define $v^+_{j-\frac{1}{2}}:=v(x_{j-\frac{1}{2}}^+)$ and $v^-_{j+\frac{1}{2}}:=v(x_{j+\frac{1}{2}}^-)$ to be the left and right end-point limit of $v$ on $I_j$ respectively. Define for $k\ge1$ the space
\begin{equation*}
\mathbb{V}^k:=\{v \in L^2(I) : v\big|_{I_j}\in \mathbb{P}^k(I_j), 1\le j \le N \}
\end{equation*}
of all piecewise (possibly discontinuous) polynomials on $I$ that coincides with a $k$-th order polynomial on each cell $I_j$. From now on, we reserve the letter $k$ to denote the polynomial order. The classical semi-discrete DG method with 1D $\mathbb{P}^k$ elements for solving the nonlinear scalar conservation law \eqref{HCL 1D scalar} can be stated as follows: Seek $u(\cdot,t)\in \mathbb{V}^k$ such that
\begin{equation}\tag{$\textbf{Classical DG}$}\label{DG1}
\int_{I_j} u_t(x,t) v(x) \ud x -\int_{I_j} f(u(x,t)) v_x(x) \ud x +\hat{f}_{j+\frac{1}{2}}(t)v^-_{j+\frac{1}{2}}-\hat{f}_{j-\frac{1}{2}}(t) v^+_{j-\frac{1}{2}}=0
\end{equation}
holds for all $v\in \mathbb{V}^k$ and all $j=1,2,\cdots,N$, where the numerical flux $\hat{f}_{j+\frac{1}{2}}(t)=\hat{f}(u(x^-_{j+\frac{1}{2}},t),u(x^+_{j+\frac{1}{2}},t))$ is Lipschitz continuous and consistent ($\hf(u,u)=f(u)$). Clearly, \eqref{DG1} can be rewritten into the following form:
\begin{equation}\label{DG ODE}
    \pdv{u}{t}(x,t)=F[u(\cdot,t)](x),\quad \forall x\in I,
\end{equation}
where $F[v]\in \mathbb{V}^k$ for any $v \in \mathbb{V}^k$ is a piecewise polynomial whose coefficients depend on $v$. Moreover, \eqref{DG ODE} is an ODE system since $u(\cdot,t)$ lies in a finite dimensional space $\mathbb{V}^k$. The initial condition of \eqref{DG ODE} is taken as any projection of $\mathcal{U}_0(x)$ onto $\mathbb{V}^k$; this can be the $L^2$-projection, the Gauss-Radau projection (see Section \ref{section: error estimate}), or interpolation on point values. Let $\Phi_{t}: \mathbb{V}^k \mapsto \mathbb{V}^k$ denote the flow map of this ODE system, which maps any initial condition to the solution at time $t$. Since the right-hand side of the ODE is locally Lipschitz continuous, the solution is uniquely defined at least for $t$ sufficiently small; moreover, one can show that the solution is always bounded (see the discussion below), so that $\Phi_t$ is well-defined for all $t\ge 0$. Then, \eqref{DG ODE} can be equivalently expressed as \[\pdv{}{t}\Phi_t u=F[\Phi_t u], \quad u \in \mathbb{V}^k.\]

\eqref{DG1} is most commonly derived by multiplying the test function $v$ on both sides of \eqref{HCL 1D scalar}, then integrate by parts on $I_j$, and finally replace the flux in the boundary terms by the numerical flux (which makes the scheme conservative) \cite{CS2}. If the monotone flux is used, \cite{J} showed that the numerical solution of \eqref{DG1} is $L^2$-stable: \[\|\Phi_{t_2} u\|_{L^2(I)}\le \|\Phi_{t_1} u\|_{L^2(I)},\quad\forall 0\le t_1\le t_2, \quad\forall u\in\mathbb{V}^k,\] which is a property shared by the unique entropy solution of \eqref{HCL 1D scalar}.

We now introduce a very different approach to deriving \eqref{DG1}, which proves to be more natural and provides deeper insight into its fundamental structure. First, let us introduce the exact solution operator $S_t$, $t\ge0$, associated with \eqref{HCL 1D scalar}, which maps a given initial condition to the unique entropy solution at time $t$. In other words,
\[
S_t \mathcal{U}_0(x)=\mathcal{U}(x,t),\quad \forall t\ge0,
\]
where $\mathcal{U}_0(x)$ and $\mathcal{U}(x,t)$ are the same as in \eqref{HCL 1D scalar}. For 1D scalar conservation laws, $S_t$ is well-defined for all $t\ge 0$ as a map from $L^\infty(\mathbb{R})\cup L^1(\mathbb{R})$ to $L^\infty(\mathbb{R})\cup L^1(\mathbb{R})$ \cite{Dafermos}. Furthermore, let $\pi:L^2(I) \mapsto \mathbb{V}^k$ denote the $L^2$-projection onto the space of piecewise polynomials, namely, for $w\in L^2(I)$, $\pi w \in \mathbb{V}^k$ is characterized by
\begin{equation*}
    \int_{I} (\pi w - w)v \ud x=0 , \quad \forall v\in \mathbb{V}^k.
\end{equation*}
Then we have the following theorem, which provides the alternative derivation of \eqref{DG1} when $\hat f$ is the Godunov flux:
\begin{theorem}\label{theorem 1}
For any $u\in \mathbb{V}^k$, we have
\begin{equation}\label{beautiful 1}
    \lim_{\Delta t\rightarrow 0}\frac{\pi S_{\Delta t}u-u}{\Delta t}=F[u],
\end{equation}
uniformly on $I$, where $F[\cdot]\in \mathbb{V}^k$ is the right-hand side of \eqref{DG ODE} and $\hat f$ is the Godunov flux:
\begin{equation}\label{Godunov}
    \hat f(u^-,u^+)=f(u^*),
\end{equation}
where $u^*$ is the value at $\frac{x}{t}=0$ of the entropy solution to the Riemann problem with left and right states $u^-$ and $u^+$ at $x=0$. If $u^-$ and $u^+$ are connected by a stationary shock, $u^*$ may be taken as either $u^-$ or $u^+$, since the Rankine–Hugoniot condition yields $f(u^-)=f(u^+)$.
\end{theorem}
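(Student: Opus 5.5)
Since $\pi S_{\Delta t}u-u$ lies in the finite-dimensional space $\mathbb{V}^k$ (because $\pi u=u$), it suffices to show convergence of the coefficients. Equivalently, for every $j$ and every test function $v\in\mathbb{P}^k(I_j)$, we must show
\[
\lim_{\Delta t\to0}\frac1{\Delta t}\int_{I_j}\big(S_{\Delta t}u-u\big)v\ud x
= \int_{I_j} f(u)v_x\ud x-\hat f_{j+\frac12}v^-_{j+\frac12}+\hat f_{j-\frac12}v^+_{j-\frac12}.
\]
In a finite-dimensional space all norms are equivalent, so convergence of these finitely many functionals gives convergence of $\frac{\pi S_{\Delta t}u-u}{\Delta t}$ to $F[u]$ in any norm, in particular uniformly on $I$. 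By the definition of $\pi$, the left-hand side is exactly $\frac1{\Delta t}\int_{I_j}(\pi S_{\Delta t}u-u)v\ud x$.

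To evaluate the left-hand side, I use the weak (entropy) formulation of the exact solution $\mathcal U=S_t u$ on the space–time box $I_j\times[0,\Delta t]$, with the time-independent test function $v$ extended by zero outside $I_j$. Formally this gives
\[
\int_{I_j}(\mathcal U(\cdot,\Delta t)-u)v\ud x=\int_0^{\Delta t}\!\!\int_{I_j} f(\mathcal U)v_x\ud x\ud t-\int_0^{\Delta t}\Big(f(\mathcal U(x^-_{j+\frac12},t))v^-_{j+\frac12}-f(\mathcal U(x^+_{j-\frac12},t))v^+_{j-\frac12}\Big)\ud t .
\]
To justify the boundary traces, I would rely on the local structure of $S_t u$ for small $t$, which is the key step. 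The datum $u$ is smooth (polynomial) on each cell and has jumps only at the cell interfaces, so for $t$ small the solution is the superposition of Riemann fans emanating from each $x_{j+\frac12}$. These fans occupy a cone of width $O(t)$, bounded by finite wave speeds $\max|f'|$ on the range of $u$. Away from the fans the solution is a smooth classical solution, valid for $t<T_*$ where $T_*\sim 1/\max|f''(u)u_x|$ precedes gradient catastrophe.

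Near $x_{j+\frac12}$, the solution converges as $t\to0^+$ along $x=x_{j+\frac12}$ to the Riemann value $u^*$ of the states $(u^-_{j+\frac12},u^+_{j+\frac12})$. This holds because the rescaled solution $\mathcal U(x_{j+\frac12}+t\xi,t)$ tends to the self-similar Riemann solution, by $L^1_{\rm loc}$-stability of $S_t$ and the fact that $u$ is close to piecewise constant on an $O(t)$ neighbourhood. Consequently the trace of the flux satisfies $f(\mathcal U(x_{j+\frac12},t))\to f(u^*)=\hat f_{j+\frac12}$, the Godunov flux (\ref{Godunov}). In the stationary-shock case the flux is continuous across the shock by the Rankine–Hugoniot condition, so the trace of $f$ is well defined even though that of $\mathcal U$ is not; this is why either choice of $u^*$ works. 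For a general BV entropy solution, the existence of strong flux traces on the lines $x=\mathrm{const}$ also follows from the normal-trace theory for divergence-measure fields, since $(\mathcal U,f(\mathcal U))$ is divergence-free. I would invoke this to avoid relying on a precise wave picture.

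Dividing by $\Delta t$ and letting $\Delta t\to0$ finishes the argument. The interior term tends to $\int_{I_j}f(u)v_x\ud x$, because $\mathcal U\to u$ in $L^1$ as $t\to0$ and $f$ is locally Lipschitz with $\mathcal U$ uniformly bounded by the maximum principle. The boundary terms tend to $\hat f_{j\pm\frac12}v^{\mp}_{j\pm\frac12}$ by the trace convergence above. The main obstacle is making this boundary-trace convergence rigorous for a general (nonconvex) flux $f$ with non-constant polynomial data near the interface. My approach would be to compare $S_t u$ with the solution of the pure Riemann problem using $L^1$-contraction on domains of dependence: the two data differ by $O(t)$ in sup norm on an $O(t)$ neighbourhood. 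I would then use the space–time averaged form, which needs only $\frac1{\Delta t}\int_0^{\Delta t}f(\mathcal U(x_{j+\frac12},t))\ud t\to f(u^*)$, and recover it by integrating the weak form over thin strips of width $\epsilon\Delta t$ adjacent to the interface.
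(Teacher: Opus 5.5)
Your proof is correct, but it is organized differently from the paper's.

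\textbf{The paper's route.} It splits $I_j$ into the polluted regions $I_j^\pm(\Delta t)$ and their complement. On the complement it uses that $S_{\Delta t}u$ is a smooth classical solution, which produces $-\int_{I_j}f(u)_xv\,\mathrm{d}x$. On each polluted region it freezes $v$ at the endpoint and replaces $u$ by the Riemann datum. It then obtains $\int_0^a(R(\xi)-u^+_{j-\frac12})\,\mathrm{d}\xi=\hat f_{j-\frac12}-f(u^+_{j-\frac12})$ from the self-similar identity $\frac{\mathrm{d}}{\mathrm{d}\xi}[\xi R-f(R)]=R$, and finishes with an integration by parts.

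\textbf{Your route.} You apply the divergence theorem on $I_j\times[0,\Delta t]$. The volume term then needs only $L^1$-continuity of $S_t$ at $t=0$, and everything reduces to the time-averaged interface flux. Your $L^1$-contraction comparison with the pure Riemann solution settles this cleanly:
\begin{itemize}
\item Test both $S_tu$ and $S_t\tilde u$ against a Lipschitz ramp of width $c\Delta t$ on the $I_j$ side of $x_{j+\frac12}$.
\item On the domain of dependence, $u$ and $\tilde u$ differ by $O(\Delta t^2)$ in $L^1$.
\item The Riemann solution's flux trace is exactly $f(R(0^\pm))=f(u^*)$.
\end{itemize}
Hence $\frac1{\Delta t}\int_0^{\Delta t}f(\mathcal U(x_{j+\frac12},t))\,\mathrm{d}t=f(u^*)+O(\Delta t)$ with $c$ fixed. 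No thin-strip limit is required. The pointwise-in-$t$ trace convergence you first assert is not needed, and it would not follow from $L^1_{\mathrm{loc}}$ convergence alone.

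\textbf{What each route buys.} Yours buys robustness. It avoids the paper's wave-structure assumptions: smoothness of $S_{\Delta t}u$ off the polluted regions, and the claim $S_{\Delta t}u=S_{\Delta t}\tilde u+\mathcal O(\Delta t)$ on $I_j^\pm(\Delta t)$, which near a shock is really only true in the $L^1$ sense your argument uses. It is also insensitive to convexity of $f$. The paper's route buys the individual boundary-layer limits $H^\pm$ over $I_j^\pm(\Delta t)$, and that decomposition is reused later. The analogous limits $G^\pm$ enter the derivation of Scheme A and Lemma~\ref{estimate1}. The HLL operator of Appendix~\ref{section:HLL classic} is built by averaging $S_{\Delta t}u$ over exactly these regions.
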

\begin{proof}
Since $u\in \mathbb{V}^k$, it is smooth on the interior of each $I_j$, but can have discontinuities at the cell interfaces. Let $\Delta t>0$ be sufficiently small such that the evolutions of discontinuities at different cell interfaces do
not interact, and no new discontinuity (including the discontinuity in derivatives of all orders) is formed. For the Riemann problem with left and right states $u^-_{j-\frac{1}{2}}$ and $u^+_{j-\frac{1}{2}}$, let $\lambda^-_{j-\frac{1}{2}}$ and $\lambda^+_{j-\frac{1}{2}}$ be lower and upper bounds, respectively, for all wave speeds in the Riemann solution. To be more conservative, we assume
\begin{equation}\label{lambda estimate scalar}
    \lambda^-_{j-\frac{1}{2}}\le \min_{0\le \theta \le 1} f'(\theta u^-_{j-\frac{1}{2}}+(1-\theta) u^+_{j-\frac{1}{2}}),\quad \lambda^+_{j-\frac{1}{2}}\ge \max_{0\le \theta \le 1} f'(\theta u^-_{j-\frac{1}{2}}+(1-\theta) u^+_{j-\frac{1}{2}}).
\end{equation}
In each $I_j$, denote the polluted regions on the left and right by
\begin{equation}\label{polluted regions}
I_j^+(\Delta t):=\left[x_{j-\frac{1}{2}},x_{j-\frac{1}{2}}+\max\{0,\lambda^+_{j-\frac{1}{2}}\}\Delta t \right],\quad
I_j^-(\Delta t):=\left[x_{j+\frac{1}{2}},x_{j+\frac{1}{2}}+\min\{0,\lambda^-_{j+\frac{1}{2}}\}\Delta t \right].
\end{equation}
These are the parts of $I_j$ that might be influenced by the discontinuity of $u$ or its derivatives at $x_{j-\frac{1}{2}}$ and $x_{j+\frac{1}{2}}$, respectively. Thus, $S_{\Delta t}u$ remains smooth on $I_j \backslash (I_j^+(\Delta t)\cup I_j^-(\Delta t))$. Around each $x_{j-\frac{1}{2}}$, the discontinuity of $u$ at $x_{j-\frac{1}{2}}$ can only influence the region $I_{j-1}^-(\Delta t)\cup I_{j}^+(\Delta t)$ by the time $\Delta t$. 

Then, for any $v\in\mathbb{V}^k$, we have
\begin{align*}
&\int_{I_j} \frac{\pi S_{\Delta t} u(x) - u(x)}{\Delta t}\cdot v(x) \ud x \\
=& \int_{I_j} \frac{S_{\Delta t} u(x) - u(x)}{\Delta t}\cdot v(x) \ud x \quad\text{(since $v\vert_{I_j}$ is a polynomial)}\\
=& \int_{I_j \backslash (I_j^+(\Delta t)\cup I_j^-(\Delta t))} \frac{S_{\Delta t} u(x) - u(x)}{\Delta t}\cdot v(x) \ud x\\
&+ \int_{I_j^+(\Delta t)} \frac{S_{\Delta t} u(x) - u(x)}{\Delta t}\cdot v(x) \ud x + \int_{I_j^-(\Delta t)} \frac{S_{\Delta t} u(x) - u(x)}{\Delta t}\cdot v(x) \ud x\\
=& -\int_{I_j} f(u)_x\cdot v(x) \ud x + \mathcal{O}(\Delta t)\\
&+ \int_{I_j^+(\Delta t)} \frac{S_{\Delta t} u(x) - u(x)}{\Delta t}\cdot v(x) \ud x + \int_{I_j^-(\Delta t)} \frac{S_{\Delta t} u(x) - u(x)}{\Delta t}\cdot v(x) \ud x,
\end{align*}
where the last equality follows from the fact that both $u$ and $S_{\Delta t} u$ are smooth on $I_j \backslash (I_j^+(\Delta t)\cup I_j^-(\Delta t))$. The notation $\mathcal{O}(\Delta t)$ stands for any term that can be upper bounded by $C\Delta t$, where $C>0$ is a constant depending on the norms of derivatives of $u$ on the interior of $I_j$. Since eventually we would let $\Delta t\rightarrow 0$ for a fixed $u$ and on a fixed mesh, the dependency of the constant $C$ on $u$ or $h_j$'s need not concern us.

By taking $\Delta t \rightarrow 0$ in the last expression, we obtain
\begin{align}
\lim_{\Delta t\rightarrow 0} \int_{I_j} \frac{\pi S_{\Delta t} u(x) - u(x)}{\Delta t}\cdot v(x) \ud x = &-\int_{I_j} f(u)_x\cdot v(x) \ud x + v_{j-\frac{1}{2}}^+\lim_{\Delta t\rightarrow 0} \int_{I_j^+(\Delta t)} \frac{S_{\Delta t} u(x) - u(x)}{\Delta t} \ud x \nonumber \\
&+ v_{j+\frac{1}{2}}^-\lim_{\Delta t\rightarrow 0} \int_{I_j^-(\Delta t)} \frac{S_{\Delta t} u(x) - u(x)}{\Delta t}\ud x . \label{theorem 1 eq 1}
\end{align}
To compute \[\lim_{\Delta t\rightarrow 0} \int_{I_j^+(\Delta t)} \frac{S_{\Delta t} u(x) - u(x)}{\Delta t} \ud x,\] we first observe that the limit would not change if we replace $u$ in a neighborhood of $x_{j-\frac{1}{2}}$ by
\begin{equation}\label{theorem 1 Riemann}
    \tilde u(x)=\begin{cases}
        u^-_{j-\frac{1}{2}}, & x<x_{j-\frac{1}{2}}\\
        u^+_{j-\frac{1}{2}}, & x>x_{j-\frac{1}{2}}
    \end{cases}
\end{equation}
since $S_{\Delta t} u(x)=S_{\Delta t} \tilde u(x) + \mathcal{O}(\Delta t)$ for $x\in I_j^+(\Delta t)$. But $S_{\Delta t} \tilde u(x)$ is simply the Riemann solution with left and right states $u^-_{j-\frac{1}{2}}$ and $u^+_{j-\frac{1}{2}}$ at $x_{j-\frac{1}{2}}$. Because the Riemann solution is self-similar, set
\begin{equation}\label{self-similar R}
    R(\xi):=S_{t} \tilde u(x),\quad \xi=\frac{x-x_{j-\frac{1}{2}}}{t}.
\end{equation}
By definition, all right-going waves of $S_{\Delta t} \tilde u(x)$ are contained in $I_j^+(\Delta t)=[x_{j-\frac{1}{2}},x_{j-\frac{1}{2}}+a\Delta t ]$, where $a=\max\{0,\lambda^+_{j-\frac{1}{2}}\}$. Then,
\begin{align}
    \lim_{\Delta t\rightarrow 0} \int_{I_j^+(\Delta t)} \frac{S_{\Delta t} u(x) - u(x)}{\Delta t} \ud x &= \lim_{\Delta t\rightarrow 0} \int_{x_{j-\frac{1}{2}}}^{x_{j-\frac{1}{2}}+a\Delta t} \frac{S_{\Delta t} u(x) - u(x)}{\Delta t} \ud x \nonumber\\
    &= \lim_{\Delta t\rightarrow 0} \int_{x_{j-\frac{1}{2}}}^{x_{j-\frac{1}{2}}+a\Delta t} \frac{S_{\Delta t} \tilde u(x) - \tilde u(x)}{\Delta t} \ud x \nonumber\\
    &=\int_{0}^a R(\xi) - u^+_{j-\frac{1}{2}} \ud \xi. \label{theorem 1 eq 2}
\end{align}
Using the conservation law for the self-similar Riemann solution, we have
\begin{equation*}
    -\xi R'(\xi)+\odv{}{\xi}f(R(\xi))=0
\end{equation*}
in the sense of distributions. Hence
\begin{equation}\label{Riemann eq}
    \odv{}{\xi}[\xi R(\xi)-f(R(\xi))]=R(\xi)
\end{equation}
also holds in the sense of distributions, and therefore,
\begin{equation*}
    \int_0^a R(\xi) \ud \xi =aR(a)-f(R(a))+f(R(0^+)).
\end{equation*}
Because $a$ lies to the right of all Riemann waves, \[R(a)=u^+_{j-\frac{1}{2}}.\] Furthermore, by the definition of the Godunov flux, \[f(R(0^+))=\hat f (u^-_{j-\frac{1}{2}},u^+_{j-\frac{1}{2}})=\hat f_{j-\frac{1}{2}}.\] Consequently,
\begin{equation*}
    \int_0^a R(\xi) \ud \xi =au^+_{j-\frac{1}{2}}-f(u^+_{j-\frac{1}{2}})+\hat f_{j-\frac{1}{2}},
\end{equation*}
which, together with \eqref{theorem 1 eq 2}, implies
\begin{equation}\label{theorem 1 eq 3}
    \lim_{\Delta t\rightarrow 0} \int_{I_j^+(\Delta t)} \frac{S_{\Delta t} u(x) - u(x)}{\Delta t} \ud x=\int_{0}^a R(\xi) - u^+_{j-\frac{1}{2}} \ud \xi = \hat f_{j-\frac{1}{2}}-f(u)^+_{j-\frac{1}{2}}.
\end{equation}
In the same way, we can also prove
\begin{equation}\label{theorem 1 eq 4}
    \lim_{\Delta t\rightarrow 0} \int_{I_j^-(\Delta t)} \frac{S_{\Delta t} u(x) - u(x)}{\Delta t} \ud x= -\hat f_{j+\frac{1}{2}}+f(u)^-_{j+\frac{1}{2}}.
\end{equation}
Using \eqref{theorem 1 eq 3} and \eqref{theorem 1 eq 4} in \eqref{theorem 1 eq 1} and integrate by parts on $\int_{I_j} f(u)_x\cdot v(x) \ud x$, we obtain \eqref{DG1}. Since \[\lim_{\Delta t\rightarrow 0}\frac{\pi S_{\Delta t}u-u}{\Delta t}\in\mathbb{V}^k\] and $v$ is an arbitrary element in $\mathbb{V}^k$, we have proved \eqref{beautiful 1} holds point-wise, thus also uniformly because both sides of \eqref{beautiful 1} are piecewise polynomials defined on the same grid.
\end{proof}
A simple corollary from the above theorem is that the limit $\lim_{\Delta t\rightarrow 0} \frac{\pi S_{\Delta t} u - u}{\Delta t}$ exists for all $u\in\mathbb{V}^k$. In particular, $\lim_{\Delta t\rightarrow 0} \pi S_{\Delta t} u = u$ uniformly on $I$. Since $S_{\Delta t}$ and $\pi$ both decrease the $L^2$-norm, we have $\|\pi S_{\Delta t} u\|_{L^2(I)}\le \|u\|_{L^2(I)}$, and the following corollary is immediate after passing to the limit as $\Delta t \rightarrow 0$:
\begin{corollary}\label{L2 stable Godunov}
    Under periodic or compactly supported boundary conditions, \eqref{DG1} with Godunov flux is $L^2$-stable in the sense that 
    \begin{equation*}
        \|\Phi_{t_2} u\|_{L^2(I)}\le \|\Phi_{t_1} u\|_{L^2(I)},\quad\forall 0\le t_1\le t_2, \quad\forall u\in\mathbb{V}^k.
    \end{equation*}
\end{corollary}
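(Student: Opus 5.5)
The plan is to pass the one-step inequality $\|\pi S_{\Delta t} u\|_{L^2(I)}\le\|u\|_{L^2(I)}$ to the limit $\Delta t\to0$ using Theorem~\ref{theorem 1}. This gives a pointwise-in-time derivative inequality for the semi-discrete flow, which we then integrate in time.

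\textbf{Step 1: one-step contraction.} Fix $w\in\mathbb{V}^k$. The $L^2$-projection $\pi$ is orthogonal, so $\|\pi g\|_{L^2(I)}\le\|g\|_{L^2(I)}$. The entropy solution operator does not increase the $L^2$ norm under periodic or compactly supported data; this is the square-entropy inequality $\partial_t(\tfrac12\mathcal{U}^2)+\partial_x q(\mathcal{U})\le0$ integrated over $I$, where the boundary terms vanish. Combining the two, $\|\pi S_{\Delta t}w\|_{L^2(I)}^2\le\|w\|_{L^2(I)}^2$ for all $\Delta t>0$.

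\textbf{Step 2: derivative inequality.} Write $\pi S_{\Delta t}w=w+\Delta t\,G_{\Delta t}$. By Theorem~\ref{theorem 1}, $G_{\Delta t}\to F[w]$ uniformly on $I$, hence in $L^2(I)$. Expanding Step 1 gives
\[
2\Delta t\,(w,G_{\Delta t})+\Delta t^2\|G_{\Delta t}\|^2\le0 .
\]
Divide by $\Delta t$ and let $\Delta t\to0$; the quadratic term vanishes because $\|G_{\Delta t}\|$ stays bounded. This yields $(w,F[w])_{L^2(I)}\le0$ for every $w\in\mathbb{V}^k$.

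\textbf{Step 3: integrate along the flow.} Set $u(t)=\Phi_t u$, which solves \eqref{DG ODE}. Then
\[
\tfrac{\ud}{\ud t}\|u(t)\|_{L^2(I)}^2=2(u(t),F[u(t)])\le0 ,
\]
so the norm is nonincreasing on $[t_1,t_2]$. This also justifies the global existence of $\Phi_t$ claimed earlier, since it rules out blow-up of the ODE.

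\textbf{Main obstacle.} The delicate point is Step 1 on the periodic or compactly supported domain. One must confirm that the exact semigroup is $L^2$-nonincreasing for the piecewise-polynomial (bounded) data. Here I would invoke Kružkov/entropy theory with the convex entropy $\tfrac12 u^2$: integrate the entropy inequality over $I\times[0,\Delta t]$ and note that the entropy flux contributions cancel under periodicity or compact support.

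Beyond that, the only technicality is ensuring the limit in Theorem~\ref{theorem 1}, which holds in the uniform sense, is strong enough to pass to the limit in the $L^2$ inner product. It is, because $I$ is bounded in the periodic case; in the compactly supported case only finitely many cells are involved.
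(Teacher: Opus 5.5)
Your proposal is correct and follows essentially the paper's argument: pass the one-step contraction $\|\pi S_{\Delta t}w\|_{L^2(I)}\le\|w\|_{L^2(I)}$ to the limit via Theorem~\ref{theorem 1} to get $(w,F[w])\le 0$, then differentiate $\|\Phi_t u\|_{L^2(I)}^2$ along the flow. Your expansion $2\Delta t\,(w,G_{\Delta t})+\Delta t^2\|G_{\Delta t}\|^2\le 0$ is the paper's midpoint/polarization step written out. It also makes explicit a point the paper leaves implicit, namely that the $\Delta t\,\|G_{\Delta t}\|^2$ term vanishes in the limit.
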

\begin{proof}
    It suffices to estimate the time derivative of $\|\Phi_t u\|_{L^2(I)}^2$:
    \begin{align*}
        \frac{1}{2}\odv{}{t}\|\Phi_t u\|^2_{L^2(I)}=&\int_{I} \Phi_t u(x) \cdot \pdv{}{t}\Phi_t u(x) \ud x=\int_{I} \Phi_t u(x) \cdot F[\Phi_t u](x) \ud x \\
        =& \lim_{\Delta t\rightarrow 0} \int_{I} \Phi_t u\cdot\frac{\pi S_{\Delta t} \Phi_t u - \Phi_t u}{\Delta t}\ud x \quad \text{(Theorem \ref{theorem 1})}\\
        =& \lim_{\Delta t\rightarrow 0} \int_{I} \frac{\pi S_{\Delta t} \Phi_t u + \Phi_t u}{2}\cdot\frac{\pi S_{\Delta t} \Phi_t u - \Phi_t u}{\Delta t}\ud x\\
        =& \frac{1}{2}\lim_{\Delta t\rightarrow 0} \int_{I} \frac{(\pi S_{\Delta t} \Phi_t u)^2 - (\Phi_t u)^2}{\Delta t}\ud x\\
        =& \frac{1}{2} \lim_{\Delta t\rightarrow 0} \frac{\|\pi S_{\Delta t} \Phi_tu\|_{L^2(I)}^2 - \|\Phi_tu\|_{L^2(I)}^2}{\Delta t} \le 0,
    \end{align*}
    where the last inequality follows from the fact that $\|\pi S_{\Delta t} \Phi_tu\|_{L^2(I)}^2\le \|\Phi_tu\|_{L^2(I)}^2$.
\end{proof}
In fact, since $S_{\Delta t} u$ is the entropy solution, it satisfies all entropy inequalities; and since $\pi$ decreases the $L^2$-norm on each cell, the scheme actually admits a local square-entropy inequality with boundary flux determined by the flux of the true solution $S_{\Delta t} u$, which was originally proved in \cite{J}. The next corollary is not needed in the subsequent discussion, but nevertheless might be conceptually helpful.
\begin{corollary}\label{operator limit}
Fix an arbitrary time $T>0$ and $u\in\mathbb{V}^k$. Let $n$ be a positive integer. Then,
    \begin{equation*}
        \lim_{n\rightarrow\infty}(\pi S_{\frac{T}{n}} )^n u = \Phi_T u
    \end{equation*}
    uniformly on $I$.
\end{corollary}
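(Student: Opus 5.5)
This is a Chernoff/Euler-type product formula: the operator $\pi S_{\Delta t}$ is a one-step map on $\mathbb{V}^k$ that is consistent with the ODE \eqref{DG ODE} by Theorem~\ref{theorem 1}. I would write $G_{\Delta t}:=\pi S_{\Delta t}$ restricted to $\mathbb{V}^k$ and compare $(G_{\Delta t})^n u$ with $\Phi_T u$ through the standard telescoping error argument for one-step methods. Everything lives in the finite-dimensional space $\mathbb{V}^k$, so all norms are equivalent, and uniform convergence on $I$ is the same as convergence of coefficients.

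\textbf{Step 1 (a priori bound).} Since $S_{\Delta t}$ and $\pi$ are $L^2$-contractions, every iterate $(G_{T/n})^m u$ stays in the $L^2$-ball of radius $\|u\|_{L^2(I)}$. By Corollary~\ref{L2 stable Godunov}, so does the trajectory $\Phi_t u$. By norm equivalence, all these functions lie in a fixed compact set $K\subset\mathbb{V}^k$ with a uniform $L^\infty$ bound $M$.

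\textbf{Step 2 (uniform local truncation error).} I would upgrade Theorem~\ref{theorem 1} to a statement that holds uniformly on $K$:
\[\sup_{w\in K}\|G_{\Delta t}w-w-\Delta t\,F[w]\|=o(\Delta t).\]
I expect this to be the main obstacle. The proof of Theorem~\ref{theorem 1} gives only a pointwise limit, with constants depending on the derivatives of $w$ and a threshold on $\Delta t$ depending on $w$ (no interaction of waves, no new shocks). On $K$, however, the derivatives of polynomials on each cell are uniformly bounded by an inverse inequality in terms of $M$ and the fixed $h_j$. The wave speeds are bounded by $\max_{|s|\le M}|f'(s)|$, and the shock formation time is bounded below by $1/\max|f''|\,\|w_x\|_\infty$. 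So the admissible $\Delta t$ and the $\mathcal{O}(\Delta t)$ constants can be taken uniform.

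For the Riemann-fan terms \eqref{theorem 1 eq 2}, I would try to show that the error from replacing $w$ by $\tilde w$ is also uniformly $\mathcal{O}(\Delta t)$. An alternative is to argue by contradiction via compactness. Suppose $w_n\to w$ in $K$ and $\Delta t_n\to0$ with the error bounded below by $\varepsilon\Delta t_n$. Then I would use $L^1$-contraction of $S_t$, which gives $\|S_{\Delta t}w_n-S_{\Delta t}w\|_{L^1}\le\|w_n-w\|_{L^1}$, together with the pointwise result at $w$. The catch is that this comparison costs $\|w_n-w\|/\Delta t_n$, which need not be small. So the direct uniform-constant route is the one I would pursue first.

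\textbf{Step 3 (stability and telescoping).} I need the Lipschitz estimate $\|G_{\Delta t}v-G_{\Delta t}w\|_{L^1}\le\|v-w\|_{L^1}$, which follows from $L^1$-contraction of $S_{\Delta t}$. The cellwise $L^2$ projection is bounded in $L^1$ on $\mathbb{V}^k$ by a constant $C_\pi$, but iterating $C_\pi^n$ blows up. Better is to write $G_{\Delta t}=\mathrm{Id}+\Delta t F+o(\Delta t)$ uniformly, as in Step 2, and use that $F$ is Lipschitz on $K$ with constant $L$. Then $G_{\Delta t}$ is Lipschitz on $K$ with constant $1+L\Delta t+o(\Delta t)$, though this requires the $o(\Delta t)$ remainder to be uniform in a Lipschitz sense. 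To avoid that, I would instead compare $(G_{\Delta t})^n u$ with the explicit Euler iterates $(\mathrm{Id}+\Delta tF)^n u$. The Euler iterates converge to $\Phi_T u$ by classical ODE theory, since $F$ is locally Lipschitz. Setting $e_m$ to be the difference between the two sequences after $m$ steps gives
\[e_{m+1}\le(1+L\Delta t)e_m+o(\Delta t),\]
and by discrete Gronwall $e_n\le e^{LT}\,n\,o(T/n)\to0$.

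Combining this with Euler convergence gives $(\pi S_{T/n})^n u\to\Phi_T u$ in $\mathbb{V}^k$, hence uniformly on $I$.
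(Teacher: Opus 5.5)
Your proposal is correct and follows essentially the paper's approach, made explicit. The paper's proof is a short sketch: by Theorem~\ref{theorem 1}, $\pi S_{T/n}$ is a first-order one-step discretization of \eqref{DG ODE}; $F$ is Lipschitz and $\partial_t\Phi_t u$ is bounded; and convergence then follows from a routine truncation-error estimate, which is your consistency, Lipschitz-$F$ and discrete Gronwall structure. The uniformity you flag in Step~2 is exactly what that ``routine estimate'' tacitly needs, and for the Riemann-fan terms it can be closed with Kru\v{z}kov's local $L^1$-contraction comparing $w$ with its Riemann data $\tilde w$ on the $\mathcal{O}(\Delta t)$ domain of dependence: this gives $\int_{I_j^+(\Delta t)}|S_{\Delta t}w-S_{\Delta t}\tilde w|\,\ud x\le C\Delta t^2\max_j\|w_x\|_{L^\infty(I_j)}$, which is uniform on your ball $K$ by the inverse inequality.
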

\begin{proof}
    The proof of Theorem \ref{theorem 1} implies that the discrete map $u\mapsto \pi S_{\frac{T}{n}} u$ is a first-order time discretization of the ODE \eqref{DG ODE} with uniform time step $\frac{T}{n}$. Since the ODE itself is Lipschitz continuous, and for a fixed $u\in\mathbb{V}^k$, the first-order time derivative of $\Phi_t u(x)$ is uniformly bounded for all $x\in I$ and $0\le t \le T$. The proof follows from a routine estimate through truncation errors.
\end{proof}
In light of Corollary \ref{operator limit}, the result of Corollary \ref{L2 stable Godunov} is strikingly clear since \[\|(\pi S_{\Delta t} )^n u\|_{L^2(I)} \le \|(\pi S_{\Delta t} )^m u\|_{L^2(I)},\quad\forall n\ge m,\quad \forall u\in\mathbb{V}^k.\] It is worth noting that, since $S_{\Delta t}$ is the exact solution operator, $\pi S_{\Delta t}$ in the limit can only generate \eqref{DG1} with Godunov flux; however, a slightly modified operator can generate \eqref{DG1} with a general HLL flux. The discussion of how to modify $S_{\Delta t}$ and the computation of the limit is contained in Appendix \ref{section:HLL classic}. This might be of interest if one wishes to generalize this operator framework to other types of PDEs.

\section{The Operator DG method for 1D scalar conservation laws}\label{section: scheme 1D scalar}
Let $(U,\mF)$ be any fixed entropy pair. Assume $U\in C^3$ and strictly convex ($U''>0$ on $\mathbb{R}$). The entropy flux $\mF$ is defined by
\begin{equation}\label{def entropy flux}
    \mF'( u)=U'( u) f'( u).
\end{equation}
In addition, let 
\begin{equation}\label{def psi}
    \psi(u):=U'( u) f( u)-\mF( u),
\end{equation}
so that $\psi'(u)=U''(u)f(u).$ Our next goal is to construct a scheme that satisfies two entropy inequalities: one for the square-entropy, and the other for $U(\cdot)$. In Section \ref{section: Godunov DG}, we have shown that \eqref{DG1} with the Godunov flux can be generated by the operator $\pi S_{\Delta t}$ as $\Delta t \to 0$. The reason that it can only admit the square-entropy inequality is because $\pi$ is the $L^2$-projection, which can only decrease the integral of the square function on each cell but not for other convex functions. But $S_{\Delta t}$ admits all entropy pairs, whose potential is lost after applying $\pi$. Therefore, to realize the full potential of $S_{\Delta t}$, we should replace $\pi$ by a different projection operator $\tilde \pi$ such that it can decrease both the integral of the square function and $U(\cdot)$. With this $\tilde\pi$, we shall derive a new scheme generated by $\tilde\pi S_{\Delta t}$, which will look very similar to the classical DG scheme with Godunov flux and satisfy entropy inequalities for both the square-entropy and $U(\cdot)$. Then, we will further modify it such that: (1) the Godunov flux can be replaced by any E-flux; (2) the scheme satisfies infinitely many entropy conditions; (3) it still retains its optimal order of accuracy.

In order to derive the scheme generated by the operator $\tilde\pi S_{\Delta t}$, we need to compute:
\begin{align}
&\int_{I_j} \frac{\tilde\pi S_{\Delta t}  u(x) -  u(x)}{\Delta t}\cdot  v(x) \ud x \nonumber\\
=& \int_{I_j} \frac{\pi S_{\Delta t}  u(x) -  u(x)}{\Delta t}\cdot  v(x) \ud x + \int_{I_j} \frac{\tilde\pi S_{\Delta t}  u(x) - \pi S_{\Delta t}  u(x)}{\Delta t}\cdot  v(x) \ud x \nonumber \\
=& \int_{I_j} \frac{ S_{\Delta t}  u(x) -  u(x)}{\Delta t}\cdot  v(x) \ud x + \int_{I_j} \frac{\tilde\pi S_{\Delta t}  u(x) - \pi S_{\Delta t}  u(x)}{\Delta t}\cdot  v(x) \ud x. \label{modified scheme 1}
\end{align}
For any function $v\in L^2(I)$, let $\overline{v_j}$ and $\overline{(\pi  v)}_j$ denote the average of $v$ and its $L^2$-projection on $I_j$, respectively. Now, we define the new (nonlinear) projection $\tilde\pi$ as follows:\footnote{This is the same limiter used in \cite{Sunlimiter}, where this limiter is simply applied after each time-stepping of the fully-discrete classical RKDG scheme. In their paper, the error estimate is done heuristically, and the accuracy of their scheme is observed to be sub-optimal in certain numerical tests. In our paper, however, we incorporate this limiter into the DG framework in a very different way so that optimal error estimates can be proved rigorously and also verified in numerical tests (see Section \ref{section: error estimate}).}
\begin{equation*}
\tilde\pi  v(x):=\theta_j[ v] \pi  v(x) + (1-\theta_j[ v]) \overline{(\pi  v)}_j, \quad \forall x \in I_j,
\end{equation*}
where
\begin{equation}\label{def theta}
    \theta_j[ v]:=\min\left\{ \frac{ \int_{I_j} U\left(  v(x) \right) \ud x - U\left( \overline{v}_j \right) h_j }{\int_{I_j} U\left( \pi  v(x) \right) \ud x - U\left( \overline{v}_j \right) h_j } , 1 \right\}.
\end{equation}
If the denominator in \eqref{def theta} is zero, we simply set $\theta_j[ v]:=1$. Firstly, from the above definition we can see that $\tilde\pi  v= v$ for all $ v\in\mathbb{V}^k$, since in which case $\pi v= v$ so that $\theta_j[ v]=1$. Therefore, our new operator $\tilde\pi$ is indeed a projection. Note that $\tilde \pi$ still decreases the $L^2$-norm on each cell, since it preserves the average and decreases the high frequency modes. The next lemma shows that $\tilde\pi$ also decreases the integral of $U(\cdot)$:
\begin{lemma}\label{stability of projection}
For all $v\in L^2(I)$, we have $0\le \theta_j[v] \le 1$ and
\begin{equation}\label{theta}
\int_{I_j} U\left(\tilde\pi v(x) \right) \ud x=\int_{I_j} U\left(\theta_j[v] \pi v(x) + (1-\theta_j[v]) \overline{(\pi v)}_j \right) \ud x \le \int_{I_j} U\left( v(x) \right) \ud x.
\end{equation}
\end{lemma}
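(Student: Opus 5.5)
Write $N_j:=\int_{I_j}U(v)\,\mathrm{d}x-U(\overline{v}_j)h_j$ and $D_j:=\int_{I_j}U(\pi v)\,\mathrm{d}x-U(\overline{v}_j)h_j$, so that $\theta_j[v]=\min\{N_j/D_j,1\}$. We will first use that the $L^2$-projection preserves cell averages: testing the defining relation of $\pi$ with the indicator of $I_j$ (which lies in $\mathbb{V}^k$) gives $\overline{(\pi v)}_j=\overline{v}_j$. Jensen's inequality for the convex $U$ then yields $N_j\ge 0$ and $D_j\ge 0$. If $D_j=0$, the convention sets $\theta_j=1$. Otherwise $N_j/D_j\ge 0$, so $\theta_j\in[0,1]$ in all cases. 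The equality in \eqref{theta} is just the definition of $\tilde\pi$, so only the inequality remains.

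For the inequality I will use convexity in the parameter $\theta$. Set $g(\theta):=\int_{I_j}U\bigl(\theta\,\pi v+(1-\theta)\overline{v}_j\bigr)\,\mathrm{d}x$, which is convex in $\theta$. Then $g(0)=U(\overline{v}_j)h_j$ and $g(1)=\int_{I_j}U(\pi v)\,\mathrm{d}x$. By convexity, for $\theta\in[0,1]$,
\begin{equation*}
g(\theta)\le (1-\theta)g(0)+\theta g(1)=U(\overline{v}_j)h_j+\theta D_j .
\end{equation*}
It therefore suffices to show $\theta_j D_j\le N_j$, i.e.\ $U(\overline{v}_j)h_j+\theta_j D_j\le \int_{I_j}U(v)\,\mathrm{d}x$.

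This reduces to a short case analysis.
\begin{itemize}
\item If $D_j>0$ and $N_j\le D_j$, then $\theta_j=N_j/D_j$ and $\theta_jD_j=N_j$.
\item If $D_j>0$ and $N_j>D_j$, then $\theta_j=1$ and $\theta_jD_j=D_j<N_j$.
\item If $D_j=0$, then $\theta_j=1$ and $\theta_jD_j=0\le N_j$, by Jensen as above.
\end{itemize}
In every case $\theta_jD_j\le N_j$, which gives \eqref{theta}.

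The only step needing care is the identity $\overline{(\pi v)}_j=\overline{v}_j$, since it is what makes $g(0)$ and both Jensen bounds refer to the same constant. We also need $U(v)$ to be integrable for $v\in L^2$. If $\int_{I_j}U(v)=+\infty$ the inequality is trivial, and then $\theta_j=1$ under the natural reading of $\min\{\infty,1\}$.
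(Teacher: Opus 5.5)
Your proof is correct and follows the paper's argument. Jensen's inequality, with $\overline{(\pi v)}_j=\overline{v}_j$, makes numerator and denominator nonnegative; pointwise convexity in $\theta$ then bounds $\int_{I_j}U(\tilde\pi v)\,\ud x$ by $\theta_j\int_{I_j}U(\pi v)\,\ud x+(1-\theta_j)U(\overline{v}_j)h_j$, which is at most $\int_{I_j}U(v)\,\ud x$ precisely because $\theta_j$ does not exceed the quotient. The one small difference is your zero-denominator case via $\theta_j\cdot 0\le N_j$, which avoids the strict convexity the paper uses to conclude $\pi v\equiv\overline{v}_j$.
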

\begin{proof}
When the denominator in \eqref{def theta} is zero, $\pi v (x) \equiv \overline{v}_j$ for all $x \in I_j$, so that \eqref{theta} holds trivially; so we may assume otherwise. In order to apply Jensen's inequality to the left hand side of \eqref{theta}, we need $0\le \theta_j[v] \le 1$. $\theta_j[v]\le 1$ is already clear from the definition \eqref{def theta}, so we only need to show
\begin{equation*}
    \frac{ \int_{I_j} U\left( v(x) \right) \ud x - U\left( \overline{v}_j \right) h_j }{\int_{I_j} U\left( \pi v(x) \right) \ud x - U\left( \overline{v}_j \right) h_j }\ge 0.
\end{equation*}
This follows easily from applying Jensen's inequality to both the numerator and denominator and noting that $\overline{v}_j=\overline{(\pi v)}_j$:
\begin{align*}
    & U\left( \overline{v}_j \right)  \le \frac{1}{h_j}\int_{I_j} U\left( v(x) \right) \ud x,\\
    & U\left( \overline{v}_j \right)= U\left( \overline{( \pi v)}_j \right) \le \frac{1}{h_j} \int_{I_j} U\left(\pi v(x) \right) \ud x.
\end{align*}
Thus, $0\le \theta_j[v] \le 1$ is proved. Now, by Jensen's inequality again,
\begin{align*}
\int_{I_j} U\left(\theta_j[v] \pi v(x) + (1-\theta_j[v]) \overline{(\pi v)}_j \right) \ud x \le& \theta_j[v] \int_{I_j} U\left( \pi v(x) \right) \ud x + (1-\theta_j[v])U\left( \overline{(\pi v)}_j \right) h_j \\
=& \theta_j[v] \int_{I_j} U\left( \pi v(x) \right) \ud x + (1-\theta_j[v])U\left( \overline{v}_j \right) h_j.
\end{align*}
The right hand side is less than or equal to $\int_{I_j} U\left( v(x) \right) \ud x$ if and only if
\begin{equation*}
    \theta_j\le \frac{ \int_{I_j} U\left( v(x) \right) \ud x - U\left( \overline{v}_j \right) h_j }{\int_{I_j} U\left( \pi v(x) \right) \ud x - U\left( \overline{v}_j \right) h_j },
\end{equation*}
which proves the claim.
\end{proof}
Applying this special projection $\tilde\pi$ to \eqref{modified scheme 1}, we then have
\begin{align}
&\int_{I_j} \frac{\tilde\pi S_{\Delta t}  u(x) -  u(x)}{\Delta t}\cdot  v(x) \ud x \nonumber \\
=& \int_{I_j} \frac{ S_{\Delta t}  u(x) -  u(x)}{\Delta t}\cdot  v(x) \ud x -\frac{1-\theta_j[S_{\Delta t}  u(x)]}{\Delta t} \int_{I_j} (\pi S_{\Delta t}  u(x) - \overline{(\pi S_{\Delta t}  u)}_j) \cdot  v(x) \ud x \nonumber \\
=& \int_{I_j} \frac{ S_{\Delta t}  u(x) -  u(x)}{\Delta t}\cdot  v(x) \ud x -\frac{1-\theta_j[S_{\Delta t}  u(x)]}{\Delta t} \int_{I_j} (S_{\Delta t}  u(x) - \overline{(S_{\Delta t}  u)}_j) \cdot  v(x) \ud x. \label{modified scheme before limit}
\end{align}
Taking the limit of \eqref{modified scheme before limit} as $\Delta t \rightarrow 0$ in the same way as in \eqref{theorem 1 eq 1}, we obtain the following semi-discrete scheme: seek $u\in\mathbb{V}^k$ such that for all $v\in\mathbb{V}^k$ we have\footnote{Here we follow the convention in the DG literature: when the context is clear, we suppress the variable $t$ in the notation $u(\cdot,t)\in\mathbb{V}^{k}$, while keeping in mind that $u$ is an element in $\mathbb{V}^{k}$ parameterized by $t\ge 0$.}
\begin{align}
\quad\int_{I_j}  u_t  v \ud x =& \int_{I_j} \lim_{\Delta t \rightarrow 0}\frac{\tilde\pi S_{\Delta t}  u(x) -  u(x)}{\Delta t}\cdot  v(x) \ud x \nonumber\\
=& -\int_{I_j}  f(u)_x v \ud x + H^+_{j-\frac{1}{2}}\cdot  v^+_{j-\frac{1}{2}} + H^-_{j+\frac{1}{2}}\cdot  v^-_{j+\frac{1}{2}} - A_j[u;U]\int_{I_j}  ( u - \bu_j)  v \ud x, \tag{$\textbf{Scheme A}$}\label{scheme A}
\end{align}
where $\bu_j$ denotes the average of $u$ on $I_j$,
\begin{equation}\label{def A}
    A_j[u;U]=\max\left\{ \lim_{\Delta t\rightarrow 0} \frac{1}{\Delta t} \frac{\int_{I_j}U(\pi S_{\Delta t} u) \ud x-\int_{I_j}U(S_{\Delta t} u) \ud x}{ \int_{I_j}U(\pi S_{\Delta t} u)\ud x -h_j U\left(\frac{1}{h_j}\int_{I_j}S_{\Delta t} u \ud x\right)},0\right\},
\end{equation}
and 
\begin{align}
H^+_{j-\frac{1}{2}}=& \lim_{\Delta t\rightarrow 0} \frac{1}{\Delta t}\int_{I_j^+(\Delta t)}S_{\Delta t} u(x) -  u(x)\ud x=\hf_{j-\frac{1}{2}}-f(u)^+_{j-\frac{1}{2}}, \label{limitH1}\\
H^-_{j+\frac{1}{2}}=& \lim_{\Delta t\rightarrow 0} \frac{1}{\Delta t}\int_{I_j^-(\Delta t)}S_{\Delta t} u(x) -  u(x)\ud x=-\hf_{j+\frac{1}{2}}+f(u)^-_{j+\frac{1}{2}}. \label{limitH2}
\end{align}
The expressions \eqref{limitH1} and \eqref{limitH2} are already proved in \eqref{theorem 1 eq 3} and \eqref{theorem 1 eq 4}.

Although the explicit expression of $A_j[u;U]$ defined in \eqref{def A} can generally be obtained, the resulting formula is too complicated to be implemented in practice. To make \eqref{scheme A} computationally efficient, the following proposition is useful:
\begin{proposition}[Monotonicity of Remainder]\label{monotone}
If there exists $B_j[u;U]$ such that $A_j[u;U] \le B_j[u;U]$ for all $ u\in \mathbb{V}^k$ and $j=1,\cdots,N,$ then replacing $A_j[u;U]$ by $B_j[u;U]$ in \eqref{scheme A} can only decrease its entropy production rate $\odv{}{t}\int_{I_j} U( u)(x,t) \ud x$ on each cell $I_j$. We call this property ``monotonicity of remainder."
\end{proposition}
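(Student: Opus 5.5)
Since the two schemes differ only in the damping coefficient, I will compare their entropy production rates directly. Fix a cell $I_j$ and a state $u\in\mathbb{V}^k$ at some time $t$. I will compute $\odv{}{t}\int_{I_j}U(u)\ud x=\int_{I_j}U'(u)u_t\ud x$. The right-hand side of \eqref{scheme A} is only an identity against test functions $v\in\mathbb{V}^k$, while $U'(u)$ is in general not a polynomial. So I will work with the DG representation: $u_t\in\mathbb{V}^k$ is determined by \eqref{scheme A}. Hence $u_t$ decomposes as $u_t=G[u]-A_j[u;U](u-\bu_j)$ on $I_j$, where $G[u]\in\mathbb{V}^k$ is the part independent of the damping coefficient: the $L^2$-projection of the flux, boundary and $H^\pm$ terms. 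This uses the fact that the damping term $\int_{I_j}(u-\bu_j)v\ud x$ is already the $L^2$ inner product of $v$ with the polynomial $u-\bu_j\in\mathbb{P}^k(I_j)$, so its Riesz representative is exactly $u-\bu_j$.

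Replacing $A_j$ by $B_j$ changes only the coefficient of $(u-\bu_j)$. Therefore
\[
\odv{}{t}\int_{I_j}U(u)\ud x\Big|_{B}-\odv{}{t}\int_{I_j}U(u)\ud x\Big|_{A}=-(B_j[u;U]-A_j[u;U])\int_{I_j}U'(u)(u-\bu_j)\ud x ,
\]
where both rates are evaluated at the same state $u$. The key step is then the sign of $\int_{I_j}U'(u)(u-\bu_j)\ud x$. Since $\int_{I_j}(u-\bu_j)\ud x=0$, this integral equals $\int_{I_j}(U'(u)-U'(\bu_j))(u-\bu_j)\ud x$. This is nonnegative because $U'$ is increasing ($U''>0$). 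Equivalently, by convexity, $\int_{I_j}U(u)\ud x-h_jU(\bu_j)\le\int_{I_j}U'(u)(u-\bu_j)\ud x$, and the left side is $\ge0$ by Jensen, as in Lemma \ref{stability of projection}. Combined with $B_j\ge A_j$, the difference above is $\le0$, which is the claim.

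The main subtlety is the bookkeeping: the comparison is an instantaneous one at a common state $u$, not along two different trajectories. I would state this explicitly, and note that the same computation with $U(u)=u^2/2$ shows the square-entropy production also does not increase. The same argument applies verbatim to any convex entropy, so monotonicity holds for all of them simultaneously.
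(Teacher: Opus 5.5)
Your proof is correct and is essentially the paper's argument. The paper tests \eqref{scheme A} with $z=\pi[U'(u)]$, using $\int_{I_j}u_tU'(u)\,\ud x=\int_{I_j}u_tz\,\ud x$ since $u_t\in\mathbb{V}^k$, rather than writing out the cellwise Riesz representative of $u_t$; both routes give the same difference $(A_j[u;U]-B_j[u;U])\int_{I_j}(u-\overline{u}_j)\bigl(U'(u)-U'(\overline{u}_j)\bigr)\,\ud x\le0$, signed by the same zero-mean and monotonicity-of-$U'$ argument. Your closing remark that the computation works for every convex entropy is exactly how the paper uses this proposition in the corollaries that follow.
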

\begin{proof}
Let $ u\in \mathbb{V}^k$ and $ z:=\pi[U'(u)] \in \mathbb{V}^k$. Taking $ v=z $ in \eqref{scheme A}, we obtain the entropy production rate for $A_j$ as
\begin{align*}
    \left(\odv{}{t}\int_{I_j} U( u) \ud x\right)_A =& \int_{I_j} u_t U'( u) \ud x = \int_{I_j} u_t \pi [U'(u)] \ud x = \int_{I_j} u_t z \ud x\\
    =&-\int_{I_j}  f(u)_x\cdot  z \ud x + H^+_{j-\frac{1}{2}}\cdot  z^+_{j-\frac{1}{2}} + H^-_{j+\frac{1}{2}}\cdot  z^-_{j+\frac{1}{2}} - A_j[u;U]\int_{I_j}  ( u - \bu_j) \cdot  z \ud x.
\end{align*}
Similarly, the entropy production rate for $B_j$ is
\begin{align*}
    \left(\odv{}{t}\int_{I_j} U( u) \ud x\right)_B =&-\int_{I_j}  f(u)_x\cdot  z \ud x + H^+_{j-\frac{1}{2}}\cdot  z^+_{j-\frac{1}{2}} + H^-_{j+\frac{1}{2}}\cdot  z^-_{j+\frac{1}{2}} - B_j[u;U]\int_{I_j}  ( u - \bu_j) \cdot  z \ud x.
\end{align*}
Therefore,
\begin{align*}
    &\left(\odv{}{t}\int_{I_j} U( u) \ud x\right)_B - \left(\odv{}{t}\int_{I_j} U( u) \ud x\right)_A = (A_j[u;U]- B_j[u;U])\int_{I_j}  ( u - \bu_j) \cdot  \pi[U'(u)] \ud x\\
    =& (A_j[u;U]- B_j[u;U])\int_{I_j}  ( u - \bu_j) \cdot  U'(u) \ud x \quad \text{(since $u-\bu_j \in \mathbb{P}^k(I_j)$)}\\
    =& (A_j[u;U]- B_j[u;U])\int_{I_j}  ( u - \bu_j) \cdot  (U'(u)-U'(\bu_j)) \ud x \le 0,
\end{align*}
where in the  last equality, we used the fact that $u-\bu_j$ is orthogonal to the constant $U'(\bu_j)$. The last inequality follows from $A_j[u;U]\le B_j[u;U]$ and $( u(x) - \bu_j) \cdot (U'( u(x))-U'(\bu_j))\ge 0$ since $U(\cdot)$ is convex. Thus, the proposition is proved.
\end{proof}

It follows from Proposition \ref{monotone} that we can instead derive an upper bound $B_j[u;U]$ for $A_j[u;U]$, and the scheme with $B_j$ satisfies the entropy condition if $A_j$ does. Moreover, as we will see shortly, if $B_j$ is a bound uniform in an infinite class of entropies, the modified scheme will satisfy infinitely many entropy inequalities instead of just two (the prescribed $U(\cdot)$ and the original square-entropy). In the rest of this section, we will find a bound on $A_j[u;U]$ as tight as possible, so that the resulting scheme is still of optimal order of accuracy. To this end, we let $P_{i,j}(x)$ denote the $i$th Legendre polynomial normalized on $I_j$ satisfying $\|P_{i,j}(x)\|_{L^2(I_j)}=1$. Then,
\begin{align*}
&\pi S_{\Delta t} u(x)- u(x)\\
=& \sum_{i=0}^k P_{i,j}(x) \int_{I_j} P_{i,j}(x)S_{\Delta t} u(x)\ud x - u(x)\\
=& \sum_{i=0}^k P_{i,j}(x) \int_{I_j} P_{i,j}(x)(S_{\Delta t} u(x)- u(x))\ud x \\
=& \Delta t \sum_{i=0}^k P_{i,j}(x) \bigg\{ -\int_{I_j} f( u)_x P_{i,j}(x)\ud x + P_{i,j}(x^-_{j+\frac{1}{2}})H^-_{j+\frac{1}{2}} +P_{i,j}(x^+_{j-\frac{1}{2}})H^+_{j-\frac{1}{2}} \bigg\} +\mathcal{O}(\Delta t^2)\\
=& -\Delta t \pi \circ [ f( u)_x](x)  + \Delta t \sum_{i=0}^k P_{i,j}(x) P_{i,j}(x^-_{j+\frac{1}{2}})H^-_{j+\frac{1}{2}} + \Delta t \sum_{i=0}^k P_{i,j}(x) P_{i,j}(x^+_{j-\frac{1}{2}})H^+_{j-\frac{1}{2}} +\mathcal{O}(\Delta t^2),
\end{align*}
Therefore, 
\begin{align}
&\int_{I_j}U(\pi S_{\Delta t} u(x)) \ud x -\int_{I_j} U( u(x)) \ud x \nonumber\\
=&  \int_{I_j} U'( u(x))\cdot(\pi S_{\Delta t} u(x)- u(x)) \ud x + \mathcal{O}(\Delta t^2) \nonumber\\
=& \int_{I_j} U'( u(x))\cdot\bigg\{-\Delta t \pi \circ [ f( u)_x](x)  + \Delta t \sum_{i=0}^k P_{i,j}(x) P_{i,j}(x^-_{j+\frac{1}{2}})H^-_{j+\frac{1}{2}} \nonumber\\
&\qquad\qquad + \Delta t \sum_{i=0}^k P_{i,j}(x) P_{i,j}(x^+_{j-\frac{1}{2}})H^+_{j-\frac{1}{2}} \bigg\} \ud x +\mathcal{O}(\Delta t^2) \nonumber\\
=& -\Delta t\int_{I_j} U'( u(x)) \cdot \pi \circ [ f( u)_x](x)\ud x +\Delta t \pi \circ [U'( u)](x^-_{j+\frac{1}{2}})\cdot H^-_{j+\frac{1}{2}} \nonumber\\
&+\Delta t \pi \circ [U'( u)](x^+_{j-\frac{1}{2}})\cdot H^+_{j-\frac{1}{2}} +\mathcal{O}(\Delta t^2).\label{FirstTerm}
\end{align}
On the other hand,
\begin{align}
&\int_{I_j}U(S_{\Delta t} u(x)) \ud x -\int_{I_j} U( u(x)) \ud x \nonumber\\
=&  \int_{I_j \backslash (I_j^+(\Delta t)\cup I_j^-(\Delta t))}U(S_{\Delta t} u(x)) - U( u(x)) \ud x+\int_{I_j^+(\Delta t)}U(S_{\Delta t} u(x)) - U( u(x)) \ud x \nonumber\\
&+\int_{I_j^-(\Delta t)}U(S_{\Delta t} u(x)) - U( u(x)) \ud x \nonumber\\
=& -\Delta t \int_{I_j} U'( u(x)) \cdot  f( u)_x \ud x +\Delta t G^+_{j-\frac{1}{2}} +\Delta t G^-_{j+\frac{1}{2}} +\mathcal{O}(\Delta t^2), \label{SecondTerm}
\end{align}
where
\begin{align}
    &G^+_{j-\frac{1}{2}}:=\lim_{\Delta t\rightarrow 0} \frac{1}{\Delta t}\int_{I_j^+(\Delta t)}U(S_{\Delta t} u(x)) - U( u(x))\ud x,\label{limitG1}\\
    &G^-_{j+\frac{1}{2}}:=\lim_{\Delta t\rightarrow 0} \frac{1}{\Delta t}\int_{I_j^-(\Delta t)}U(S_{\Delta t} u(x)) - U( u(x))\ud x.\label{limitG2}
\end{align}
The exact expressions of $G^+_{j-\frac{1}{2}}$ and $G^-_{j+\frac{1}{2}}$ are not important, since they will be eliminated in the subsequent computations.

It follows from \eqref{FirstTerm} and \eqref{SecondTerm} that
\begin{align}
&\lim_{\Delta t\rightarrow 0} \frac{\int_{I_j}U(\pi S_{\Delta t} u(x)) \ud x-\int_{I_j}U(S_{\Delta t} u(x)) \ud x}{\Delta t} \nonumber\\
=& \int_{I_j} U'( u) \cdot ( f( u)_x - \pi \circ [ f( u)_x])\ud x + \pi \circ [U'( u)](x^-_{j+\frac{1}{2}}) \cdot H^-_{j+\frac{1}{2}} \nonumber\\
&+ \pi \circ [U'( u)](x^+_{j-\frac{1}{2}}) \cdot H^+_{j-\frac{1}{2}} - G^-_{j+\frac{1}{2}} - G^+_{j-\frac{1}{2}} \nonumber\\
=& \int_{I_j} U'( u) \cdot ( f( u)_x - \pi \circ [ f( u)_x])\ud x \label{a1}\\
& + \pi \circ [U'( u)- U'( u^+_{j-\frac{1}{2}})](x^+_{j-\frac{1}{2}}) \cdot H^+_{j-\frac{1}{2}} \label{a2}\\
& + \pi \circ [U'( u)- U'( u^-_{j+\frac{1}{2}})](x^-_{j+\frac{1}{2}}) \cdot H^-_{j+\frac{1}{2}} \label{a3}\\
& + (U'( u^+_{j-\frac{1}{2}})\cdot H^+_{j-\frac{1}{2}}-G^+_{j-\frac{1}{2}}) \label{a4}\\
& + (U'( u^-_{j+\frac{1}{2}})\cdot H^-_{j+\frac{1}{2}}-G^-_{j+\frac{1}{2}}). \label{a5}
\end{align}

This is the explicit expression for the numerator in \eqref{def A}. We first prove that both \eqref{a4} and \eqref{a5} in the last expression are non-positive.
\begin{lemma}\label{estimate1}
\begin{align*}
    U'( u^+_{j-\frac{1}{2}})\cdot H^+_{j-\frac{1}{2}}-G^+_{j-\frac{1}{2}} \le 0,\quad U'( u^-_{j+\frac{1}{2}})\cdot H^-_{j+\frac{1}{2}}-G^-_{j+\frac{1}{2}}\le 0.
\end{align*}
\end{lemma}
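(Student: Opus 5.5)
We reduce both inequalities to a statement about the self-similar Riemann solution, exactly as in the proof of Theorem~\ref{theorem 1}, and then use the entropy inequality satisfied by the Riemann solution. We treat the left interface; the right one is symmetric, with the roles of left and right exchanged.

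\textbf{Step 1: localise to the Riemann problem.} As in \eqref{theorem 1 eq 2}, replacing $u$ near $x_{j-\frac12}$ by the Riemann data $\tilde u$ of \eqref{theorem 1 Riemann} changes the integrand on $I_j^+(\Delta t)$ only by $\mathcal{O}(\Delta t)$. The interval has length $a\Delta t$, so after dividing by $\Delta t$ the error vanishes in the limit. With $R$ as in \eqref{self-similar R} and $a=\max\{0,\lambda^+_{j-\frac12}\}$, we obtain
\begin{equation*}
G^+_{j-\frac12}=\int_0^a \big(U(R(\xi))-U(u^+_{j-\frac12})\big)\ud\xi ,\qquad H^+_{j-\frac12}=\int_0^a \big(R(\xi)-u^+_{j-\frac12}\big)\ud\xi .
\end{equation*}
Writing $w:=u^+_{j-\frac12}$, the quantity to control becomes
\begin{equation*}
U'(w)H^+_{j-\frac12}-G^+_{j-\frac12}=-\int_0^a \big[U(R)-U(w)-U'(w)(R-w)\big]\ud\xi .
\end{equation*}

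\textbf{Step 2: conclude by convexity.} The integrand above is the Bregman divergence of $U$ between $R(\xi)$ and $w$. Since $U$ is convex, it is nonnegative pointwise. Hence the whole expression is $\le 0$, which proves the first inequality.

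\textbf{Step 3: the right interface.} The same argument applies on $I_j^-(\Delta t)$, which is $[x_{j+\frac12}+\min\{0,\lambda^-\}\Delta t,\;x_{j+\frac12}]$. Here the integrals run over $\xi\in[b,0]$ with $b=\min\{0,\lambda^-_{j+\frac12}\}\le 0$, and $w=u^-_{j+\frac12}$. The integral is taken with the positive orientation, since the region has nonnegative length. We again obtain minus the integral of a Bregman divergence, so the second inequality is also $\le 0$.

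\textbf{Main point of care.} The only delicate step is justifying that $U(S_{\Delta t}u)$ may be replaced by $U(S_{\Delta t}\tilde u)$ inside the limit. This follows as in Theorem~\ref{theorem 1}: $U$ is locally Lipschitz on the bounded range of values involved, and the pollution region has width $\mathcal{O}(\Delta t)$.

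Note that the argument never uses the entropy inequality of the Riemann solution itself; the convexity of $U$ alone is enough. If one preferred to express $G^\pm$ through entropy fluxes via \eqref{Riemann eq}, that route would need the Riemann entropy inequality, but it is unnecessary here.
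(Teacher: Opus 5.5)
Your proposal is correct and rests on the same key fact as the paper: the Bregman divergence $U(b)-U(a)-U'(a)(b-a)$ of a convex $U$ is pointwise nonnegative. The only difference is that the paper applies this inequality at finite $\Delta t$ directly to $S_{\Delta t}u(x)$ and $u(x)$ on $I_j^\pm(\Delta t)$ and then passes to the limit. It therefore never needs to localize $G^\pm$ to the self-similar Riemann solution, as you do in Step 1.
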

\begin{proof}
It follows from \eqref{limitH1} and \eqref{limitH2} that
\begin{align*}
U'( u^+_{j-\frac{1}{2}})\cdot H^+_{j-\frac{1}{2}}=& \lim_{\Delta t\rightarrow 0} \frac{1}{\Delta t}\int_{I_j^+(\Delta t)}U'( u(x))\cdot (S_{\Delta t} u(x) -  u(x))\ud x, \\
U'( u^-_{j+\frac{1}{2}})\cdot H^-_{j+\frac{1}{2}}=& \lim_{\Delta t\rightarrow 0} \frac{1}{\Delta t}\int_{I_j^-(\Delta t)}U'( u(x))\cdot (S_{\Delta t} u(x) -  u(x))\ud x. 
\end{align*}
From the definitions of $G^+_{j-\frac{1}{2}}$ and $G^-_{j+\frac{1}{2}}$ in \eqref{limitG1} and \eqref{limitG2}, we see that it suffices to prove 
\begin{align*}
\int_{I_j^+(\Delta t)}U'( u(x))\cdot (S_{\Delta t} u(x) -  u(x))\ud x - \int_{I_j^+(\Delta t)}U(S_{\Delta t} u(x)) - U( u(x))\ud x \le 0, \\
\int_{I_j^-(\Delta t)}U'( u(x))\cdot (S_{\Delta t} u(x) -  u(x))\ud x - \int_{I_j^-(\Delta t)}U(S_{\Delta t} u(x)) - U( u(x))\ud x \le 0, 
\end{align*}
since taking the limit does not change sign. By the convexity of $U(\cdot)$, the above two inequalities are trivial since
\begin{equation*}
U'( u(x))\cdot (S_{\Delta t} u(x) -  u(x)) - \left(U(S_{\Delta t} u(x)) - U( u(x))\right) \le 0, \quad \forall x \in I.
\end{equation*}
\end{proof}

Next, we estimate \eqref{a2} and \eqref{a3}.
\begin{lemma}\label{estimate2}
For any $u \in \mathbb V^k$, we have
\begin{align*}
&|\pi \circ [U'( u)- U'( u^+_{j-\frac{1}{2}})](x^+_{j-\frac{1}{2}})|\le \frac{\beta_k}{2h_j}\max_{x\in I_j}|U'''( u(x))| \int_{I_j} | u(x)-\bu_j|^2 \ud x,\\
&|\pi \circ [U'( u)- U'( u^-_{j+\frac{1}{2}})](x^-_{j+\frac{1}{2}})|\le \frac{\beta_k}{2h_j}\max_{x\in I_j}|U'''( u(x))| \int_{I_j} | u(x)-\bu_j|^2 \ud x,
\end{align*}
where $\beta_k\le (k+1)^4$. 
\end{lemma}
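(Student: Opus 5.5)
The plan is to Taylor expand $U'(u)$ about the cell average $\bu_j$ rather than about the endpoint value. The linear part of this expansion is a polynomial of degree $k$, so $\pi$ reproduces it exactly, and only a quadratic remainder is left. I give the argument at $x^+_{j-\frac{1}{2}}$; the other endpoint is symmetric.

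\textbf{Step 1 (reduction to a remainder).} Set $M:=\max_{x\in I_j}|U'''(u(x))|$ and
\[
w(x):=U'(u(x))-U'(\bu_j)-U''(\bu_j)\,(u(x)-\bu_j).
\]
Since $\pi$ preserves constants and elements of $\mathbb{P}^k(I_j)$, we have
\[
\pi[U'(u)]=U'(\bu_j)+U''(\bu_j)(u-\bu_j)+\pi w
\]
on $I_j$. Evaluating at $x^+_{j-\frac{1}{2}}$ and subtracting the constant $U'(u^+_{j-\frac{1}{2}})$ gives the identity
\[
\pi\circ[U'(u)-U'(u^+_{j-\frac{1}{2}})](x^+_{j-\frac{1}{2}})=(\pi w)(x^+_{j-\frac{1}{2}})-w(x^+_{j-\frac{1}{2}}).
\]

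\textbf{Step 2 (pointwise bound on the remainder).} By the mean-value form of Taylor's theorem, $|w(x)|\le \frac{M}{2}|u(x)-\bu_j|^2$. Here the intermediate point lies between $u(x)$ and $\bu_j$. Since $u$ is continuous on $I_j$, its average $\bu_j$ lies in $[\min_{I_j}u,\max_{I_j}u]$, so the intermediate point lies in the range of $u$ on $I_j$. This is why $M$ is taken over $u(I_j)$ only.

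\textbf{Step 3 (bounding $\pi w$ at the endpoint).} Expand in the normalized Legendre basis. The $i$th polynomial satisfies $\max_{I_j}|P_{i,j}|=\sqrt{(2i+1)/h_j}$, which is attained at the endpoints. Hence
\[
|(\pi w)(x^+_{j-\frac{1}{2}})|\le\sum_{i=0}^k |P_{i,j}(x^+_{j-\frac{1}{2}})|\int_{I_j}|P_{i,j}||w|\ud x\le \sum_{i=0}^k\frac{2i+1}{h_j}\int_{I_j}|w|\ud x=\frac{(k+1)^2}{h_j}\int_{I_j}|w|\ud x.
\]
Combined with Step 2, this is at most $\frac{(k+1)^2 M}{2h_j}\int_{I_j}|u-\bu_j|^2\ud x$.

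\textbf{Step 4 (the point value $w(x^+_{j-\frac{1}{2}})$).} This is the step I expect to be the main obstacle, because a point value must be controlled by an $L^2$ integral. Since $p:=u-\bu_j\in\mathbb{P}^k(I_j)$, I would use the sharp inverse inequality
\[
\max_{I_j}|p|^2\le \frac{(k+1)^2}{h_j}\|p\|^2_{L^2(I_j)}.
\]
This follows from expanding $p$ in $P_{i,j}$ and applying Cauchy–Schwarz with $\sum_{i=0}^k (2i+1)/h_j=(k+1)^2/h_j$. Together with Step 2 it gives
\[
|w(x^+_{j-\frac{1}{2}})|\le \frac{(k+1)^2M}{2h_j}\int_{I_j}|u-\bu_j|^2\ud x.
\]

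\textbf{Conclusion.} Adding Steps 3 and 4 yields the claimed bound with $\beta_k=2(k+1)^2$. Since $k\ge1$, we have $2(k+1)^2\le(k+1)^4$. The estimate at $x^-_{j+\frac{1}{2}}$ is identical, using the same Legendre endpoint values at the right end of $I_j$.
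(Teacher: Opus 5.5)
Your proof is correct, and it takes a different route from the paper's.

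The paper Taylor-expands $U'(u)$ about the endpoint value $u^+_{j-\frac12}$ rather than about the mean. The linear term $U''(u^+_{j-\frac12})(u-u^+_{j-\frac12})$ lies in $\mathbb{P}^k(I_j)$ and vanishes at $x^+_{j-\frac12}$. So no pointwise term appears, and the whole quantity is the projection of a single quadratic remainder evaluated at the endpoint. Bounding the Legendre kernel by its sup norm gives the factor $(k+1)^2/h_j$ times $\frac{M}{2}\int_{I_j}|u-u^+_{j-\frac12}|^2\,\mathrm{d}x$. This then has to be converted to $\int_{I_j}|u-\overline{u}_j|^2\,\mathrm{d}x$ via the norm equivalence $\|p-p^+_{j-\frac12}\|_{L^2(I_j)}\le(k+1)\|p\|_{L^2(I_j)}$ for zero-mean $p$. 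That costs a second, multiplicative factor $(k+1)^2$, which is where $(k+1)^4$ comes from.

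Expanding about $\overline{u}_j$ instead, you control the remainder directly by $|u-\overline{u}_j|^2$. The price is the extra point value $w(x^+_{j-\frac12})$, which you handle with the inverse inequality. The two $(k+1)^2$ losses therefore add instead of multiply, giving $\beta_k=2(k+1)^2$. This is much sharper than $(k+1)^4$. For $k\ge4$ it even beats the refined constant $2\mathcal{A}_k$ of Proposition~\ref{prop:improved beta} and Table~\ref{tab:improved beta}: $50$ vs.\ $52.79$ at $k=4$, and $128$ vs.\ $194.54$ at $k=7$. For $k\le3$ the kernel-based $2\mathcal{A}_k$ is still smaller.

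What the paper's endpoint expansion buys is a single kernel-weighted term. That is what makes the sharp generalized-eigenvalue computation of $\mathcal{A}_k$ possible. Your route could be tightened in a similar spirit. In Step 4, $p=u-\overline{u}_j$ has zero mean, so $\max_{I_j}|p|^2\le\frac{k(k+2)}{h_j}\|p\|^2_{L^2(I_j)}$. In Step 3, the exact kernel $|K_k^-|$ could be used in place of its sup.

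Your justification that the Taylor intermediate point lies in $u(I_j)$ is exactly what is needed, since $\overline{u}_j$ lies in the range of the continuous function $u$ on $I_j$. It carries over to systems via the convex hull, as in Lemma~\ref{estimate4}.
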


\begin{proof}
Since $U''( u^+_{j-\frac{1}{2}})( u(x)- u^+_{j-\frac{1}{2}})$ is a $k$-th order polynomial on $I_j$ and vanishes at $x=x^+_{j-\frac{1}{2}}$,
\begin{align*}
&|\pi \circ [U'( u)- U'( u^+_{j-\frac{1}{2}})](x^+_{j-\frac{1}{2}})|\\
=& |\pi \circ [U'( u)- U'( u^+_{j-\frac{1}{2}})- U''( u^+_{j-\frac{1}{2}})( u- u^+_{j-\frac{1}{2}})](x^+_{j-\frac{1}{2}})|\\
=& \bigg|\sum_{i=0}^k P_{i,j}(x^+_{j-\frac{1}{2}}) \int_{I_j} P_{i,j}(x)[U'( u(x))- U'( u^+_{j-\frac{1}{2}})- U''( u^+_{j-\frac{1}{2}})( u(x)- u^+_{j-\frac{1}{2}})]\ud x\bigg|\\
\le& \sum_{i=0}^k \|P_{i,j}(\cdot)\|_{L^\infty(I_j)}^2 \int_{I_j} |U'( u(x))- U'( u^+_{j-\frac{1}{2}})- U''( u^+_{j-\frac{1}{2}})( u(x)- u^+_{j-\frac{1}{2}})|\ud x \\
\le& \sum_{i=0}^k \frac{2i+1}{h_j} \frac{\max_{x\in I_j}|U'''( u(x))|}{2} \int_{I_j} | u(x)- u^+_{j-\frac{1}{2}}|^2 \ud x \\
=& \frac{(k+1)^2}{2h_j} \max_{x\in I_j}|U'''( u(x))| \int_{I_j} | u(x)- u^+_{j-\frac{1}{2}}|^2 \ud x\\
\le& \frac{(k+1)^4}{2h_j} \max_{x\in I_j}|U'''( u(x))| \int_{I_j} | u(x)-\bu_j|^2 \ud x,
\end{align*}
where in the last step, the following norm equivalence on the space of all $k$-th order polynomials with zero average on $I_j$ has been used:
\begin{equation*}
    \| p-p^+_{j-\frac{1}{2}} \|_{L^2(I_j)}\le (k+1) \| p \|_{L^2(I_j)}, \quad \forall p\in \mathbb{P}^k(I_j)\text{ with }\overline{p}_j=0.
\end{equation*}
The estimate for $|\pi \circ [U'( u)- U'( u^-_{j+\frac{1}{2}})](x^-_{j+\frac{1}{2}})|$ is proved in the same way.
\end{proof}

\begin{remark}
    A substantially improved estimate on $\beta_k$ for small $k$ can be found in Appendix \ref{section:estimate beta_k}. In Proposition \ref{prop:improved beta} we show that $\beta_k \le 2 \mathcal{A}_k$, where the first few values of $\mathcal{A}_k$ are tabulated in Table \ref{tab:improved beta} therein.
\end{remark}

Finally, we estimate \eqref{a1}.
\begin{lemma}\label{estimate3}
For any $\vu\in \mathbb V^k$, we have
\begin{align*}
    \bigg|\int_{I_j} U'( u)\cdot &( f( u)_x - \pi \circ [ f( u)_x])\ud x\bigg|\\
    &\le \frac{ C_k}{2\sqrt{h_j}} \max_{x\in I_j}|U'''( u(x))| \| f( u)_x - \pi \circ [ f( u)_x]\|_{L^2(I_j)}\int_{I_j} | u(x)-\bu_j|^2 \ud x 
\end{align*}
where $C_k>0$ is the optimal constant such that
\begin{equation}\label{C_k}
    \|p\|_{L^4(I_j)}\le \frac{\sqrt{C_k}}{h_j^{1/4}} \|p\|_{L^2(I_j)},\quad \forall p\in \mathbb{P}^k(I_j).
\end{equation}
\end{lemma}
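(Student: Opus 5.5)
The plan is to exploit the orthogonality of $f(u)_x-\pi\circ[f(u)_x]$ to $\mathbb{P}^k(I_j)$, as in Lemma \ref{estimate2}. Since $g:=f(u)_x-\pi\circ[f(u)_x]$ is orthogonal to every polynomial of degree $\le k$ on $I_j$, we may subtract from $U'(u)$ any polynomial in $\mathbb{P}^k(I_j)$ without changing the integral. The natural choice is the first-order Taylor expansion around the cell average,
\[
T(x):=U'(\bu_j)+U''(\bu_j)\,(u(x)-\bu_j),
\]
which lies in $\mathbb{P}^k(I_j)$ because $u\in\mathbb{P}^k(I_j)$. This gives
\[
\int_{I_j} U'(u)\, g \ud x=\int_{I_j}\bigl(U'(u)-U'(\bu_j)-U''(\bu_j)(u-\bu_j)\bigr)\, g\ud x .
\]

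Next we apply Taylor's theorem with Lagrange remainder pointwise. By the mean value theorem the remainder is bounded by
\[
\tfrac12\max_{x\in I_j}|U'''(u(x))|\,|u(x)-\bu_j|^2 .
\]
The intermediate points lie between $u(x)$ and $\bu_j$. Since $u$ is continuous on $I_j$ and $\bu_j$ is attained by $u$ somewhere in $I_j$, these points lie in the range $u(I_j)$, so the maximum over $x\in I_j$ of $|U'''(u(x))|$ is the correct bound.

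Then Cauchy--Schwarz gives
\[
\Bigl|\int_{I_j} U'(u)\, g\Bigr|\le \tfrac12\max|U'''(u)|\,\|(u-\bu_j)^2\|_{L^2(I_j)}\,\|g\|_{L^2(I_j)} .
\]
Here $\|(u-\bu_j)^2\|_{L^2(I_j)}=\|u-\bu_j\|_{L^4(I_j)}^2$. Applying the inverse inequality \eqref{C_k} to the polynomial $p=u-\bu_j\in\mathbb{P}^k(I_j)$ yields
\[
\|u-\bu_j\|_{L^4(I_j)}^2\le \frac{C_k}{h_j^{1/2}}\,\|u-\bu_j\|_{L^2(I_j)}^2 .
\]
This produces exactly the stated factor $\frac{C_k}{2\sqrt{h_j}}$.

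The only step needing care is the range argument for the Lagrange intermediate point. A secondary point is that $C_k$, although defined on $I_j$, is scale-invariant after the factor $h_j^{-1/4}$, which is what the statement presumes. No other step should pose an obstacle.
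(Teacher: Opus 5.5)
Your proposal is correct and matches the paper's proof step for step. You subtract the first-order Taylor polynomial of $U'$ about $\bu_j$, which lies in $\mathbb{P}^k(I_j)$ and is orthogonal to $f(u)_x-\pi\circ[f(u)_x]$; then you bound the remainder by $\tfrac12\max_{x\in I_j}|U'''(u(x))|\,|u-\bu_j|^2$, apply Cauchy--Schwarz, and finish with the $L^4$--$L^2$ inequality \eqref{C_k}. Your check that the Lagrange intermediate points stay in the interval $u(I_j)$ (since the continuous $u$ attains $\bu_j$) is right, and the paper leaves that detail implicit.
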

\begin{proof}
Note that $U'(\bu_j)+ U''(\bu_j)( u(x)-\bu_j)$ is a $k$-th order polynomial on $I_j$, which is orthogonal to $ f( u)_x - \pi \circ [ f( u)_x]$. Therefore,
\begin{align*}
    &\bigg|\int_{I_j} U'( u)\cdot ( f( u)_x - \pi \circ [ f( u)_x])\ud x\bigg|\\
    =& \bigg|\int_{I_j} (U'( u)- U'(\bu_j)- U''(\bu_j)( u-\bu_j))\cdot ( f( u)_x - \pi \circ [ f( u)_x])\ud x\bigg| \\
    \le& \frac{\max_{x\in I_j}| U'''( u(x))|}{2}\int_{I_j}  | u-\bu_j|^2 | f( u)_x - \pi \circ [ f( u)_x]|\ud x \\
    \le&  \frac{\max_{x\in I_j}| U'''( u(x))|}{2} \| f( u)_x - \pi \circ [ f( u)_x]\|_{L^2(I_j)} \sqrt{\int_{I_j} | u-\bu_j|^4 \ud x}  \\
    \stackrel{\eqref{C_k}}{\le}& \frac{C_k}{2\sqrt{h_j}} \max_{x\in I_j}| U'''( u(x))| \| f( u)_x - \pi \circ [ f( u)_x]\|_{L^2(I_j)}\int_{I_j} | u(x)-\bu_j|^2 \ud x.
\end{align*}
\end{proof}

\begin{remark}
The first few values of $C_k$ in \eqref{C_k} are
\FloatBarrier
\begin{table}[htbp]
\centering
\begin{tabular}{ c|c|c|c|c|c|c|c|c } 
 $k$ & 0 & 1 & 2 & 3 & 4 & 5 & 6 & 7 \\ 
 \hline
 $C_k$ & 1 & 1.5 & 2.156 & 2.834 & 3.520 & 4.209 & 4.901 & 5.594
\end{tabular}
\caption{Optimal constants $C_k$.}
\label{tab:Ck}
\end{table}
For a general $k\ge 0$ we have the convenient but non-sharp bound
\begin{equation*}
    C_k \le k+1,
\end{equation*}
since
\begin{equation*}
    \|p\|_{L^4(I_j)}^2 \le \|p\|_{L^\infty(I_j)} \|p\|_{L^2(I_j)} \quad \text{and} \quad \|p\|_{L^\infty(I_j)} \le \frac{k+1}{\sqrt{h_j}}\|p\|_{L^2(I_j)}.
\end{equation*}
\end{remark}

Combining Lemma \ref{estimate1}, \ref{estimate2} and \ref{estimate3}, we obtain
\begin{align}
&\lim_{\Delta t\rightarrow 0} \frac{\int_{I_j}U(\pi S_{\Delta t} u(x))\ud x-\int_{I_j}U(S_{\Delta t} u(x))\ud x}{\Delta t} \nonumber\\
\le& \frac{ C_k}{2\sqrt{h_j}} \max_{x\in I_j}| U'''( u(x))| \| f( u)_x - \pi \circ [ f( u)_x]\|_{L^2(I_j)}\int_{I_j} | u(x)-\bu_j|^2 \ud x \nonumber\\
&+\frac{\beta_k}{2h_j}\max_{x\in I_j}| U'''( u(x))| \left(|H^+_{j-\frac{1}{2}}|+|H^-_{j+\frac{1}{2}}|\right) \int_{I_j} | u(x)-\bu_j|^2 \ud x. \label{nominator estimate}
\end{align}

By Hölder’s Defect Formula \cite{Steele_2004} (also known as \emph{the Jensen Gap}), we also have
\begin{align}
    &\lim_{\Delta t\rightarrow 0} \int_{I_j}U(\pi S_{\Delta t} u)\ud x -h_j U\left(\frac{1}{h_j}\int_{I_j}S_{\Delta t} u \ud x\right) \nonumber\\
    =&\int_{I_j}U( u)\ud x -h_j U(\bu_j)\ge \frac{1}{2}\min_{x\in I_j}| U''( u(x))| \int_{I_j} | u(x)-\bu_j|^2, \label{Jensen}
\end{align}
It follows from \eqref{nominator estimate} and \eqref{Jensen} that
\begin{align}
    A_j[u;U]=&\max\left\{ \lim_{\Delta t\rightarrow 0} \frac{1}{\Delta t} \frac{\int_{I_j}U(\pi S_{\Delta t} u) \ud x-\int_{I_j}U(S_{\Delta t} u) \ud x}{ \int_{I_j}U(\pi S_{\Delta t} u)\ud x -h_j U\left(\frac{1}{h_j}\int_{I_j}S_{\Delta t} u \ud x\right)},0\right\}\nonumber\\
    \le& \frac{\max_{x\in I_j}| U'''( u(x))|}{\min_{x\in I_j}|U''( u(x))|} \bigg\{ \frac{C_k}{\sqrt{h_j}} \| f( u)_x - \pi \circ [ f( u)_x]\|_{L^2(I_j)} +\frac{\beta_k}{h_j}\left(|H^+_{j-\frac{1}{2}}|+|H^-_{j+\frac{1}{2}}|\right) \bigg\}.\label{bound A_j}
\end{align}
In the above, we implicitly used the following inequality:
\begin{equation*}
    \max\{a,0\}\le\max\{b,0\},\quad\forall a<b.
\end{equation*}
As the next theorem shows, we only need half of the bound in \eqref{bound A_j} to satisfy the entropy condition for $U(\cdot)$. Therefore, we define
\begin{equation}\label{def B_j}
    B_j[u;U]:= \frac{\max_{x\in I_j}| U'''( u(x))|}{\min_{x\in I_j}|U''( u(x))|} \bigg\{ \frac{C_k}{2 \sqrt{h_j}} \| f( u)_x - \pi \circ [ f( u)_x]\|_{L^2(I_j)}+\frac{\beta_k}{2 h_j}\left(|H^+_{j-\frac{1}{2}}|+|H^-_{j+\frac{1}{2}}|\right) \bigg\}.
\end{equation}
Now, replacing $A_j[u;U]$ in \eqref{scheme A} with the $B_j[u;U]$ defined above yields the following new scheme: seek $u\in\mathbb{V}^k$ such that for all $v\in\mathbb{V}^k$ we have
\begin{align}
\int_{I_j}  u_t  v \ud x = -\int_{I_j}  f(u)_x  v \ud x + H^+_{j-\frac{1}{2}}\cdot  v^+_{j-\frac{1}{2}} + H^-_{j+\frac{1}{2}}\cdot  v^-_{j+\frac{1}{2}} - B_j[u;U]\int_{I_j}  ( u - \bu_j) v \ud x, \tag{$\textbf{Scheme B}$}\label{scheme B}
\end{align}
where $H^+_{j-\frac{1}{2}}$ and $H^-_{j+\frac{1}{2}}$ are defined as
\begin{equation}\label{def H}
    H^+_{j-\frac{1}{2}}:=\hf_{j-\frac{1}{2}}-f(u)^+_{j-\frac{1}{2}},\quad H^-_{j+\frac{1}{2}}:=-\hf_{j+\frac{1}{2}}+f(u)^-_{j+\frac{1}{2}},
\end{equation}
according to \eqref{limitH1} and \eqref{limitH2}.
After performing integration by parts on $\int_{I_j}  f(u)_x\cdot  v \ud x$, we get the equivalent weak formulation of \eqref{scheme B} as
\begin{align}
\int_{I_j}  u_t  v \ud x - \int_{I_j}  f(u)  v_x \ud x + \hf_{j+\frac{1}{2}}\cdot  v^-_{j+\frac{1}{2}} - \hf_{j-\frac{1}{2}}\cdot  v^+_{j-\frac{1}{2}}= - B_j[u;U]\int_{I_j}  ( u - \bu_j)  v \ud x, \label{weak scheme B}
\end{align}
which is really \eqref{DG1} plus a remainder term on the right-hand side. Note that the numerical flux $\hf$ computed using the expressions \eqref{limitH1}--\eqref{limitH2} is the Godunov flux. This follows from the fact that $S_{\Delta t}$ is the exact solution operator. Now, however, \eqref{scheme B} has nothing to do with $S_{\Delta t}$ if we just define $H^+_{j-\frac{1}{2}}$ and $H^-_{j+\frac{1}{2}}$ by \eqref{def H} instead of using \eqref{limitH1}--\eqref{limitH2}; thus, \eqref{scheme B} is still well-defined if any other numerical flux is used in the definitions \eqref{def H}. 
\begin{definition}\label{def E-flux}
    $\hf$ is an E-flux \cite{Osher} if it is consistent ($\hf(u,u)=f(u)$), locally Lipschitz continuous, and satisfies
    \begin{equation*}
        (\hf(u^-,u^+)-f(u))(u^+-u^-)\le 0
    \end{equation*}
    for all $u$ between $u^-$ and $u^+$.
\end{definition}

All monotone fluxes, such as the Godunov flux, local Lax-Friedrichs flux, or the more general class of HLL fluxes with suitable lower and upper wave-speed bounds (see \eqref{HLL}), are E-fluxes. The next theorem shows that \eqref{scheme B} satisfies local entropy inequality for $U(\cdot)$, provided that $\hf$ is any E-flux.
\begin{theorem}[Local Entropy Inequality for \eqref{scheme B}]\label{theorem: Local Entropy Inequality B}
Suppose that $\hf$ in \eqref{def H} is a consistent E-flux. Then \eqref{scheme B} satisfies the local semi-discrete entropy inequality
\begin{equation}\label{scheme B entropy inequality U}
\odv{}{t}\int_{I_j} U( u) \ud x + \hat\mF_{j+\frac{1}{2}} - \hat\mF_{j-\frac{1}{2}}\le 0,\quad \forall u\in\mathbb{V}^k,\quad \forall j=1,\cdots,N,
\end{equation}
where $\hat\mF_{j+\frac{1}{2}}=\hat\mF( u^-_{j+\frac{1}{2}}, u^+_{j+\frac{1}{2}})$ and
\begin{equation}\label{entropy flux}
\hat\mF( u^-, u^+):=U'(u^-)\hf(u^-,u^+)-\psi(u^-),
\end{equation}
with $\psi$ defined by \eqref{def psi}.
\end{theorem}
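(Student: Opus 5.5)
The plan is to test \eqref{scheme B} with $v=z:=\pi[U'(u)]\in\mathbb{V}^k$, exactly as in the proof of Proposition \ref{monotone}. Since $u_t\in\mathbb{P}^k(I_j)$, this gives $\int_{I_j}u_t z\ud x=\odv{}{t}\int_{I_j}U(u)\ud x$. The entropy production then equals
\[
-\int_{I_j} f(u)_x z\ud x+H^+_{j-\frac12}z^+_{j-\frac12}+H^-_{j+\frac12}z^-_{j+\frac12}-B_j[u;U]\int_{I_j}(u-\bu_j)z\ud x .
\]
I will split it into an exact part, two error parts and the damping part, and show that the damping absorbs the errors.

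\emph{Volume term.} I write $z=U'(u)+(z-U'(u))$. First, $-\int_{I_j}f(u)_xU'(u)\ud x=\mF(u^+_{j-\frac12})-\mF(u^-_{j+\frac12})$ by \eqref{def entropy flux}. Next, $z-U'(u)$ is orthogonal to $\pi[f(u)_x]$, and $f(u)_x-\pi[f(u)_x]$ is orthogonal to $z$. Hence
\[
-\int_{I_j}f(u)_x(z-U'(u))\ud x=\int_{I_j}U'(u)\,(f(u)_x-\pi[f(u)_x])\ud x,
\]
which is precisely \eqref{a1} and is bounded by Lemma \ref{estimate3}.

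\emph{Boundary terms.} I write $z^+_{j-\frac12}=U'(u^+_{j-\frac12})+\pi[U'(u)-U'(u^+_{j-\frac12})](x^+_{j-\frac12})$, and similarly at $x_{j+\frac12}$. The correction pieces are bounded by Lemma \ref{estimate2}, multiplied by $|H^\pm|$.

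\emph{Damping term.} As in Proposition \ref{monotone}, $\int_{I_j}(u-\bu_j)z\ud x=\int_{I_j}(u-\bu_j)(U'(u)-U'(\bu_j))\ud x$. The mean value theorem then gives
\[
\int_{I_j}(u-\bu_j)z\ud x\ge \min_{I_j}U''(u)\int_{I_j}|u-\bu_j|^2\ud x .
\]
Multiplying by $B_j[u;U]$ from \eqref{def B_j} yields exactly
\[
\max_{I_j}|U'''(u)|\Big\{\tfrac{C_k}{2\sqrt{h_j}}\|f(u)_x-\pi[f(u)_x]\|_{L^2(I_j)}+\tfrac{\beta_k}{2h_j}(|H^+_{j-\frac12}|+|H^-_{j+\frac12}|)\Big\}\int_{I_j}|u-\bu_j|^2\ud x .
\]
This is exactly the sum of the bounds from Lemmas \ref{estimate3} and \ref{estimate2}, including their factors $\tfrac12$, so the two error parts minus the damping is $\le 0$. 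This is why only half of \eqref{bound A_j} is needed.

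\emph{Remaining exact part.} Using $\psi=U'f-\mF$, the terms at the right interface combine to
\[
-\mF(u^-_{j+\frac12})+U'(u^-_{j+\frac12})(f(u^-_{j+\frac12})-\hf_{j+\frac12})=\psi(u^-_{j+\frac12})-U'(u^-_{j+\frac12})\hf_{j+\frac12}=-\hat\mF_{j+\frac12}.
\]
At the left interface they give $U'(u^+)\hf_{j-\frac12}-\psi(u^+)$, where $u^\pm=u^\pm_{j-\frac12}$. It therefore remains to show $U'(u^+)\hf_{j-\frac12}-\psi(u^+)\le\hat\mF_{j-\frac12}=U'(u^-)\hf_{j-\frac12}-\psi(u^-)$. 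Since $\psi'=U''f$, this is equivalent to
\[
\int_{u^-}^{u^+}U''(s)\,\big(\hf(u^-,u^+)-f(s)\big)\ud s\le 0 .
\]
This holds because $U''>0$ and, by Definition \ref{def E-flux}, $(\hf-f(s))$ has the sign opposite to $u^+-u^-$ for every $s$ between $u^-$ and $u^+$. This interface inequality is the only place the E-flux property enters, and it is the step I expect to need the most care with signs. Summing all parts gives \eqref{scheme B entropy inequality U}.
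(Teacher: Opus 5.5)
Your proposal is correct and follows essentially the same route as the paper. Both test with $z=\pi[U'(u)]$, split off the three projection errors bounded by Lemmas \ref{estimate3} and \ref{estimate2}, and absorb them into the damping term via the mean-value lower bound on $\int_{I_j}(u-\bu_j)z\,\mathrm{d}x$. Both then close with the E-flux sign of $\int_{u^-}^{u^+}U''(s)\bigl(\hat f(u^-,u^+)-f(s)\bigr)\,\mathrm{d}s$ at the left interface; the paper simply packages this last step through the auxiliary flux $\hat{\mathcal F}^R$ and the interface dissipation $\mathcal D_{j-\frac12}\ge 0$.
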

\begin{proof}
Set
\[
    z:=\pi[U'(u)]\in\mathbb V^k.
\]
Since $u_t\in\mathbb V^k$ and $\pi$ is the $L^2$-projection onto $\mathbb V^k$, we have
\begin{equation}\label{B proof time projection}
\int_{I_j}u_tz\,\ud x
=\int_{I_j}u_tU'(u)\,\ud x
=\odv{}{t}\int_{I_j}U(u)\,\ud x.
\end{equation}
Taking $v=z$ in \eqref{scheme B} therefore gives
\begin{align}
\odv{}{t}\int_{I_j}U(u)\,\ud x
={}&-\int_{I_j}f(u)_x z\,\ud x
+H^+_{j-\frac12}z^+_{j-\frac12}
+H^-_{j+\frac12}z^-_{j+\frac12} \nonumber\\
&-B_j[u;U]\int_{I_j}(u-\bu_j)z\,\ud x. \label{B proof DG identity}
\end{align}

We first rewrite the volume term. By the self-adjointness of the $L^2$-projection,
\[
\int_{I_j}f(u)_x z\,\ud x
=\int_{I_j}\pi[f(u)_x]U'(u)\,\ud x.
\]
Hence, using $\mF'(u)=U'(u)f'(u)$,
\begin{align}
-\int_{I_j}f(u)_x z\,\ud x
={}&-\int_{I_j}U'(u)f(u)_x\,\ud x
+\int_{I_j}U'(u)\bigl(f(u)_x-\pi[f(u)_x]\bigr)\,\ud x \nonumber\\
={}&\mF(u^+_{j-\frac12})-\mF(u^-_{j+\frac12})+R_{j,1}, \label{B proof volume}
\end{align}
where
\begin{equation}\label{B proof R1}
R_{j,1}:=
\int_{I_j}U'(u)\bigl(f(u)_x-\pi[f(u)_x]\bigr)\,\ud x.
\end{equation}
Also, since $u-\bu_j\in\mathbb V^k$ and has zero average,
\begin{align}
\int_{I_j}(u-\bu_j)z\,\ud x
&=\int_{I_j}(u-\bu_j)U'(u)\,\ud x \nonumber\\
&=\int_{I_j}(u-\bu_j)\bigl(U'(u)-U'(\bu_j)\bigr)\,\ud x
=:D_j. \label{B proof cell dissipation}
\end{align}
By the mean value theorem and strict convexity of $U$,
\begin{equation}\label{B proof Dj lower bound}
D_j\ge m_j\int_{I_j}|u-\bu_j|^2\,\ud x\ge0,
\end{equation}
where
\begin{equation*}
    m_j:= \min_{x \in I_j} |U''(u(x))|.
\end{equation*}

We next treat the interface terms. At $x_{j+\frac12}$, it follows from \eqref{def psi} and \eqref{entropy flux} that
\begin{align*}
\hat\mF_{j+\frac12}
&=U'(u^-_{j+\frac12})\hf_{j+\frac12}-\psi(u^-_{j+\frac12})\\
&=\mF(u^-_{j+\frac12})
+U'(u^-_{j+\frac12})\bigl(\hf_{j+\frac12}-f(u^-_{j+\frac12})\bigr)\\
&=\mF(u^-_{j+\frac12})-U'(u^-_{j+\frac12})H^-_{j+\frac12}.
\end{align*}
Consequently,
\begin{align}
&-\mF(u^-_{j+\frac12})
+H^-_{j+\frac12}z^-_{j+\frac12}
+\hat\mF_{j+\frac12} \nonumber\\
&\qquad
=H^-_{j+\frac12}
\pi\bigl[U'(u)-U'(u^-_{j+\frac12})\bigr](x^-_{j+\frac12})
=:R_{j,3}. \label{B proof right interface}
\end{align}

At $x_{j-\frac12}$, put
\[
    u^-:=u^-_{j-\frac12},\qquad
    u^+:=u^+_{j-\frac12},\qquad
    h:=\hf(u^-,u^+).
\]
For convenience define the same numerical entropy-flux expression evaluated with the right state by
\[
    \hat\mF^R(u^-,u^+):=U'(u^+)h-\psi(u^+).
\]
Since $H^+_{j-\frac12}=h-f(u^+)$,
\begin{align}
&\mF(u^+_{j-\frac12})
+H^+_{j-\frac12}z^+_{j-\frac12}
-\hat\mF^R(u^-,u^+) \nonumber\\
&\qquad
=H^+_{j-\frac12}
\pi\bigl[U'(u)-U'(u^+_{j-\frac12})\bigr](x^+_{j-\frac12})
=:R_{j,2}. \label{B proof left projection}
\end{align}
On the other hand,
\begin{align}
\hat\mF(u^-,u^+)-\hat\mF^R(u^-,u^+)
&=h\bigl(U'(u^-)-U'(u^+)\bigr)-\bigl(\psi(u^-)-\psi(u^+)\bigr)\nonumber\\
&=\int_{u^-}^{u^+}U''(s)\bigl(f(s)-h\bigr)\,\ud s
=:\mathcal D(u^-,u^+). \label{B proof interface entropy dissipation}
\end{align}
Due to the property of E-flux and $U''\ge0$, we have
\begin{equation}\label{B proof interface entropy nonnegative}
    \mathcal D(u^-,u^+)\ge0.
\end{equation}
Thus, by \eqref{B proof left projection},
\begin{align}
&\mF(u^+_{j-\frac12})
+H^+_{j-\frac12}z^+_{j-\frac12}
-\hat\mF_{j-\frac12}
=R_{j,2}-\mathcal D_{j-\frac12}, \label{B proof left interface}
\end{align}
where
\[
    \mathcal D_{j-\frac12}
    :=\mathcal D(u^-_{j-\frac12},u^+_{j-\frac12})\ge0.
\]

Adding $\hat\mF_{j+\frac12}-\hat\mF_{j-\frac12}$ to \eqref{B proof DG identity} and using \eqref{B proof volume}, \eqref{B proof right interface}, \eqref{B proof left interface}, and \eqref{B proof cell dissipation}, we obtain the exact identity
\begin{align}
&\odv{}{t}\int_{I_j}U(u)\,\ud x
+\hat\mF_{j+\frac12}-\hat\mF_{j-\frac12} \nonumber\\
&\qquad
=R_{j,1}+R_{j,2}+R_{j,3}
-\mathcal D_{j-\frac12}-B_j[u;U]D_j. \label{B proof master identity}
\end{align}

It remains to estimate the three projection errors. Lemma \ref{estimate3} gives
\begin{align}
|R_{j,1}|
\le{}&\frac{C_k}{2\sqrt{h_j}}
\max_{x\in I_j}|U'''(u(x))|
\|f(u)_x-\pi[f(u)_x]\|_{L^2(I_j)}
\int_{I_j}|u-\bu_j|^2\,\ud x, \label{B proof R1 estimate}
\end{align}
and Lemma \ref{estimate2} gives
\begin{align}
|R_{j,2}|+|R_{j,3}|
\le{}&\frac{\beta_k}{2h_j}
\max_{x\in I_j}|U'''(u(x))|
\bigl(|H^+_{j-\frac12}|+|H^-_{j+\frac12}|\bigr)
\int_{I_j}|u-\bu_j|^2\,\ud x. \label{B proof R23 estimate}
\end{align}
By the definition of $B_j[u;U]$, \eqref{B proof R1 estimate}--\eqref{B proof R23 estimate} imply
\begin{align}
R_{j,1}+R_{j,2}+R_{j,3}
&\le |R_{j,1}|+|R_{j,2}|+|R_{j,3}|\nonumber\\
&\le B_j[u;U]m_j
\int_{I_j}|u-\bu_j|^2\,\ud x\nonumber\\
&\le B_j[u;U]D_j, \label{B proof total error bound}
\end{align}
where the last inequality follows from \eqref{B proof Dj lower bound}. Substituting \eqref{B proof total error bound} into \eqref{B proof master identity} yields
\begin{align*}
\odv{}{t}\int_{I_j}U(u)\,\ud x
+\hat\mF_{j+\frac12}-\hat\mF_{j-\frac12}
&\le -\mathcal D_{j-\frac12} \le0,
\end{align*}
which proves \eqref{scheme B entropy inequality U}.
\end{proof}

\begin{remark}
    The above proof is very much parallel to the proof for the square entropy $U(u)=\frac{u^2}{2}$ given in \cite{J}, the only difference being that the test function $v$ is simply taken as $u=U'(u)$ therein. When $U(\cdot)$ is not the square entropy, however, we cannot take $v=U'(u)$ since $U'(u)$ is not a piecewise polynomial. Thus, the only way is by setting $v=\pi[U'(u)]$ and then estimating the projection error, which can be done easily thanks to Lemmas \ref{estimate2} and \ref{estimate3}.
\end{remark}

Note that in \eqref{scheme B}, $U$ only appears in the quotient
\begin{equation}\label{entropy quotient}
    q_j[u;U]:=\frac{\max_{x\in I_j}| U'''( u(x))|}{\min_{x\in I_j}|U''( u(x))|}
\end{equation}
in the expression of $B_j[u;U]$, and nowhere else! By monotonicity of remainder (Proposition \ref{monotone}), \eqref{scheme B} still preserves the entropy inequality with a larger quotient. 

\begin{corollary}\label{corollary: also L^2}
    \eqref{scheme B} also satisfies the local entropy inequality for the square-entropy $V(u)=\frac{u^2}{2}$, that is, inequality \eqref{scheme B entropy inequality U} with $(U,\mF)$ replaced by $V$ and the corresponding square-entropy flux. This is nothing other than the well-known result for \eqref{DG1} proved in \cite{J}. Thus, \eqref{scheme B} with any $U(u)\neq \frac{u^2}{2}$ satisfies at least two local entropy inequalities.
\end{corollary}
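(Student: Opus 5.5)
The plan is to repeat the argument of \cite{J} for the square entropy $V(u)=\frac{u^2}{2}$, with entropy flux $\mF_V'(u)=uf'(u)$ and $\psi_V(u)=uf(u)-\mF_V(u)$. The one new ingredient is the damping term, and its sign is favourable. Because $V'(u)=u$ already lies in $\mathbb V^k$, no projection is needed. We take $v=u$ in \eqref{scheme B}, which gives
\[
\frac12\odv{}{t}\int_{I_j}u^2\ud x=-\int_{I_j}f(u)_xu\ud x+H^+_{j-\frac12}u^+_{j-\frac12}+H^-_{j+\frac12}u^-_{j+\frac12}-B_j[u;U]\int_{I_j}(u-\bu_j)u\ud x .
\]

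The argument then runs through the terms of the master identity \eqref{B proof master identity} with $U$ replaced by $V$. For the volume term, $u f(u)_x=\mF_V(u)_x$ holds exactly, so $-\int_{I_j}f(u)_xu\ud x=\mF_V(u^+_{j-\frac12})-\mF_V(u^-_{j+\frac12})$ and $R_{j,1}=0$. The interface remainders are $R_{j,2}$ and $R_{j,3}$. They involve $\pi[V'(u)-V'(u^\pm)]$ evaluated at the endpoint, which equals $u-u^\pm$ evaluated at that same endpoint, so both vanish identically. This is precisely the case $V'''\equiv0$ of Lemma \ref{estimate2}. The interface dissipation is $\mathcal D_V(u^-,u^+)=\int_{u^-}^{u^+}(f(s)-\hf)\ud s$, which is nonnegative for any E-flux; the argument is the same as for \eqref{B proof interface entropy nonnegative} with $V''=1$. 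Finally, the damping term equals $-B_j[u;U]\int_{I_j}(u-\bu_j)^2\ud x$, since $u-\bu_j$ is orthogonal to $\bu_j$. It is nonpositive because $B_j[u;U]\ge0$: every factor in \eqref{def B_j} is nonnegative.

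Adding $\hat\mF_{V,j+\frac12}-\hat\mF_{V,j-\frac12}$ as in the proof of Theorem \ref{theorem: Local Entropy Inequality B} then gives
\[
\odv{}{t}\int_{I_j}V(u)\ud x+\hat\mF_{V,j+\frac12}-\hat\mF_{V,j-\frac12}=-\mathcal D_{V,j-\frac12}-B_j[u;U]\int_{I_j}|u-\bu_j|^2\ud x\le0,
\]
which is the claimed inequality. An alternative route gives the same result through Proposition \ref{monotone} with $V$ in place of $U$. For $V$ the exact remainder quantity is $A_j[u;V]=0$, because $\pi$ contracts $L^2$ and the numerator vanishes. Hence $B_j[u;U]\ge0=A_j[u;V]$ can only decrease the entropy production below that of \eqref{DG1}, to which the result of \cite{J} applies.

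I see no serious obstacle. The only step needing care is confirming that $B_j[u;U]\ge0$, together with the sign of $(u-\bu_j,u)_{I_j}$, so that the damping term really dissipates $V$ for an arbitrary prescribed $U$. The statement that at least two inequalities hold then needs only that $U\neq V$.
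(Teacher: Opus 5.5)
Your proof is correct and is essentially the paper's. The paper's one-line proof notes $B_j[u;U]\ge B_j[u;V]\equiv0$: the comparison scheme is then \eqref{DG1} with the same E-flux, which satisfies the square-entropy inequality by Theorem \ref{theorem: Local Entropy Inequality B} with $U=V$, and monotonicity of remainder finishes the argument. That is exactly your alternative route, and your direct computation with $v=u$ simply unpacks these two ingredients.

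There is one small imprecision in that alternative route. The numerator in $A_j[u;V]$ is nonpositive rather than zero, since the terms \eqref{a4}--\eqref{a5} need not vanish. This still gives $A_j[u;V]=0$ through the $\max\{\cdot,0\}$. Comparing with $B_j[u;V]\equiv0$, as the paper does, avoids $A_j$ altogether, along with its tie to the Godunov flux.
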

\begin{proof}
    This follows from $B_j[u;U]\ge B_j[u;V]\equiv 0$ and monotonicity of remainder.
\end{proof}
Moreover, if $U$ is uniformly convex, i.e. $\min_{w\in\mathbb{R}}U''(w)>0$, then we can replace \eqref{entropy quotient} in $B_j[u;U]$ by the larger quotient
\begin{equation}\label{entropy quotient weaker}
    \tilde q_j[u;U]:=\frac{\max_{x\in I_j}| U'''( u(x))|}{\min_{w\in\mathbb R}U''(w)} \ge q_j[u;U].
\end{equation}
Denote the new $B_j[u;U]$ with $\tilde q_j[u;U]$ by $\tilde B_j[u;U]$. Clearly, $\tilde B_j[u;U] \ge B_j[u;U]$, so Theorem \ref{theorem: Local Entropy Inequality B} also holds for $\tilde B_j[u;U]$ by monotonicity of remainder.
\begin{corollary}\label{corollary: infinite entropy}
    Suppose $\min_{w\in\mathbb{R}}U''(w)=a>0$ and we use $\tilde B_j[u;U]$ in \eqref{scheme B}. Then, for all uniformly convex function $V(\cdot)$ satisfying
    \begin{equation}\label{other entropy condition}
     a\le V''(w),\quad |V'''(w)| \le |U'''(w)|, \quad \forall w\in\mathbb{R},
    \end{equation}
    \eqref{scheme B} also satisfies the local entropy inequality for $V$.
\end{corollary}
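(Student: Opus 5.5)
The plan is to reduce the claim to Theorem \ref{theorem: Local Entropy Inequality B} applied to the entropy $V$, combined with the monotonicity of remainder (Proposition \ref{monotone}) applied with $V$ in place of $U$. By Theorem \ref{theorem: Local Entropy Inequality B}, written for the strictly convex $C^3$ entropy $V$, the scheme with damping coefficient $B_j[u;V]$ satisfies the local entropy inequality for $V$, with numerical entropy flux $\hat\mF^V(u^-,u^+)=V'(u^-)\hf(u^-,u^+)-\psi_V(u^-)$. Proposition \ref{monotone}, stated for $V$, says that replacing the coefficient in front of $\int_{I_j}(u-\bu_j)v\ud x$ by any larger one can only decrease $\odv{}{t}\int_{I_j}V(u)\ud x$. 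The inequality for $V$ therefore follows once we show the pointwise inequality
\[
B_j[u;V]\le \tilde B_j[u;U]\quad\text{for all } u\in\mathbb V^k \text{ and all } j.
\]
The interface fluxes are unchanged by this replacement, so the same $\hat\mF^V$ works.

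Comparing coefficients is straightforward. In the definition \eqref{def B_j}, the entropy enters only through the quotient \eqref{entropy quotient}. The bracket involving $C_k$, $\beta_k$, $\|f(u)_x-\pi[f(u)_x]\|_{L^2(I_j)}$ and $|H^\pm|$ depends only on $u$, $f$ and $\hf$, so it is the same nonnegative factor for $U$ and $V$. It therefore suffices to show
\[
q_j[u;V]=\frac{\max_{x\in I_j}|V'''(u(x))|}{\min_{x\in I_j}V''(u(x))}\le \frac{\max_{x\in I_j}|U'''(u(x))|}{\min_{w\in\mathbb R}U''(w)}=\tilde q_j[u;U].
\]
By \eqref{other entropy condition}, $|V'''(u(x))|\le |U'''(u(x))|$ pointwise, so the numerators compare correctly. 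Also $\min_{x\in I_j}V''(u(x))\ge a=\min_{w\in\mathbb R}U''(w)>0$, so the denominators compare in the right direction.

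The only care needed, and the closest thing to an obstacle, is checking that the proof of Proposition \ref{monotone} really holds verbatim for an entropy $V$ different from the one used to build the coefficient. It does. That proof only uses the test function $z=\pi[V'(u)]$ and the sign of $\int_{I_j}(u-\bu_j)(V'(u)-V'(\bu_j))\ud x\ge0$, which follows from the convexity of $V$. It never refers to which entropy generated the damping coefficient. We also need $V\in C^3$ with $V''>0$ so that Theorem \ref{theorem: Local Entropy Inequality B} applies; both are implicit in \eqref{other entropy condition}. Combining these steps gives the local entropy inequality for $V$ for the scheme with $\tilde B_j[u;U]$, for every admissible $V$. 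This yields infinitely many entropy inequalities.
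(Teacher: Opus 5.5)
Your proposal is correct and follows the paper's proof. The paper notes $\tilde B_j[u;U]\ge\tilde B_j[u;V]$ and invokes monotonicity of remainder, relying on the earlier remark that Theorem~\ref{theorem: Local Entropy Inequality B} persists when $B_j$ is enlarged to $\tilde B_j$; you compare directly with $B_j[u;V]$, which merges that intermediate step, and you make explicit the point the paper leaves implicit, namely that Proposition~\ref{monotone} does not depend on which entropy generated the damping coefficient.
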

\begin{proof}
    This follows from $\tilde B_j[u;U]\ge \tilde B_j[u;V]$ and monotonicity of remainder.
\end{proof}
The next corollary states that for a special class of $U$, \eqref{scheme B} with $\tilde B_j[u;U]$ can satisfy infinitely many local entropy inequalities.
\begin{corollary}\label{corollary: infinite entropy 2}
    Suppose we use $\tilde B_j[u;U]$ in \eqref{scheme B} and take $U(u)=\sum_{i=1}^\infty a_i u^{2i}$ to be analytic on $\mathbb{R}$, such that $a_1>0$, $a_i\ge 0$ for all $i>1$ and not all zero. Then, \eqref{scheme B} also satisfies local entropy inequalities for all $V$ defined by
    \begin{equation*}
     V(u)=\sum_{i=1}^\infty b_i u^{2i},\quad \text{with $b_1\ge a_1$ and $0 \le b_i\le a_i$ for all $i>1$.} 
    \end{equation*}
    Thus, with this $U$, \eqref{scheme B} satisfies infinitely many entropy inequalities.
\end{corollary}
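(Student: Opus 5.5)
The plan is to reduce the statement to Corollary \ref{corollary: infinite entropy}, i.e.\ to check that every admissible $V$ satisfies \eqref{other entropy condition} with $a=\min_{w\in\mathbb R}U''(w)$, and then invoke monotonicity of remainder (Proposition \ref{monotone}).

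\emph{Step 1: identify $a$.} Since $U(u)=\sum_i a_i u^{2i}$ with all $a_i\ge0$, we have
\[
U''(w)=2a_1+\sum_{i\ge2}2i(2i-1)a_i w^{2i-2},
\]
a sum of $2a_1$ and nonnegative even powers. Hence $\min_{w}U''(w)=U''(0)=2a_1>0$, so $U$ is uniformly convex and $\tilde B_j[u;U]$ is well-defined.

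\emph{Step 2: the lower bound on $V''$.} For $V(u)=\sum_i b_i u^{2i}$, the coefficients are dominated by those of $U$ for $i\ge2$, so $V$ is analytic on $\mathbb R$ and termwise differentiation is legitimate. The same computation gives $V''(w)\ge 2b_1\ge 2a_1=a$ for all $w$.

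\emph{Step 3: the bound on $|V'''|$.} Differentiating termwise,
\[
V'''(w)=\sum_{i\ge2}2i(2i-1)(2i-2)\,b_i\,w^{2i-3}.
\]
The $b_1$ term drops out, which is why $b_1$ may be arbitrarily large. Every summand has the same sign as $w$, since it is a nonnegative coefficient times an odd power. Therefore
\[
|V'''(w)|=\sum_{i\ge2}2i(2i-1)(2i-2)\,b_i\,|w|^{2i-3}\le\sum_{i\ge2}2i(2i-1)(2i-2)\,a_i\,|w|^{2i-3}=|U'''(w)|.
\]
This sign-coherence of the odd powers is the only slightly delicate point. Without it, one could not pass absolute values inside the sum as an equality, and the comparison would fail.

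\emph{Step 4: conclude.} Condition \eqref{other entropy condition} now holds, so Corollary \ref{corollary: infinite entropy} yields the local entropy inequality for each such $V$. Alternatively, one can argue directly: $\tilde q_j[u;V]\le\tilde q_j[u;U]$ gives $\tilde B_j[u;V]\le\tilde B_j[u;U]$, and monotonicity of remainder applies.

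For the infinitude claim, we need infinitely many distinct admissible $V$, taken modulo the trivial equivalence of positive multiples and affine additions. Because some $a_i>0$ with $i\ge2$, one can take $V_\lambda$ with coefficients $b_1=a_1$ and $b_i=\lambda a_i$ for $i\ge2$, for $\lambda\in[0,1]$. These entropies are pairwise non-proportional, so they give a continuum of genuinely different entropy inequalities.

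The main obstacle is modest: justifying termwise differentiation (which follows from domination by $U$'s series) and the sign argument in Step 3. Everything else is a direct application of earlier results.
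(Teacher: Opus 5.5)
Your proposal is correct and takes the same route as the paper, whose proof is a one-line reduction to Corollary \ref{corollary: infinite entropy} by checking \eqref{other entropy condition}. Your Steps 1--3 (showing $\min_w U''(w)=U''(0)=2a_1$, the bound $V''\ge 2b_1\ge 2a_1$, and using the common sign of the odd powers to get $|V'''|\le|U'''|$) supply the verification the paper leaves implicit, and your family $V_\lambda$ usefully makes the ``infinitely many'' claim precise, which indeed needs some $a_i>0$ with $i\ge 2$.
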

\begin{proof}
    $V$ satisfies \eqref{other entropy condition}, so the statement follows from Corollary \ref{corollary: infinite entropy}.
\end{proof}

The term $\| f( u)_x - \pi \circ [ f( u)_x]\|_{L^2(I_j)}$ in $B_j[u;U]$ (or $\tilde B_j[u;U]$) is easy to compute if $f(\cdot)$ is a polynomial. In particular, it vanishes when $f(\cdot)$ is linear. When $f(\cdot)$ is a general nonlinear function, however, this term might be computationally expensive. Since $f(u(x))$ is smooth on each compact interval $I_j$, by Bramble–Hilbert lemma and best approximation property of $\pi$,
\begin{equation}\label{bound on bad term}
    \| f( u)_x - \pi \circ [ f( u)_x]\|_{L^2(I_j)} \le C(k,r) h_j^{r-1} \|\partial_x^{r}f(u)\|_{L^2(I_j)},\quad k+1 \le r \le k+2,
\end{equation}
where $C(k,r)$ is a constant only depending on $k$ and $r$. The right-hand side is still zero if $f(\cdot)$ is linear, and when $f(\cdot)$ is nonlinear it can be computed with relative ease if the derivatives of $f(\cdot)$ have analytical formulae. In light of Proposition \ref{monotone}, we can replace $\| f( u)_x - \pi \circ [ f( u)_x]\|_{L^2(I_j)}$ in $B_j[u;U]$ by this upper bound and get the new constant
\begin{equation}\label{def C_j}
    C_j[u;U,r]:= q_j[u;U] \bigg\{ \frac{C_k C(k,r) h_j^{r-\frac{3}{2}}}{2} \|\partial_x^{r}f(u)\|_{L^2(I_j)} +\frac{\beta_k}{2h_j}\left(|H^+_{j-\frac{1}{2}}|+|H^-_{j+\frac{1}{2}}|\right) \bigg\}.
\end{equation}
Replacing $B_j[u;U]$ by $C_j[u;U,r]\ge B_j[u;U]$ in the weak formulation of \eqref{scheme B}, we get yet another scheme that satisfies the local entropy inequality of $U$: seek $u\in\mathbb{V}^k$ such that for all $v\in\mathbb{V}^k$ we have
\begin{align}
\int_{I_j}  u_t  v \ud x - \int_{I_j}  f(u)  v_x \ud x + \hf_{j+\frac{1}{2}}\cdot  v^-_{j+\frac{1}{2}} - \hf_{j-\frac{1}{2}}\cdot  v^+_{j-\frac{1}{2}}= - C_j[u;U,r]\int_{I_j}  ( u - \bu_j)  v \ud x. \tag{$\textbf{Scheme C}$}\label{scheme C}
\end{align}
Clearly, by monotonicity of remainder, Theorem \ref{theorem: Local Entropy Inequality B} and Corollaries \ref{corollary: also L^2}--\ref{corollary: infinite entropy 2} also hold for \eqref{scheme C}. In particular, \eqref{scheme C} admits infinitely 
many entropy inequalities if $q_j[u;U]$ is replaced by $\tilde q_j[u;U]$ with $U$ defined as in Corollary \ref{corollary: infinite entropy 2}. The following table lists the optimal $C(k,r)$ in \eqref{bound on bad term} with $1 \le k \le 4$ and $k+1 \le r \le k+2$.
\begin{table}[htbp]
\centering
\begin{tabular}{c|c}
\hline
$(k,r)$ & $C(k,r)$ \\
\hline
$(1,2)$ & $\frac{1}{2\pi} \approx 0.159154943$ \\
$(1,3)$ & $0.0446961624$ \\
$(2,3)$ & $0.0162145975$ \\
$(2,4)$ & $0.00403144180$ \\
$(3,4)$ & $0.00119363570$ \\
$(3,5)$ & $0.000267584017$ \\
$(4,5)$ & $0.0000684049992$ \\
$(4,6)$ & $0.0000140442003$ \\
\hline
\end{tabular}
\caption{Optimal constants $C(k,r)$.}
\label{tab:Ckr}
\end{table}

For practical reasons, if we do not care about the choice of $U$, we may let
\begin{equation*}
    U(u)=U_\epsilon(u)=\frac{u^2}{2}+\epsilon V(u),
\end{equation*}
where $\epsilon>0$ is any small constant and $V(u)$ is any convex function. Then, $q_j[u;U_\epsilon]\le \epsilon \max |V'''(u)|$. In numerical implementations we can really replace $\max |V'''(u)|$ with a uniform constant whenever the numerical solution is observed to be uniformly bounded; therefore, $q_j[u;U_\epsilon]$ can be made arbitrarily small depending on how small $\epsilon$ is. As for the term $|H^+_{j-\frac{1}{2}}|+|H^-_{j+\frac{1}{2}}|$, they can clearly be bounded by $|[\![u]\!]_{j-\frac{1}{2}}|+|[\![u]\!]_{j+\frac{1}{2}}|$ multiplied by the Local Lipschitz constant of $\hf$ depending on $\|u\|_{L^\infty(I)}$. Therefore, we may replace $C_j[u;U,r]$ by
\begin{equation}\label{def D_j}
    D_j[u;r,\epsilon_1,\epsilon_2] := \epsilon_1 h_j^{r-\frac{3}{2}} \|\partial_x^{r}f(u)\|_{L^2(I_j)} + \epsilon_2 \frac{|[\![u]\!]_{j-\frac{1}{2}}|+|[\![u]\!]_{j+\frac{1}{2}}|}{h_j},
\end{equation}
where $\epsilon_1,\epsilon_2 >0$ are arbitrarily chosen. Then, we can define our last scheme in this section: seek $u\in\mathbb{V}^k$ such that for all $v\in\mathbb{V}^k$ we have
\begin{align}
\int_{I_j}  u_t  v \ud x - \int_{I_j}  f(u)  v_x \ud x + \hf_{j+\frac{1}{2}}\cdot  v^-_{j+\frac{1}{2}} - \hf_{j-\frac{1}{2}}\cdot  v^+_{j-\frac{1}{2}}= - D_j[u;r,\epsilon_1,\epsilon_2]\int_{I_j}  ( u - \bu_j)  v \ud x. \tag{$\textbf{Scheme D}$}\label{scheme D}
\end{align}
This scheme is computationally efficient. Except for the first term in the definition of $D_j[u;r,\epsilon_1,\epsilon_2]$ involving $\|\partial_x^{r}f(u)\|_{L^2(I_j)}$, which is needed to bound the variation of $u$ on the interior of each cell, \eqref{scheme D} is in striking similarity with the OFDG scheme proposed in \cite{OFDG}. From Proposition \ref{monotone} and Corollary \ref{corollary: infinite entropy 2}, we deduce that \eqref{scheme D} still admits infinitely many entropy inequalities (although they might be very close to the square-entropy) if the numerical solution remains bounded. In addition, more entropies are included as we increase $\epsilon_1$ and $\epsilon_2$.

The optimal error estimates of \eqref{scheme B}, \eqref{scheme C} and \eqref{scheme D} for smooth solutions will be proved in Section \ref{section: error estimate}, and the proof of strong convergence for general discontinuous data will be given in Section \ref{section: strong convergence}. In the next two sections, we will provide generalizations of these schemes to one and higher dimensional systems of conservation laws.

\section{The Operator DG method for 1D systems of conservation laws}\label{section: scheme 1D system}
Consider an $m \times m$ system of conservation laws in one space dimension:
\begin{equation}\label{HCL 1D system}
\vu_t(x,t)+\vf(\vu(x,t))_x=\mathbf{0},\quad x\in I, \quad t\ge0.
\end{equation}
where $\vf:\Omega'\mapsto\mathbb{R}^m$ is a smooth function defined on some open set $\Omega' \subset \mathbb{R}^m$. In what follows, $\vu$ and $\vf$ are viewed as column vectors. Let $(U,\mF)$ be any fixed entropy pair for \eqref{HCL 1D system}. Assume $U\in C^3$ and strictly convex ($\nabla^2U >0$ on $\Omega'$). The entropy flux $\mF$ is defined by
\begin{equation}\label{def entropy flux 1D system}
    \nabla \mF( \vu)^T= \nabla U( \vu)^T \nabla \vf( \vu),
\end{equation}
where the gradients $\nabla \mF( \vu)$ and $\nabla U( \vu)$ are also viewed as column vectors, and $\nabla \vf$ denotes the Jacobian matrix of $\vf$. In addition, let 
\begin{equation}\label{def psi 1D system}
    \psi(\vu):=\nabla U( \vu)^T \vf( \vu) -\mF( \vu),
\end{equation}
so that $\nabla \psi(\vu)^T=\vf(\vu)^T\nabla^2 U(\vu).$ For clarity, we also adopt the notation $\mathbf{a}\cdot\mathbf{b}=\mathbf{a}^T \mathbf{b}$ for the dot product of two column vectors $\mathbf{a}$ and $\mathbf{b}$. Let $|\mathbf{a}|$ denote the Euclidean norm of the vector $\mathbf{a}$. Define $(\mathbb V^k)^m$ as the space of all vector-valued functions $\vecv:I\mapsto\mathbb{R}^m$ with each component belonging to $\mathbb V^k$.
As in Section \ref{section: scheme 1D scalar}, we have the following estimates analogous to Lemmas \ref{estimate2} and \ref{estimate3}.
\begin{lemma}\label{estimate4}
For any $\vu\in (\mathbb V^k)^m$, we have
\begin{align*}
&|\pi \circ [\nabla U(\vu)- \nabla U(\vu^+_{j-\frac{1}{2}})](x^+_{j-\frac{1}{2}})|\le \frac{m^{3/2}\beta_k}{2h_j}\max_{\vw\in \mathrm{co}(\vu(I_j))}|\partial^3 U(\vw)| \int_{I_j} |\vu(x)-\bvu_j|^2 \ud x,\\
&|\pi \circ [\nabla U(\vu)- \nabla U(\vu^-_{j+\frac{1}{2}})](x^-_{j+\frac{1}{2}})|\le \frac{m^{3/2}\beta_k}{2h_j}\max_{\vw\in \mathrm{co}(\vu(I_j))}|\partial^3 U(\vw)| \int_{I_j} |\vu(x)-\bvu_j|^2 \ud x,
\end{align*}
where $\beta_k$ may be chosen to be the same constant as in Lemma \ref{estimate2}; in particular,
\begin{equation*}
    \beta_k=2\mathcal{A}_k
\end{equation*}
is admissible, with $\mathcal{A}_k$ defined in \eqref{eq:def Ak beta}. Moreover,
\[\max_{\vw\in \mathrm{co}(\vu(I_j))}|\partial^3 U(\vw)|=\max_{\substack{ 1\le i_1,i_2,i_3 \le m \\ \vw\in \mathrm{co}(\vu(I_j)) }} \left| \frac{\partial^3 U(\vw)}{\partial u_{i_1} \partial u_{i_2} \partial u_{i_3}} \right|\]
is the maximum over all possible third order partial derivatives of $U$ in the convex hull of the states $\vu(I_j)=\{\vu(x):x\in I_j\}$.
\end{lemma}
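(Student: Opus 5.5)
We mirror the proof of Lemma \ref{estimate2} componentwise. Fix $\vu\in(\mathbb V^k)^m$ and write $\vu^+:=\vu^+_{j-\frac12}$. The vector-valued map $x\mapsto \nabla^2U(\vu^+)(\vu(x)-\vu^+)$ has every component in $\mathbb{P}^k(I_j)$ and vanishes at $x^+_{j-\frac12}$. Since $\pi$ reproduces polynomials, we may subtract it inside the projection at no cost. The quantity to bound is therefore the vector
\[
\pi\circ\bigl[\mathbf{r}\bigr](x^+_{j-\frac12}),\qquad \mathbf{r}(x):=\nabla U(\vu(x))-\nabla U(\vu^+)-\nabla^2U(\vu^+)(\vu(x)-\vu^+).
\]

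\textbf{Pointwise bound on the remainder.} For each component $i$, Taylor's theorem with integral remainder along the segment from $\vu^+$ to $\vu(x)$ gives
\[
r_i(x)=\int_0^1(1-s)\sum_{a,b}\partial_{i a b}U\bigl(\vu^++s(\vu(x)-\vu^+)\bigr)(u_a(x)-u^+_a)(u_b(x)-u^+_b)\ud s .
\]
The segment lies in $\mathrm{co}(\vu(I_j))$, because $\vu^+$ is a limit of values $\vu(x)$ and the hull of this compact curve is closed. Hence
\[
|r_i(x)|\le \tfrac12 M\Bigl(\sum_a|u_a-u_a^+|\Bigr)^2\le \tfrac m2 M|\vu(x)-\vu^+|^2,
\]
where $M$ is the maximum in the statement. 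Summing the squares over $i$ then gives $|\mathbf{r}(x)|\le \frac{m^{3/2}}{2}M|\vu(x)-\vu^+|^2$. This is exactly where the factor $m^{3/2}$ comes from: $\sqrt m$ from the Euclidean norm over $i$, and $m$ from Cauchy--Schwarz on the double sum.

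\textbf{Projection and norm equivalence.} The point-evaluated projection is the linear functional $\sum_{l}P_{l,j}(x^+_{j-\frac12})\int_{I_j}P_{l,j}\,\mathbf{r}\ud x$. Applying Minkowski's inequality for the vector integral and the bound on $P_{l,j}$ gives
\[
|\pi[\mathbf{r}](x^+_{j-\frac12})|\le \frac{(k+1)^2}{h_j}\int_{I_j}|\mathbf{r}|\ud x\le \frac{m^{3/2}(k+1)^2}{2h_j}M\int_{I_j}|\vu-\vu^+|^2\ud x .
\]
To finish, we apply the zero-average norm equivalence used in Lemma \ref{estimate2} to each component $u_a-\bar u_{a,j}$ and sum over $a$. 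This gives $\int_{I_j}|\vu-\vu^+|^2\le (k+1)^2\int_{I_j}|\vu-\bvu_j|^2$, which yields the constant $\beta_k\le(k+1)^4$.

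\textbf{Admissibility of $\beta_k=2\mathcal A_k$.} The sharper constant from Proposition \ref{prop:improved beta} should transfer the same way. That argument bounds the linear functional $g\mapsto\pi[g](x^+_{j-\frac12})$ applied to a function dominated by a quadratic in a zero-average polynomial. In the vector case, we apply it to the scalar majorant $\frac{m^{3/2}}{2}M|\vu-\vu^+|^2$, which is a sum of squares of the scalar polynomials $u_a-u_a^+$. We then use linearity in the sum over $a$, so the scalar constant carries over unchanged. This is the step I expect to need care, since it depends on whether the appendix bound uses only the majorant $|g|\le c\,p^2$ together with the positivity and kernel structure. If it instead exploits a specific sign structure of the scalar remainder, I would fall back on $(k+1)^4$ and check that the appendix's argument applies to each component of $\mathbf{r}$ separately.

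The right-endpoint estimate at $x^-_{j+\frac12}$ is identical.
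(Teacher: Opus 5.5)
Your proposal is correct and is essentially the paper's proof: the paper subtracts the same linear term $\nabla^2U(\vu^+_{j-\frac12})(\vu-\vu^+_{j-\frac12})$ and derives the same pointwise bound with factor $m^{3/2}/2$. It then skips your intermediate $(k+1)^4$ route, bounds the endpoint value directly by $\int_{-1}^1|K_k^-|\,|\mathbf{p}(\xi)-\mathbf{p}(-1)|^2\ud\xi$, and sums the scalar $\mathcal{A}_k$ inequality over components exactly as in your last paragraph. Your hedge there is unnecessary: \eqref{eq:def Ak beta} is a pure weighted Rayleigh quotient over scalar zero-average polynomials, and the argument uses only $|\int K_k^-g|\le\int|K_k^-||g|$ together with the quadratic majorant.
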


\begin{proof}
We prove the estimate at $x^+_{j-\frac12}$; the estimate at the other endpoint follows by symmetry. Introduce the reference coordinate $\xi\in[-1,1]$ by
\begin{equation*}
    x=x_j+\frac{h_j}{2}\xi,
\end{equation*}
and define the vector-valued polynomial
\begin{equation*}
    \mathbf{p}(\xi):=\vu\left(x_j+\frac{h_j}{2}\xi\right)-\bvu_j.
\end{equation*}
Then each component of $\mathbf{p}$ belongs to $\mathbb P^k([-1,1])$ and has zero average. In addition,
\begin{equation*}
    \vu(x)-\vu^+_{j-\frac12}=\mathbf{p}(\xi)-\mathbf{p}(-1).
\end{equation*}

Set
\begin{equation*}
    M_j:=\max_{\vw\in\mathrm{co}(\vu(I_j))}|\partial^3U(\vw)|.
\end{equation*}
For each component $r=1,\ldots,m$, Taylor's theorem with integral remainder gives
\begin{align*}
&\big[\nabla U(\vu)-\nabla U(\vu^+_{j-\frac12})
-\nabla^2U(\vu^+_{j-\frac12})(\vu-\vu^+_{j-\frac12})\big]_r\\
&\qquad=\int_0^1(1-s)\sum_{a,b=1}^m
\frac{\partial^3U}{\partial u_r\partial u_a\partial u_b}
\left(\vu^+_{j-\frac12}+s(\vu-\vu^+_{j-\frac12})\right)
(\vu_a-\vu^+_{a,j-\frac12})(\vu_b-\vu^+_{b,j-\frac12})\,\ud s.
\end{align*}
Since the line segment joining $\vu^+_{j-\frac12}$ and $\vu(x)$ is contained in $\mathrm{co}(\vu(I_j))$, it follows that
\begin{align*}
&\left|\nabla U(\vu)-\nabla U(\vu^+_{j-\frac12})
-\nabla^2U(\vu^+_{j-\frac12})(\vu-\vu^+_{j-\frac12})\right|\\
&\qquad\le \frac{m^{3/2}M_j}{2}|\vu-\vu^+_{j-\frac12}|^2.
\end{align*}
Indeed, for each component the absolute value is bounded by
\[\frac{M_j}{2}(\sum_{a=1}^m|\vu_a-\vu^+_{a,j-\frac12}|)^2\le
\frac{mM_j}{2}|\vu-\vu^+_{j-\frac12}|^2,\] and taking the Euclidean norm over the $m$ components yields the factor $m^{3/2}$.

Each component of
\begin{equation*}
    \nabla^2U(\vu^+_{j-\frac12})(\vu-\vu^+_{j-\frac12})
\end{equation*}
belongs to $\mathbb P^k(I_j)$ and vanishes at $x^+_{j-\frac12}$. Since $\pi$ acts componentwise, its projected value at that endpoint is therefore zero. Using the endpoint reproducing kernel $K_k^-$ from \eqref{eq:left projection kernel}, we obtain
\begin{align*}
&\left|\pi\circ[\nabla U(\vu)-\nabla U(\vu^+_{j-\frac12})](x^+_{j-\frac12})\right|\\
&\quad\le \int_{-1}^1|K_k^-(\xi)|
\left|\nabla U(\vu)-\nabla U(\vu^+_{j-\frac12})
-\nabla^2U(\vu^+_{j-\frac12})(\vu-\vu^+_{j-\frac12})\right|\ud\xi\\
&\quad\le \frac{m^{3/2}M_j}{2}
\int_{-1}^1|K_k^-(\xi)|\,|\mathbf{p}(\xi)-\mathbf{p}(-1)|^2\ud\xi.
\end{align*}

The scalar inequality defining $\mathcal{A}_k$ extends to vector-valued polynomials componentwise. This can be seen by writing $\mathbf{p}=(p_1,\ldots,p_m)^T$ and using \eqref{eq:def Ak beta},
\begin{align*}
\int_{-1}^1|K_k^-(\xi)|\,|\mathbf{p}(\xi)-\mathbf{p}(-1)|^2\ud\xi
&=\sum_{r=1}^m\int_{-1}^1|K_k^-(\xi)|\,|p_r(\xi)-p_r(-1)|^2\ud\xi\\
&\le \mathcal{A}_k\sum_{r=1}^m\int_{-1}^1|p_r(\xi)|^2\ud\xi\\
&=\mathcal{A}_k\int_{-1}^1|\mathbf{p}(\xi)|^2\ud\xi.
\end{align*}
Therefore,
\begin{align*}
&\left|\pi\circ[\nabla U(\vu)-\nabla U(\vu^+_{j-\frac12})](x^+_{j-\frac12})\right|\\
&\quad\le \frac{m^{3/2}M_j\mathcal{A}_k}{2}
\int_{-1}^1|\mathbf{p}(\xi)|^2\ud\xi\\
&\quad=\frac{m^{3/2}\mathcal{A}_k}{h_j}M_j
\int_{I_j}|\vu(x)-\bvu_j|^2\ud x\\
&\quad=\frac{m^{3/2}\beta_k}{2h_j}M_j
\int_{I_j}|\vu(x)-\bvu_j|^2\ud x,
\end{align*}
where in the last line we take $\beta_k=2\mathcal{A}_k$.

For the right endpoint, use $K_k^+(\xi)=K_k^-(-\xi)$ and the transformation $\mathbf{p}(\xi)\mapsto\mathbf{p}(-\xi)$. The zero-average condition and the vector $L^2$ norm are preserved, so exactly the same constant $\mathcal{A}_k$, and hence the same $\beta_k$, applies.
\end{proof}

\begin{lemma}\label{estimate5}
For any $\vu\in (\mathbb V^k)^m$, we have
\begin{align*}
    \bigg|\int_{I_j} & \nabla U(\vu)\cdot (\vf(\vu)_x - \pi \circ [\vf(\vu)_x])\ud x\bigg|\\
    &\le \frac{m^{3/2} C_k}{2\sqrt{h_j}} \max_{\vw\in \mathrm{co}(\vu(I_j))}|\partial^3 U(\vw)| \|\vf(\vu)_x - \pi \circ [\vf(\vu)_x]\|_{L^2(I_j)}\int_{I_j} |\vu(x)-\bvu_j|^2 \ud x 
\end{align*}
where $C_k>0$ is the optimal constant from \eqref{C_k}, and $\max_{\vw\in \mathrm{co}(\vu(I_j))}|\partial^3 U(\vw)|$ is the same maximum from Lemma \ref{estimate4}.
\end{lemma}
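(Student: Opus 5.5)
The plan is to mirror the proof of Lemma \ref{estimate3} componentwise, with the vector Taylor bound from the proof of Lemma \ref{estimate4} in place of the scalar one. First, the vector field
\[
\mathbf{q}(x):=\nabla U(\bvu_j)+\nabla^2U(\bvu_j)(\vu(x)-\bvu_j)
\]
has each component in $\mathbb P^k(I_j)$. Since $\pi$ acts componentwise, each component of $\vf(\vu)_x-\pi\circ[\vf(\vu)_x]$ is $L^2(I_j)$-orthogonal to $\mathbb P^k(I_j)$. Therefore $\int_{I_j}\mathbf{q}\cdot(\vf(\vu)_x-\pi\circ[\vf(\vu)_x])\ud x=0$, and we may replace $\nabla U(\vu)$ in the integrand by the Taylor remainder $\mathbf{r}(x):=\nabla U(\vu)-\nabla U(\bvu_j)-\nabla^2U(\bvu_j)(\vu-\bvu_j)$.

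Second, we bound $\mathbf{r}$ pointwise. Use Taylor's formula with integral remainder exactly as in the proof of Lemma \ref{estimate4}, now expanded about $\bvu_j$. The average $\bvu_j$ lies in $\mathrm{co}(\vu(I_j))$, being a limit of convex combinations of values of $\vu$ in this compact convex set. Hence the segment from $\bvu_j$ to $\vu(x)$ stays in $\mathrm{co}(\vu(I_j))$, and we get
\[
|\mathbf{r}(x)|\le \frac{m^{3/2}M_j}{2}|\vu(x)-\bvu_j|^2,\qquad M_j:=\max_{\vw\in\mathrm{co}(\vu(I_j))}|\partial^3U(\vw)|.
\]
The factor $m^{3/2}$ arises as before: each component is bounded by $\tfrac{M_j}{2}(\sum_a|u_a-\bar u_{a,j}|)^2\le \tfrac{mM_j}{2}|\vu-\bvu_j|^2$, and the Euclidean norm over $m$ components contributes a further $\sqrt m$.

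Third, apply Cauchy--Schwarz pointwise in $\mathbb R^m$ and then in $L^2(I_j)$. Writing $\mathbf{e}:=\vf(\vu)_x-\pi\circ[\vf(\vu)_x]$ (whose norm is the vector $L^2$ norm of the statement), this gives
\[
\Big|\int_{I_j}\mathbf{r}\cdot\mathbf{e}\ud x\Big|\le \frac{m^{3/2}M_j}{2}\,\|\mathbf{e}\|_{L^2(I_j)}\Big(\int_{I_j}|\vu-\bvu_j|^4\ud x\Big)^{1/2}.
\]

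The only genuinely new point, and the main step to check, is extending \eqref{C_k} to vector polynomials, namely
\[
\int_{I_j}|\mathbf{p}|^4\ud x\le \frac{C_k^2}{h_j}\Big(\int_{I_j}|\mathbf{p}|^2\ud x\Big)^2
\]
for $\mathbf{p}=\vu-\bvu_j$. To prove it, expand $|\mathbf{p}|^4=\sum_{r,s}p_r^2p_s^2$ and use Cauchy--Schwarz: $\int p_r^2p_s^2\le\|p_r\|_{L^4}^2\|p_s\|_{L^4}^2$. Each scalar $L^4$ norm is then bounded by \eqref{C_k}, $\|p_r\|_{L^4}^2\le C_k h_j^{-1/2}\|p_r\|_{L^2}^2$. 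This gives $\int|\mathbf{p}|^4\le C_k^2h_j^{-1}(\sum_r\|p_r\|_{L^2}^2)^2$, so the constant stays $C_k$ with no extra power of $m$. Taking the square root gives $(\int|\vu-\bvu_j|^4)^{1/2}\le C_kh_j^{-1/2}\int|\vu-\bvu_j|^2$, and combining with the previous display yields exactly the claimed bound.
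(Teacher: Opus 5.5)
Your proposal is correct and follows the paper's proof essentially step by step: orthogonality of $\nabla U(\bvu_j)+\nabla^2U(\bvu_j)(\vu-\bvu_j)$ against the projection error, the pointwise $\tfrac{m^{3/2}}{2}$ Taylor-remainder bound, Cauchy--Schwarz, and then \eqref{C_k}. Your componentwise derivation of the vector $L^4$--$L^2$ inequality (showing the constant stays $C_k$ with no extra power of $m$) and your check that $\bvu_j\in\mathrm{co}(\vu(I_j))$ fill in two steps the paper uses without comment.
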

\begin{proof}
Note that $\nabla U(\bvu_j)+\nabla^2 U(\bvu_j)(\vu(x)-\bvu_j)$ is a $k$-th order polynomial on $I_j$, which is orthogonal to $\vf(\vu)_x - \pi \circ [\vf(\vu)_x]$. Therefore,
\begin{align*}
    &\bigg|\int_{I_j} \nabla U(\vu)\cdot (\vf(\vu)_x - \pi \circ [\vf(\vu)_x])\ud x\bigg|\\
    =& \bigg|\int_{I_j} (\nabla U(\vu)- \nabla U(\bvu_j)-\nabla^2 U(\bvu_j)(\vu-\bvu_j))\cdot (\vf(\vu)_x - \pi \circ [\vf(\vu)_x])\ud x\bigg| \\
    \le& \frac{m^{3/2}}{2} \max_{\vw\in \mathrm{co}(\vu(I_j))}|\partial^3 U(\vw)| \int_{I_j} |\vu-\bvu_j|^2 |\vf(\vu)_x - \pi \circ [\vf(\vu)_x]|\ud x \\
    \le& \frac{m^{3/2}}{2} \max_{\vw\in \mathrm{co}(\vu(I_j))}|\partial^3 U(\vw)| \|\vf(\vu)_x - \pi \circ [\vf(\vu)_x]\|_{L^2(I_j)} \sqrt{\int_{I_j} |\vu-\bvu_j|^4 \ud x}  \\
    \stackrel{\eqref{C_k}}{\le}& \frac{m^{3/2} C_k}{2\sqrt{h_j}} \max_{\vw\in \mathrm{co}(\vu(I_j))}|\partial^3 U(\vw)| \|\vf(\vu)_x - \pi \circ [\vf(\vu)_x]\|_{L^2(I_j)}\int_{I_j} |\vu(x)-\bvu_j|^2 \ud x.
\end{align*}
\end{proof}
In addition, Hölder’s Defect Formula also holds in $\mathbb{R}^m$:
\begin{align}
    \int_{I_j}U(\vu)\ud x -h_j U(\bvu_j)\ge \frac{1}{2}\min_{\vw\in \mathrm{co}(\vu(I_j))}|\sigma_{\min}(\nabla^2 U(\vw))| \int_{I_j} |\vu(x)-\bvu_j|^2, \label{Jensen system}
\end{align}
where $\sigma_{\min} (\nabla^2 U)$ denotes the (positive) minimum eigenvalue of the Hessian matrix $\nabla^2 U$, and $\min_{\vw\in \mathrm{co}(\vu(I_j))}$ means taking the minimum over the convex hull of the states $\vu(I_j)=\{\vu(x):x\in I_j\}$. Therefore, define
\begin{align*}
    B'_j[\vu;U]:= \frac{m^{3/2}}{2}&\frac{\max_{\vw\in \mathrm{co}(\vu(I_j))}|\partial^3 U(\vw)|}{\min_{\vw\in \text{co}(\vu(I_j))}|\sigma_{\min}(\nabla^2 U(\vw))|} \times\\
    &\bigg\{ \frac{C_k}{\sqrt{h_j}} \|\vf(\vu)_x - \pi \circ [\vf(\vu)_x]\|_{L^2(I_j)} +\frac{ \beta_k}{h_j}\left(|\vH^+_{j-\frac{1}{2}}|+|\vH^-_{j+\frac{1}{2}}|\right) \bigg\},
\end{align*}
where 
\begin{equation}\label{def H system}
    \vH^+_{j-\frac{1}{2}}:=\hvf_{j-\frac{1}{2}}-\vf(\vu)^+_{j-\frac{1}{2}},\quad \vH^-_{j+\frac{1}{2}}:=-\hvf_{j+\frac{1}{2}}+\vf(\vu)^-_{j+\frac{1}{2}},
\end{equation}
and $\hvf$ is the numerical flux. Then, \eqref{scheme B} can be directly generalized from 1D scalar equation to 1D system as follows: seek $\vu\in(\mathbb{V}^k)^m$ such that for all $\vecv\in(\mathbb{V}^k)^m$ we have
\begin{align}
\int_{I_j}  \vu_t \cdot  \vecv \ud x = -\int_{I_j}  \vf(\vu)_x \cdot \vecv \ud x + \vH^+_{j-\frac{1}{2}}\cdot  \vecv^+_{j-\frac{1}{2}} + \vH^-_{j+\frac{1}{2}}\cdot \vecv^-_{j+\frac{1}{2}} - B'_j[\vu;U]\int_{I_j}  ( \vu - \bvu_j) \cdot \vecv \ud x. \tag{$\textbf{Scheme B'}$}\label{scheme B'}
\end{align}
After performing integration by parts on $\int_{I_j}  \vf(\vu)_x\cdot  \vecv \ud x$, we also get the equivalent weak formulation of \eqref{scheme B'} as
\begin{align}
\int_{I_j}  \vu_t \cdot \vecv \ud x - \int_{I_j}  \vf(\vu) \cdot \vecv_x \ud x + \hvf_{j+\frac{1}{2}}\cdot  \vecv^-_{j+\frac{1}{2}} - \hvf_{j-\frac{1}{2}}\cdot  \vecv^+_{j-\frac{1}{2}}= - B'_j[\vu;U]\int_{I_j}  ( \vu - \bvu_j) \cdot \vecv \ud x. \label{weak scheme B'}
\end{align}
Next, we borrow the definition of entropy stable numerical flux from \cite{CHEN2017427}:
\begin{definition}
A consistent two-point numerical flux $\hvf(\vu^-,\vu^+)$ is entropy stable for a given entropy function $U$ if
\begin{equation}\label{def entropy stable flux}
    \hvf(\vu^-,\vu^+)\cdot \left(\nabla U (\vu^+) - \nabla U (\vu^-) \right) - (\psi(\vu^+)-\psi(\vu^-))\le 0.
\end{equation}
\end{definition}
It is shown in \cite{CHEN2017427} that the class of HLL flux is entropy stable, which includes the local Lax–Friedrichs flux. The next theorem shows that \eqref{scheme B'} satisfies local entropy inequality for $U(\cdot)$, provided that $\hvf$ is any entropy stable flux.
\begin{theorem}[Local Entropy Inequality for \eqref{scheme B'}]\label{theorem: Local Entropy Inequality B'}
Suppose that $\hvf$ in \eqref{def H system} is an entropy stable flux. Then \eqref{scheme B'} satisfies the local semi-discrete entropy inequality
\begin{equation}\label{scheme B' entropy inequality U}
\odv{}{t}\int_{I_j} U( \vu) \ud x + \hat\mF_{j+\frac{1}{2}} - \hat\mF_{j-\frac{1}{2}}\le 0, \quad \forall \vu\in(\mathbb{V}^k)^m,\quad \forall j=1,\cdots,N,
\end{equation}
where $\hat\mF_{j+\frac{1}{2}}=\hat\mF( \vu^-_{j+\frac{1}{2}}, \vu^+_{j+\frac{1}{2}})$ and
\begin{equation}\label{entropy flux system}
\hat\mF( \vu^-, \vu^+):=\nabla U(\vu^-)\cdot \hvf(\vu^-,\vu^+)-\psi(\vu^-),
\end{equation}
with $\psi$ defined by \eqref{def psi 1D system}.
\end{theorem}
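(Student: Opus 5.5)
The plan is to repeat the proof of Theorem \ref{theorem: Local Entropy Inequality B} line by line in vector notation. Set $\vz:=\pi[\nabla U(\vu)]\in(\mathbb V^k)^m$, with $\pi$ acting componentwise. Because $\vu_t\in(\mathbb V^k)^m$, we have $\int_{I_j}\vu_t\cdot\vz\ud x=\int_{I_j}\vu_t\cdot\nabla U(\vu)\ud x=\odv{}{t}\int_{I_j}U(\vu)\ud x$. Taking $\vecv=\vz$ in \eqref{scheme B'} then gives an identity for the entropy production rate.

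\textbf{Volume term.} By self-adjointness of $\pi$ and \eqref{def entropy flux 1D system}, $\nabla U(\vu)\cdot\vf(\vu)_x=\mF(\vu)_x$. Hence
\[
-\int_{I_j}\vf(\vu)_x\cdot\vz\ud x=\mF(\vu^+_{j-\frac12})-\mF(\vu^-_{j+\frac12})+R_{j,1},
\]
where $R_{j,1}:=\int_{I_j}\nabla U(\vu)\cdot(\vf(\vu)_x-\pi[\vf(\vu)_x])\ud x$.

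\textbf{Damping term.} Since $\vu-\bvu_j$ has zero mean and lies in $(\mathbb V^k)^m$,
\[
\int_{I_j}(\vu-\bvu_j)\cdot\vz\ud x=\int_{I_j}(\vu-\bvu_j)\cdot(\nabla U(\vu)-\nabla U(\bvu_j))\ud x=:D_j.
\]
Write $\nabla U(\vu)-\nabla U(\bvu_j)=\int_0^1\nabla^2U(\bvu_j+s(\vu-\bvu_j))\ud s\,(\vu-\bvu_j)$. The segment from $\bvu_j$ to $\vu(x)$ lies in $\mathrm{co}(\vu(I_j))$, because $\bvu_j$, as an average of $\vu$, lies in that hull. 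This gives
\[
D_j\ge \min_{\vw\in\mathrm{co}(\vu(I_j))}\sigma_{\min}(\nabla^2U(\vw))\int_{I_j}|\vu-\bvu_j|^2\ud x.
\]

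\textbf{Right interface.} From \eqref{def psi 1D system} and \eqref{entropy flux system}, $\hat\mF_{j+\frac12}=\mF(\vu^-_{j+\frac12})-\nabla U(\vu^-_{j+\frac12})\cdot\vH^-_{j+\frac12}$. Therefore
\[
-\mF(\vu^-_{j+\frac12})+\vH^-_{j+\frac12}\cdot\vz^-_{j+\frac12}+\hat\mF_{j+\frac12}=\vH^-_{j+\frac12}\cdot\pi[\nabla U(\vu)-\nabla U(\vu^-_{j+\frac12})](x^-_{j+\frac12})=:R_{j,3}.
\]

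\textbf{Left interface.} Introduce $\hat\mF^R:=\nabla U(\vu^+)\cdot\hvf-\psi(\vu^+)$. The same computation gives the analogous $R_{j,2}$, and
\[
\hat\mF-\hat\mF^R=-\bigl[\hvf\cdot(\nabla U(\vu^+)-\nabla U(\vu^-))-(\psi(\vu^+)-\psi(\vu^-))\bigr]=:\mathcal D_{j-\frac12}\ge0
\]
by \eqref{def entropy stable flux}. This is exactly where the E-flux integral in the scalar proof is replaced by the entropy-stability definition. Collecting these pieces yields the master identity
\[
\odv{}{t}\int_{I_j}U(\vu)\ud x+\hat\mF_{j+\frac12}-\hat\mF_{j-\frac12}=R_{j,1}+R_{j,2}+R_{j,3}-\mathcal D_{j-\frac12}-B'_j[\vu;U]D_j.
\]

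\textbf{Error bounds.} Apply Cauchy--Schwarz in $\mathbb R^m$, $|\vH\cdot\mathbf a|\le|\vH||\mathbf a|$, together with Lemma \ref{estimate4} for $R_{j,2},R_{j,3}$ and Lemma \ref{estimate5} for $R_{j,1}$. This gives
\[
|R_{j,1}|+|R_{j,2}|+|R_{j,3}|\le B'_j[\vu;U]\,\min_{\mathrm{co}}\sigma_{\min}\int_{I_j}|\vu-\bvu_j|^2\le B'_j[\vu;U]D_j,
\]
and the claim follows. The main thing to check is the constant bookkeeping: in $B'_j$, the factor $\frac{m^{3/2}}{2}$ multiplies $C_k/\sqrt{h_j}$ and $\beta_k/h_j$. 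This matches the lemmas' constants $\frac{m^{3/2}C_k}{2\sqrt{h_j}}$ and $\frac{m^{3/2}\beta_k}{2h_j}$ exactly, with no slack lost in the lower bound on $D_j$.
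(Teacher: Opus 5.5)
Your proposal is correct and takes the same route as the paper. The paper's proof just says to repeat the proof of Theorem~\ref{theorem: Local Entropy Inequality B}, with two changes: \eqref{def entropy stable flux} replaces the E-flux property in giving the sign of the interface dissipation, and Lemmas~\ref{estimate4}--\ref{estimate5} bound $R_{j,1},R_{j,2},R_{j,3}$. You carry this out in full, and your constant bookkeeping against $B'_j[\vu;U]$ checks out, including the Cauchy--Schwarz step $|\vH\cdot\mathbf a|\le|\vH||\mathbf a|$ and the convex-hull lower bound on $D_j$.
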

\begin{proof}
    This is essentially the same as the proof of Theorem \ref{theorem: Local Entropy Inequality B}. The only differences are that now we use the definition \eqref{def entropy stable flux} of the entropy stable flux to show the positivity of \eqref{B proof interface entropy dissipation}, and use Lemmas \ref{estimate4}--\ref{estimate5} to bound $|R_{j,1}|$, $|R_{j,2}|$ and $|R_{j,3}|$.
\end{proof}

Of course, similar to the 1D scalar case, we may use Bramble-Hilbert lemma \eqref{bound on bad term} on the tricky term $\|\vf(\vu)_x - \pi \circ [\vf(\vu)_x]\|_{L^2(I_j)}$ in the definition of $B_j'[\vu;U]$ and derive the following simplified schemes: seek $\vu\in(\mathbb{V}^k)^m$ such that for all $\vecv\in(\mathbb{V}^k)^m$ we have
\begin{align}
\int_{I_j}  \vu_t \cdot \vecv \ud x - \int_{I_j}  \vf(\vu) \cdot \vecv_x \ud x +& \hvf_{j+\frac{1}{2}}\cdot  \vecv^-_{j+\frac{1}{2}} - \hvf_{j-\frac{1}{2}}\cdot  \vecv^+_{j-\frac{1}{2}} \nonumber\\
=& - C'_j[\vu;U,r]\int_{I_j}  ( \vu - \bvu_j) \cdot \vecv \ud x \tag{$\textbf{Scheme C'}$}\label{scheme C'}
\end{align}
or
\begin{align}
\int_{I_j}  \vu_t \cdot \vecv \ud x - \int_{I_j}  \vf(\vu) \cdot \vecv_x \ud x +& \hvf_{j+\frac{1}{2}}\cdot  \vecv^-_{j+\frac{1}{2}} - \hvf_{j-\frac{1}{2}}\cdot  \vecv^+_{j-\frac{1}{2}} \nonumber\\
=& - D'_j[\vu;r,\epsilon_1,\epsilon_2]\int_{I_j}  ( \vu - \bvu_j) \cdot \vecv \ud x, \tag{$\textbf{Scheme D'}$}\label{scheme D'}
\end{align}
where the damping coefficients are
\begin{align*}
    C'_j[\vu;U,r]:= \frac{m^{3/2}}{2}&\frac{\max_{\vw\in \mathrm{co}(\vu(I_j))}|\partial^3 U(\vw)|}{\min_{\vw\in \text{co}(\vu(I_j))}|\sigma_{\min}(\nabla^2 U(\vw))|} \times\\
    &\bigg\{ C_k C(k,r) h_j^{r-\frac32} \|\partial_x^{r}\vf(\vu)\|_{L^2(I_j)} +\frac{ \beta_k}{h_j}\left(|\vH^+_{j-\frac{1}{2}}|+|\vH^-_{j+\frac{1}{2}}|\right) \bigg\}
\end{align*}
and
\begin{align*}
    D'_j[\vu;r,\epsilon_1,\epsilon_2]:= \epsilon_1 h_j^{r-\frac32} \|\partial_x^{r}\vf(\vu)\|_{L^2(I_j)} +\epsilon_2 \frac{ |[\![\vu]\!]_{j-\frac{1}{2}}|+|[\![\vu]\!]_{j+\frac{1}{2}}| }{h_j}
\end{align*}
respectively with $k+1 \le r \le k+2$.

Because monotonicity of remainder (Proposition \ref{monotone}) still holds for systems of conservation laws, Theorem \ref{theorem: Local Entropy Inequality B'} also holds true for \eqref{scheme C'}, and also for \eqref{scheme D'} with sufficiently large $\epsilon_1,\epsilon_2$ if the numerical solution is uniformly bounded.

\section{The Operator DG method for higher dimensional systems of conservation laws}\label{section: scheme dD system}
Consider an $m \times m$ system of conservation laws in $d$ space dimensions:
\begin{equation}\label{HCL dD system}
\pdv{\vu(\vx,t)}{t}+\sum_{i=1}^d \pdv{\vf^{(i)}(\vu(\vx,t))}{x^{(i)}}=\mathbf{0},\quad \vx=(x^{(1)},\cdots,x^{(d)})\in \Omega\subset \mathbb{R}^d, \quad t\ge0.
\end{equation}
where $\vf^{(i)}:\Omega'\mapsto\mathbb{R}^m$ for $i=1,\cdots,d$ are smooth functions defined on some open set $\Omega' \subset \mathbb{R}^m$. In what follows, $\vu$ and $\vf^{(i)}$'s are viewed as column vectors. Let $U$ be any fixed entropy for \eqref{HCL dD system} with entropy fluxes $\{\mF^{(i)}\}_{i=1}^d$ defined by
\begin{equation}\label{def entropy flux dD system}
    \nabla \mF^{(i)}( \vu)^T= \nabla U( \vu)^T \nabla \vf^{(i)}( \vu),\quad i=1\cdots,d.
\end{equation}
Assume $U\in C^3$ and strictly convex. In addition, let 
\begin{equation}\label{def psi dD system}
    \psi^{(i)}(\vu):=\nabla U( \vu)^T \vf^{(i)}( \vu) -\mF^{(i)}( \vu), \quad i=1\cdots,d,
\end{equation}
so that $\nabla \psi^{(i)}(\vu)^T=\vf^{(i)}(\vu)^T\nabla^2 U(\vu)$.

Let $\mathcal{T}_h$ be a partition of $\Omega$. Assume every $K\in\mathcal{T}_h$ is convex and let $\rho_K$ denote the diameter of the largest sphere inscribed in $K$. Let $|K|$ be the volume of $K$ and
\begin{equation*}
    h=\max_{K\in \mathcal{T}_h} h_K,\quad h_K=\operatorname{diam}\, K,\quad \gamma_K=\frac{h_K}{\rho_K}.
\end{equation*}
Define for $k\ge1$ the space
\begin{equation*}
(\mathbb{V}^k_d)^m:=\{\vecv \in (L^2(\Omega))^m : \vecv\big|_{K}\in (\mathbb{P}^k(K))^m, \forall K\in \mathcal{T}_h \}
\end{equation*}
of all piecewise (possibly discontinuous) polynomials on $\Omega$ that coincides with a $k$-th order polynomial on each element $K$. For any function $\vecv$ continuous on $\overline{K}$ and $\vx\in \partial K$, denote the limit of $\vecv$ when approaching $\vx$ from the interior of $K$ by $\vecv(\vx^{\text{int}\,K})$, and the limit from outside of $K$ by $\vecv(\vx^{\text{ext}\,K})$. Let $\pi_h$ be the $L^2$-projection onto $(\mathbb V_d^k)^m$, and denote the average of $\vu$ on $K$ by $\bvu_K$. With similar proofs, one can show the following generalizations of Lemmas \ref{estimate4} and \ref{estimate5} to any element $K\in \mathcal{T}_h$:
\begin{lemma}\label{estimate6}
For any $K\in \mathcal{T}_h$ and $\vu\in (\mathbb V^k_d)^m$, we have
\begin{equation*}
\left|\pi_h \circ [\nabla U(\vu)- \nabla U(\vu(\vx^{\text{int}\,K}))](\vx^{\text{int}\,K})\right|\le \frac{C(k,m,d,\gamma_K)}{|K|}\max_{\vw\in \mathrm{co}(\vu(K))}|\partial^3 U(\vw)| \int_{K} |\vu(x)-\bvu_K|^2 \ud \vx
\end{equation*}
for all $\vx\in\partial K$, and
\begin{align*}
    &\bigg|\sum_{i=1}^d\int_{K} \nabla U(\vu)\cdot \bigg(\pdv{\vf^{(i)}(\vu)}{x^{(i)}} - \pi_h \circ \bigg[\pdv{\vf^{(i)}(\vu)}{x^{(i)}}\bigg] \bigg)\ud x\bigg|\\
    \le& \frac{C(k,m,d,\gamma_K)}{\sqrt{|K|}} \max_{\vw\in \mathrm{co}(\vu(K))}|\partial^3 U(\vw)| \bigg\|\sum_{i=1}^d\pdv{\vf^{(i)}(\vu)}{x^{(i)}} - \pi_h \circ \bigg[\sum_{i=1}^d \pdv{\vf^{(i)}(\vu)}{x^{(i)}}\bigg]\bigg\|_{L^2(K)}\int_{K} |\vu(x)-\bvu_K|^2 \ud \vx,
\end{align*}
where $C(k,m,d,\gamma_K)>0$ is constant depending only on $k,m,d,\gamma_K$, and \[\max_{\vw\in \mathrm{co}(\vu(K))}|\partial^3 U(\vw)|=\max_{\substack{ 1\le i_1,i_2,i_3 \le m \\ \vw\in \mathrm{co}(\vu(K)) }} \left| \frac{\partial^3 U (\vw) }{\partial u_{i_1} \partial u_{i_2} \partial u_{i_3}} \right|\] is the maximum over all possible third order partial derivatives of $U$ over the convex hull of the states $\vu(K)=\{\vu(\vx):\vx\in K\}$.
\end{lemma}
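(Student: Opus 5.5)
The plan is to follow the proofs of Lemmas \ref{estimate4} and \ref{estimate5} almost verbatim. The one-dimensional reference interval $[-1,1]$ is replaced by a reference simplex or ball containing the affine image of $K$, and every constant that was explicit in 1D is replaced by one controlled through shape regularity ($\gamma_K$). Concretely, fix $K$ and an affine map $F_K(\hat\vx)=A_K\hat\vx+\mathbf b_K$. Choose it so that $\hat K:=F_K^{-1}(K)$ has inradius $1$ and diameter $\gamma_K$, which is possible since $K$ is convex. Pulling back polynomials of degree $\le k$ gives polynomials of degree $\le k$ on $\hat K$. Volumes scale by $|\det A_K|=|K|/|\hat K|$, and $|\hat K|$ is bounded above and below in terms of $d$ and $\gamma_K$.

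\textbf{First (pointwise) estimate.} Fix $\vx_0\in\partial K$ and set $\vu_0:=\vu(\vx_0^{\text{int}\,K})$.
\begin{enumerate}
\item As before, $\nabla^2U(\vu_0)(\vu-\vu_0)$ lies in $(\mathbb P^k(K))^m$ and vanishes at $\vx_0$. Hence we may subtract it inside $\pi_h$ without changing the value at $\vx_0$.
\item Taylor's theorem with integral remainder, with all segments lying in $\mathrm{co}(\vu(K))$, bounds the remaining integrand by $\tfrac{m^{3/2}}{2}M_K|\vu-\vu_0|^2$, exactly as in Lemma \ref{estimate4}. Here $M_K:=\max_{\vw\in\mathrm{co}(\vu(K))}|\partial^3U(\vw)|$.
\item Write $\pi_h g(\vx_0)=\int_K \mathcal K(\vx_0,\vy)g(\vy)\ud\vy$, where $\mathcal K$ is the reproducing kernel of $\mathbb P^k(K)$ in $L^2(K)$. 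Then
\[
\sup_{\vx,\vy\in K}|\mathcal K(\vx,\vy)|\le \dim\mathbb P^k\cdot \max_i\|\phi_i\|_{L^\infty(K)}^2
\]
for an orthonormal basis $\{\phi_i\}$. By the inverse inequality $\|p\|_{L^\infty(K)}\le C(k,d,\gamma_K)|K|^{-1/2}\|p\|_{L^2(K)}$, obtained by scaling plus equivalence of norms on the finite-dimensional space $\mathbb P^k(\hat K)$, this gives $|\mathcal K|\le C/|K|$.
\item It remains to show $\int_K|\vu-\vu_0|^2\le C\int_K|\vu-\bvu_K|^2$. This follows by applying the triangle inequality to $\vu_0-\bvu_K$ and using the same $L^\infty$–$L^2$ inverse inequality on $\vu-\bvu_K$: we get $|K|\,|\vu_0-\bvu_K|^2\le C\|\vu-\bvu_K\|_{L^2(K)}^2$.
\end{enumerate}
Collecting these bounds gives the first inequality with a constant depending only on $(k,m,d,\gamma_K)$.

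\textbf{Second (volume) estimate.} Let $\mathbf g:=\sum_i\partial_{x^{(i)}}\vf^{(i)}(\vu)$, and note that $\pi_h$ is linear, so the sum of projections equals the projection of the sum. As in Lemma \ref{estimate5}, the field $\nabla U(\bvu_K)+\nabla^2U(\bvu_K)(\vu-\bvu_K)$ lies in $(\mathbb P^k(K))^m$ and is therefore orthogonal to $\mathbf g-\pi_h\mathbf g$. Subtracting it and applying Taylor's theorem bounds the integrand by $\tfrac{m^{3/2}}{2}M_K|\vu-\bvu_K|^2\,|\mathbf g-\pi_h\mathbf g|$. Cauchy–Schwarz then reduces matters to the $L^4$–$L^2$ inequality
\[
\|p\|_{L^4(K)}^2\le C(k,d,\gamma_K)\,|K|^{-1/2}\|p\|_{L^2(K)}^2 ,
\]
applied componentwise. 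It follows from $\|p\|_{L^4}^2\le\|p\|_{L^\infty}\|p\|_{L^2}$ together with the same inverse inequality.

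\textbf{Main obstacle.} The only genuinely new point is establishing the inverse inequality $\|p\|_{L^\infty(K)}\le C|K|^{-1/2}\|p\|_{L^2(K)}$ with a constant depending only on $k,d,\gamma_K$ for general convex $K$, rather than for a fixed reference element. My approach is to place a ball $B$ of radius $\rho_K/2$ inside $K$ and note that $K\subset$ a ball of radius $h_K$ with the same center. Norm equivalence on $\mathbb P^k$ for the pair (ball of radius $1$, concentric ball of radius $2\gamma_K$) is a fixed finite-dimensional statement, so it yields $\|p\|_{L^\infty(K)}\le C(k,d,\gamma_K)\rho_K^{-d/2}\|p\|_{L^2(B)}$. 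Since $|K|\le C h_K^d\le C\gamma_K^d\rho_K^d$, this gives the inequality. Everything else is bookkeeping identical to the 1D proofs.
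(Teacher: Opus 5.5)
Your proposal is correct and follows essentially the route the paper intends; the paper only says the lemma follows ``with similar proofs'' to Lemmas~\ref{estimate4}--\ref{estimate5}. As there, you subtract the linear Taylor polynomial that vanishes at the boundary point, bound the projection kernel by $C/|K|$, compare $\int_K|\vu-\vu_0|^2$ with $\int_K|\vu-\bvu_K|^2$ as in the original proof of Lemma~\ref{estimate2}, and handle the volume term by orthogonality plus an $L^4$--$L^2$ bound. The one ingredient the paper leaves implicit is the inverse inequality $\|p\|_{L^\infty(K)}\le C(k,d,\gamma_K)|K|^{-1/2}\|p\|_{L^2(K)}$ on a general convex, shape-regular $K$, and your inscribed/circumscribed-ball argument supplies it correctly; this is needed because the rescaled $\hat K$ is not a fixed reference element.
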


For $\vu\in(\mathbb{V}^k_d)^m$ and using the constant $C(k,m,d,\gamma_K)$ from Lemma \ref{estimate6}, define
\begin{align*}
    B''_K[\vu;U]:=& C(k,m,d,\gamma_K)\frac{\max_{\vw\in \mathrm{co}(\vu(K))}|\partial^3 U(\vw)|}{\min_{\vw\in \mathrm{co}(\vu(K))}|\sigma_{\min}(\nabla^2 U(\vw))|} \times\\
    &\bigg\{ \frac{1}{\sqrt{|K|}} \bigg\|\sum_{i=1}^d\pdv{\vf^{(i)}(\vu)}{x^{(i)}} - \pi_h \circ \bigg[\sum_{i=1}^d \pdv{\vf^{(i)}(\vu)}{x^{(i)}}\bigg]\bigg\|_{L^2(K)}+\frac{1}{|K|} \int_{\partial K} \left|\vH_K(\vx,\vn)\right| \ud s(\vx) \bigg\},
\end{align*}
where $\vn=(n_1,\cdots,n_d)^T$ is the unit outward normal with respect to $\partial K$,
\begin{equation}\label{def H system dD}
    \vH_K(\vx,\vn):=-\hvf(\vu(\vx^{\text{int}\,K}),\vu(\vx^{\text{ext}\,K}),\vn)+ \sum_{i=1}^d n_i\vf^{(i)}(\vu(\vx^{\text{int}\,K})),
\end{equation}
and $\hvf$ is the directional numerical flux. Then, \eqref{scheme B'} can be directly generalized from 1D system to $d$-dimension as follows: seek $\vu\in(\mathbb{V}^k_d)^m$ such that for all $\vecv\in(\mathbb{V}^k_d)^m$ we have
\begin{align}
\int_{K}  \vu_t \cdot  \vecv \ud \vx =& -\sum_{i=1}^d\int_{K}  \pdv{\vf^{(i)}(\vu)}{x^{(i)}} \cdot \vecv \ud \vx + \int_{\partial K} \vH_K(\vx,\vn)\cdot \vecv(\vx^{\text{int}\,K}) \ud s(\vx) \nonumber\\
& - B''_K[\vu;U]\int_{K}  ( \vu - \bvu_K) \cdot \vecv \ud \vx, \tag{$\textbf{Scheme B''}$}\label{scheme B''}
\end{align}
After using divergence theorem on $\sum_{i=1}^d \int_{K}  \pdv{\vf^{(i)}(\vu)}{x^{(i)}} \cdot \vecv \ud \vx$, we also get the equivalent weak formulation of \eqref{scheme B''} as
\begin{align}
\int_{K}  \vu_t \cdot \vecv \ud \vx - \sum_{i=1}^d \int_{K} \vf^{(i)}(\vu) \cdot \pdv{\vecv}{x^{(i)}} \ud \vx + & \int_{\partial K} \hvf(\vu(\vx^{\text{int}\,K}),\vu(\vx^{\text{ext}\,K}),\vn)\cdot \vecv(\vx^{\text{int}\,K}) \ud s(\vx) \nonumber \\
=& - B''_K[\vu;U]\int_{K}  ( \vu - \bvu_K) \cdot \vecv \ud \vx. \label{weak scheme B''}
\end{align}
Now, the definition \eqref{def entropy stable flux} of entropy stable numerical flux is generalized to $d$-dimension following \cite{CHEN2017427}.
\begin{definition}\label{def entropy stable flux dD}
    Given a normal vector $\vn\in\mathbb{R}^d$, a directional numerical flux $\hvf(\vu^{\text{int}\,K},\vu^{\text{ext}\,K},\vn)$ is consistent if
    \begin{equation}\label{eq: def consistent numerical flux dD}
        \hvf(\vu,\vu,\vn)=\sum_{i=1}^d n_i \vf^{(i)}(\vu).
    \end{equation}
    It is called conservative if
    \begin{equation}\label{eq: def conservative numerical flux dD}
        \hvf(\vu^{\text{ext}\,K},\vu^{\text{int}\,K},-\vn)=-\hvf(\vu^{\text{int}\,K},\vu^{\text{ext}\,K},\vn).
    \end{equation}
    A directional numerical flux is \emph{entropy stable} for a given entropy $U(\cdot)$ if it is consistent and conservative and satisfies
    \begin{equation}\label{eq: def entropy stable flux dD}
        \hvf(\vu^{\text{int}\,K},\vu^{\text{ext}\,K},\vn)\cdot\left(\nabla U (\vu^{\text{ext}\,K}) - \nabla U (\vu^{\text{int}\,K}) \right) - \sum_{i=1}^d n_i (\psi^{(i)}(\vu^{\text{ext}\,K})-\psi^{(i)}(\vu^{\text{int}\,K})) \le 0.
    \end{equation}
\end{definition} 
The class of HLL fluxes, including the local Lax-Friedrichs flux, are entropy stable. The next theorem shows that \eqref{scheme B''} satisfies local entropy inequality for $U(\cdot)$, provided that $\hvf$ is any entropy stable directional flux.
\begin{theorem}[Local Entropy Inequality for \eqref{scheme B''}]\label{theorem: Local Entropy Inequality B''}
Suppose that $\hvf$ in \eqref{def H system dD} is an entropy stable directional flux. Then \eqref{scheme B''} satisfies the local semi-discrete entropy inequality
\begin{equation}\label{scheme B'' entropy inequality U}
\odv{}{t}\int_{K} U( \vu) \ud \vx + \int_{\partial K} \hat\mF(\vu(\vx^{\text{int}\,K}),\vu(\vx^{\text{ext}\,K}),\vn) \ud s(\vx)\le 0,\quad \forall \vu\in (\mathbb{V}^k_d)^m, \quad \forall K\in\mathcal{T}_h,
\end{equation}
where
\begin{align}
\hat\mF( \vu^{\text{int}\,K}, \vu^{\text{ext}\,K},\vn):=& \frac{\nabla U(\vu^{\text{int}\,K})+\nabla U(\vu^{\text{ext}\,K})}{2} \cdot \hvf(\vu^{\text{int}\,K},\vu^{\text{ext}\,K},\vn) \nonumber \\
&- \sum_{i=1}^d n_i \frac{\psi^{(i)}(\vu^{\text{int}\,K})+\psi^{(i)}(\vu^{\text{ext}\,K})}{2}, \label{entropy flux system dD}
\end{align}
with $\psi^{(i)}$ defined by \eqref{def psi dD system}. Clearly, $\hat\mF$ is consistent and conservative.
\end{theorem}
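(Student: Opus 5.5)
The plan mirrors the proof of Theorem \ref{theorem: Local Entropy Inequality B}, adapted to $d$ dimensions, with the symmetric numerical entropy flux \eqref{entropy flux system dD} in place of the one-sided flux.

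\textbf{Step 1: test with the projected entropy variable.} Set $\vz:=\pi_h[\nabla U(\vu)]\in(\mathbb V^k_d)^m$ and take $\vecv=\vz$ in \eqref{scheme B''}. Since $\vu_t\in(\mathbb V^k_d)^m$, the left side equals $\odv{}{t}\int_K U(\vu)\ud\vx$.

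\textbf{Step 2: the volume term.} Write it as $-\int_K\nabla U(\vu)\cdot\sum_i\partial_{x^{(i)}}\vf^{(i)}(\vu)\ud\vx+R_{K,1}$. By \eqref{def entropy flux dD system} the first piece equals $-\int_K\sum_i\partial_{x^{(i)}}\mF^{(i)}(\vu)\ud\vx$. The divergence theorem turns it into $-\int_{\partial K}\sum_i n_i\mF^{(i)}(\vu^{\text{int}})\ud s$.

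\textbf{Step 3: the damping term.} It equals $-B''_K D_K$, where $D_K=\int_K(\vu-\bvu_K)\cdot(\nabla U(\vu)-\nabla U(\bvu_K))\ud\vx$. By the mean value theorem on the convex hull, $D_K\ge\min_{\mathrm{co}}\sigma_{\min}(\nabla^2U)\int_K|\vu-\bvu_K|^2$.

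\textbf{Step 4: the boundary term.} Split $\vz(\vx^{\text{int}})=\nabla U(\vu^{\text{int}})+\pi_h[\nabla U(\vu)-\nabla U(\vu^{\text{int}})](\vx^{\text{int}})$. The second piece gives $R_{K,2}=\int_{\partial K}\vH_K\cdot\pi_h[\cdots]\ud s$. The first piece combines with the Step 2 boundary term to give
\[
\int_{\partial K}\Bigl(-\nabla U(\vu^{\text{int}})\cdot\hvf+\sum_i n_i\psi^{(i)}(\vu^{\text{int}})\Bigr)\ud s,
\]
using $\psi^{(i)}=\nabla U^T\vf^{(i)}-\mF^{(i)}$. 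Adding $\int_{\partial K}\hat\mF\ud s$ then leaves the integrand
\[
\tfrac12\Bigl[\hvf\cdot(\nabla U(\vu^{\text{ext}})-\nabla U(\vu^{\text{int}}))-\sum_i n_i(\psi^{(i)}(\vu^{\text{ext}})-\psi^{(i)}(\vu^{\text{int}}))\Bigr],
\]
which is $\le0$ by \eqref{eq: def entropy stable flux dD}. The symmetric average in \eqref{entropy flux system dD} is what makes this exact half-difference appear, so no one-sided E-flux type argument is needed. Consistency and conservativity of $\hat\mF$ follow directly from \eqref{eq: def consistent numerical flux dD}--\eqref{eq: def conservative numerical flux dD} and the symmetry of the average.

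\textbf{Step 5: bound the remainders.} Apply Lemma \ref{estimate6}. The second estimate bounds $|R_{K,1}|$. The first estimate, pointwise in $\vx\in\partial K$ and integrated against $|\vH_K|$, bounds $|R_{K,2}|$. Together they give
\[
|R_{K,1}|+|R_{K,2}|\le B''_K\min\sigma_{\min}\int_K|\vu-\bvu_K|^2\le B''_K D_K,
\]
and hence the inequality. This is where the constant in $B''_K$ has to be arranged carefully. In the 1D analysis only half of the bound \eqref{bound A_j} was needed. Here $D_K$ carries the full $\sigma_{\min}$ rather than $\tfrac12\sigma_{\min}$, so $B''_K$ as defined, with the constant $C(k,m,d,\gamma_K)$ of Lemma \ref{estimate6}, should suffice without an extra factor.

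\textbf{Main obstacle.} I expect the hardest step to be Step 4. One must check the sign bookkeeping on the boundary with the outward normal and the definition of $\vH_K$, and verify that the symmetric flux yields exactly one half of the entropy-stability quantity rather than a leftover nonsymmetric term. Checking that $B''_K$ dominates the remainders with the right constants in Step 5 is the secondary concern.
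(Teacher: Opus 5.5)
Your proposal is correct and follows essentially the same route as the paper's proof. The shared steps are: test with $\vz=\pi_h[\nabla U(\vu)]$; split off the projection remainders $R_{K,1}$ and $R_{K,2}$; use the exact boundary identity, in which the symmetric flux $\hat\mF$ leaves exactly one half of the entropy-stability quantity; and absorb $|R_{K,1}|+|R_{K,2}|$ into $B''_K D_K$ via Lemma \ref{estimate6} and the bound $D_K\ge m_K\int_K|\vu-\bvu_K|^2\ud\vx$. Your boundary computation is right, and so is your remark that no extra factor is needed in $B''_K$ because $D_K$ carries the full $\sigma_{\min}$.
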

\begin{proof}
Fix $K\in\mathcal T_h$ and set
\[
    \vz:=\pi_h[\nabla U(\vu)]\in(\mathbb V_d^k)^m.
\]
Since $\vu_t|_K\in(\mathbb P^k(K))^m$ and $\pi_h$ is the $L^2$-projection,
\begin{equation}\label{Bpp proof time projection}
\int_K\vu_t\cdot\vz\,\ud\vx
=\int_K\vu_t\cdot\nabla U(\vu)\,\ud\vx
=\odv{}{t}\int_KU(\vu)\,\ud\vx.
\end{equation}
Taking $\vecv=\vz$ in \eqref{scheme B''} gives
\begin{align}
\odv{}{t}\int_KU(\vu)\,\ud\vx
={}&-\sum_{i=1}^d\int_K\pdv{\vf^{(i)}(\vu)}{x^{(i)}}\cdot\vz\,\ud\vx
+\int_{\partial K}\vH_K\cdot\vz(\vx^{\mathrm{int}\,K})\,\ud s \nonumber\\
&-B''_K[\vu;U]\int_K(\vu-\bvu_K)\cdot\vz\,\ud\vx.
\label{Bpp proof DG identity}
\end{align}

By self-adjointness of the $L^2$-projection,
\begin{align}
&-\sum_{i=1}^d\int_K\pdv{\vf^{(i)}(\vu)}{x^{(i)}}\cdot\vz\,\ud\vx \nonumber\\
={}&-\sum_{i=1}^d\int_K\nabla U(\vu)\cdot\pdv{\vf^{(i)}(\vu)}{x^{(i)}}\,\ud\vx+R_{K,1} \nonumber\\
={}&-\int_{\partial K}\sum_{i=1}^dn_i\mF^{(i)}(\vu(\vx^{\mathrm{int}\,K}))\,\ud s+R_{K,1},
\label{Bpp proof volume}
\end{align}
where
\begin{equation}\label{Bpp proof R1}
R_{K,1}:=\int_K\nabla U(\vu)\cdot\left\{\sum_{i=1}^d\pdv{\vf^{(i)}(\vu)}{x^{(i)}}-
\pi_h\left[\sum_{i=1}^d\pdv{\vf^{(i)}(\vu)}{x^{(i)}}\right]\right\}\,\ud\vx.
\end{equation}
The last equality follows from \eqref{def entropy flux dD system} and the divergence theorem.

Moreover, since $\vu-\bvu_K\in(\mathbb P^k(K))^m$ and has zero average,
\begin{align}
\int_K(\vu-\bvu_K)\cdot\vz\,\ud\vx
&=\int_K(\vu-\bvu_K)\cdot\bigl(\nabla U(\vu)-\nabla U(\bvu_K)\bigr)\,\ud\vx=:D_K.
\label{Bpp proof cell dissipation}
\end{align}
Put
\[
m_K:=\min_{\vw\in\operatorname{co}(\vu(K))}\sigma_{\min}(\nabla^2U(\vw)).
\]
Since $\bvu_K\in\operatorname{co}(\vu(K))$, the segment joining $\bvu_K$ and $\vu(\vx)$ lies in $\operatorname{co}(\vu(K))$. Hence
\begin{align*}
&(\vu-\bvu_K)\cdot(\nabla U(\vu)-\nabla U(\bvu_K))\\
&\quad=\int_0^1(\vu-\bvu_K)^T\nabla^2U(\bvu_K+\theta(\vu-\bvu_K))(\vu-\bvu_K)\,\ud\theta
\ge m_K|\vu-\bvu_K|^2,
\end{align*}
and therefore
\begin{equation}\label{Bpp proof DK lower bound}
D_K\ge m_K\int_K|\vu-\bvu_K|^2\,\ud\vx\ge0.
\end{equation}

For $\vx\in\partial K$, write
\[
\vu^-:=\vu(\vx^{\mathrm{int}\,K}),\qquad \vu^+:=\vu(\vx^{\mathrm{ext}\,K}),\qquad
\vh:=\hvf(\vu^-,\vu^+,\vn),
\]
and set
\[
\mF_{\vn}(\vu):=\sum_{i=1}^dn_i\mF^{(i)}(\vu),\qquad
\psi_{\vn}(\vu):=\sum_{i=1}^dn_i\psi^{(i)}(\vu).
\]
By \eqref{def H system dD}, \eqref{def psi dD system}, and \eqref{entropy flux system dD},
\begin{align}
&-\mF_{\vn}(\vu^-)+\vH_K\cdot\nabla U(\vu^-)+\hat\mF(\vu^-,\vu^+,\vn) \nonumber\\
={}&\frac12\left\{\vh\cdot(\nabla U(\vu^+)-\nabla U(\vu^-))-
(\psi_{\vn}(\vu^+)-\psi_{\vn}(\vu^-))\right\}
=:\frac12\mathcal E_K(\vx).
\label{Bpp proof interface identity}
\end{align}
The entropy stability condition \eqref{eq: def entropy stable flux dD} implies
\begin{equation}\label{Bpp proof interface entropy sign}
\mathcal E_K(\vx)\le0\qquad\text{on }\partial K.
\end{equation}
Since constant vectors are preserved by $\pi_h$,
\[
\vz(\vx^{\mathrm{int}\,K})-\nabla U(\vu^-)
=\pi_h[\nabla U(\vu)-\nabla U(\vu^-)](\vx^{\mathrm{int}\,K}).
\]
Define
\begin{equation}\label{Bpp proof R2}
R_{K,2}:=\int_{\partial K}\vH_K(\vx,\vn)\cdot
\pi_h[\nabla U(\vu)-\nabla U(\vu^-)](\vx^{\mathrm{int}\,K})\,\ud s.
\end{equation}
Integrating \eqref{Bpp proof interface identity} over $\partial K$, then adding \eqref{Bpp proof R2} to both sides yields
\begin{align}
&-\int_{\partial K}\mF_{\vn}(\vu^-)\,\ud s
+\int_{\partial K}\vH_K\cdot\vz(\vx^{\mathrm{int}\,K})\,\ud s
+\int_{\partial K}\hat\mF(\vu^-,\vu^+,\vn)\,\ud s \nonumber\\
&\qquad=R_{K,2}+\frac12\int_{\partial K}\mathcal E_K(\vx)\,\ud s.
\label{Bpp proof boundary identity}
\end{align}
Combining \eqref{Bpp proof DG identity}, \eqref{Bpp proof volume}, \eqref{Bpp proof cell dissipation}, and \eqref{Bpp proof boundary identity}, we obtain
\begin{align}
&\odv{}{t}\int_KU(\vu)\,\ud\vx+\int_{\partial K}\hat\mF(\vu^-,\vu^+,\vn)\,\ud s \nonumber\\
&\qquad=R_{K,1}+R_{K,2}+\frac12\int_{\partial K}\mathcal E_K(\vx)\,\ud s-B''_K[\vu;U]D_K.
\label{Bpp proof master identity}
\end{align}

By Lemma \ref{estimate6},
\begin{align}
|R_{K,1}|\le \frac{C(k,m,d,\gamma_K)}{\sqrt{|K|}}\max_{\vw\in \mathrm{co}(\vu(K))}|\partial^3U(\vw)|
&\left\|\sum_{i=1}^d\pdv{\vf^{(i)}(\vu)}{x^{(i)}}-
\pi_h\left[\sum_{i=1}^d\pdv{\vf^{(i)}(\vu)}{x^{(i)}}\right]\right\|_{L^2(K)} \nonumber\\
&\times\int_K|\vu-\bvu_K|^2\,\ud\vx,
\label{Bpp proof R1 estimate}
\end{align}
and
\begin{align}
|R_{K,2}|\le{}&\frac{C(k,m,d,\gamma_K)}{|K|}\max_{\vw\in \mathrm{co}(\vu(K))}|\partial^3U(\vw)|
\left(\int_{\partial K}|\vH_K(\vx,\vn)|\,\ud s\right)
\int_K|\vu-\bvu_K|^2\,\ud\vx.
\label{Bpp proof R2 estimate}
\end{align}
Thus, the definition of $B''_K[\vu;U]$ and \eqref{Bpp proof DK lower bound} give
\begin{equation}\label{Bpp proof remainder bound}
R_{K,1}+R_{K,2}\le |R_{K,1}|+|R_{K,2}|
\le B''_K[\vu;U]m_K\int_K|\vu-\bvu_K|^2\,\ud\vx
\le B''_K[\vu;U]D_K.
\end{equation}
Substituting this into \eqref{Bpp proof master identity} and using \eqref{Bpp proof interface entropy sign}, we conclude that
\[
\odv{}{t}\int_KU(\vu)\,\ud\vx+\int_{\partial K}\hat\mF(\vu^-,\vu^+,\vn)\,\ud s
\le\frac12\int_{\partial K}\mathcal E_K(\vx)\,\ud s\le0,
\]
which is precisely \eqref{scheme B'' entropy inequality U}.
\end{proof}
Similar as in the 1D system case, we may use Bramble-Hilbert lemma \eqref{bound on bad term} on the term \[ \bigg\|\sum_{i=1}^d\pdv{\vf^{(i)}(\vu)}{x^{(i)}} - \pi_h \circ \bigg[\sum_{i=1}^d \pdv{\vf^{(i)}(\vu)}{x^{(i)}}\bigg]\bigg\|_{L^2(K)}\] and derive the following simplified scheme: seek $\vu\in(\mathbb{V}^k_d)^m$ such that for all $\vecv\in(\mathbb{V}^k_d)^m$ we have
\begin{align}
\int_{K}  \vu_t \cdot \vecv \ud \vx - \sum_{i=1}^d \int_{K} \vf^{(i)}(\vu) \cdot \pdv{\vecv}{x^{(i)}} \ud \vx + & \int_{\partial K} \hvf(\vu(\vx^{\text{int}\,K}),\vu(\vx^{\text{ext}\,K}),\vn)\cdot \vecv(\vx^{\text{int}\,K}) \ud s(\vx) \nonumber \\
=& - D''_K[\vu;r,\epsilon_1,\epsilon_2]\int_{K}  ( \vu - \bvu_K) \cdot \vecv \ud \vx, \tag{$\textbf{Scheme D''}$}\label{scheme D''}
\end{align}
where
\begin{align*}
D''_K[\vu;r,\epsilon_1,\epsilon_2]
:={}&
\epsilon_1
\frac{h_K^{r-1}}{\sqrt{|K|}}
\left(
\sum_{i=1}^d
|\vf^{(i)}(\vu)|_{H^r(K)}^2
\right)^{1/2}
+
\frac{\epsilon_2}{|K|}
\int_{\partial K} \left|\vH_K(\vx,\vn)\right| \ud s(\vx), \quad k+1\le r \le k+2.
\end{align*}
Note that here we still integrate $\left|\vH_K(\vx,\vn)\right|$ along the boundary of $K$ instead of $|[\![\vu]\!]|$. As before, \eqref{scheme D''} also satisfies Theorem \ref{theorem: Local Entropy Inequality B''} for sufficiently large $\epsilon_1,\epsilon_2$ if the numerical solution is uniformly bounded.

\section{Optimal error estimates to smooth solutions of general nonlinear scalar conservation laws}\label{section: error estimate}
In this section, we show the optimal error estimates to smooth solutions of \eqref{scheme B}, \eqref{scheme C} and \eqref{scheme D} for general nonlinear scalar conservation laws. For the partition of the interval $I=\cup_{j=1}^N I_j$ with $|I_j|=h_j$, define \[h:=\max_{1\le j \le N} h_j.\] Assume the mesh is quasi-uniform, namely there exists a constant $C>0$ such that \[\max_{1\le j \le N} h_j \le C \min_{1\le j \le N} h_j.\]
\begin{theorem}\label{theorem: error estimate}
    Let $\tu(x,t)$ denote the exact solution of the 1D nonlinear scalar conservation law \eqref{HCL 1D scalar} with initial condition $\tu_0(x)$ and periodic or compactly supported boundary conditions. Suppose $f(\cdot)$ is sufficiently smooth, and $\exists T>0$ such that $\tu$ is smooth for all $t\in[0, T]$. For any given integer $k\ge 1$, let $u(\cdot,t)\in \mathbb{V}^k$ denote the numerical solution of \eqref{scheme B}, \eqref{scheme C} or \eqref{scheme D} with a quasi-uniform mesh and initial condition $\pi \tu_0\in\mathbb{V}^k$. If an upwind numerical flux is used, then for small enough $h$ there holds the following optimal error estimate:
    \begin{equation}\label{upwind flux error estimate}
        \max_{0\le t \le T} \|\tu(\cdot,t)-u(\cdot,t)\|_{L^2(I)} \le C h^{k+1}.
    \end{equation}
    Moreover, if a general monotone flux is used, then for small enough $h$ there holds
    \begin{equation}\label{monotone flux error estimate}
        \max_{0\le t \le T} \|\tu(\cdot,t)-u(\cdot,t)\|_{L^2(I)} \le C h^{k+\frac{1}{2}}.
    \end{equation}
    Here, $C>0$ depends on the uniform bound on $\tu$ and its derivatives in $I\times [0,T]$, but not on $u$ and $h$.
\end{theorem}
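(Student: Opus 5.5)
We will follow the energy-method framework of \cite{Z2005} for nonlinear scalar conservation laws and treat the damping term as an additional perturbation that is either sign-definite or of high order. Write the error as $e=\tu-u=\eta-\xi$ with $\eta:=\tu-P_h\tu$ and $\xi:=u-P_h\tu\in\mathbb{V}^k$. Here $P_h$ is the Gauss--Radau projection when an upwind flux is used, and the $L^2$-projection $\pi$ when a general monotone flux is used. The exact solution satisfies the classical DG weak form \eqref{DG1} with zero right-hand side. Subtracting \eqref{weak scheme B} (or its analogues for Schemes C and D) gives the error equation
\begin{align*}
\int_{I_j} e_t v\,\ud x-\int_{I_j}(f(\tu)-f(u))v_x\,\ud x+\bigl(f(\tu)-\hf\bigr)_{j+\frac12}v^-_{j+\frac12}-\bigl(f(\tu)-\hf\bigr)_{j-\frac12}v^+_{j-\frac12}
= \Lambda_j\int_{I_j}(u-\bu_j)v\,\ud x ,
\end{align*}
where $\Lambda_j\in\{B_j,C_j,D_j\}$. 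We take $v=\xi$ and sum over $j$. All terms on the left are handled exactly as in \cite{Z2005}: Taylor-expand $f$ about $\tu$, and use the a priori assumption $\|\xi\|_{L^2}\le h^{3/2}$, which gives $\|\xi\|_{L^\infty}\lesssim h$ via the inverse inequality. This yields
\[
\tfrac12\tfrac{\ud}{\ud t}\|\xi\|^2+c\sum_j[\![\xi]\!]^2_{j+\frac12}\le C\|\xi\|^2+Ch^{2k+2}
\]
in the upwind case. In the monotone-flux case the same bound holds with $h^{2k+1}$ in place of $h^{2k+2}$. The a priori assumption is removed afterwards by the standard continuity argument for small $h$.

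The new ingredient is the damping term $-\Lambda_j\int_{I_j}(u-\bu_j)\xi$. Write $u=P_h\tu+\xi$. The piece $-\Lambda_j\int_{I_j}|\xi-\bar\xi_j|^2$ is nonpositive, because $\xi-\bar\xi_j$ has zero mean and so $\int(\xi-\bar\xi_j)\xi=\|\xi-\bar\xi_j\|^2$. This piece can therefore be dropped. Since $\Lambda_j\ge0$, what remains is to bound
\[
\Lambda_j\Bigl|\int_{I_j}(P_h\tu-\overline{P_h\tu}_j)\xi\,\ud x\Bigr| \le \Lambda_j\,\|P_h\tu-\overline{P_h\tu}_j\|_{L^2(I_j)}\|\xi-\bar\xi_j\|_{L^2(I_j)} .
\]
The factor $\|P_h\tu-\overline{P_h\tu}_j\|_{L^2(I_j)}$ is only $O(h^{3/2})$ per cell, so $\Lambda_j$ itself must be shown to be small. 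For this we estimate $\Lambda_j$ under the a priori assumption.

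The quotient $q_j$ (or $\tilde q_j$) is bounded, since $u$ stays in a fixed compact set. For the volume part, a smooth $\tu$ gives $\|\partial_x^r f(u)\|_{L^2(I_j)}\lesssim h^{1/2}$ plus terms involving derivatives of $\xi$. Inverse inequalities give $|\xi|_{W^{s,\infty}}\lesssim h^{-s}\|\xi\|_{L^\infty}$, so those terms can be large, and they are the main obstacle. The first thing we would try is to show that each higher derivative of $f(u)$ is a product of derivatives of $u$ of total order $r\ge k+1$. Since $u$ has degree $k$, the terms carrying derivatives of $\xi$ can be absorbed using $\|\xi\|\le h^{3/2}$, giving $h_j^{r-3/2}\|\partial_x^r f(u)\|\lesssim h^{r-1}+\ldots\lesssim 1$. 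For Scheme B we would instead use $\|f(u)_x-\pi f(u)_x\|\lesssim h^{k}\cdot h^{1/2}\cdot\|\partial^{k+1}_x f(u)\|_\infty$ together with the same argument. For the jump part, $|H^\pm|\lesssim |[\![u]\!]|=|[\![\xi]\!]|+|[\![\eta]\!]|$, with $|[\![\eta]\!]|\lesssim h^{k+1}$. Consequently
\[
\Lambda_j\lesssim 1+h^{-1}\bigl(|[\![\xi]\!]_{j\pm\frac12}|+h^{k+1}\bigr).
\]

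Inserting this bound, and using $\|P_h\tu-\overline{P_h\tu}_j\|_{L^2(I_j)}\lesssim h^{3/2}$ and $\|\xi-\bar\xi_j\|\le\|\xi\|_{L^2(I_j)}$, the damping contribution is at most
\[
C\|\xi\|^2+C\sum_j h^{1/2}|[\![\xi]\!]|\,\|\xi\|_{L^2(I_j)}+Ch^{2k+2}.
\]
The middle term is bounded by $\tfrac c2\sum_j[\![\xi]\!]^2+Ch\|\xi\|^2$. In the upwind case it is absorbed by the jump dissipation. For a general monotone flux the jump dissipation may degenerate, so we instead bound it using $|[\![\xi]\!]|\lesssim h^{-1/2}\|\xi\|_{L^2}$ locally, which gives $C\|\xi\|^2$ directly. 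A sharper bound on the damping cross term is possible: the $O(h^{3/2})$ factor can be improved, because $\Lambda_j(u-\bu_j)$ should compare against $\tu_x h$ to yield a consistency error of order $h^{k+1}$ after taking its inner product with $\xi$. If the crude bound leaves a consistency term of only $O(h^{2})$ rather than $h^{2k+2}$, we will use this sharper bound. Concretely, the volume part of $\Lambda_j$ is $O(h^{r-1})\ge O(h^k)$ by smoothness, and the jump part is $O(h^{k})$ since $|[\![\eta]\!]|\lesssim h^{k+1}$. Together these give a contribution of $O(h^k\cdot h^{1})$ times $\|\xi\|$ per unit length, which is acceptable for $k\ge1$. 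Gronwall's inequality, the initial bound $\|\xi(0)\|\lesssim h^{k+1}$, and the approximation bound $\|\eta\|\lesssim h^{k+1}$ then give \eqref{upwind flux error estimate} and \eqref{monotone flux error estimate}.
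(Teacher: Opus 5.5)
\textbf{There is a genuine gap in how you bound the volume part of the damping coefficient.}

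You absorb the $\xi$-dependent terms of $h_j^{r-\frac32}\|\partial_x^r f(u)\|_{L^2(I_j)}$ into a constant using the a priori hypothesis. This gives $\Lambda_j\lesssim 1+h^{-1}\bigl(|[\![\xi]\!]|+h^{k+1}\bigr)$. Insert this into $\Lambda_j\|P_h\tu-\overline{P_h\tu}_j\|_{L^2(I_j)}\|\xi\|_{L^2(I_j)}$. The constant $1$ then produces $\sum_j h^{3/2}\|\xi\|_{L^2(I_j)}\approx h\|\xi\|_{L^2(I)}$, which is an $O(h^2)$ source after Young's inequality. Gronwall then yields only $\|\xi\|=O(h)$, so the bound $C\|\xi\|^2+\dots+Ch^{2k+2}$ you write down does not follow.

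Note that the factor $\|P_h\tu-\overline{P_h\tu}_j\|_{L^2(I_j)}\sim|\tu_x|h^{3/2}$ cannot be improved. All the smallness has to come from $\Lambda_j$.

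Your fallback does not close the gap either. The claim that the volume part of $\Lambda_j$ is $O(h^{r-1})$ ``by smoothness'' holds for the coefficient evaluated at $\tu$ or $P_h\tu$. But $\Lambda_j$ is evaluated at the numerical solution $u=P_h\tu+\xi$. Under your hypothesis $\|\xi\|_{L^2}\le h^{3/2}$, the $\xi$-dependent part of the coefficient is only bounded by a fixed power of $h$ (about $O(h)$ when $r=k+1$). That is not $O(h^k)$ once $k\ge2$.

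\textbf{The missing idea.} This is the content of the paper's Lemma~\ref{lemma: second term estimate}: keep the $\xi$-dependence of the volume part \emph{linear}, exactly as you already did for the jump part. Split
$\partial_x^rf(u)=[\partial_x^rf(u)-\partial_x^rf(P_h\tu)]+[\partial_x^rf(P_h\tu)-\partial_x^rf(\tu)]+\partial_x^rf(\tu)$.

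For the first bracket, combine four ingredients:
\begin{enumerate}
\item Fa\`a di Bruno's formula;
\item the telescoping identity for $\prod_i a_i^{m_i}-\prod_i b_i^{m_i}$, with $a_i,b_i$ the scaled derivatives of $u$ and $P_h\tu$;
\item the inverse inequalities $\|\partial_x^i\xi\|_{L^2(I_j)}\le Ch_j^{-i}\|\xi\|_{L^2(I_j)}$ and $\|\partial_x^iu\|_{L^\infty(I_j)}\le Ch_j^{-i}\|u\|_{L^\infty}$;
\item the counting $\sum_i i\,m_i=r$.
\end{enumerate}
Together these give $\|\partial_x^rf(u)-\partial_x^rf(P_h\tu)\|_{L^2(I_j)}\le Ch_j^{-r}\|\xi\|_{L^2(I_j)}$, with $C$ depending only on an $L^\infty$ bound for $u$.

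The other two brackets are bounded in $L^2(I_j)$ by $C\|\eta\|_{L^2(I_j)}+Ch_j^{k+\frac32-r}$ and by $Ch_j^{1/2}$, respectively. Hence
$h_j^{3/2}\Lambda_j\le C\|\xi\|_{L^2(I_j)}+Ch^{r}\|\eta\|_{L^2(I_j)}+Ch^{1/2}\bigl(|[\![\xi]\!]_{j\pm\frac12}|+|[\![\eta]\!]_{j\pm\frac12}|\bigr)+Ch^{k+\frac32}$.
The damping contribution is therefore at most $C\|\xi\|^2+Ch^{2k+2}$.

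Schemes B and C then follow with no extra work. The argument only uses $\Lambda_j\ge0$ together with $\Lambda_j\le D_j$, which holds for suitable $\epsilon_1,\epsilon_2$ once $u$ is bounded.

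\textbf{Two smaller points.}
\begin{enumerate}
\item Your hypothesis $\|\xi\|\le h^{3/2}$ does not close for $k=1$ with a general monotone flux, because the conclusion there is only $\|\xi\|\le Ch^{3/2}$. The paper instead assumes $\|e\|_{L^2}\le h$. This closes, since $Ch^{k+\frac12}\le h$ for small $h$. It is also sufficient, because it gives $\|u\|_{L^\infty}\le C$ and keeps the factors $\frac1h\|e\|$ and $\frac1{h^2}\|e\|^2$ in the \cite{Z2005} estimates bounded.
\item You do not need uniform jump dissipation in the upwind case, and in fact it degenerates where $f'(\tu)=0$. The inequality $\sum_j[\![\xi]\!]^2\le Ch^{-1}\|\xi\|^2$ handles the $\xi$-jump term for either flux.
\end{enumerate}
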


\begin{proof}
As it will be clear at the end, it suffices to show \eqref{upwind flux error estimate} and \eqref{monotone flux error estimate} for \eqref{scheme D} with any $k+1 \le r \le k+2$ and any $\epsilon_1,\epsilon_2>0$. Recall that \eqref{scheme D} is defined as follows: seek $u(\cdot,t)\in\mathbb{V}^k$ such that for all $v\in \mathbb{V}^k$ we have
\begin{align}
\int_{I_j}  u_t(x,t)  v(x) \ud x & - \int_{I_j}  f(u(x,t))  v_x(x) \ud x + \hf_{j+\frac{1}{2}}(t)\cdot  v^-_{j+\frac{1}{2}} - \hf_{j-\frac{1}{2}}(t)\cdot  v^+_{j-\frac{1}{2}} \nonumber \\
&= - D_j[u(\cdot,t);r,\epsilon_1,\epsilon_2] \int_{I_j}  ( u(x,t) - \bu_j(t))  v(x) \ud x, \label{scheme D again}
\end{align}
where $\bu_j(t)$ is the average of $u(x,t)$ on the cell $I_j$, $\hat{f}_{j+\frac{1}{2}}(t)=\hat{f}(u(x^-_{j+\frac{1}{2}},t),u(x^+_{j+\frac{1}{2}},t))$, and
\begin{equation}\label{def D_j again}
    D_j[u(\cdot,t);r,\epsilon_1,\epsilon_2] = \epsilon_1 h_j^{r-\frac{3}{2}} \|\partial_x^{r}f(u)(\cdot,t)\|_{L^2(I_j)} + \epsilon_2 \frac{|[\![u(\cdot,t)]\!]_{j-\frac{1}{2}}|+|[\![u(\cdot,t)]\!]_{j+\frac{1}{2}}|}{h_j}
\end{equation}
is defined in \eqref{def D_j}. Set $e(x,t):=\tu(x,t)-u(x,t)$ and denote
\begin{equation}\label{def xi eta}
    e=\xi-\eta,\quad \xi:= \bQ \tu - u,\quad \eta:=\bQ \tu - \tu,
\end{equation}
where the projection $\bQ$ is
\begin{enumerate}[label=(\arabic*)]
    \item the $L^2$-projection $\pi$ if a general monotone flux is used;
    \item if an upwind flux is used, $\bQ$ is a modified Gauss-Radau projection that depends on the initial condition $\tu_0$: for any continuous function $w$ on $I$, define $\bQ w\in\mathbb{V}^k$ such that
\begin{equation*}
    \begin{cases}
    \displaystyle
        \int_{I_j} (\bQ w - w) v \ud x = 0,& \forall v \in \mathbb{P}^{k-1}(I_j),\\
        \bQ w (x^-_{j+\frac12}) = w (x_{j+\frac12}), & \text{if $f'(\tu_0)>0$ on $I_j$,} \\
        \bQ w (x^+_{j-\frac12}) = w (x_{j-\frac12}), & \text{if $f'(\tu_0)<0$ on $I_j$,} \\
        \bQ w = \pi w, & \text{if $f'(\tu_0)$ has at least one zero on $I_j$.}
    \end{cases}
\end{equation*}
\end{enumerate}
Because the points where $f'(\tilde u)=0$ do not move with time as long as $\tu$ remains smooth, the definition of $\bQ$ would not change even if we replace $f'(\tu_0)$ by the time-dependent $f'(\tu)$ in the above three conditions, which is the original definition adopted in \cite{Z2005}. Since for all $v\in\mathbb{V}^k$, the exact solution $\tu$ satisfies
\begin{equation}\label{tu equation}
\int_{I_j}  \tu_t(x,t)  v(x) \ud x - \int_{I_j}  f(\tu(x,t))  v_x(x) \ud x + f(\tu(x_{j+\frac12},t))\cdot  v^-_{j+\frac{1}{2}} - f(\tu(x_{j-\frac12},t)) \cdot  v^+_{j-\frac{1}{2}}= 0,
\end{equation}
subtracting \eqref{scheme D again} from \eqref{tu equation} yields the following equation for $e$:
\begin{align}
&\int_{I_j}  e_t(x,t)  v(x) \ud x - \int_{I_j}  (f(\tu(x,t))-f(u(x,t)))  v_x(x) \ud x + (f(\tu(x_{j+\frac12},t))-\hf_{j+\frac12}(t))\cdot  v^-_{j+\frac{1}{2}} \nonumber \\
&- (f(\tu(x_{j-\frac12},t))-\hf_{j-\frac12}(t)) \cdot  v^+_{j-\frac{1}{2}} = D_j[u(\cdot,t);r,\epsilon_1,\epsilon_2]\int_{I_j}  ( u(x,t) - \bu_j(t))  v(x) \ud x, \label{error equation}
\end{align}
which holds for all $v\in\mathbb{V}^k$. At any time $t\ge 0$, by taking $v(x)=\xi(x,t)$ in \eqref{error equation}, we obtain
\begin{align}
    &\frac{1}{2}\odv{}{t}\|\xi(\cdot,t)\|^2_{L^2(I_j)}=\int_{I_j}  \xi_t(x,t)  \xi(x,t) \ud x \nonumber\\
    =& Z_j[\tu(\cdot,t),u(\cdot,t)] +D_j[u(\cdot,t);r,\epsilon_1,\epsilon_2]\int_{I_j}  ( u(x,t) - \bu_j(t))  \xi(x,t) \ud x, \label{error equation 2}
\end{align}
where
\begin{equation*}
    Z_j[\tu,u]:= \int_{I_j}  \eta_t  \xi \ud x + \int_{I_j}  (f(\tu)-f(u))  \xi_x \ud x - (f(\tu)_{j+\frac12}-\hf_{j+\frac12})\cdot  \xi^-_{j+\frac{1}{2}} + (f(\tu)_{j-\frac12}-\hf_{j-\frac12}) \cdot  \xi^+_{j-\frac{1}{2}}
\end{equation*}
is a functional acting on the functions $u,\tu$ at time $t$. It has been shown in \cite{Z2005} (see Lemmas 5.4, 5.5 and 5.7 therein) that 
\begin{enumerate}[label=(\arabic*)]
\item if $\hf$ is a general monotone flux,
\begin{equation}\label{monotone Z estimate}
    \sum_{j} Z_j[\tu,u]\le 
        C\left( 1+\|\xi\|_{L^2(I)}+\frac{1}{h} \|e\|_{L^2(I)}^2 \right) \|\xi\|_{L^2(I)}^2 + C\left( 1+\frac{1}{h} \|e\|_{L^2(I)}^2 \right) h^{2k+1};
\end{equation}

\item if $\hf$ is an upwind flux,
\begin{equation}\label{upwind Z estimate}
    \sum_{j} Z_j[\tu,u]\le 
        C\left( 1 + \frac{1}{h}\|e\|_{L^2(I)}+\frac{1}{h^2} \|e\|_{L^2(I)}^2 \right) \|\xi\|_{L^2(I)}^2 + C\left( 1+\frac{1}{h^2} \|e\|_{L^2(I)}^2 \right) h^{2k+2}.
\end{equation}
\end{enumerate}
In \eqref{monotone Z estimate} and \eqref{upwind Z estimate}, $C$ depends on the uniform bound on $\tu$ and its derivatives in $I\times [0,T]$, but not on $t$, $u$ and $h$.

In order to estimate the last term in \eqref{error equation 2}, we need the following lemma:

\begin{lemma}\label{lemma: second term estimate}
Under the assumptions of Theorem \ref{theorem: error estimate}, we have
\begin{align}
    \sum_{j} D_j[u(\cdot,t);r,\epsilon_1,\epsilon_2]\int_{I_j}  ( u(x,t) - \bu_j(t))  \xi(x,t) \ud x \le C(\|u(\cdot,t)\|_{L^\infty(I)}) \left(  \|\xi\|^2_{L^2(I)} + h^{2k+2} \right), \label{second term estimate}
\end{align}
where $C(\|u(\cdot,t)\|_{L^\infty(I)})<\infty$ is a constant which depends on $t$ only through the $L^\infty$-norm of $u$ at time $t$; it also depends on the uniform bound on $\tu$ and its derivatives in $I\times [0,T]$, but not on $h$.
\end{lemma}

\begin{proof}
Within the proof of this lemma, we adopt the notation $C_\star$ to denote any such constant $C(\|u(\cdot,t)\|_{L^\infty(I)})$ described in the statement above, and suppress the variable $t$ as we used to, since we may assume $t$ is fixed. We still use $C$ to denote those constants which are independent of $\|u(\cdot,t)\|_{L^\infty(I)}$ and $h$, but still depends on the uniform bound on $\tu$ and its derivatives in $I\times [0,T]$. Substituting $u=\bQ \tu - \xi$ and $\bu_j=\overline{(\bQ \tu)}_j - \overline{\xi}_j$ on the left-hand side of \eqref{second term estimate}, we get
\begin{align}
    &\sum_{j} D_j[u;r,\epsilon_1,\epsilon_2]\int_{I_j}  ( u - \bu_j)  \xi \ud x \nonumber \\
    = & \sum_{j} D_j[u;r,\epsilon_1,\epsilon_2]\int_{I_j}  ( \bQ \tu - \overline{(\bQ \tu)}_j)  \xi \ud x - \sum_{j} D_j[u;r,\epsilon_1,\epsilon_2]\int_{I_j}  ( \xi - \overline{\xi}_j)  \xi \ud x \nonumber \\
    = & \sum_{j} D_j[u;r,\epsilon_1,\epsilon_2]\int_{I_j}  ( \bQ \tu - \overline{(\bQ \tu)}_j)  \xi \ud x - \sum_{j} D_j[u;r,\epsilon_1,\epsilon_2]\int_{I_j}  ( \xi - \overline{\xi}_j)^2 \ud x \nonumber \\
    \le & \sum_{j} D_j[u;r,\epsilon_1,\epsilon_2]\int_{I_j}  ( \bQ \tu - \overline{(\bQ \tu)}_j)  \xi \ud x \nonumber \\
    \le & \sum_{j} D_j[u;r,\epsilon_1,\epsilon_2] \|\bQ \tu-\overline{(\bQ \tu)}_j\|_{L^2(I_j)} \|\xi\|_{L^2(I_j)}. \nonumber \\
    \le & C \sum_{j} h_j^{3/2} D_j[u;r,\epsilon_1,\epsilon_2] \|\xi\|_{L^2(I_j)}, \label{second term estimate 2}
\end{align}
where the last inequality follows from
\begin{align*}
\|\bQ \tu-\overline{(\bQ \tu)}_j\|_{L^2(I_j)}\le \|\bQ \tu-\tu\|_{L^2(I_j)}+\|\tu-\overline{(\tu)}_j\|_{L^2(I_j)}+\|\overline{(\tu-\bQ \tu)}_j\|_{L^2(I_j)}\le Ch_j^{3/2}.
\end{align*}
Now we proceed to estimate $D_j[u;r,\epsilon_1,\epsilon_2]$. By triangle inequality,
\begin{align}
\left\| \partial_x^{r}f(u)\right\|_{L^2(I_j)} \le& \left\| \partial_x^{r}f(u)-\partial_x^{r}f(\bQ \tu) \right\|_{L^2(I_j)} \nonumber \\
&+ \left\| \partial_x^{r}f(\bQ \tu)-\partial_x^{r}f(\tu) \right\|_{L^2(I_j)} \nonumber \\
&+ \left\| \partial_x^{r}f(\tu) \right\|_{L^2(I_j)}. \label{error triangle}
\end{align}
Set $a_i=\partial_x^i u /i!$, $b_i=\partial_x^i (\bQ \tu) /i!$ and $c_i=\partial_x^i \tu /i!$.  By Faà di Bruno's formula \cite{Hairer},
\begin{equation}\label{Faà di Bruno's formula}
    \partial_x^{r}f(u)=\sum_{\substack{m_1,\cdots,m_{r}\ge 0 \\ m_1+2m_2+\cdots+rm_r=r}} \frac{r !}{m_1 ! \cdots m_{r} !} f^{(m_1+\cdots+m_{r})}(u) \prod_{i=1}^{r}\left( a_i \right)^{m_i},
\end{equation}
so that
\begin{align}
    &\partial_x^{r}f(u)-\partial_x^{r}f(\bQ \tu) \nonumber\\
    =& \sum_{\substack{m_1,\cdots,m_{r}\ge 0 \\ m_1+2m_2+\cdots+rm_{r}=r}} \frac{r !}{m_1 ! \cdots m_{r} !} \left(f^{(m_1+\cdots+m_{r})}(u) - f^{(m_1+\cdots+m_{r})}(\bQ \tu) \right)\prod_{i=1}^{r}\left( a_i \right)^{m_i} \nonumber\\
    & + \sum_{\substack{m_1,\cdots,m_{r}\ge 0 \\ m_1+2m_2+\cdots+rm_{r}=r}} \frac{r !}{m_1 ! \cdots m_{r} !} f^{(m_1+\cdots+m_{r})}(\bQ \tu) \left(  \prod_{i=1}^{r}\left( a_i \right)^{m_i} - \prod_{i=1}^{r}\left( b_i \right)^{m_i} \right). \label{u-Qtu 1}
\end{align}
Note that
\begin{align}
\prod_{i=1}^{r}(a_i)^{m_i}-\prod_{i=1}^{r}(b_i)^{m_i}=& \sum_{i=1}^{r} ((a_i)^{m_i}-(b_i)^{m_i})\prod_{l=1}^{i-1} (b_l)^{m_l} \prod_{l=i+1}^{r} (a_l)^{m_l} \nonumber\\
=& \sum_{i=1}^{r} (a_i-b_i)\left( \sum_{l=0}^{m_i-1} a_i^l b_i^{m_i-1-l} \right)\prod_{l=1}^{i-1} (b_l)^{m_l} \prod_{l=i+1}^{r} (a_l)^{m_l}, \label{diff prod ai bi}
\end{align}
and also
\begin{align*}
&\|a_i-b_i\|_{L^2(I_j)} \le \frac{C}{h_j^i} \|\xi\|_{L^2(I_j)},\\
&\|a_i\|_{L^\infty(I_j)} \le \frac{C}{h_j^i} \|u\|_{L^\infty(I_j)} \le \frac{C_\star}{h_j^i},\\
&\|b_i\|_{L^\infty(I_j)} \le \frac{C}{h_j^i} \|\bQ \tu\|_{L^\infty(I_j)} \le \frac{C}{h_j^i}.
\end{align*}
Therefore, the exponent of $\frac{1}{h_j}$ in \eqref{diff prod ai bi} is
\begin{align*}
    &i+i(m_i-1)+(m_1+2m_2+\cdots+(i-1)m_{i-1})+((i+1)m_{i+1}+\cdots+rm_{r})\\
    =&m_1+2m_2+\cdots+rm_{r}=r,
\end{align*}
which implies
\begin{align*}
\left\| \prod_{i=1}^{r}(a_i)^{m_i}-\prod_{i=1}^{r}(b_i)^{m_i} \right\|_{L^2(I_j)}\le \frac{C_\star}{ h_j^{r}} \|\xi\|_{L^2(I_j)} .
\end{align*}
Also, by mean value theorem and the $L^\infty$-bound on $u$ and $\bQ \tu$,
\begin{align*}
\left\| f^{(m_1+\cdots+m_{r})}(u) - f^{(m_1+\cdots+m_{r})}(\bQ \tu) \right\|_{L^2(I_j)} \le C_\star \|\xi\|_{L^2(I_j)}.
\end{align*}
Hence, by \eqref{u-Qtu 1} and the two inequalities above,
\begin{equation}
\left\| \partial_x^{r}f(u)-\partial_x^{r}f(\bQ \tu) \right\|_{L^2(I_j)}\le \frac{C_\star}{ h_j^{r}}\|\xi\|_{L^2(I_j)}.\label{u-Qtu 2}
\end{equation}
To estimate $\left\|\partial_x^{r}f(\bQ \tu)-\partial_x^{r}f(\tu)\right\|_{L^2(I_j)}$, we proceed similarly. First, we compute
\begin{align}
    &\partial_x^{r}f(\bQ \tu)-\partial_x^{r}f(\tu) \nonumber\\
    =& \sum_{\substack{m_1,\cdots,m_{r}\ge 0 \\ m_1+2m_2+\cdots+rm_{r}=r}} \frac{r !}{m_1 ! \cdots m_{r} !} \left(f^{(m_1+\cdots+m_{r})}(\bQ \tu) - f^{(m_1+\cdots+m_{r})}(\tu) \right)\prod_{i=1}^{r}\left( c_i \right)^{m_i} \nonumber\\
    & + \sum_{\substack{m_1,\cdots,m_{r}\ge 0 \\ m_1+2m_2+\cdots+rm_{r}=r}} \frac{r !}{m_1 ! \cdots m_{r} !} f^{(m_1+\cdots+m_{r})}(\bQ \tu) \left(  \prod_{i=1}^{r}\left( b_i \right)^{m_i} - \prod_{i=1}^{r}\left( c_i \right)^{m_i} \right). \label{Qtu-tu 1}
\end{align}
Next, we have
\begin{align}
\prod_{i=1}^{r}(b_i)^{m_i}-\prod_{i=1}^{r}(c_i)^{m_i}= \sum_{i=1}^{r} (b_i-c_i)\left( \sum_{l=0}^{m_i-1} b_i^l c_i^{m_i-1-l} \right)\prod_{l=1}^{i-1} (c_l)^{m_l} \prod_{l=i+1}^{r} (b_l)^{m_l}, \label{diff prod bi ci}
\end{align}
It follows from Bramble–Hilbert lemma that
\begin{align*}
&\|b_i-c_i\|_{L^2(I_j)} \le C h_j^{k+1-i} |\tu|_{H^{k+1}(I_j)}\le C h_j^{k+\frac{3}{2}-i},\\
&\|b_i-c_i\|_{L^\infty(I_j)} \le C h_j^{k+1-i} \|\partial_x^{k+1} \tu \|_{L^\infty(I_j)}\le C h_j^{k+1-i},\\
&\|c_i\|_{L^\infty(I_j)} \le C,\\
&\|b_i\|_{L^\infty(I_j)} \le \|b_i-c_i\|_{L^\infty(I_j)} + \|c_i\|_{L^\infty(I_j)} \le C h_j^{\min\{0,k+1-i\}}.
\end{align*}
Recall that $i\le r\le k+2$ and $m_r\le 1$. Therefore,
\begin{align*}
\left\| \prod_{i=1}^{r}(b_i)^{m_i}-\prod_{i=1}^{r}(c_i)^{m_i} \right\|_{L^2(I_j)}\le C h_j^{k+\frac{3}{2}-r}.
\end{align*}
By mean value theorem and the $L^\infty$-bound on $u$ and $\bQ \tu$,
\begin{align*}
\left\| f^{(m_1+\cdots+m_{r})}(\bQ \tu) - f^{(m_1+\cdots+m_{r})}(\tu) \right\|_{L^2(I_j)} \le C \|\eta\|_{L^2(I_j)}.
\end{align*}
Hence, by \eqref{Qtu-tu 1} and the two inequalities above,
\begin{equation}
\left\| \partial_x^{r}f(\bQ \tu)-\partial_x^{r}f(\tu) \right\|_{L^2(I_j)}\le  C\|\eta\|_{L^2(I_j)} + C h_j^{k+\frac{3}{2}-r}.\label{Qtu-tu 2}
\end{equation}
Finally, since $\tu$ is smooth, we have the trivial bound
\begin{equation}
\left\| \partial_x^{r}f(\tu) \right\|_{L^2(I_j)}\le  C h_j^{\frac{1}{2}}.\label{only tu}
\end{equation}
Combining \eqref{error triangle}, \eqref{u-Qtu 2}, \eqref{Qtu-tu 2} and \eqref{only tu}, we get
\begin{align*}
h_j^{r}\left\| \partial_x^{r}f(u)\right\|_{L^2(I_j)} \le& C_\star \|\xi\|_{L^2(I_j)} + h_j^{r} C\|\eta\|_{L^2(I_j)} + C h_j^{\min\{k+\frac32,r+\frac12\}} \\
\le& C_\star \|\xi\|_{L^2(I_j)} + h_j^{r} C\|\eta\|_{L^2(I_j)} + C h_j^{k+\frac32},
\end{align*}
where the last inequality follows from $k+1\le r \le k+2$. This takes care of the first term in \eqref{def D_j again}. The second term in \eqref{def D_j} can be easily bounded by
\begin{align*}
|[\![u]\!]|_{j-\frac{1}{2}}+|[\![u]\!]|_{j+\frac{1}{2}}\le& |[\![\xi]\!]|_{j-\frac{1}{2}}+|[\![\eta]\!]|_{j-\frac{1}{2}}+|[\![\xi]\!]|_{j+\frac{1}{2}}+|[\![\eta]\!]|_{j+\frac{1}{2}}
\end{align*}
since the jumps of $\tu$ at cell interfaces are all zero. Therefore, taking into account $\|\eta\|_{L^2(I)} \le C h^{k+1}$, \eqref{second term estimate 2} can be estimated by
\begin{align*}
&C \sum_{j} h_j^{3/2} D_j[u;r,\epsilon_1,\epsilon_2] \|\xi\|_{L^2(I_j)} \\
\le& C \sum_j h_j^{r} \|\partial_x^{r}f(u)\|_{L^2(I_j)} \|\xi\|_{L^2(I_j)} + C \sqrt{h} \sum_j \left( |[\![u]\!]_{j-\frac{1}{2}}|+|[\![u]\!]_{j+\frac{1}{2}}| \right) \|\xi\|_{L^2(I_j)} \\
\le& C_\star \|\xi\|^2_{L^2(I)} + C h^r \|\eta\|^2_{L^2(I)} + C h^{2k+2} + C \sqrt{h} \sqrt{ \sum_j [\![\xi]\!]^2_{j+\frac{1}{2}} + \sum_j [\![\eta]\!]^2_{j+\frac{1}{2}} } \|\xi\|_{L^2(I)} \\
\le& C_\star \|\xi\|^2_{L^2(I)} + C h^{2k+2} + C \left( \|\xi\|_{L^2(I)} +\|\eta\|_{L^2(I)} \right) \|\xi\|_{L^2(I)}\\
\le& C_\star \|\xi\|^2_{L^2(I)} + C h^{2k+2}
\end{align*}
and the lemma is proved.
\end{proof}

Now, we begin to prove \eqref{upwind flux error estimate} and \eqref{monotone flux error estimate} using a bootstrap argument. We first suppose 
\begin{equation}\tag{Bootstrap Hypothesis}\label{Bootstrap Hypothesis}
    \|e(\cdot,t)\|_{L^2(I)}\le h \quad \text{for all $t\in[0,T]$.}
\end{equation}
Under this hypothesis, $\|u(\cdot,t)\|_{L^\infty(I)}\le C$ for all $t\in[0,T]$. By Lemma \ref{lemma: second term estimate}, 
\begin{align}\label{second term estimate induction}
    \sum_{j} D_j[u(\cdot,t);r,\epsilon_1,\epsilon_2]\int_{I_j}  ( u(x,t) - \bu_j(t))  \xi(x,t) \ud x \le C \|\xi\|^2_{L^2(I)} + C h^{2k+2} , 
\end{align}
where the constant $C$ no longer depends on $t$. Moreover, it follows from \eqref{monotone Z estimate} and \eqref{upwind Z estimate} that
\begin{enumerate}[label=(\arabic*)]
\item if $\hf$ is a general monotone flux,
\begin{equation*}
    \sum_{j} Z_j[\tu,u]\le 
        C\|\xi\|_{L^2(I)}^2 + Ch^{2k+1}.
\end{equation*}
Therefore, by \eqref{error equation 2} and \eqref{second term estimate induction},
\begin{align*}
    \frac{1}{2}\odv{}{t}\|\xi(\cdot,t)\|^2_{L^2(I)} \le C\|\xi\|_{L^2(I)}^2 + Ch^{2k+1}.
\end{align*}
Applying Gronwall's inequality, we obtain
\begin{align*}
    \|\xi(\cdot,t)\|^2_{L^2(I)} \le \|\xi(\cdot,0)\|^2_{L^2(I)}+Ch^{2k+1}\le Ch^{2k+1}, \quad \forall t\in[0,T],
\end{align*}
so that
\begin{align*}
    \|e(\cdot,t)\|_{L^2(I)} \le \|\xi(\cdot,t)\|_{L^2(I)}+\|\eta(\cdot,t)\|_{L^2(I)}\le Ch^{k+\frac12}. \quad \forall t\in[0,T].
\end{align*}
Now, we can take $h>0$ sufficiently small so that $Ch^{k+\frac12}\le h$, which completes the bootstrap step and the proof.
\item If $\hf$ is an upwind flux,
\begin{equation*}
    \sum_{j} Z_j[\tu,u]\le 
        C \|\xi\|_{L^2(I)}^2 + C h^{2k+2}.
\end{equation*}
We proceed as the monotone case and obtain
\begin{align*}
    \|\xi(\cdot,t)\|^2_{L^2(I)} \le \|\xi(\cdot,0)\|^2_{L^2(I)}+Ch^{2k+2}\le Ch^{2k+2}, \quad \forall t\in[0,T],
\end{align*}
and
\begin{align*}
    \|e(\cdot,t)\|_{L^2(I)} \le \|\xi(\cdot,t)\|_{L^2(I)}+\|\eta(\cdot,t)\|_{L^2(I)}\le Ch^{k+1}. \quad \forall t\in[0,T].
\end{align*}
Now, we can take $h>0$ sufficiently small so that $Ch^{k+1}\le h$, which completes the bootstrap step and the proof.
\end{enumerate} 

Now, we have proved Theorem \ref{theorem: error estimate} for \eqref{scheme D}. Since $u$ is uniformly bounded for all $t\in[0,T]$ under \eqref{Bootstrap Hypothesis}, we can find $\epsilon_1,\epsilon_2$ such that $D_j[u;r,\epsilon_1,\epsilon_2]\ge B_j[u;U]$ and $D_j[u;r,\epsilon_1,\epsilon_2]\ge C_j[u;U,r]$ (recall this is how we constructed $D_j$ so that we could use monotonicity of remainder). Thus, our estimate for $D_j[u;r,\epsilon_1,\epsilon_2]$ in Lemma \ref{lemma: second term estimate} can be directly applied to $B_j[u;U]$ and $C_j[u;U,r]$, and in the same way we can prove Theorem \ref{theorem: error estimate} for \eqref{scheme B} and \eqref{scheme C}.

\end{proof}

\subsection{Numerical tests for optimal order of convergence}
\begin{expl}
We first test the optimal order of convergence proved in Theorem \ref{theorem: error estimate} for \eqref{scheme D} with polynomial degrees $k=1,2,3$ and $k+1 \le r \le k+2$. Consider the nonlinear Burgers equation
\begin{equation*}
    u_t + \left( \frac{u^2}{2} \right)_x = 0
\end{equation*}
with the initial condition $u_0(x)=\sin(x)+0.5$. The computational domain is $I=[0, 2\pi]$ with periodic boundary conditions. This is the same example from \cite{OFDG}. The final time is taken to be $T=0.6$, so that the exact solution remains smooth. We use the classic fourth order Runge--Kutta method for time discretizations. The CFL condition is $\Delta t = \mathcal{O}(h)$. In all tests, we adopt the local Lax-Friedrichs numerical flux and set $\epsilon_1=\epsilon_2=1$.

We test the accuracy on a series of uniform meshes, with the total number of cells equal to $N=16,32,\cdots,512$. The $L^1$, $L^2$ and $L^\infty$ errors and their orders of convergence for the cases $r=k+1$ and $r=k+2$ are shown in Tables \ref{tab:burgers_convergence k+1} and \ref{tab:burgers_convergence k+2}, respectively. We can see that the numerical results matches with the statement of Theorem \ref{theorem: error estimate}.

\begin{table}[htbp]
\begin{adjustbox}{width=\columnwidth,center}
\begin{tabular}{c|r|cc|cc|cc}
\hline
 & $N$
& $L^1$ error & Order
& $L^2$ error & Order
& $L^\infty$ error & Order \\
\hline
\multirow{6}{*}{$\mathbb{P}^1$} & 16  & $5.858936\times10^{-2}$ & --     & $3.913706\times10^{-2}$ & --     & $9.937107\times10^{-2}$ & --     \\
 & 32  & $1.375636\times10^{-2}$ & 2.0905 & $9.450200\times10^{-3}$ & 2.0501 & $2.300587\times10^{-2}$ & 2.1108 \\
 & 64  & $3.271946\times10^{-3}$ & 2.0719 & $2.399096\times10^{-3}$ & 1.9779 & $7.577038\times10^{-3}$ & 1.6023 \\
 & 128 & $7.845710\times10^{-4}$ & 2.0602 & $5.862501\times10^{-4}$ & 2.0329 & $2.055943\times10^{-3}$ & 1.8818 \\
 & 256 & $1.921616\times10^{-4}$ & 2.0296 & $1.471028\times10^{-4}$ & 1.9947 & $5.553945\times10^{-4}$ & 1.8882 \\
 & 512 & $4.746975\times10^{-5}$ & 2.0172 & $3.662547\times10^{-5}$ & 2.0059 & $1.346929\times10^{-4}$ & 2.0438 \\
\hline
\multirow{6}{*}{$\mathbb{P}^2$} & 16  & $5.579540\times10^{-2}$ & --     & $6.537560\times10^{-2}$ & --     & $2.125512\times10^{-1}$ & --     \\
 & 32  & $6.356686\times10^{-3}$ & 3.1338 & $9.017231\times10^{-3}$ & 2.8580 & $3.935147\times10^{-2}$ & 2.4333 \\
 & 64  & $4.968015\times10^{-4}$ & 3.6775 & $8.178954\times10^{-4}$ & 3.4627 & $6.172782\times10^{-3}$ & 2.6724 \\
 & 128 & $3.620624\times10^{-5}$ & 3.7784 & $5.919958\times10^{-5}$ & 3.7883 & $4.116191\times10^{-4}$ & 3.9065 \\
 & 256 & $2.977955\times10^{-6}$ & 3.6038 & $5.265680\times10^{-6}$ & 3.4909 & $6.562058\times10^{-5}$ & 2.6491 \\
 & 512 & $2.648582\times10^{-7}$ & 3.4910 & $4.668005\times10^{-7}$ & 3.4957 & $6.501662\times10^{-6}$ & 3.3353 \\
\hline
\multirow{6}{*}{$\mathbb{P}^3$} & 16  & $3.098316\times10^{-1}$ & --     & $2.214555\times10^{-1}$ & --     & $3.821968\times10^{-1}$ & --     \\
 & 32  & $6.271633\times10^{-2}$ & 2.3046 & $8.178122\times10^{-2}$ & 1.4372 & $1.780029\times10^{-1}$ & 1.1024 \\
 & 64  & $1.298156\times10^{-3}$ & 5.5943 & $3.586751\times10^{-3}$ & 4.5110 & $1.591189\times10^{-2}$ & 3.4837 \\
 & 128 & $1.287967\times10^{-5}$ & 6.6552 & $2.873509\times10^{-5}$ & 6.9637 & $1.597338\times10^{-4}$ & 6.6383 \\
 & 256 & $2.857467\times10^{-7}$ & 5.4942 & $4.912621\times10^{-7}$ & 5.8702 & $2.823307\times10^{-6}$ & 5.8221 \\
 & 512 & $9.106452\times10^{-9}$ & 4.9717 & $1.517097\times10^{-8}$ & 5.0171 & $9.072239\times10^{-8}$ & 4.9598 \\
\hline
\end{tabular}
\end{adjustbox}
\caption{Errors and convergence orders for \eqref{scheme D} with polynomial degrees $k=1,2,3$ and $r=k+1$.}
\label{tab:burgers_convergence k+1}
\end{table}

\begin{table}[htbp]
\begin{adjustbox}{width=\columnwidth,center}
\begin{tabular}{c|r|cc|cc|cc}
\hline
 & $N$
& $L^1$ error & Order
& $L^2$ error & Order
& $L^\infty$ error & Order \\
\hline
\multirow{6}{*}{$\mathbb{P}^1$} & 16  & $4.294674\times10^{-2}$ & --     & $2.754407\times10^{-2}$ & --     & $5.637620\times10^{-2}$ & --     \\
 & 32  & $1.099892\times10^{-2}$ & 1.9652 & $7.523647\times10^{-3}$ & 1.8722 & $2.174760\times10^{-2}$ & 1.3742 \\
 & 64  & $2.785689\times10^{-3}$ & 1.9813 & $2.031890\times10^{-3}$ & 1.8886 & $6.226255\times10^{-3}$ & 1.8044 \\
 & 128 & $7.101160\times10^{-4}$ & 1.9719 & $5.367464\times10^{-4}$ & 1.9205 & $1.692266\times10^{-3}$ & 1.8794 \\
 & 256 & $1.802998\times10^{-4}$ & 1.9777 & $1.393244\times10^{-4}$ & 1.9458 & $4.389260\times10^{-4}$ & 1.9469 \\
 & 512 & $4.559686\times10^{-5}$ & 1.9834 & $3.566824\times10^{-5}$ & 1.9657 & $1.115395\times10^{-4}$ & 1.9764 \\
\hline
\multirow{6}{*}{$\mathbb{P}^2$} & 16  & $1.854099\times10^{-2}$ & --     & $1.849425\times10^{-2}$ & --     & $6.238941\times10^{-2}$ & --     \\
 & 32  & $1.645980\times10^{-3}$ & 3.4937 & $1.991278\times10^{-3}$ & 3.2153 & $6.517240\times10^{-3}$ & 3.2590 \\
 & 64  & $1.080701\times10^{-4}$ & 3.9289 & $1.303109\times10^{-4}$ & 3.9337 & $5.678523\times10^{-4}$ & 3.5207 \\
 & 128 & $9.718192\times10^{-6}$ & 3.4751 & $1.100091\times10^{-5}$ & 3.5663 & $6.984597\times10^{-5}$ & 3.0233 \\
 & 256 & $1.091877\times10^{-6}$ & 3.1539 & $1.259977\times10^{-6}$ & 3.1262 & $9.076374\times10^{-6}$ & 2.9440 \\
 & 512 & $1.322708\times10^{-7}$ & 3.0452 & $1.559972\times10^{-7}$ & 3.0138 & $1.170951\times10^{-6}$ & 2.9544 \\
\hline
\multirow{6}{*}{$\mathbb{P}^3$} & 16  & $2.167622\times10^{-1}$ & --     & $2.113418\times10^{-1}$ & --     & $4.010803\times10^{-1}$ & --     \\
 & 32  & $5.608432\times10^{-2}$ & 1.9504 & $7.943122\times10^{-2}$ & 1.4118 & $1.981113\times10^{-1}$ & 1.0176 \\
 & 64  & $6.803013\times10^{-5}$ & 9.6872 & $9.882538\times10^{-5}$ & 9.6506 & $3.113658\times10^{-4}$ & 9.3135 \\
 & 128 & $1.058955\times10^{-6}$ & 6.0055 & $1.542502\times10^{-6}$ & 6.0015 & $8.865795\times10^{-6}$ & 5.1342 \\
 & 256 & $2.144179\times10^{-8}$ & 5.6261 & $3.149298\times10^{-8}$ & 5.6141 & $2.284965\times10^{-7}$ & 5.2780 \\
 & 512 & $6.680798\times10^{-10}$ & 5.0043 & $1.000724\times10^{-9}$ & 4.9759 & $8.485364\times10^{-9}$ & 4.7511 \\
\hline
\end{tabular}
\end{adjustbox}
\caption{Errors and convergence orders for \eqref{scheme D} with polynomial degrees $k=1,2,3$ and $r=k+2$.}
\label{tab:burgers_convergence k+2}
\end{table}

\end{expl}

\FloatBarrier
\begin{expl}
Although Theorem \ref{theorem: error estimate} only covers 1D scalar conservation laws, we nevertheless proceed to the case of 2D scalar conservation laws. Consider the nonlinear 2D Burgers equation
\begin{equation*}
    u_t + \left( u^2 \right)_x + \left( u^2 \right)_y = 0
\end{equation*}
with the initial condition $u_0(x,y)=0.5 \sin(2\pi (x+y) )$. We test the optimal order of convergence for \eqref{scheme D''} with polynomial degrees $k=1,2,3$ and $k+1 \le r \le k+2$. The computational domain is $I=[0, 1]\times [0,1]$ with periodic boundary conditions. This is the same example from \cite{CHEN2017427}. The final time is taken to be $T=0.05$, so that the exact solution remains smooth. We use the classic fourth order Runge--Kutta method for time discretizations. The CFL condition is $\Delta t = \mathcal{O}(h)$. In all tests, we adopt the local Lax-Friedrichs numerical flux and set $\epsilon_1=\epsilon_2=1$.

We test the accuracy on a series of uniform rectangular meshes with $N_x=N_y=16,32,\cdots,512$. The $L^1$, $L^2$ and $L^\infty$ errors and their orders of convergence for the cases $r=k+1$ and $r=k+2$ are shown in Tables \ref{tab:burgers-convergence-Dpp-k-plus-1} and \ref{tab:burgers-convergence-Dpp-k-plus-2}, respectively. The convergence orders are indeed optimal. Note that the $L^1$ and $L^2$ errors are evaluated using a tensor-product 10-point Gauss–Legendre rule on every rectangular cell, while the $L^\infty$ error uses a separate uniform point set that includes cell boundaries; super-convergence in $L^1$ and $L^2$ errors can be observed for $\mathbb{P}^3$ elements.

\begin{table}[htbp]
\begin{adjustbox}{width=\columnwidth,center}
\begin{tabular}{c|r|cc|cc|cc}
\hline
 & $N$
& $L^1$ error & Order
& $L^2$ error & Order
& $L^\infty$ error & Order \\
\hline
\multirow{6}{*}{$\mathbb{P}^1$}
 & 16  & $8.873109\times10^{-3}$ & --    & $1.516498\times10^{-2}$ & --    & $1.535248\times10^{-1}$ & --    \\
 & 32  & $2.219736\times10^{-3}$ & 1.999 & $4.294241\times10^{-3}$ & 1.820 & $5.490943\times10^{-2}$ & 1.483 \\
 & 64  & $5.276571\times10^{-4}$ & 2.073 & $1.079269\times10^{-3}$ & 1.992 & $1.371005\times10^{-2}$ & 2.002 \\
 & 128 & $1.250893\times10^{-4}$ & 2.077 & $2.612186\times10^{-4}$ & 2.047 & $2.932631\times10^{-3}$ & 2.225 \\
 & 256 & $3.006254\times10^{-5}$ & 2.057 & $6.402969\times10^{-5}$ & 2.028 & $8.263882\times10^{-4}$ & 1.827 \\
 & 512 & $7.332548\times10^{-6}$ & 2.036 & $1.593880\times10^{-5}$ & 2.006 & $2.199474\times10^{-4}$ & 1.910 \\
\hline
\multirow{6}{*}{$\mathbb{P}^2$}
 & 16  & $3.851715\times10^{-3}$ & --    & $8.276021\times10^{-3}$ & --    & $1.020042\times10^{-1}$ & --    \\
 & 32  & $5.841907\times10^{-4}$ & 2.721 & $1.750315\times10^{-3}$ & 2.241 & $2.323799\times10^{-2}$ & 2.134 \\
 & 64  & $6.805218\times10^{-5}$ & 3.102 & $2.180787\times10^{-4}$ & 3.005 & $3.479090\times10^{-3}$ & 2.740 \\
 & 128 & $6.682944\times10^{-6}$ & 3.348 & $2.203113\times10^{-5}$ & 3.307 & $4.685234\times10^{-4}$ & 2.893 \\
 & 256 & $6.231348\times10^{-7}$ & 3.423 & $2.062058\times10^{-6}$ & 3.417 & $6.001436\times10^{-5}$ & 2.965 \\
 & 512 & $6.120003\times10^{-8}$ & 3.348 & $2.041695\times10^{-7}$ & 3.336 & $7.359222\times10^{-6}$ & 3.028 \\
\hline
\multirow{6}{*}{$\mathbb{P}^3$}
 & 16  & $4.058691\times10^{-2}$ & --    & $6.319777\times10^{-2}$ & --    & $3.127916\times10^{-1}$ & --    \\
 & 32  & $1.042123\times10^{-2}$ & 1.961 & $2.798347\times10^{-2}$ & 1.175 & $1.871199\times10^{-1}$ & 0.741 \\
 & 64  & $2.347751\times10^{-3}$ & 2.150 & $1.071306\times10^{-2}$ & 1.385 & $9.616082\times10^{-2}$ & 0.960 \\
 & 128 & $7.096482\times10^{-5}$ & 5.048 & $8.266929\times10^{-4}$ & 3.696 & $1.557553\times10^{-2}$ & 2.626 \\
 & 256 & $3.059249\times10^{-7}$ & 7.858 & $2.532318\times10^{-6}$ & 8.351 & $6.318595\times10^{-5}$ & 7.945 \\
 & 512 & $1.004042\times10^{-8}$ & 4.929 & $6.985113\times10^{-8}$ & 5.180 & $2.634123\times10^{-6}$ & 4.584 \\
\hline
\end{tabular}
\end{adjustbox}
\caption{Errors and convergence orders for \eqref{scheme D''} with polynomial degrees
$k=1,2,3$ and $r=k+1$.}
\label{tab:burgers-convergence-Dpp-k-plus-1}
\end{table}

\begin{table}[htbp]
\begin{adjustbox}{width=\columnwidth,center}
\begin{tabular}{c|r|cc|cc|cc}
\hline
 & $N$
& $L^1$ error & Order
& $L^2$ error & Order
& $L^\infty$ error & Order \\
\hline
\multirow{6}{*}{$\mathbb{P}^1$}
 & 16  & $6.745922\times10^{-3}$ & --    & $1.206151\times10^{-2}$ & --    & $1.380574\times10^{-1}$ & --    \\
 & 32  & $1.762432\times10^{-3}$ & 1.936 & $3.512284\times10^{-3}$ & 1.780 & $4.127613\times10^{-2}$ & 1.742 \\
 & 64  & $4.419675\times10^{-4}$ & 1.996 & $9.224084\times10^{-4}$ & 1.929 & $1.129637\times10^{-2}$ & 1.869 \\
 & 128 & $1.108997\times10^{-4}$ & 1.995 & $2.388377\times10^{-4}$ & 1.949 & $3.236580\times10^{-3}$ & 1.803 \\
 & 256 & $2.788790\times10^{-5}$ & 1.992 & $6.146162\times10^{-5}$ & 1.958 & $8.705389\times10^{-4}$ & 1.894 \\
 & 512 & $7.020283\times10^{-6}$ & 1.990 & $1.567670\times10^{-5}$ & 1.971 & $2.256230\times10^{-4}$ & 1.948 \\
\hline
\multirow{6}{*}{$\mathbb{P}^2$}
 & 16  & $3.458565\times10^{-3}$ & --    & $8.039311\times10^{-3}$ & --    & $9.947845\times10^{-2}$ & --    \\
 & 32  & $3.873141\times10^{-4}$ & 3.159 & $1.131736\times10^{-3}$ & 2.829 & $1.898286\times10^{-2}$ & 2.390 \\
 & 64  & $3.181137\times10^{-5}$ & 3.606 & $9.140592\times10^{-5}$ & 3.630 & $2.517905\times10^{-3}$ & 2.914 \\
 & 128 & $3.184988\times10^{-6}$ & 3.320 & $9.412806\times10^{-6}$ & 3.280 & $3.038137\times10^{-4}$ & 3.051 \\
 & 256 & $3.869309\times10^{-7}$ & 3.041 & $1.192756\times10^{-6}$ & 2.980 & $3.953387\times10^{-5}$ & 2.942 \\
 & 512 & $4.895042\times10^{-8}$ & 2.983 & $1.557787\times10^{-7}$ & 2.937 & $5.673613\times10^{-6}$ & 2.801 \\
\hline
\multirow{6}{*}{$\mathbb{P}^3$}
 & 16  & $4.589640\times10^{-2}$ & --    & $6.740899\times10^{-2}$ & --    & $3.427352\times10^{-1}$ & --    \\
 & 32  & $9.132882\times10^{-3}$ & 2.329 & $2.828390\times10^{-2}$ & 1.253 & $2.050638\times10^{-1}$ & 0.741 \\
 & 64  & $3.319369\times10^{-5}$ & 8.104 & $1.220094\times10^{-4}$ & 7.857 & $1.763706\times10^{-3}$ & 6.861 \\
 & 128 & $8.950958\times10^{-7}$ & 5.213 & $3.976124\times10^{-6}$ & 4.939 & $6.332885\times10^{-5}$ & 4.800 \\
 & 256 & $2.294565\times10^{-8}$ & 5.286 & $1.062076\times10^{-7}$ & 5.226 & $1.641234\times10^{-6}$ & 5.270 \\
 & 512 & $7.024232\times10^{-10}$ & 5.030 & $2.994068\times10^{-9}$ & 5.149 & $1.017521\times10^{-7}$ & 4.012 \\
\hline
\end{tabular}
\end{adjustbox}
\caption{Errors and convergence orders for \eqref{scheme D''} with polynomial degrees
$k=1,2,3$ and $r=k+2$.}
\label{tab:burgers-convergence-Dpp-k-plus-2}
\end{table}

\end{expl}

\section{Strong convergence through compensated compactness for general strictly convex conservation laws}\label{section: strong convergence}

In this section we restrict attention to the scalar equation \eqref{HCL 1D scalar}
with a strictly convex flux ($f''>0$ on $\mathbb{R}$). Let $h:=\max_jh_j$ as in Section \ref{section: error estimate}, but denote the numerical solution by $u_h$ (instead of $u$) to emphasize its role as a sequence parameterized by $h$. We show in Theorem \ref{theorem: strong convergence} that, for any initial data \(\mathcal U_0\in L^\infty(I)\) with $u_h(\cdot,0)\to \mathcal U_0$ strongly in \(L^2(I)\), the sequence of numerical solutions $\{u_h\}$ generated by \eqref{scheme D} with $\epsilon_1>0$ converges strongly to the unique entropy solution as $h\rightarrow 0$ if $\{u_h\}$ is uniformly bounded in $L^\infty$. We prove it using the compensated compactness argument
\cite{Murat1978,Tartar1979,DiPerna1983}. 

We make the following assumptions. The meshes are quasi-uniform, $k\ge1$ is fixed, and $u_h$ denotes a solution of \eqref{scheme D} subject to periodic or compactly supported boundary conditions. For convenience, we only consider the case 
\begin{equation}\label{SC choice r}
    r=k+1,\quad \epsilon_1>0.
\end{equation}
The same argument applies to $r=k+2$, but $\epsilon_1>0$ must be satisfied. We assume that, for every
$T>0$,
\begin{equation}\label{SC Linfinity}
    \|u_h\|_{L^\infty(I\times(0,T))}\le M_T
\end{equation}
uniformly in $h$. We also assume that $f\in C^{k+2}$ and
\begin{equation}\label{SC uniform convexity}
    f''(s)\ge c_f>0,\qquad |s|\le M_T.
\end{equation}
Note that $c_f$ might depend on $M_T$ if $f(.)$ is not uniformly convex. Finally, $\hf$ is assumed to be a consistent locally Lipschitz E-flux whose
square-entropy dissipation is quantitatively coercive: if
\begin{equation}\label{SC square-entropy flux}
    \eta(s):=\frac{s^2}{2},\qquad q'(s):=s f'(s),
\end{equation}
denotes the square-entropy pair\footnote{Here we adopt the notation $(\eta,q)$ instead of $(U,\mF)$ as the entropy-entropy flux pair to avoid confusion with the prescribed entropy $U$ in the definition of \eqref{scheme B} and \eqref{scheme C} from the previous sections.}, then its interface dissipation $\mathcal D_\eta$ satisfies
\begin{equation}\label{SC interface dissipation definition}
    \mathcal D_\eta(a,b)
    :=\int_a^b\bigl(f(s)-\hf(a,b)\bigr)\,\ud s.
\end{equation}
This is \eqref{B proof interface entropy dissipation} specialized to the
square-entropy $U$, for which $U''\equiv1$. We assume that
\begin{equation}\label{SC cubic dissipation assumption}
    \mathcal D_\eta(a,b)\ge c_{\hf}|a-b|^3,
    \qquad |a|,|b|\le M_T.
\end{equation}
This is crucial for deriving the weak-BV estimate \eqref{SC weak BV} below. The following lemma shows that every E-flux (see Definition \ref{def E-flux}) satisfies \eqref{SC cubic dissipation assumption}.
\begin{lemma}
Assume that \eqref{SC uniform convexity} holds. Then every E-flux
satisfies \eqref{SC cubic dissipation assumption}; in particular, one may
take
\[
c_{\hf}=\frac{c_f}{24}.
\]
\end{lemma}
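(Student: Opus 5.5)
The plan is to compare the E-flux dissipation with that of the Godunov flux, and then compute the Godunov dissipation explicitly using convexity of $f$. Fix $a,b$ with $|a|,|b|\le M_T$ and write $h:=\hf(a,b)$.

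\textbf{Case $a<b$.} The E-flux property (Definition \ref{def E-flux}) gives $h\le f(s)$ for every $s\in[a,b]$, hence $h\le \min_{[a,b]}f$. Therefore
\[
\mathcal D_\eta(a,b)=\int_a^b\bigl(f(s)-h\bigr)\,\ud s\ge \int_a^b\bigl(f(s)-\min_{[a,b]}f\bigr)\,\ud s .
\]
Let $s_0\in[a,b]$ be the minimizer of $f$ on $[a,b]$; it exists and is unique by strict convexity. On $[a,b]$ we have $f''\ge c_f$, and either $f'(s_0)=0$ (interior point) or $f'(s_0)$ has the sign that makes $f$ increase into $[a,b]$ (endpoint). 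In both cases Taylor's formula gives
\[
f(s)-f(s_0)\ge \tfrac{c_f}{2}(s-s_0)^2,\qquad s\in[a,b].
\]
Integrating, $\int_a^b (s-s_0)^2\,\ud s$ is minimized when $s_0$ is the midpoint, where it equals $\frac{(b-a)^3}{12}$. This yields $\mathcal D_\eta\ge \frac{c_f}{24}(b-a)^3$.

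\textbf{Case $a>b$.} Now the E-flux property gives $h\ge f(s)$ for all $s\in[b,a]$, so $h\ge\max_{[b,a]}f=\max\{f(a),f(b)\}$ by convexity. Then
\[
\mathcal D_\eta(a,b)=\int_b^a\bigl(h-f(s)\bigr)\,\ud s\ge\int_b^a\bigl(\ell(s)-f(s)\bigr)\,\ud s,
\]
where $\ell$ is the chord of $f$ joining $(b,f(b))$ and $(a,f(a))$; this holds because $\max\{f(a),f(b)\}\ge \ell(s)$ on $[b,a]$. The chord–function gap is bounded below using $f''\ge c_f$. Indeed, $g:=\ell-f$ vanishes at both endpoints and satisfies $g''\le -c_f$, so $g(s)\ge \frac{c_f}{2}(s-b)(a-s)$. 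Integrating gives $\frac{c_f}{12}(a-b)^3\ge\frac{c_f}{24}|a-b|^3$.

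\textbf{Case $a=b$.} The estimate is trivial.

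The only delicate point is the endpoint-minimizer subcase in the first case, where one must check the quadratic lower bound from the one-sided sign of $f'(s_0)$. This is routine. The constant $c_f/24$ comes out of the worst case $s_0$ at the midpoint, and the uniformity $|a|,|b|\le M_T$ is used only to invoke \eqref{SC uniform convexity} on the whole interval between $a$ and $b$.
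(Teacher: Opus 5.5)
Your proof is correct and follows the paper's argument essentially step for step: the E-flux bound $\hf\le\min_{[a,b]}f$ with the quadratic growth away from the minimizer for $a<b$, where your midpoint minimization of $\int_a^b(s-s_0)^2\,\ud s$ is the paper's inequality $x^3+y^3\ge (x+y)^3/4$; and for $a>b$, the endpoint-maximum, chord, and strong-convexity gap $\frac{c_f}{2}(s-b)(a-s)$ giving $\frac{c_f}{12}(a-b)^3$. Your ``comparison with the Godunov flux'' framing is only a relabeling, since $\min_{[a,b]}f$ and $\max_{[b,a]}f$ are exactly the Godunov fluxes in the two cases.
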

\begin{proof}
Let $a,b\in[-M_T,M_T]$.  We prove the estimate with
\begin{equation}\label{SC Eflux cubic constant}
    c_{\hf}:=\frac{c_f}{24},
\end{equation}
which is independent of $a$, $b$, and of the particular E-flux.  The case
$a=b$ is immediate, so it remains to consider the two possible orientations
of the interval.

Suppose first that $a<b$.  Definition~\ref{def E-flux} gives
\[
    \hf(a,b)\le f(s)\qquad\text{for every }s\in[a,b].
\]
Consequently,
\begin{equation}\label{SC Eflux below minimum}
    \hf(a,b)\le \min_{s\in[a,b]}f(s).
\end{equation}
Let $m\in[a,b]$ be a point at which this minimum is attained.  We claim that
\begin{equation}\label{SC convex distance from minimum}
    f(s)-f(m)\ge \frac{c_f}{2}|s-m|^2,
    \qquad s\in[a,b].
\end{equation}
Indeed, uniform convexity gives
\[
    f(s)\ge f(m)+f'(m)(s-m)+\frac{c_f}{2}|s-m|^2.
\]
If $m\in(a,b)$, then $f'(m)=0$.  If $m=a$, then $f'(m)\ge0$ and
$s-m\ge0$; if $m=b$, then $f'(m)\le0$ and $s-m\le0$.  Thus in every case
$f'(m)(s-m)\ge0$, which proves \eqref{SC convex distance from minimum}.
Using \eqref{SC interface dissipation definition},
\eqref{SC Eflux below minimum}, and
\eqref{SC convex distance from minimum}, we obtain
\begin{align*}
    \mathcal D_\eta(a,b)
    &=\int_a^b\bigl(f(s)-\hf(a,b)\bigr)\,\ud s\\
    &\ge \int_a^b\bigl(f(s)-f(m)\bigr)\,\ud s\\
    &\ge \frac{c_f}{2}\int_a^b|s-m|^2\,\ud s\\
    &=\frac{c_f}{6}\left((m-a)^3+(b-m)^3\right).
\end{align*}
For $x,y\ge0$, convexity of $z\mapsto z^3$ gives
\[
    x^3+y^3\ge 2\left(\frac{x+y}{2}\right)^3
    =\frac{(x+y)^3}{4}.
\]
Taking $x=m-a$ and $y=b-m$ therefore yields
\begin{equation}\label{SC Eflux increasing orientation}
    \mathcal D_\eta(a,b)
    \ge \frac{c_f}{24}(b-a)^3.
\end{equation}

Suppose next that $a>b$.  In this case Definition~\ref{def E-flux} gives
\[
    \hf(a,b)\ge f(s) \qquad\text{for every }s\in[b,a],
\]
and hence
\begin{equation}\label{SC Eflux above maximum}
    \hf(a,b)\ge\max\{f(a),f(b)\}.
\end{equation}
Let $\ell$ be the affine function joining the endpoint values of $f$ on
$[b,a]$:
\[
    \ell(s):=\frac{a-s}{a-b}f(b)+\frac{s-b}{a-b}f(a).
\]
Since $\ell(s)$ is a convex combination of $f(a)$ and $f(b)$,
\begin{equation}\label{SC chord below endpoint maximum}
    \ell(s)\le\max\{f(a),f(b)\}.
\end{equation}
Furthermore, $f''\ge c_f$ implies
\begin{equation}\label{SC strong convexity chord estimate}
    \ell(s)-f(s)\ge\frac{c_f}{2}(s-b)(a-s),
    \qquad s\in[b,a].
\end{equation}
For completeness, this follows by applying the ordinary convexity inequality
to the convex function $g(s):=f(s)-c_fs^2/2$: writing
$s=(1-\theta)b+\theta a$, where
$\theta=(s-b)/(a-b)$, gives
\[
    g(s)\le(1-\theta)g(b)+\theta g(a),
\]
and rearrangement is exactly
\eqref{SC strong convexity chord estimate}.

Because the integral in \eqref{SC interface dissipation definition} is now
oriented from $a$ to $b$, equations
\eqref{SC Eflux above maximum}--\eqref{SC strong convexity chord estimate}
give
\begin{align*}
    \mathcal D_\eta(a,b)
    &=\int_b^a\bigl(\hf(a,b)-f(s)\bigr)\,\ud s\\
    &\ge\int_b^a\bigl(\ell(s)-f(s)\bigr)\,\ud s\\
    &\ge\frac{c_f}{2}\int_b^a(s-b)(a-s)\,\ud s\\
    &=\frac{c_f}{12}(a-b)^3\\
    &\ge\frac{c_f}{24}|a-b|^3.
\end{align*}
Together with \eqref{SC Eflux increasing orientation}, this proves
\[
    \mathcal D_\eta(a,b)
    \ge c_{\hf}|a-b|^3,
    \qquad |a|,|b|\le M_T,
\]
with $c_{\hf}=c_f/24$, and hence proves
\eqref{SC cubic dissipation assumption} for every E-flux.
\end{proof}

In what follows, we denote by $C$ any constant that may depend on $k,T$ but independent of $h$. We first record the estimates used in the proof; they cannot be obtained without the first term in the definition of $D_j$ \eqref{def D_j}, which is why we need to assume $\epsilon_1>0$ in \eqref{SC choice r}.

\begin{lemma}[Entropy dissipation and consistency]\label{lemma: SC estimates}
Under the preceding assumptions, the numerical solutions satisfy
\begin{align}
    &\int_0^T\sum_j
    D_j[u_h;k+1,\epsilon_1,\epsilon_2]
    \|u_h-\bu_{h,j}\|_{L^2(I_j)}^2\,\ud t\le C,
    \label{SC cell dissipation}\\
    &\int_0^T\sum_j
    |[\![u_h]\!]_{j+\frac12}|^3\,\ud t\le C,
    \label{SC weak BV}\\
    &\int_0^T\sum_j
    D_j[u_h;k+1,\epsilon_1,\epsilon_2](h_j)^3\,\ud t
    \le C h. \label{SC damping consistency}
\end{align}
Moreover, with the piecewise smooth function $\rho$ defined on $I$ by
\begin{equation}\label{SC projection remainder}
    \rho(x):=f(u_h)_x(x)-\pi[f(u_h)_x](x)\quad \forall x \in I_j,
\end{equation}
one has
\begin{equation}\label{SC rho estimate}
    \|\rho\|_{L^2(I_j)}
    \le C(h_j)^{1/2}D_j.
\end{equation}
Here and below $D_j=D_j[u_h;k+1,\epsilon_1,\epsilon_2]$.
\end{lemma}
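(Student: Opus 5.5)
The plan is to derive all four estimates from the square-entropy identity for \eqref{scheme D}, together with two ingredients already available: the quantitative interface dissipation \eqref{SC cubic dissipation assumption}, and the Bramble--Hilbert bound \eqref{bound on bad term}.

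\emph{Step 1: the energy identity.} Take $v=u_h$ in \eqref{scheme D}. Since $U(u)=u^2/2$ gives $\pi[U'(u_h)]=u_h$, the projection remainders $R_{j,1},R_{j,2},R_{j,3}$ in the master identity \eqref{B proof master identity} vanish identically. That identity therefore becomes exact:
\[
\odv{}{t}\int_{I_j}\tfrac12u_h^2\,\ud x+\hat q_{j+\frac12}-\hat q_{j-\frac12}=-\mathcal D_\eta\bigl(u^-_{h,j-\frac12},u^+_{h,j-\frac12}\bigr)-D_j\int_{I_j}(u_h-\bu_{h,j})^2\,\ud x .
\]
Here we used $\int_{I_j}(u_h-\bu_{h,j})u_h\,\ud x=\|u_h-\bu_{h,j}\|^2_{L^2(I_j)}$. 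Summing over $j$, the numerical entropy fluxes telescope away under periodic or compactly supported boundary conditions. Integrating over $(0,T)$ then bounds the sum of the two nonnegative dissipation terms by $\tfrac12\|u_h(\cdot,0)\|_{L^2(I)}^2$, which is uniformly bounded because $u_h(\cdot,0)\to\mathcal U_0$ in $L^2(I)$. This yields \eqref{SC cell dissipation} directly. It also gives $\int_0^T\sum_j\mathcal D_\eta\,\ud t\le C$, and combined with the preceding lemma ($c_{\hf}=c_f/24$) and the $L^\infty$ bound \eqref{SC Linfinity}, this gives \eqref{SC weak BV}.

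\emph{Step 2: the estimate \eqref{SC rho estimate}.} Apply \eqref{bound on bad term} with $r=k+1$:
\[
\|\rho\|_{L^2(I_j)}\le C(k,k+1)\,h_j^{k}\|\partial_x^{k+1}f(u_h)\|_{L^2(I_j)}=\frac{C(k,k+1)}{\epsilon_1}\,h_j^{1/2}\cdot\epsilon_1h_j^{k-\frac12}\|\partial_x^{k+1}f(u_h)\|_{L^2(I_j)}\le Ch_j^{1/2}D_j,
\]
where the last step uses that the second term of $D_j$ is nonnegative. This is the only place where $\epsilon_1>0$ is essential.

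\emph{Step 3: the consistency bound \eqref{SC damping consistency}.} Split $D_j$ into its two terms.

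For the first term, apply Faà di Bruno \eqref{Faà di Bruno's formula} with $r=k+1$. Since $u_h|_{I_j}\in\mathbb P^k$, the term $f'(u_h)\partial_x^{k+1}u_h$ vanishes. Each remaining term is a product of derivatives $\partial_x^iu_h$ with $i\le k$, multiplied by a bounded $f^{(m)}(u_h)$; this uses $f\in C^{k+2}$ and $|u_h|\le M_T$. The inverse inequality gives $\|\partial_x^iu_h\|_{L^\infty(I_j)}\le Ch_j^{-i}M_T$, and the orders add up to $k+1$, so
\[
\|\partial_x^{k+1}f(u_h)\|_{L^2(I_j)}\le Ch_j^{-(k+1)+\frac12}.
\]
Hence $\epsilon_1h_j^{k-\frac12+3}\|\partial_x^{k+1}f(u_h)\|_{L^2(I_j)}\le Ch_j^{2}$. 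Summing over the $N\le C/h$ cells of the quasi-uniform mesh gives $Ch$, and integrating in time gives $CTh$.

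For the jump term, we need to bound $\epsilon_2h_j^2(|[\![u_h]\!]_{j-\frac12}|+|[\![u_h]\!]_{j+\frac12}|)$ summed over $j$. The crude bound $|[\![u_h]\!]|\le 2M_T$ already suffices: it gives $\sum_j h_j^2\cdot 4M_T\le CNh^2\le Ch$. Alternatively, Hölder's inequality with \eqref{SC weak BV} gives the better rate $h^{4/3}$.

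\emph{Main obstacle.} I do not expect a genuine difficulty. The points that need care are:
\begin{itemize}
\item confirming that the interface bookkeeping in Step 1 reproduces exactly $\mathcal D_\eta$ as defined in \eqref{SC interface dissipation definition}, with the right orientation at each interface, so that the cubic lemma applies;
\item the Faà di Bruno counting in Step 3, which is a simplified version of the argument in Lemma~\ref{lemma: second term estimate}.
\end{itemize}
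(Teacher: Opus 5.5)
Your proposal is correct and follows essentially the same route as the paper: you test \eqref{scheme D} with $v=u_h$ to get the square-entropy identity, which yields the cell-dissipation bound and, via the cubic interface-dissipation lemma, the weak-BV bound. You then use Fa\`a di Bruno with inverse estimates to get $D_j\le Ch_j^{-1}$ and hence the consistency bound, and the Bramble--Hilbert estimate with $r=k+1$ together with $D_j\ge\epsilon_1h_j^{k-\frac12}\|\partial_x^{k+1}f(u_h)\|_{L^2(I_j)}$ for the bound on $\rho$. The differences are cosmetic: you use the one-sided numerical entropy flux from the proof of Theorem~\ref{theorem: Local Entropy Inequality B}, counting each interface dissipation once, instead of the paper's symmetric flux with half the dissipation on each side, and you bound the initial energy by the $L^2$ convergence of the initial data rather than by the uniform $L^\infty$ bound.
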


\begin{proof}
Assume $I=[x_L,x_R]$. Taking $v=u_h$ in \eqref{scheme D}, proceeding as in \cite{J} or in the proof of Theorem \ref{theorem: Local Entropy Inequality B} and using
\[
    \int_{I_j}(u_h-\bu_{h,j})u_h\,\ud x
    =\|u_h-\bu_{h,j}\|_{L^2(I_j)}^2,
\]
we obtain the following local square-entropy inequality:
\begin{equation*}
    \odv{}{t} \int_{I_j} \eta(u_h) \ud x + \hat{q}_{j+\frac12}(t) - \hat{q}_{j-\frac12}(t) + \frac12 \left( \mathcal D_{\eta,j-\frac12} + \mathcal D_{\eta,j+\frac12} \right) + D_j \|u_h-\bu_{h,j}\|_{L^2(I_j)}^2 \le 0,
\end{equation*}
where $\mathcal D_{\eta,j + \frac12}=\mathcal D_{\eta}(u^-_{j+\frac12}, u^+_{j+\frac12})$ and \[\hat q_{j+\frac12}(t) = \frac{1}{2}(u^-_{j+\frac12}+u^+_{j+\frac12}) \hf_{j+\frac12} - \frac{1}{2}\left( \int^{u^-_{j+\frac12}} f(\xi) \ud \xi + \int^{u^+_{j+\frac12}} f(\xi) \ud \xi \right).\]
Summing it over $j$ yields the global square-entropy inequality
\begin{equation*}
    \odv{}{t} \int_{I} \eta(u_h) \ud x + \hat{q}_{R}(t) - \hat{q}_{L}(t) + \sum_j \frac12 \left( \mathcal D_{\eta,j-\frac12} + \mathcal D_{\eta,j+\frac12} \right) + \sum_j D_j \|u_h-\bu_{h,j}\|_{L^2(I_j)}^2 \le 0.
\end{equation*}
Integrate in time and apply the assumption $\|u_h\|_{L^\infty(I\times(0,T))}\le M_T$,
\begin{align}
\int_0^T \sum_j \frac12 \left( \mathcal D_{\eta,j-\frac12} + \mathcal D_{\eta,j+\frac12} \right) \ud t + \int_0^T \sum_j D_j \|u_h-\bu_{h,j}\|_{L^2(I_j)}^2 \ud t \nonumber \\
\le \int_{I} \eta(u_h(x,0)) \ud x + \int_0^T (\hat{q}_{L}(t) - \hat{q}_{R}(t)) \ud t \le C. \label{tight square-entropy inequality}
\end{align}
By \eqref{SC cubic dissipation assumption}, \[\int_0^T \sum_j \frac12 \left( \mathcal D_{\eta,j-\frac12} + \mathcal D_{\eta,j+\frac12} \right) \ud t \ge \frac{c_{\hf}}{2} \int_0^T\sum_j |[\![u_h]\!]_{j+\frac12}|^3\,\ud t,\]
so that
\begin{align*}
\frac{c_{\hf}}{2} \int_0^T\sum_j |[\![u_h]\!]_{j+\frac12}|^3\,\ud t + \int_0^T \sum_j D_j \|u_h-\bu_{h,j}\|_{L^2(I_j)}^2 \ud t \le C.
\end{align*}
This proves \eqref{SC cell dissipation} and \eqref{SC weak BV}.

For the first term in the definition of $D_j$, map $I_j$ affinely to the reference interval.
The set of polynomials of degree at most $k$ that are bounded by $M_T$ is a
bounded subset of a finite-dimensional space. Hence by Faà di Bruno's formula \eqref{Faà di Bruno's formula}, the
boundedness of the derivatives of $f$ on $[-M_T,M_T]$, and scaling back to
$I_j$ give
\begin{align*}
 \|\partial_x^{k+1}f(u_h)\|_{L^2(I_j)}
 &\le C (h_j)^{-(k+1)}\sum_{i=1}^{k+1}\|f^{(i)}(u_h)\|_{L^2(I_j)}\le C (h_j)^{-k-\frac12}.
\end{align*}
Since the jumps are uniformly bounded by \eqref{SC Linfinity}, the definition
\eqref{def D_j} consequently implies
\begin{equation}\label{SC Dj inverse bound}
    0\le D_j\le C (h_j)^{-1}.
\end{equation}
Quasi-uniformity now yields
\[
 \int_0^T\sum_jD_j(h_j)^3\,\ud t
 \le C \sum_jh_j^2\le C h,
\]
which is \eqref{SC damping consistency}. Finally, \eqref{SC rho estimate} follows from \eqref{bound on bad term} with $r=k+1$ and
\[
 D_j\ge\epsilon_1(h_j)^{k-\frac12}
 \|\partial_x^{k+1}f(u_h)\|_{L^2(I_j)}.
\]
\end{proof}

\begin{lemma}[Compact entropy productions]\label{lemma: SC Hminus1}
The two sequences
\begin{equation}\label{SC two productions}
 \partial_tu_h+\partial_xf(u_h),\qquad
 \partial_t \eta(u_h)+\partial_xq(u_h)
\end{equation}
are relatively compact in $H^{-1}_{\mathrm{loc}}(I\times (0,T))$.
\end{lemma}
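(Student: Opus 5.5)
We follow the standard route via Murat's lemma: if a sequence of distributions is bounded in $W^{-1,p}_{\mathrm{loc}}$ for some $p>2$ and can be split as (compact in $H^{-1}_{\mathrm{loc}}$) plus (bounded in the space of Radon measures $\mathcal M_{\mathrm{loc}}$), then it is relatively compact in $H^{-1}_{\mathrm{loc}}$. The $W^{-1,\infty}$ bound is immediate from \eqref{SC Linfinity}, since $u_h$, $f(u_h)$, $\eta(u_h)$ and $q(u_h)$ are uniformly bounded. So the work is to decompose each entropy production, tested against $\phi\in C_c^\infty(I\times(0,T))$, into a measure-bounded part and a part that is $O(h^{\alpha})\|\phi\|_{H^1_0}$ (or, more generally, small in $W^{-1,p}$ for some $p<2$, which interpolates against the $W^{-1,\infty}$ bound).

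For the conservation law itself, $\langle \partial_tu_h+\partial_xf(u_h),\phi\rangle$ is computed cellwise: integrating by parts on each $I_j$ produces jump terms $\sum_j\int_0^T [\![f(u_h)]\!]_{j+\frac12}\phi\,\ud t$ and volume terms $\int u_{h,t}\phi+f(u_h)_x\phi$. We insert $\pi\phi$ and use \eqref{scheme D} with $v=\pi\phi$. What remains are the following pieces.
\begin{enumerate}[label=(\roman*)]
\item Terms with $\phi-\pi\phi$ against $f(u_h)_x$, i.e.\ $\rho$ from \eqref{SC projection remainder}, together with interface terms $(\hf-f(u_h)^\pm)(\phi-\pi\phi)^\pm$.
\item The damping term $D_j\int_{I_j}(u_h-\bu_{h,j})\pi\phi$.
\end{enumerate}
The interface pieces are bounded by $C\sum|[\![u_h]\!]|\,h^{1/2}\|\phi_x\|_{L^2(I_j)}$. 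Hölder with \eqref{SC weak BV} gives $\sum_j\int|[\![u_h]\!]|^2\le Ch^{-1/3}$, so they are $O(h^{1/3})\|\phi\|_{H^1}$. The $\rho$ pieces are handled by \eqref{SC rho estimate}: $\|\rho\|\,\|\phi-\pi\phi\|\le Ch_j^{3/2}D_j\|\phi_x\|_{L^2(I_j)}$, and Cauchy–Schwarz with $\sum\int D_j^2h_j^3\le C\sum\int D_jh_j^2$ and \eqref{SC Dj inverse bound}, \eqref{SC damping consistency} gives smallness. For the damping term (ii), since $u_h-\bu_{h,j}$ has zero mean we may replace $\pi\phi$ by $\pi\phi-\bar\phi_j$. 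Cauchy–Schwarz then pairs \eqref{SC cell dissipation} with $\int\sum D_jh_j^2\|\phi_x\|^2$, which is $\le C\|\phi_x\|^2$ by \eqref{SC Dj inverse bound}. This is only bounded, not small, so we treat it as a measure term instead: bounding it by $\sum D_j\|u_h-\bu\|\,h_j^{1/2}\|\phi\|_{L^\infty}$ and using the same estimates shows it is bounded in $\mathcal M$.

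For the square entropy we proceed analogously, testing \eqref{scheme D} with $v=\pi(u_h\phi)$, which is legitimate since $\eta'(u_h)=u_h$ is a polynomial. This reproduces the local entropy inequality computation of Theorem~\ref{theorem: Local Entropy Inequality B}, but weighted by $\phi$. The production then splits as follows.
\begin{enumerate}[label=(\roman*)]
\item Nonpositive dissipation terms, namely $-\mathcal D_\eta$ at interfaces and $-D_j\|u_h-\bu\|^2$. These are bounded measures by \eqref{tight square-entropy inequality}.
\item Commutator errors from $u_h\phi-\pi(u_h\phi)$ and from the mismatch between $\hat q$ and the traces. These are controlled by $h\|\phi_x\|_\infty$ times the same dissipation quantities, or by cubic jump sums via \eqref{SC weak BV}.
\end{enumerate}

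The main obstacle is these commutator terms. They involve $D_j\int (u_h-\bu)(u_h\phi-\pi(u_h\phi))$, and the interface terms carry $\hf-f$ multiplied by $(u_h\phi)^\pm$ differences. Each must be shown either to be $O(h^\alpha)$ in $W^{-1,p}$ or to be measure-bounded. I would first try the estimate $\|u_h\phi-\pi(u_h\phi)\|_{L^2(I_j)}\le Ch_j\|\phi_x\|_\infty\|u_h-\bu_{h,j}\|_{L^2(I_j)}$, which follows because $\bu\phi$ is handled by standard projection error and $(u_h-\bu)\phi$ differs from $(u_h-\bu)\phi(x_j)\in\mathbb P^k$ by $O(h)$. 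With this, the damping commutator is bounded by $C\|\phi\|_{W^{1,\infty}}\int\sum D_jh_j\|u_h-\bu\|^2$, hence is measure-bounded by \eqref{SC cell dissipation}. The $\rho$ commutator is treated as in the first part. Murat's lemma then concludes both claims.
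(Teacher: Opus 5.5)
Your outline matches the paper's structure (cellwise residual split into a volume term, a damping term and an interface term, followed by Murat's lemma), but the volume term $\rho$ is not actually handled, and this affects both productions.

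\textbf{The gap: the volume term $\rho$.} From \eqref{SC rho estimate} and Cauchy--Schwarz you reduce to $\int_0^T\sum_j h_j^3D_j^2\,\ud t$. However, \eqref{SC Dj inverse bound} only gives $h_j^3D_j^2\le Ch_j^2D_j\le Ch_j$, so this quantity is $O(1)$, not small. \eqref{SC damping consistency} adds nothing: it is itself just $D_j\le Ch_j^{-1}$ summed, and with quasi-uniformity it again yields only $\int_0^T\sum_jD_jh_j^2\,\ud t\le C$. The cited bounds are compatible with $\rho_j$ of size $h^{-1}$, orthogonal to $\mathbb P^k(I_j)$, in every cell. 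Such a sequence is bounded but not precompact in $H^{-1}$, and it is not bounded in $\mathcal M$, so Murat's lemma cannot be invoked. Your entropy step inherits the same defect, because $\int\rho\,u_h\phi$ is ``treated as in the first part''.

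\textbf{The missing idea.} $\rho_j$ is quadratic in the in-cell oscillation. Write $f(u_h)=f(\overline{u}_{h,j})+f'(\overline{u}_{h,j})(u_h-\overline{u}_{h,j})+\mathcal R_j$. The affine part is reproduced exactly by $\pi_j$, so $\rho_j=(\mathcal R_j)_x-\pi_j[(\mathcal R_j)_x]$, with $|(\mathcal R_j)_x|\le C|u_h-\overline{u}_{h,j}|\,|(u_h)_x|$. Inverse inequalities then give $\|\rho_j\|_{L^2(I_j)}\le Ch_j^{-3/2}\|u_h-\overline{u}_{h,j}\|^2_{L^2(I_j)}$. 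Multiplying this by \eqref{SC rho estimate} yields $h_j^2\|\rho_j\|^2_{L^2(I_j)}\le Ch_jD_j\|u_h-\overline{u}_{h,j}\|^2_{L^2(I_j)}$. Only at this point does \eqref{SC cell dissipation} make the volume term $O(h^{1/2})$ in $H^{-1}$. This is exactly the extra estimate the paper proves in Step 2, and it is where the cell dissipation enters for the volume residual.

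\textbf{Errors in the damping term (fixable).}
\begin{itemize}
\item You dropped a factor of $h$. By \eqref{SC Dj inverse bound}, $D_jh_j^2\le Ch_j$, so your own Cauchy--Schwarz pairing already gives $Ch^{1/2}\|\phi_x\|_{L^2}$. The damping term is small in $H^{-1}$, not merely bounded.
\item Your fallback claim that it is bounded in $\mathcal M$ does not follow. Cauchy--Schwarz leaves a factor $(\int_0^T\sum_jD_jh_j\,\ud t)^{1/2}$, which is generally of size $h^{-1/2}$.
\item A bound of the form $C\|\phi\|_{W^{1,\infty}}$ neither makes a distribution a measure nor controls any $W^{-1,p}$ norm with $p<2$.
\item In the entropy part, the damping ``commutator'' $D_j\int_{I_j}(u_h-\overline{u}_{h,j})(u_h\phi-\pi(u_h\phi))\,\ud x$ is identically zero, since $u_h-\overline{u}_{h,j}\in\mathbb P^k(I_j)$.
\item Your estimate $\|u_h\phi-\pi(u_h\phi)\|_{L^2(I_j)}\le Ch_j\|\phi_x\|_\infty\|u_h-\overline{u}_{h,j}\|_{L^2(I_j)}$ is false as written, because of the $\overline{u}_{h,j}(\phi-\pi\phi)$ part.
\end{itemize}
The correct split of the entropy damping term is $-D_j\int|u_h-\overline{u}_{h,j}|^2\phi$, which is a bounded measure, plus $-D_j\overline{u}_{h,j}\int(u_h-\overline{u}_{h,j})\phi$, which is small in $H^{-1}$.

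\textbf{What works.} Your interface estimate is correct. Combining $|(\pi_j\phi)(x^\pm_{j+\frac12})-\phi(x_{j+\frac12})|\le Ch_j^{1/2}\|\phi_x\|_{L^2(I_j)}$ with $\int_0^T\sum_j|[\![u_h]\!]_{j+\frac12}|^2\,\ud t\le Ch^{-1/3}$ gives $O(h^{1/3})$ directly in $H^{-1}$. This is slightly more direct than the paper's $W^{-1,p}$, $p<2$, route. The entropy interface terms should be handled the same way, using $L^2$ norms of $\phi_x$ rather than $\|\phi_x\|_\infty$.
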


\begin{proof}
Fix a bounded Lipschitz open set $\mathcal O\Subset I\times(0,T)$. All estimates below are
restricted to the finitely many space-time cells meeting $\mathcal O$. We use the distributional
convention
\begin{align}
 \left\langle\partial_tu_h+\partial_xf(u_h),\varphi\right\rangle
 &:=-\iint_{\mathcal O}
 \bigl(u_h\varphi_t+f(u_h)\varphi_x\bigr)\,\ud x\ud t,
 \label{SC definition L1}\\
 \left\langle\partial_t\eta(u_h)+\partial_xq(u_h),\varphi\right\rangle
 &:=-\iint_{\mathcal O}
 \bigl(\eta(u_h)\varphi_t+q(u_h)\varphi_x\bigr)\,\ud x\ud t
 \label{SC definition L2}
\end{align}
for every $\varphi\in C_c^\infty(\mathcal O)$. In what follows, $\mathcal M (\mathcal O)$ denotes the space of finite signed Radon measures on the open set $\mathcal O$. Equivalently, $\mathcal M (\mathcal O)=C_0(\mathcal O)^*$. For any $\mu\in \mathcal M (\mathcal O)$, define its norm as $\|\mu\|_{\mathcal M (\mathcal O)}:=|\mu|(\mathcal O)$.

\medskip
\noindent\emph{Step 1: the decomposition for the residual of \eqref{SC definition L1} on each cell.}
Let $\pi_j$ denote the $L^2(I_j)$-projection onto $\mathbb P^k(I_j)$, and
define the two polynomials $\ell_j^-,\ell_j^+\in
\mathbb P^k(I_j)$ by
\begin{equation}\label{SC lifting definition}
 \int_{I_j}\ell_j^-v\,\ud x=v(x_{j-\frac12}^+),\qquad
 \int_{I_j}\ell_j^+v\,\ud x=v(x_{j+\frac12}^-)
 \quad\forall v\in\mathbb P^k(I_j).
\end{equation}
One can actually obtain the explicit expressions for $\ell_j^-,\ell_j^+$ as
\[
\ell_j^-(x)
:=\sum_{i=0}^k \sqrt{\frac{2i+1}{h_j}} (-1)^iP_{i,j}(x),
\qquad
\ell_j^+(x)
:=\sum_{i=0}^k \sqrt{\frac{2i+1}{h_j}} P_{i,j}(x).
\]
where $P_{i,j}(x)$ is the $i$th Legendre polynomial normalized on $I_j$ with $\|P_{i,j}\|_{L^2(I_j)}=1$. Therefore, $\int_{I_j}\ell_j^-\varphi\,\ud x=\pi_{j}\varphi(x_{j-\frac12}^+)$ and $\int_{I_j}\ell_j^+\varphi\,\ud x=\pi_j\varphi(x_{j+\frac12}^-)$. Recall 
\begin{equation}\label{SC alpha definition}
 H_{j-\frac12}^+
 :=\hf_{j-\frac12}-f(u_h)^+_{j-\frac12},\qquad
 H_{j+\frac12}^-
 :=f(u_h)^-_{j+\frac12}-\hf_{j+\frac12}.
\end{equation}
Using the strong form of \eqref{scheme D} and the defining property
\eqref{SC lifting definition}, we obtain the following identity in
$\mathbb P^k(I_j)$:
\begin{equation}\label{SC exact cell residual}
 (u_h)_t\vert_{I_j}+\pi_j[f(u_h)_x\vert_{I_j}]
 =H_{j-\frac12}^+\ell_j^-
  +H_{j+\frac12}^-\ell_j^+
  -D_j(u_h-\bu_{h,j}).
\end{equation}
Therefore, with
\[
 \rho_j:=\rho\vert_{I_j}= f(u_h)_x\vert_{I_j}-\pi_j[f(u_h)_x\vert_{I_j}] \in C^{k+1}(I_j),
\]
the classical residual in the interior of $I_j$ is
\begin{equation}\label{SC classical cell residual}
 \left((u_h)_t+f(u_h)_x\right)\vert_{I_j}
 =\rho_j+H_{j-\frac12}^+\ell_j^-
  +H_{j+\frac12}^-\ell_j^+
  -D_j(u_h-\bu_{h,j}).
\end{equation}

Because $f(u_h)$ is discontinuous at the interfaces, its distributional
derivative contains additionally the term
\[
 \sum_j[\![f(u_h)]\!]_{j+\frac12}\delta_{x_{j+\frac12}}.
\]
Combining this fact with \eqref{SC classical cell residual} gives the exact
decomposition on $I$:
\begin{equation}\label{SC L1 exact decomposition}
 \partial_tu_h+\partial_xf(u_h)
 =R_h^{\mathrm{vol}}+R_h^{\mathrm{damp}}
  +R_h^{\mathrm{int}}
\end{equation}
in $\mathcal D'(\mathcal O)$, where
\begin{align}
 R_h^{\mathrm{vol}}&:=\sum_j\rho_j\mathbf 1_{I_j},
 \label{SC Rvol definition}\\
 R_h^{\mathrm{damp}}&:=-\sum_jD_j(u_h-\bu_{h,j})\mathbf 1_{I_j},
 \label{SC Rdamp definition}\\
 R_h^{\mathrm{int}}&:=\sum_j \left(H_{j-\frac12}^+\ell_j^-
  +H_{j+\frac12}^-\ell_j^+ \right)\mathbf 1_{I_j} + \sum_j[\![f(u_h)]\!]_{j+\frac12}\delta_{x_{j+\frac12}}.
\end{align}

\medskip
\noindent\emph{Step 2: $R_h^{\mathrm{vol}}$ tends to zero in
$H^{-1}(\mathcal O)$.}
In addition to \eqref{SC rho estimate}, we need a bound containing the
within-cell oscillation. Set
\[
 v_j:=u_h-\bu_{h,j}.
\]
Taylor expansion at $\bu_{h,j}$ gives
\begin{equation}\label{SC Taylor remainder definition}
 f(u_h)=f(\bu_{h,j})+f'(\bu_{h,j})v_j+\mathcal R_j,
\end{equation}
where the integral form of the remainder is
\begin{equation}\label{SC Taylor remainder integral}
 \mathcal R_j
 =v_j^2\int_0^1(1-\theta)
 f''(\bu_{h,j}+\theta v_j)\,\ud\theta.
\end{equation}
On the bounded range \eqref{SC Linfinity}, put
\[
 C_f:=\max_{|s|\le M_T}f''(s).
\]
Together with \eqref{SC uniform convexity},
\eqref{SC Taylor remainder integral} implies the pointwise comparison
\begin{equation}\label{SC remainder equivalence}
 \frac{c_f}{2}|v_j|^2
 \le\mathcal R_j
 \le\frac{C_f}{2}|v_j|^2.
\end{equation}

The first two terms on the right-hand side of
\eqref{SC Taylor remainder definition} belong to $\mathbb P^k(I_j)$.
Their derivatives therefore belong to
$\mathbb P^{k-1}(I_j)\subset\mathbb P^k(I_j)$ and are reproduced exactly by
$\pi_j$. Consequently,
\begin{equation}\label{SC rho remainder derivative}
 \rho_j=(\mathcal R_j)_x-\pi_j[(\mathcal R_j)_x].
\end{equation}
Since $\pi_j$ is the $L^2$-orthogonal projection,
\begin{equation}\label{SC rho best approximation}
 \|\rho_j\|_{L^2(I_j)}
 \le\|(\mathcal R_j)_x\|_{L^2(I_j)}.
\end{equation}

We next establish an inverse-type estimate for $\|(\mathcal R_j)_x\|_{L^2(I_j)}$. Differentiating
\eqref{SC Taylor remainder definition} in $x$ gives
\[
 (\mathcal R_j)_x
 =\bigl(f'(u_h)-f'(\bu_{h,j})\bigr)(v_j)_x.
\]
By the fundamental theorem of calculus,
\[
 f'(u_h)-f'(\bu_{h,j})
 =v_j\int_0^1f''(\bu_{h,j}+\theta v_j)\,\ud\theta,
\]
and hence
\begin{equation}\label{SC remainder derivative bound}
 |(\mathcal R_j)_x|
 \le C_f|v_j(v_j)_x|
 =\frac{C_f}{2}|(v_j^2)_x|.
\end{equation}
Although $\mathcal R_j$ is generally not a polynomial, $v_j^2$ belongs to
$\mathbb P^{2k}(I_j)$. The polynomial inverse inequality can therefore be
applied to $v_j^2$, giving
\[
 \|(v_j^2)_x\|_{L^2(I_j)}
 \le C h_j^{-1}\|v_j^2\|_{L^2(I_j)}.
\]
Moreover, the lower bound in \eqref{SC remainder equivalence} yields
\[
 \|v_j^2\|_{L^2(I_j)}
 \le\frac{2}{c_f}\|\mathcal R_j\|_{L^2(I_j)}.
\]
Combining these two inequalities with
\eqref{SC rho best approximation} and
\eqref{SC remainder derivative bound}, we obtain
\begin{equation}\label{SC first nonlinear rho inequality}
 \|\rho_j\|_{L^2(I_j)}
 \le C h_j^{-1}\|\mathcal R_j\|_{L^2(I_j)},
\end{equation}
where $C$ depends on $k$ and $C_f/c_f$, but is independent of $h_j$.

Finally, the upper bound in \eqref{SC remainder equivalence} and the polynomial
inverse estimate
\[
 \|u_h-\bu_{h,j}\|_{L^\infty(I_j)}
 \le C h_j^{-1/2}\|u_h-\bu_{h,j}\|_{L^2(I_j)}
\]
give
\begin{align}
 \|\rho_j\|_{L^2(I_j)}
 &\le C h_j^{-1}\|\mathcal R_j\|_{L^2(I_j)}\nonumber\\
 &\le C h_j^{-1}
 \|u_h-\bu_{h,j}\|_{L^\infty(I_j)}
 \|u_h-\bu_{h,j}\|_{L^2(I_j)}\nonumber\\
 &\le C(h_j)^{-\frac32}
 \|u_h-\bu_{h,j}\|_{L^2(I_j)}^2.
 \label{SC nonlinear rho estimate}
\end{align}
Multiplying \eqref{SC rho estimate} and
\eqref{SC nonlinear rho estimate} yields
\begin{equation}\label{SC rho product estimate}
 h_j^2\|\rho_j\|_{L^2(I_j)}^2
 \le Ch_jD_j
 \|u_h-\bu_{h,j}\|_{L^2(I_j)}^2.
\end{equation}
Moreover, $\rho_j\perp \mathbb{P}^0(I_j)$. Thus, if
$\varphi\in C_c^\infty(\mathcal O)$ and
$\bar\varphi_j(t)$ is its average over $I_j$, then
\begin{align*}
 |\langle R_h^{\mathrm{vol}},\varphi\rangle|
 &=\left|\int_0^T\sum_j\int_{I_j}
 \rho_j(\varphi-\bar\varphi_j)\,\ud x\ud t\right|\\
 &\le
 C\left(\int_0^T\sum_jh_j^2
 \|\rho_j\|_{L^2(I_j)}^2\,\ud t\right)^{1/2}
 \|\varphi_x\|_{L^2(\mathcal O)}\\
 &\le C h^{1/2}
 \|\varphi\|_{H^1(\mathcal O)}
\end{align*}
by the cellwise Poincar\'e inequality, \eqref{SC cell dissipation}, \eqref{SC rho product estimate}, and
quasi-uniformity. Consequently,
\begin{equation}\label{SC Rvol Hminus1}
 R_h^{\mathrm{vol}}\longrightarrow0
 \quad\text{strongly in }H^{-1}(\mathcal O).
\end{equation}

\medskip
\noindent\emph{Step 3: $R_h^{\mathrm{damp}}$ tends to zero in $H^{-1}(\mathcal O)$.}
Using again the zero mean of $u_h-\bu_{h,j}$, the cellwise Poincar\'e inequality,
\eqref{SC cell dissipation}, and \eqref{SC Dj inverse bound}, we obtain
\begin{align*}
 |\langle R_h^{\mathrm{damp}},\varphi\rangle|
 &\le\int_0^T\sum_jD_j
 \|u_h-\bu_{h,j}\|_{L^2(I_j)}
 \|\varphi-\bar\varphi_j\|_{L^2(I_j)}\,\ud t\\
 &\le
 C\left(\int_0^T\sum_jD_j
 \|u_h-\bu_{h,j}\|_{L^2(I_j)}^2\,\ud t\right)^{1/2}\\
 &\qquad\times
 \left(\int_0^T\sum_jD_j h_j^2
 \|\varphi_x\|_{L^2(I_j)}^2\,\ud t\right)^{1/2}\\
 &\le C h^{1/2}\|\varphi\|_{H^1(\mathcal O)}.
\end{align*}
Hence
\begin{equation}\label{SC Rdamp Hminus1}
 R_h^{\mathrm{damp}}\longrightarrow0
 \quad\text{strongly in }H^{-1}(\mathcal O).
\end{equation}

\medskip
\noindent\emph{Step 4: the conservation-law interface remainder $R_h^{\mathrm{int}}$.}
For clarity of notation, let $u_{j+\frac12}^\pm=u_h(x_{j+\frac12}^\pm,t)$ and
$\varphi_{j+\frac12}=\varphi(x_{j+\frac12},t)$. From \eqref{SC lifting definition}, the complete
contribution at $x_{j+\frac12}$ is
\begin{align}
 \langle R_{h,{j+\frac12}}^{\mathrm{int}},\varphi\rangle
 ={}&\int_0^T\Bigl\{
 (f(u_{j+\frac12}^-)-\hf_{j+\frac12})(\pi_j\varphi)(x_{j+\frac12}^-)
 +(\hf_{j+\frac12}-f(u_{j+\frac12}^+))(\pi_{j+1}\varphi)(x_{j+\frac12}^+)\nonumber\\
 &\qquad +(f(u_{j+\frac12}^+)-f(u_{j+\frac12}^-))\varphi_{j+\frac12}\Bigr\}\,\ud t \nonumber\\
 = &\int_0^T\Bigl\{
 (f(u_{j+\frac12}^-)-\hf_{j+\frac12})
 [(\pi_j\varphi)(x_{j+\frac12}^-)-\varphi_{j+\frac12}]\nonumber\\
 &\qquad+(\hf_{j+\frac12}-f(u_{j+\frac12}^+))
 [(\pi_{j+1}\varphi)(x_{j+\frac12}^+)-\varphi_{j+\frac12}]\Bigr\}\,\ud t.
 \label{SC conservation interface identity}
\end{align}
Indeed, the terms multiplying $\varphi_{j+\frac12}$ cancel because
\[
 f(u_{j+\frac12}^-)-\hf_{j+\frac12}+\hf_{j+\frac12}-f(u_{j+\frac12}^+)
 +f(u_{j+\frac12}^+)-f(u_{j+\frac12}^-)=0.
\]
Consistency and local Lipschitz continuity imply
\begin{equation}\label{SC alpha jump bound}
 |f(u_{j+\frac12}^-)-\hf_{j+\frac12}|+|\hf_{j+\frac12}-f(u_{j+\frac12}^+)|
 \le C|u_{j+\frac12}^+-u_{j+\frac12}^-|.
\end{equation}
For $p'>2$, we claim that the following inequality holds
\begin{equation}\label{SC endpoint W1p estimate}
 |(\pi_j\varphi)(x_{j+\frac12}^\pm)-\varphi_{j+\frac12}|
 \le C h_j^{1-\frac1{p'}}
 \|\varphi_x(\cdot,t)\|_{L^{p'}(I_j)},
\end{equation}
where $C$ is independent of $\varphi$. Here is the detailed scaling argument for proving \eqref{SC endpoint W1p estimate}. Let
\[
 F_j(\xi):=x_j+\frac{h_j}{2}\xi,\qquad -1\le\xi\le1,
\]
and set
\[
 \hat\varphi(\xi,t):=\varphi(F_j(\xi),t).
\]
If $\hat\pi$ denotes the $L^2(-1,1)$-projection onto
$\mathbb P^k(-1,1)$, then the affine invariance of the $L^2$-projection gives
\begin{equation}\label{SC projection affine invariance}
 (\pi_j\varphi)(F_j(\xi),t)
 =(\hat\pi\hat\varphi)(\xi,t).
\end{equation}
Since $x_{j+\frac12}=F_j(1)$ and $\hat\pi$ reproduces constants,
\begin{align}
 |(\pi_j\varphi)(x_{j+\frac12}^-,t)-\varphi_{j+\frac12}|
 &=
 \left|
 \hat\pi [\hat\varphi(\cdot,t)-\hat\varphi(1,t)](1)
 \right|\nonumber\\
 &\le C
 \|\hat\varphi(\cdot,t)-\hat\varphi(1,t)\|_{L^\infty(-1,1)}.
 \label{SC endpoint projection stability}
\end{align}
The last inequality follows because
$g\mapsto(\hat\pi g)(1)$ is a bounded linear functional on
$L^\infty(-1,1)$.
For every $\xi\in[-1,1]$, the fundamental theorem of calculus and H\"older's
inequality give
\begin{align*}
 |\hat\varphi(\xi,t)-\hat\varphi(1,t)|
 \le\int_\xi^1|\partial_\zeta\hat\varphi(\zeta,t)|\,\ud\zeta \le2^{1-\frac1{p'}}
 \|\partial_\xi\hat\varphi(\cdot,t)\|_{L^{p'}(-1,1)}.
\end{align*}
Since
\[
 \partial_\xi\hat\varphi
 =\frac{h_j}{2}\varphi_x\circ F_j
\]
and $dx=(h_j/2)d\xi$, scaling yields
\begin{equation}\label{SC derivative scaling}
 \|\partial_\xi\hat\varphi(\cdot,t)\|_{L^{p'}(-1,1)}
 =
 \left(\frac{h_j}{2}\right)^{1-\frac1{p'}}
 \|\varphi_x(\cdot,t)\|_{L^{p'}(I_j)}.
\end{equation}
Substituting \eqref{SC derivative scaling} into
\eqref{SC endpoint projection stability} proves
\eqref{SC endpoint W1p estimate}. The proof at the left endpoint is
identical. Notice that this one-dimensional estimate only requires $p'>1$;
we choose $p'>2$ because its conjugate exponent $p=p'/(p'-1)$ then satisfies
$1<p<2$, which is the range used below.

Put
\[
 b_{j+\frac12}(t):=|[\![u_h]\!]_{j+\frac12}|,\qquad
 a_j(t):=\|\varphi_x(\cdot,t)\|_{L^{p'}(I_j)}.
\]
At the interface $x_{j+\frac12}$, equations
\eqref{SC conservation interface identity},
\eqref{SC alpha jump bound}, and
\eqref{SC endpoint W1p estimate} imply
\begin{align}
 |\langle R_{h,{j+\frac12}}^{\mathrm{int}},\varphi\rangle|
 &\le C\int_0^T b_{j+\frac12}(t)
 \left[
 h_j^{1-\frac1{p'}}a_j(t)
 +h_{j+1}^{1-\frac1{p'}}a_{j+1}(t)
 \right]\,\ud t.\label{SC one interface estimate}
\end{align}
Define
\[
 A_{j+\frac12}(t):=a_j(t)+a_{j+1}(t).
\]
Summing \eqref{SC one interface estimate} over all interfaces contained in $\mathcal O$ gives
\begin{equation}\label{SC interface sum before Holder}
 |\langle R_h^{\mathrm{int}},\varphi\rangle|
 \le Ch^{1-\frac1{p'}}
 \int_0^T\sum_{(x_{j+\frac12},t)\in \mathcal O}
 b_{j+\frac12}(t)A_{j+\frac12}(t)\,\ud t.
\end{equation}
In what follows we shall simply write $\sum_j$ instead of $\sum_{(x_{j+\frac12},t)\in \mathcal O}$.
We now apply H\"older's inequality with exponents $3$ and $3/2$ on the
product of Lebesgue measure in time and counting measure on the interface
set:
\begin{align}
 \int_0^T\sum_j b_{j+\frac12} A_{j+\frac12}\,\ud t
 &\le
 \left(\int_0^T\sum_j b_{j+\frac12}^3\,\ud t\right)^{1/3}
 \left(\int_0^T\sum_j A_{j+\frac12}^{3/2}\,\ud t\right)^{2/3}.
 \label{SC space time Holder}
\end{align}
By \eqref{SC weak BV}, the first factor is bounded uniformly in $h$.
Moreover,
\[
 A_{j+\frac12}^{3/2}
 \le 2^{1/2}\bigl(a_j^{3/2}+a_{j+1}^{3/2}\bigr)
\]
implies
\begin{equation}\label{SC A to a}
 \sum_j A_{j+\frac12}(t)^{3/2}\le C\sum_ja_j(t)^{3/2}.
\end{equation}

For completeness, let $N_h$ be the number of cells meeting the spatial
projection of $\mathcal O$. Quasi-uniformity gives $N_h\le Ch^{-1}$.
Because $p'>2>3/2$, discrete H\"older's inequality with conjugate exponents
$2p'/3$ and $2p'/(2p'-3)$ yields
\begin{align}
 \sum_ja_j(t)^{3/2} \le
 \left(\sum_ja_j(t)^{p'}\right)^{\frac{3}{2p'}}
 N_h^{1-\frac{3}{2p'}} \le
 Ch^{-1+\frac{3}{2p'}}
 \left(\sum_ja_j(t)^{p'}\right)^{\frac{3}{2p'}}.
 \label{SC discrete a estimate}
\end{align}
Set
\[
 B(t):=\sum_ja_j(t)^{p'}
 =\sum_j\int_{I_j}|\varphi_x(x,t)|^{p'}\,\ud x.
\]
Since $\gamma:=3/(2p')<1$, H\"older's inequality in time gives
\begin{align}
 \int_0^TB(t)^\gamma\,\ud t \le T^{1-\gamma}
 \left(\int_0^TB(t)\,\ud t\right)^\gamma =T^{1-\frac{3}{2p'}}
 \|\varphi_x\|_{L^{p'}(\mathcal O)}^{3/2}.
 \label{SC Holder in time}
\end{align}
Combining \eqref{SC A to a}--\eqref{SC Holder in time} and then raising to
the power $2/3$, we obtain
\begin{align}
 \left(\int_0^T\sum_j A_{j+\frac12}(t)^{3/2}\,\ud t\right)^{2/3}
 &\le
 Ch^{-\frac23+\frac1{p'}}
 T^{\frac23-\frac1{p'}}
 \|\varphi_x\|_{L^{p'}(\mathcal O)}.
 \label{SC second Holder factor}
\end{align}
Substituting \eqref{SC space time Holder} and
\eqref{SC second Holder factor} into
\eqref{SC interface sum before Holder}, and using
\eqref{SC weak BV}, yields
\begin{align}
 |\langle R_h^{\mathrm{int}},\varphi\rangle|
 \le
 Ch^{1-\frac1{p'}}
 h^{-\frac23+\frac1{p'}}
 \|\varphi_x\|_{L^{p'}(\mathcal O)} =Ch^{1/3}\|\varphi_x\|_{L^{p'}(\mathcal O)} \le Ch^{1/3}\|\varphi\|_{W^{1,p'}(\mathcal O)}. \label{SC Rint quantitative}
\end{align}
Thus, with $1/p+1/p'=1$,
\begin{equation}\label{SC Rint Wminus1p}
 R_h^{\mathrm{int}}\longrightarrow0
 \quad\text{strongly in }W^{-1,p}(\mathcal O),
 \qquad 1<p<2.
\end{equation}
Combining \eqref{SC L1 exact decomposition},
\eqref{SC Rvol Hminus1}, \eqref{SC Rdamp Hminus1}, and
\eqref{SC Rint Wminus1p}, we conclude that the first production in
\eqref{SC two productions} is relatively compact in
$W^{-1,p}(\mathcal O)$ for every $1<p<2$. Its upgrade to
$H^{-1}(\mathcal O)$ is carried out in Step 6.

\medskip
\noindent\emph{Step 5: the decomposition of the square-entropy production \eqref{SC definition L2}.}
Since $\eta'(u_h)=u_h$, the chain rule is valid in each open cell and
\[
 \left( \partial_t\eta(u_h)+\partial_xq(u_h) \right)\vert_{I_j}
 =\left(u_h\bigl((u_h)_t+f(u_h)_x\bigr)\right)\vert_{I_j}.
\]
Multiplying \eqref{SC classical cell residual} by $u_h$ and adding the
distributional interface term
$[\![q(u_h)]\!]_{j+\frac12}\delta_{x_{j+\frac12}}$ gives
\begin{equation}\label{SC L2 exact decomposition}
 \partial_t\eta(u_h)+\partial_xq(u_h)
 =-\mu_h^{\mathrm{cell}}-\mu_h^{\mathrm{int}}
  +S_h^{\mathrm{vol}}+S_h^{\mathrm{damp}}
  +S_h^{\mathrm{int}},
\end{equation}
where the two nonnegative measures are
\begin{align}
 \mu_h^{\mathrm{cell}}
 &:=\sum_jD_j|u_h-\bu_{h,j}|^2\mathbf1_{I_j}\,\ud x\ud t,
 \label{SC cell measure}\\
 \mu_h^{\mathrm{int}}
 &:=\sum_j \mathcal D_\eta(u_{j+\frac12}^-,u_{j+\frac12}^+)
 \delta_{x_{j+\frac12}}\,\ud t.
 \label{SC interface measure}
\end{align}
and the space-time distributions are
\begin{align}
 S_h^{\mathrm{vol}}&:=\sum_j u_h\rho_j\mathbf1_{I_j},\\
 S_h^{\mathrm{damp}}&:=-\sum_jD_j\bu_{h,j}(u_h-\bu_{h,j})\mathbf1_{I_j},\\
 S_h^{\mathrm{int}}&:=\sum_j u_h \left(H_{j-\frac12}^+\ell_j^-
  +H_{j+\frac12}^-\ell_j^+ \right)\mathbf 1_{I_j} + \sum_j \left( [\![q(u_h)]\!]_{j+\frac12} + \mathcal D_\eta(u_{j+\frac12}^-,u_{j+\frac12}^+) \right) \delta_{x_{j+\frac12}}.
\end{align}
Note that the total contribution at the interface $x_{j+\frac12}$ is
\begin{align}
 & (f(u_{j+\frac12}^-)-\hf_{j+\frac12})
       [\pi_j(u_h\varphi)(x_{j+\frac12}^-)-u_{j+\frac12}^-\varphi_{j+\frac12}]\nonumber\\
 &\quad+(\hf_{j+\frac12}-f(u_{j+\frac12}^+))
       [\pi_{j+1}(u_h\varphi)(x_{j+\frac12}^+)-u_{j+\frac12}^+\varphi_{j+\frac12}]\nonumber\\
 &\quad+\Bigl\{
 u_{j+\frac12}^-[f(u_{j+\frac12}^-)-\hf_{j+\frac12}]
 +u_{j+\frac12}^+[\hf_{j+\frac12}-f(u_{j+\frac12}^+)]
 +q(u_{j+\frac12}^+)-q(u_{j+\frac12}^-)\Bigr\}\varphi_{j+\frac12}.
 \label{SC entropy interface preliminary}
\end{align}
By $q'(s)=sf'(s)$, the
quantity in braces equals
\begin{align*}
 &(u_{j+\frac12}^+-u_{j+\frac12}^-)\hf_{j+\frac12}-\int_{u_{j+\frac12}^-}^{u_{j+\frac12}^+}f(s)\,\ud s
 =-\mathcal D_\eta(u_{j+\frac12}^-,u_{j+\frac12}^+).
\end{align*}
This is precisely the measure $-\mu_h^{\mathrm{int}}$. The remaining two
terms define $S_h^{\mathrm{int}}$. In addition, $\mu_h^{\mathrm{cell}}$ and $S_h^{\mathrm{damp}}$ arise from $
 -D_ju_h(u_h-\bu_{h,j})
 =-D_j|u_h-\bu_{h,j}|^2-D_j\bu_{h,j}(u_h-\bu_{h,j})=-\mu_h^{\mathrm{cell}}+S_h^{\mathrm{damp}}.$

Inequalities \eqref{tight square-entropy inequality} and
\eqref{SC cell dissipation}, which are the direct consequences of the global square-entropy inequality, imply
\begin{equation}\label{SC measure bound}
 \|\mu_h^{\mathrm{cell}}\|_{\mathcal M(\mathcal O)}
 +\|\mu_h^{\mathrm{int}}\|_{\mathcal M(\mathcal O)}
 \le C.
\end{equation}
We now identify and estimate the three remaining terms in
\eqref{SC L2 exact decomposition}. First, because $|u_h|\le M_T$,
\[
 S_h^{\mathrm{vol}}:=\sum_j u_h\rho_j\mathbf1_{I_j}
\]
can be estimated in the same way as the proof of \eqref{SC Rvol Hminus1}:
\begin{align}
|\langle S_h^{\mathrm{vol}},\varphi\rangle|= &\left|\int_0^T\sum_j\int_{I_j}
 u_h \rho_j(\varphi-\bar\varphi_j)\,\ud x\ud t\right| \nonumber\\
 &\le
 C\left(\int_0^T\sum_jh_j^2
 \|\rho_j\|_{L^2(I_j)}^2\,\ud t\right)^{1/2}
 \|\varphi_x\|_{L^2(\mathcal O)} \nonumber\\
 &\le C h^{1/2}
 \|\varphi\|_{H^1(\mathcal O)}.\label{SC entropy projection vanishes}
\end{align}
Thus $S_h^{\mathrm{vol}}\to0$ strongly in $H^{-1}(\mathcal O)$.
Also, $S_h^{\mathrm{damp}}
 :=-\sum_jD_j\bu_{h,j}(u_h-\bu_{h,j})\mathbf1_{I_j}$
tends to zero strongly in $H^{-1}(\mathcal O)$ by Step 3 and
$|\bu_{h,j}|\le M_T$.

Finally, we estimate $S_h^{\mathrm{int}}$. By \eqref{SC entropy interface preliminary}, we have
\begin{align*}
\langle S_h^{\mathrm{int}},\varphi\rangle = \int_0^T \sum_j \bigg\{ & (f(u_{j+\frac12}^-)-\hf_{j+\frac12}) [\pi_j(u_h\varphi)(x_{j+\frac12}^-)-u_{j+\frac12}^-\varphi_{j+\frac12}] \\
&\quad+(\hf_{j+\frac12}-f(u_{j+\frac12}^+)) [\pi_{j+1}(u_h\varphi)(x_{j+\frac12}^+)-u_{j+\frac12}^+\varphi_{j+\frac12}] \bigg\} \ud t
\end{align*}
Since $\pi_j$ reproduces $u_h$,
the $L^\infty$-stability of projection gives
\begin{align}
 |\pi_j(u_h\varphi)(x_{j+\frac12}^\pm)-u_{j+\frac12}^\pm\varphi_{j+\frac12}|
 \le& C \|u_h(\varphi-\varphi_{j+\frac12})\|_{L^\infty(I_j)} \nonumber \\
 \le& Ch_j^{1-\frac{1}{p'}}\|u_h\|_{L^\infty(I_j)}
 \|\varphi_x\|_{L^{p'}(I_j)}\label{SC entropy endpoint estimate}
\end{align}
for $p'>1$. Now, repeating the explicit H\"older
estimates leading to \eqref{SC Rint quantitative}, gives
\begin{equation}\label{SC Sint Wminus1p}
 S_h^{\mathrm{int}}\longrightarrow0
 \quad\text{strongly in }W^{-1,p}(\mathcal O),
 \qquad1<p<2.
\end{equation}

\medskip
\noindent\emph{Step 6: application of Murat's lemma.}
We use the following form of Murat's lemma \cite{Murat1978}.

\smallskip
\noindent\textbf{Murat's lemma.}
Let $\mathcal O\subset\mathbb R^2$ be bounded. Suppose that a sequence
$\{L_n\}$ satisfies:
\begin{enumerate}[label=\textnormal{(\roman*)}]
 \item $\{L_n\}$ is bounded in $W^{-1,r}(\mathcal O)$ for some $r>2$;
 \item for some $1<p<2$, $\{L_n\}$ is relatively compact in
 $W^{-1,p}(\mathcal O)$.
\end{enumerate}
Then $\{L_n\}$ is relatively compact in $H^{-1}(\mathcal O)$.
A frequently used sufficient condition for \textnormal{(ii)} is a
decomposition
\[
 L_n=K_n+M_n,
\]
where $\{K_n\}$ is relatively compact in $W^{-1,p}(\mathcal O)$ and
$\{M_n\}$ is bounded in $\mathcal M(\mathcal O)$. Indeed, in two space-time
dimensions,
\begin{equation}\label{SC measure compact embedding}
 \mathcal M(\mathcal O)\hookrightarrow W^{-1,p}(\mathcal O)
 \quad\text{compactly for every }1<p<2.
\end{equation}
After extracting a subsequence, both $K_n$ and $M_n$ therefore converge in
$W^{-1,p}(\mathcal O)$, which proves \textnormal{(ii)}. Finally,
interpolation between $W^{-1,p}$ and $W^{-1,r}$ gives
\[
 \|L_n-L_m\|_{H^{-1}(\mathcal O)}
 \le C
 \|L_n-L_m\|_{W^{-1,p}(\mathcal O)}^\theta
 \|L_n-L_m\|_{W^{-1,r}(\mathcal O)}^{1-\theta},
\]
where $0<\theta<1$ is determined by
\[
 \frac12=\frac{\theta}{p}+\frac{1-\theta}{r}.
\]
Thus every subsequence that is Cauchy in $W^{-1,p}$ is Cauchy in $H^{-1}$,
which proves the lemma.

We now verify these hypotheses. Define
\[
 L_{1,h}:=\partial_tu_h+\partial_xf(u_h),\qquad
 L_{2,h}:=\partial_t\eta(u_h)+\partial_xq(u_h).
\]
For $\varphi\in C_c^\infty(\mathcal O)$,
\begin{align*}
 |\langle L_{1,h},\varphi\rangle|
 &\le
 \|u_h\|_{L^\infty(\mathcal O)}\|\varphi_t\|_{L^1(\mathcal O)}
 +\|f(u_h)\|_{L^\infty(\mathcal O)}
  \|\varphi_x\|_{L^1(\mathcal O)},\\
 |\langle L_{2,h},\varphi\rangle|
 &\le
 \|\eta(u_h)\|_{L^\infty(\mathcal O)}
  \|\varphi_t\|_{L^1(\mathcal O)}
 +\|q(u_h)\|_{L^\infty(\mathcal O)}
  \|\varphi_x\|_{L^1(\mathcal O)}.
\end{align*}
The uniform bound \eqref{SC Linfinity} and boundedness of $\mathcal O$ imply
that, for every finite $r>2$,
\begin{equation}\label{SC production Wminus1r}
 \|L_{1,h}\|_{W^{-1,r}(\mathcal O)}
 +\|L_{2,h}\|_{W^{-1,r}(\mathcal O)}
 \le C_{r}.
\end{equation}
This verifies condition \textnormal{(i)} for both entropy productions.

For $L_{1,h}$, Steps 2 and 3 give terms converging strongly in $H^{-1}$,
while Step 4 gives a term converging strongly in $W^{-1,p}$.
Thus \eqref{SC L1 exact decomposition} shows directly that
$\{L_{1,h}\}$ is relatively compact in $W^{-1,p}(\mathcal O)$.

For $L_{2,h}$, define
\[
 K_h:=S_h^{\mathrm{vol}}+S_h^{\mathrm{damp}}
      +S_h^{\mathrm{int}},\qquad
 M_h:=-\mu_h^{\mathrm{cell}}-\mu_h^{\mathrm{int}}.
\]
Equations \eqref{SC entropy projection vanishes} and
\eqref{SC Sint Wminus1p} show that $\{K_h\}$ is relatively compact in
$W^{-1,p}(\mathcal O)$, while \eqref{SC measure bound} shows that
$\{M_h\}$ is bounded in $\mathcal M(\mathcal O)$. By
\eqref{SC measure compact embedding}, $\{M_h\}$ is relatively compact in
$W^{-1,p}(\mathcal O)$. Hence
\eqref{SC L2 exact decomposition} implies that $\{L_{2,h}\}$ is relatively
compact in $W^{-1,p}(\mathcal O)$, proving condition \textnormal{(ii)}.

Murat's lemma and \eqref{SC production Wminus1r} now imply that both
$\{L_{1,h}\}$ and $\{L_{2,h}\}$ are relatively compact in
$H^{-1}(\mathcal O)$. Since $\mathcal O\Subset I\times(0,T)$ was arbitrary,
the two sequences in \eqref{SC two productions} are relatively compact in
$H^{-1}_{\mathrm{loc}}(I\times(0,T))$.
\end{proof}

We can now state the strong-convergence result.

\begin{theorem}[Strong convergence for a uniformly convex flux]
\label{theorem: strong convergence}
Let $k\ge1$ and let $u_h$ be the solution of \eqref{scheme D} with $\epsilon_1>0$ subject to periodic or compactly supported boundary conditions. Assume \eqref{SC choice r}--\eqref{SC cubic dissipation assumption}
hold, and that the initial projections converge strongly to
$\mathcal U_0$ in $L^2(I)$, i.e.,
\[
    u_h(\cdot,0)\longrightarrow\mathcal U_0
    \quad\text{strongly in }L^2(I).
\]
Then every sequence $h\to0$ contains a subsequence,
still denoted by $u_h$, and a function
$\mathcal U\in L^\infty(I\times(0,T))$ such that
\begin{equation}\label{SC strong conclusion}
    u_h\longrightarrow \mathcal U
    \quad\text{strongly in }L^p_{\mathrm{loc}}(I\times(0,T))
    \quad\text{for every }1\le p<\infty.
\end{equation}
The limit is a distributional solution of \eqref{HCL 1D scalar} and satisfies
the limiting square-entropy inequality. Since $f(\cdot)$ is strictly convex, the square-entropy inequality is enough to guarantee uniqueness of solution \cite{GR}, so that the entire family $u_h$ converges to the unique entropy solution of \eqref{HCL 1D scalar}.
\end{theorem}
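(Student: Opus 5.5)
The argument follows Tartar's compensated-compactness route for a scalar genuinely nonlinear flux. By \eqref{SC Linfinity}, the sequence $\{u_h\}$ is bounded in $L^\infty(I\times(0,T))$. Along any sequence $h\to0$ we can therefore extract a subsequence and a Young measure $\nu_{(x,t)}$ supported in $[-M_T,M_T]$ such that $g(u_h)\rightharpoonup\langle\nu,g\rangle$ weak-$*$ for every continuous $g$. Lemma \ref{lemma: SC Hminus1} shows that the two productions for the pairs $(s,f(s))$ and $(\eta(s),q(s))$ are relatively compact in $H^{-1}_{\mathrm{loc}}$. The div--curl lemma applied to the fields $(u_h,f(u_h))$ and $(-q(u_h),\eta(u_h))$ then gives the commutation relation
\[
\langle\nu,s\,q(s)-f(s)\eta(s)\rangle=\langle\nu,s\rangle\langle\nu,q\rangle-\langle\nu,f\rangle\langle\nu,\eta\rangle .
\]
The main task is to show that this single relation, using only the pair $(\eta,q)$, forces $\nu$ to be a Dirac mass. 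Once that holds, $u_h\to\mathcal U:=\langle\nu,s\rangle$ in $L^1_{\mathrm{loc}}$, and by the $L^\infty$ bound also in every $L^p_{\mathrm{loc}}$ with $p<\infty$. This is \eqref{SC strong conclusion}.

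For the reduction to a Dirac mass, shift variables so that $\bar u:=\langle\nu,s\rangle$ becomes $0$ and $f(0)=0$. Put $a:=\langle\nu,f\rangle$. The entropy pair may be modified by affine terms, and with the normalization $\tilde\eta(s)=\tfrac12 s^2$, $\tilde q(s)=\int_0^s\sigma f'(\sigma)\,\ud\sigma$ the relation reads
\[
\langle\nu,s\tilde q(s)-f(s)\tfrac{s^2}{2}\rangle=-a\langle\nu,\tfrac{s^2}{2}\rangle .
\]
Subtract the zero term $a\langle\nu,s\rangle\cdot(\text{const})$ and write $f(s)=a+(f(s)-a)$. The relation then takes the form $\langle\nu,G(s)\rangle=0$, where $G(s)=s\tilde q(s)-\tfrac{s^2}{2}f(s)$. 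One checks that $G'''(s)$ is proportional to $s\,f''(s)$ up to lower-order terms, or more simply that $G(s)=\int_0^s\int_0^\sigma\int_0^\tau \cdots$ has a sign. The standard identity is
\[
s\tilde q(s)-\tfrac{s^2}{2}f(s)=-\tfrac12\int_0^s \sigma(s-\sigma)f'(\sigma)\,\ud\sigma\cdot(\text{sign-corrected}),
\]
and uniform convexity \eqref{SC uniform convexity} then yields a one-sided bound of the form $\langle\nu,G\rangle\ge c\,\langle\nu,|s|^3\rangle$ or its analogue with the roles of $\pm$ handled separately. I expect this algebra, together with verifying that the linear terms cancel exactly, to be the main obstacle. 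If the single-pair argument does not close cleanly, the fallback is to use the Lax entropy construction, or to restrict to Tartar's classical argument with one Kružkov-type pair. That would require extending Lemma \ref{lemma: SC Hminus1} to the pair $(U,\mF)$ with $U$ convex, which the same decomposition handles through Theorem \ref{theorem: Local Entropy Inequality B} when $D_j\ge B_j$.

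Next I identify the limit. Passing to the limit in the weak form of $L_{1,h}$, with $R^{\mathrm{vol}}_h,R^{\mathrm{damp}}_h\to0$ in $H^{-1}$ and $R^{\mathrm{int}}_h\to0$ in $W^{-1,p}$ by Steps 2--4 of Lemma \ref{lemma: SC Hminus1}, shows that $\mathcal U$ is a distributional solution. Test functions touching $t=0$ are handled through $u_h(\cdot,0)\to\mathcal U_0$ in $L^2$ together with a time-cutoff argument. For $L_{2,h}$ with $\varphi\ge0$, the decomposition \eqref{SC L2 exact decomposition} has $-\mu^{\mathrm{cell}}_h-\mu^{\mathrm{int}}_h\le0$ and $K_h\to0$. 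Hence $\partial_t\eta(\mathcal U)+\partial_xq(\mathcal U)\le0$ in $\mathcal D'$, which is the limiting square-entropy inequality.

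Finally, for strictly convex $f$ a single strictly convex entropy inequality characterizes the entropy solution \cite{GR}. The limit is therefore unique and independent of the subsequence, so the whole family $u_h$ converges.
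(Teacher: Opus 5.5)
Your overall architecture matches the paper: the Young measure, the div--curl lemma applied to $(u_h,f(u_h))$ and $(-q(u_h),\eta(u_h))$, passage to the limit in the two productions, the time cutoff for the initial data, and uniqueness from a single entropy. However, the step that actually carries the theorem is not established. You never show that the commutation relation forces $\nu_{x,t}$ to be a Dirac mass, and the algebra you sketch for it is wrong.

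After your normalization ($\langle\nu,s\rangle=0$, $f(0)=0$, $\tilde q(s)=\int_0^s\sigma f'(\sigma)\ud\sigma$), the relation is
\[
\langle\nu,G\rangle=-\frac{a}{2}\langle\nu,s^2\rangle,\qquad G(s):=s\tilde q(s)-\frac{s^2}{2}f(s),\qquad a=\langle\nu,f\rangle .
\]
It is not $\langle\nu,G\rangle=0$. The term $a\langle\nu,s^2\rangle$ is not a ``zero term'' and cannot be subtracted away.

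Your ``standard identity'' is also false. For $f(s)=s$ the left-hand side vanishes, whereas $-\frac12\int_0^s\sigma(s-\sigma)\ud\sigma=-s^3/12$. Likewise, a bound $G(s)\ge c|s|^3$ cannot hold near $s=0$: for Burgers' flux, $G(s)=s^4/12$. The correct representation is
\[
G(s)=\frac{s}{2}\int_0^s\sigma(s-\sigma)f''(\sigma)\ud\sigma\ \ge\ \frac{c_f}{12}\,s^4 .
\]

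The missing ingredient is the sign of the right-hand side. Since the shifted flux is convex and vanishes at $\langle\nu,s\rangle=0$, Jensen's inequality gives $a\ge0$. Then
\[
0\le\frac{c_f}{12}\langle\nu,s^4\rangle\le\langle\nu,G\rangle=-\frac a2\langle\nu,s^2\rangle\le0,
\]
so $\nu_{x,t}$ is the Dirac mass at its mean. The paper avoids both the normalization and Jensen by symmetrizing. The relation is equivalent to \eqref{SC commutation}, that is, $\iint\mathcal C(\alpha,\beta)\ud\nu(\alpha)\ud\nu(\beta)=0$. The double-integral form \eqref{SC covariance identity} then gives $\mathcal C(\alpha,\beta)\ge\frac{c_f}{12}|\alpha-\beta|^4$ directly. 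Your $G$ is just $\mathcal C(\cdot,\bar u)$ written in shifted variables.

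Your fallback would not rescue the argument. Lemma \ref{lemma: SC Hminus1} gives compactness only for the productions of $(s,f)$ and of the square entropy. Extending it through Theorem \ref{theorem: Local Entropy Inequality B} requires $D_j\ge B_j[u_h;U]$. With $\epsilon_1,\epsilon_2$ fixed, that holds only for $C^3$ entropies whose quotient $q_j[u_h;U]$ is small. It fails for Kru\v{z}kov entropies, which are not $C^3$, and for Lax entropies $e^{\lambda s}$ as $\lambda\to\infty$, since their quotient equals $\lambda$. The whole point, for a uniformly convex flux, is that the single square-entropy pair suffices. So you must carry out the positivity computation above rather than fall back on richer entropy families.
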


\begin{proof}
By \eqref{SC Linfinity}, we can extract a subsequence of $u_h$ and the associated family of Young measures
$\nu_{x,t}$ supported in $[-M_T,M_T]$ such that $g(u_h)\rightharpoonup\langle\nu_{x,t},g\rangle$ weakly-* in \(L^\infty\), for any continuous function $g$ \cite{DiPerna1983,Dafermos}. By Lemma \ref{lemma: SC Hminus1}, we can apply the
div--curl lemma \cite{Tartar1979} to the two
entropy pairs
\[
    \eta_1(s)=s,\qquad q_1(s)=f(s),
\]
\[
    \eta_2(s)=\frac{s^2}{2},\qquad
    q_2(s)=q(s),\qquad q'(s)=s f'(s),
\]
and obtain
\begin{equation*}
    \langle \nu, \eta_1 \rangle \langle \nu, q_2 \rangle - \langle \nu, \eta_2 \rangle \langle \nu, q_1 \rangle = \langle \nu, \eta_1 q_2 - \eta_2 q_1 \rangle.
\end{equation*}
Equivalently, this can be written in its symmetrized form as
\begin{equation}\label{SC commutation}
 \iint_{\mathbb R^2}\mathcal C(a,b)\,\ud\nu_{x,t}(a)\ud\nu_{x,t}(b)=0,
\end{equation}
where
\begin{align}
    \mathcal C(a,b)
 =& (a-b)(q(a)-q(b))
 -\frac{a^2-b^2}{2}(f(a)-f(b)) \nonumber\\
 =& \frac12\int_b^a\int_b^a
 (s-r)(f'(s)-f'(r))\,\ud r\ud s. \label{SC covariance identity}
\end{align}
By \eqref{SC uniform convexity},
\[
 (s-r)(f'(s)-f'(r))\ge c_f(s-r)^2.
\]
Hence,
\begin{equation}\label{SC commutator lower bound}
    \mathcal C(a,b)\ge\frac{c_f}{12}|a-b|^4.
\end{equation}
Equations \eqref{SC commutation} and \eqref{SC commutator lower bound} imply
\[
 \iint_{\mathbb R^2}|a-b|^4\,
 \ud\nu_{x,t}(a)\ud\nu_{x,t}(b)=0.
\]
Thus $\nu_{x,t}$ is a Dirac mass for almost every $(x,t)$. Hence
$\nu_{x,t}=\delta_{\mathcal U(x,t)}$ and $u_h\to\mathcal U$
locally in measure. Since \(\{ |u_h|^p\}\) is uniformly integrable on every compact subset,
Vitali's theorem gives
\[
u_h\to\mathcal U
\quad\text{strongly in }L^p_{\mathrm{loc}},
\qquad 1\le p<\infty.
\]

In Lemma \ref{lemma: SC Hminus1} we showed that
$(u_h)_t+f(u_h)_x\to0$ in
$H^{-1}_{\mathrm{loc}}(I\times(0,T))$. Together with the strong convergence
of $u_h$, this implies
$\mathcal U_t+f(\mathcal U)_x=0$ in distributions, and the
square-entropy inequality can be passed to the limit as well. It remains to
identify the initial trace. Let
$\varphi\in C_c^\infty(I\times[0,T))$. The estimates in Steps~2--4 of
Lemma~\ref{lemma: SC Hminus1} remain valid for such a test function, since
they only use spatial derivatives of $\varphi$ and estimates integrated over
$0<t<T$. Hence, writing
$R_h:=(u_h)_t+f(u_h)_x$, we have
\[
 \left|\langle R_h,\varphi\rangle\right|
 \le C_\varphi\bigl(h^{1/2}+h^{1/3}\bigr)\longrightarrow0.
\]
Because $u_h$ is differentiable in time and $\varphi(\cdot,T)=0$,
integration by parts in time gives
\[
 \langle R_h,\varphi\rangle
 =-\int_0^T\!\!\int_I
 \bigl(u_h\varphi_t+f(u_h)\varphi_x\bigr)\,\ud x\ud t
 -\int_Iu_h(x,0)\varphi(x,0)\,\ud x.
\]
Fix $\delta>0$. On $I\times(\delta,T)$, the strong convergence already
proved above and continuity of $f$ on $[-M_T,M_T]$ allow passage to the
limit in the space--time integral. On $I\times(0,\delta)$, the uniform
$L^\infty$ bound on $u_h$ and $f(u_h)$ makes the absolute value of that
integral at most $C_\varphi\delta$, uniformly in $h$. Thus, first letting
$h\to0$ and then $\delta\downarrow0$, and using
$u_h(\cdot,0)\to\mathcal U_0$ strongly in $L^2(I)$, we obtain
\[
 \int_0^T\!\!\int_I
 \bigl(\mathcal U\varphi_t+f(\mathcal U)\varphi_x\bigr)\,\ud x\ud t
 +\int_I\mathcal U_0(x)\varphi(x,0)\,\ud x=0.
\]
Therefore $\mathcal U$ has initial value $\mathcal U_0$ in the
distributional trace sense, and the limit is the unique entropy solution to
\eqref{HCL 1D scalar}.
\end{proof}

\section{Numerical results}\label{section: numerical results}
Recall the definitions \eqref{def C_j} and \eqref{def D_j} of the damping coefficients in \eqref{scheme C} and \eqref{scheme D}:
\begin{align*}
    C_j[u;U,r]:=& q_j[u;U] \bigg\{ \frac{C_k C(k,r) h_j^{r-\frac{3}{2}}}{2} \|\partial_x^{r}f(u)\|_{L^2(I_j)} +\frac{\beta_k}{2h_j}\left(|H^+_{j-\frac{1}{2}}|+|H^-_{j+\frac{1}{2}}|\right) \bigg\}, \\
    D_j[u;r,\epsilon_1,\epsilon_2] :=& \epsilon_1 h_j^{r-\frac{3}{2}} \|\partial_x^{r}f(u)\|_{L^2(I_j)} + \epsilon_2 \frac{|[\![u]\!]_{j-\frac{1}{2}}|+|[\![u]\!]_{j+\frac{1}{2}}|}{h_j},
\end{align*}
where $k+1 \le r \le k+2$. The first term on the right-hand side of either $C_j$ and $D_j$ hinders the efficiency of the numerical scheme, but is nevertheless indispensable in the proof of local entropy inequalities and the strong convergence. In Section \ref{section: strong convergence}, we proved strong convergence only for strictly convex conservation laws; but for general non-convex conservation laws, the convergence of the scheme usually depends on the choice of $\epsilon_1,\epsilon_2$. This section discusses the effects of $\epsilon_1$ on numerical results and the question of whether it can be set to zero.

First of all, it can be seen from Tables \ref{tab:Ck}, \ref{tab:Ckr} and \ref{tab:improved beta} that the constant $C_k C(k,r)$ is much smaller in magnitude compared to $\beta_k=2 \mathcal A_k$ as $k$ increases. Indeed, the ratio $C_k C(k,r)/\beta_k$ is recorded in the following table for the first few $k$:
\FloatBarrier
\begin{table}[htbp]
\centering
\begin{tabular}{c|c|c|c}
\hline
$(k,r)$ & $C_k C(k,r)$ & $\beta_k=2\mathcal{A}_k$ 
& $C_k C(k,r)/\beta_k$ \\
\hline
(1,2) & $0.2387324145$   & $4.3703703704$  & $5.46252135\times10^{-2}$ \\
(1,3) & $0.06704424360$  & $4.3703703704$  & $1.53406320\times10^{-2}$ \\
(2,3) & $0.03495228188$  & $12.1950289930$ & $2.86610896\times10^{-3}$ \\
(2,4) & $0.008690199690$ & $12.1950289930$ & $7.12601806\times10^{-4}$ \\
(3,4) & $0.003382471073$  & $28.6965574262$ & $1.17870274\times10^{-4}$ \\
(3,5) & $0.0007582675327$ & $28.6965574262$ & $2.64236410\times10^{-5}$ \\
(4,5) & $0.0002407636609$ & $52.7920626087$ & $4.56060341\times10^{-6}$ \\
(4,6) & $0.00004943108133$& $52.7920626087$ & $9.36335481\times10^{-7}$ \\
\hline
\end{tabular}
\caption{The ratio $C_k C(k,r)/\beta_k$ for $k+1\le r\le k+2$.}
\label{tab:ratio-CkCkr-beta}
\end{table}
Therefore, assuming that our constant $\beta_k$ is close to its optimal value, and taking into account that $|H^\pm_{j+\frac12}|\approx |f'(u)|\cdot|[\![u]\!]_{j+\frac12}|$ we deduce that $\epsilon_1$ and $\epsilon_2$ should also satisfy
\begin{equation}\label{quotient rule}
    \frac{\epsilon_1}{\epsilon_2}\approx \frac{C_k C(k,r)}{\beta_k \max|f'(u)|},
\end{equation}
or at least both sides of \eqref{quotient rule} should be asymptotically in the same order of magnitude. This suggests that, for small values of $\epsilon_2$, we may take $\epsilon_1$ to be equally small during computations if the numerical solution itself is observed to be uniformly bounded. This is observed in numerical tests for non-convex conservation laws shown below. Moreover, it has also been observed that if $\epsilon_2$ is sufficiently large, the scheme would always converge for non-convex tests with $\epsilon_1=0$. The really interesting case is when we use a moderate value of $\epsilon_2$, say around 10, then the smallest value of $\epsilon_1$ for the scheme to converge satisfies \eqref{quotient rule}. Indeed, this is demonstrated by the following example.

\begin{expl}
Consider the Buckley-Leverett equation with the non-convex flux
\begin{equation*}
    f(u)=\frac{4 u^2}{4 u^2 + (1-u)^2}.
\end{equation*}
The computational domain is $I=[-1,1]$ and the initial condition is
\begin{equation*}
    u_0(x)=\begin{cases}
        1,& -\frac{1}{2}\le x \le 0,\\
        0,& \text{otherwise.}
    \end{cases}
\end{equation*}
We evolve the solution up to $T=0.3$. The reference solution is shown in Figure \ref{fig:ex1.0}, which was computed by the first-order monotone scheme with Godunov flux and a uniform mesh of size $N=20000$.
\begin{figure}[htbp]
     \centering
         \includegraphics[width=0.47\textwidth]{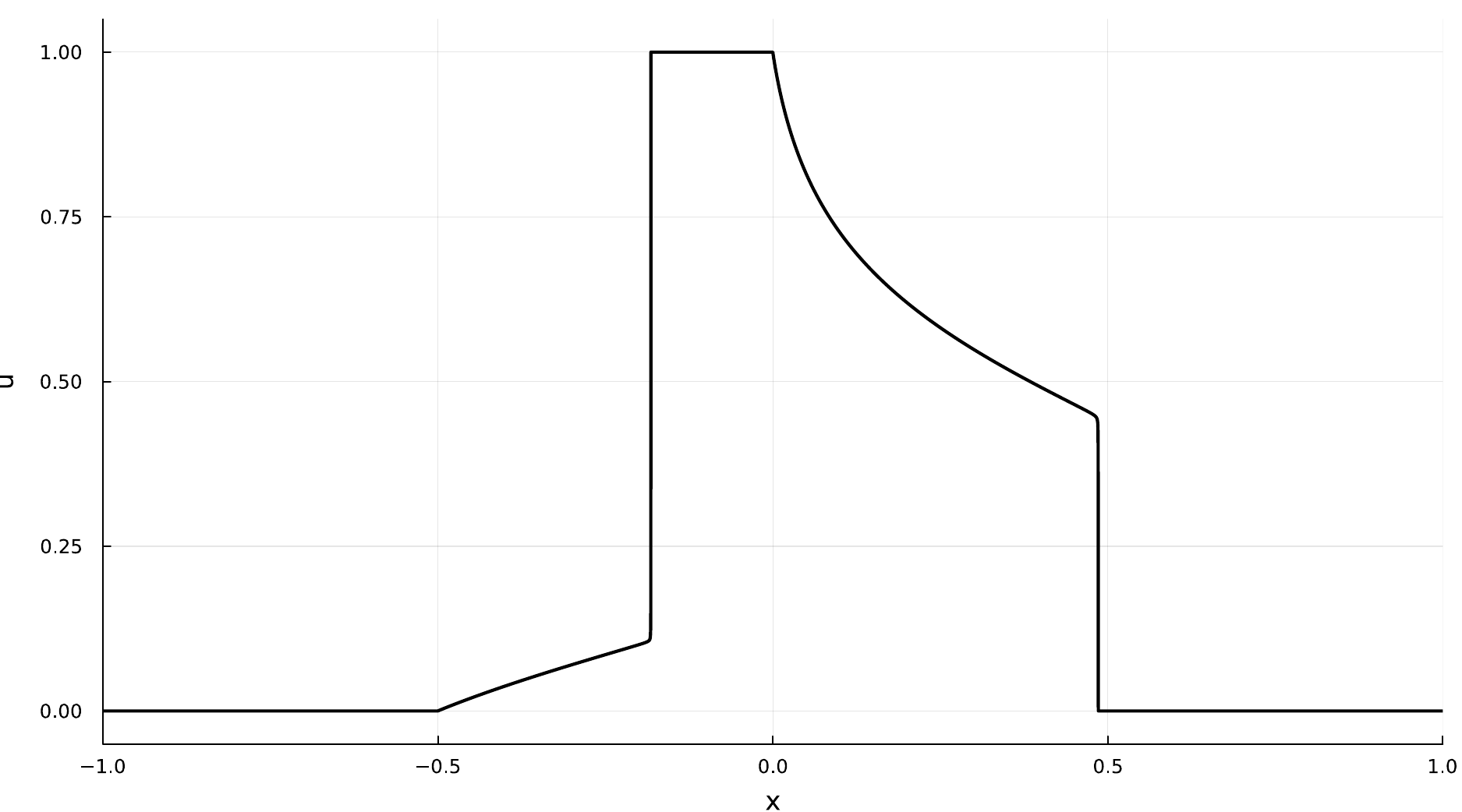}
     \caption{}
     \label{fig:ex1.0}
\end{figure}

Next, we compute the solution using \eqref{scheme D} with a uniform mesh of size $N=275$. The numerical solutions with $\epsilon_1=0$, $\epsilon_2=10$ and $k=1,2,3,4$ are shown in Figure \ref{fig:OFDG ex1}. The global Lax–Friedrichs flux with $\max|f'(u)|\approx 2.34$ and classical RK4 were used in all tests. Note that none of them converges to the correct solution.
\begin{figure}[htbp]
     \centering
     \begin{subfigure}[b]{0.2\textwidth}
         \centering
         \includegraphics[width=\textwidth]{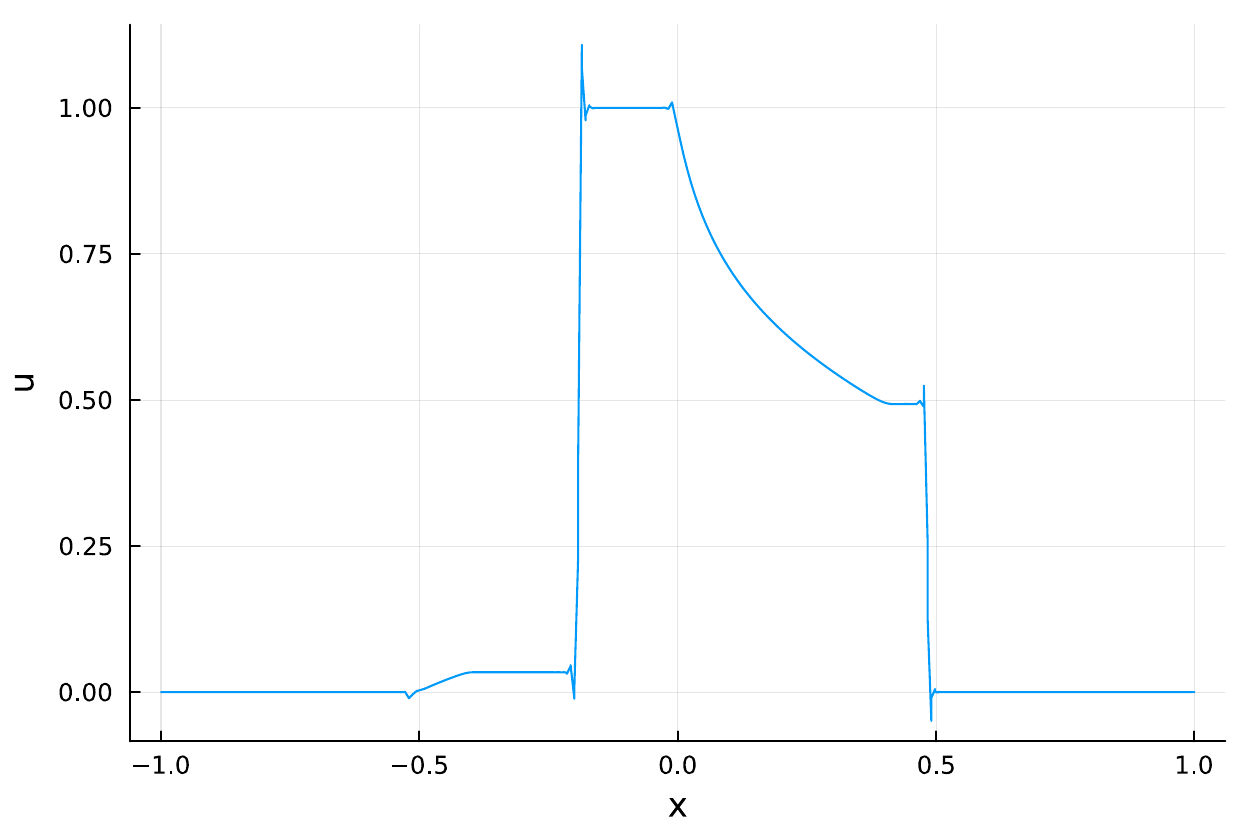}
         \caption*{$\mathbb{P}^1$}
     \end{subfigure}
     \hfill
     \begin{subfigure}[b]{0.2\textwidth}
         \centering
         \includegraphics[width=\textwidth]{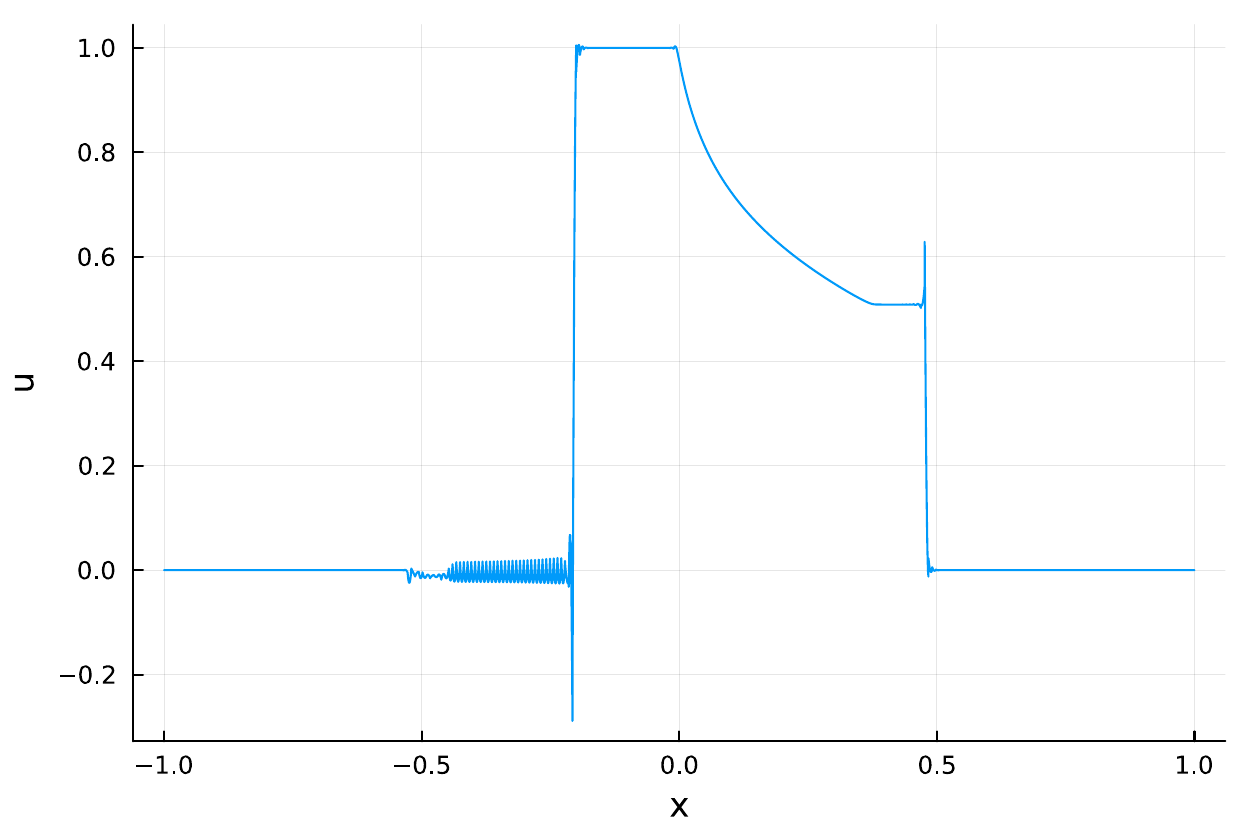}
         \caption*{$\mathbb{P}^2$}
     \end{subfigure}
     \hfill
     \begin{subfigure}[b]{0.2\textwidth}
         \centering
         \includegraphics[width=\textwidth]{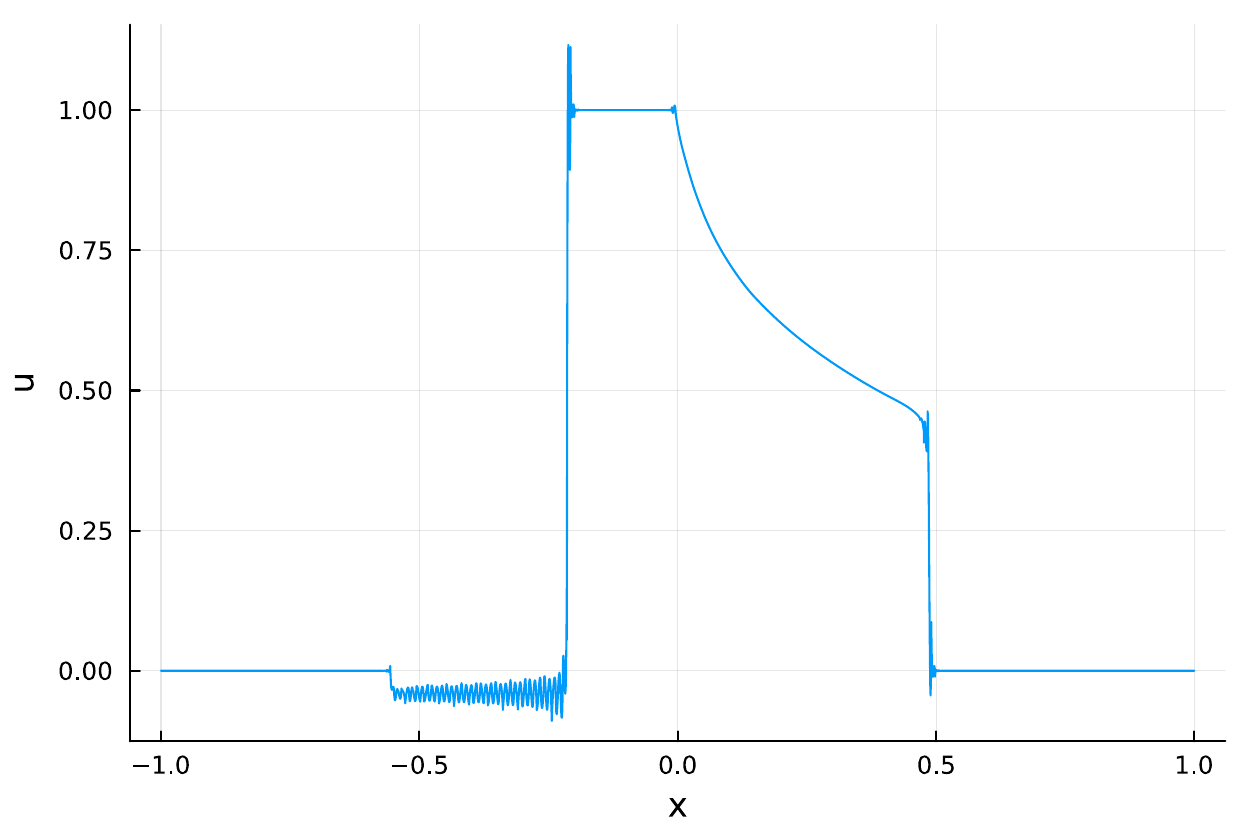}
         \caption*{$\mathbb{P}^3$}
     \end{subfigure}
     \hfill
     \begin{subfigure}[b]{0.2\textwidth}
         \centering
         \includegraphics[width=\textwidth]{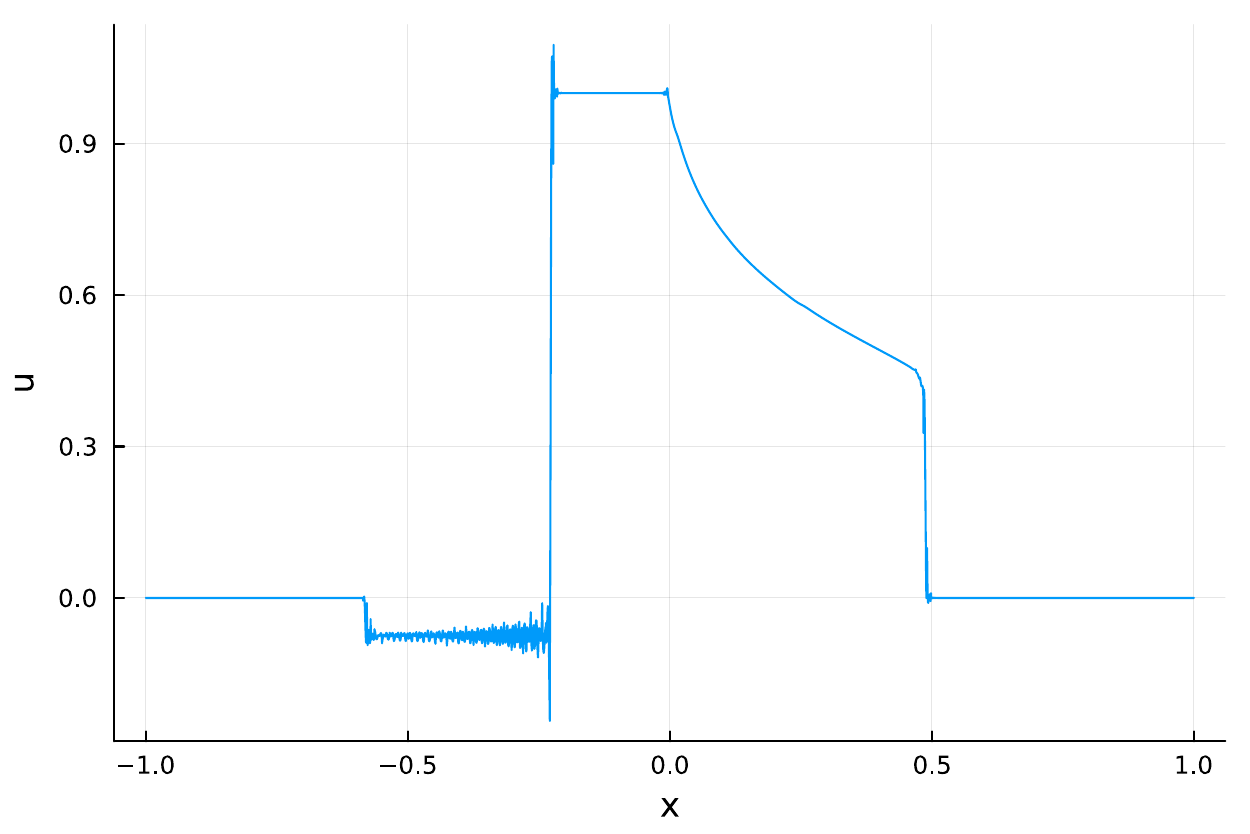}
         \caption*{$\mathbb{P}^4$}
     \end{subfigure}
     \caption{\small Solutions of \eqref{scheme D} with $\epsilon_1=0$, $\epsilon_2=10$ and $k=1,2,3,4$. We use a uniform mesh with $N=275$. None of the above solutions converges to the correct solution.}
     \label{fig:OFDG ex1}
\end{figure}
\FloatBarrier
Now, keep $\epsilon_2=10$ and start slightly increasing $\epsilon_1$. We observe that when $\epsilon_1$ reaches a critical value, the scheme would converge correctly. These numerical solutions for \eqref{scheme D} with $r=k+1$ are shown in Figure \ref{fig:OPDG k+1 ex1}, and those with $r=k+2$ are shown in Figure \ref{fig:OPDG k+2 ex1}.
\begin{figure}[htbp]
     \centering
     \begin{subfigure}[b]{0.2\textwidth}
         \centering
         \includegraphics[width=\textwidth]{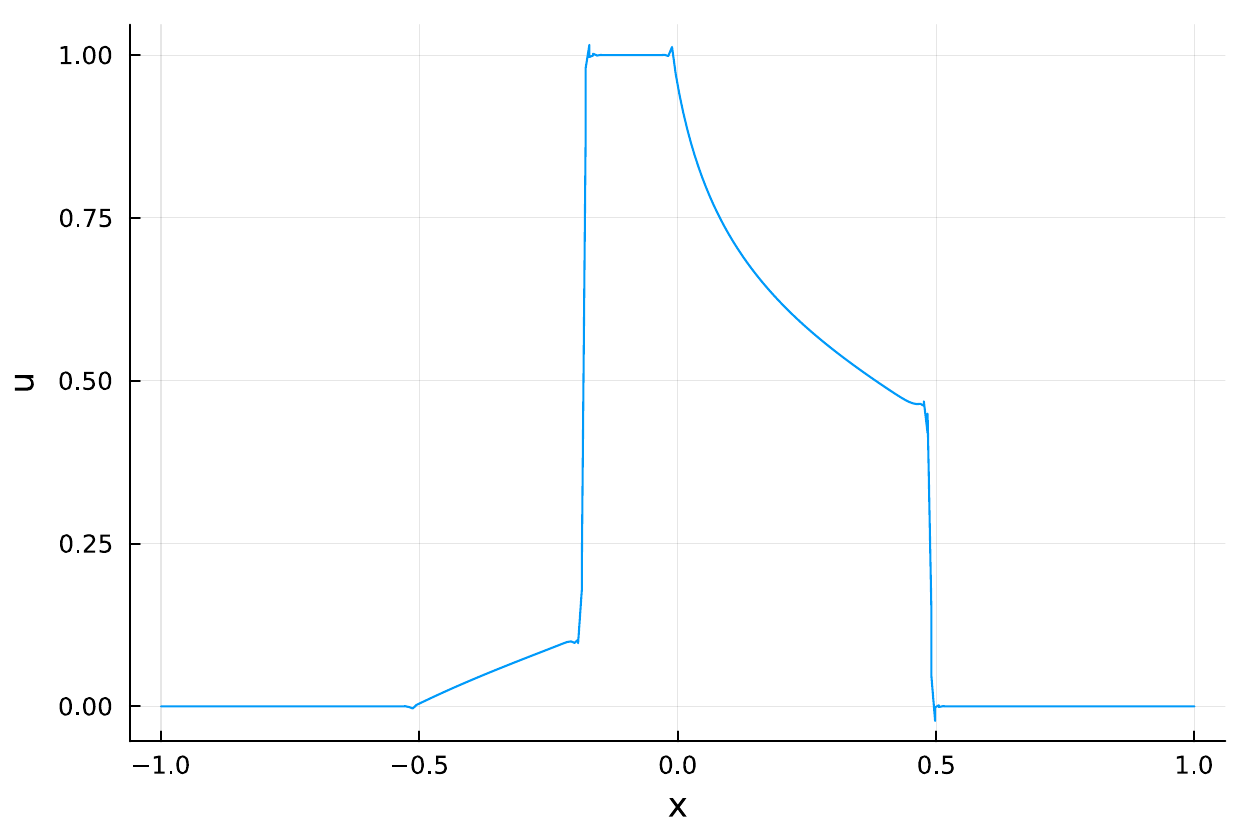}
         \caption*{$\mathbb{P}^1$ with $\epsilon_1=2$, $\epsilon_2=10$.}
     \end{subfigure}
     \hfill
     \begin{subfigure}[b]{0.2\textwidth}
         \centering
         \includegraphics[width=\textwidth]{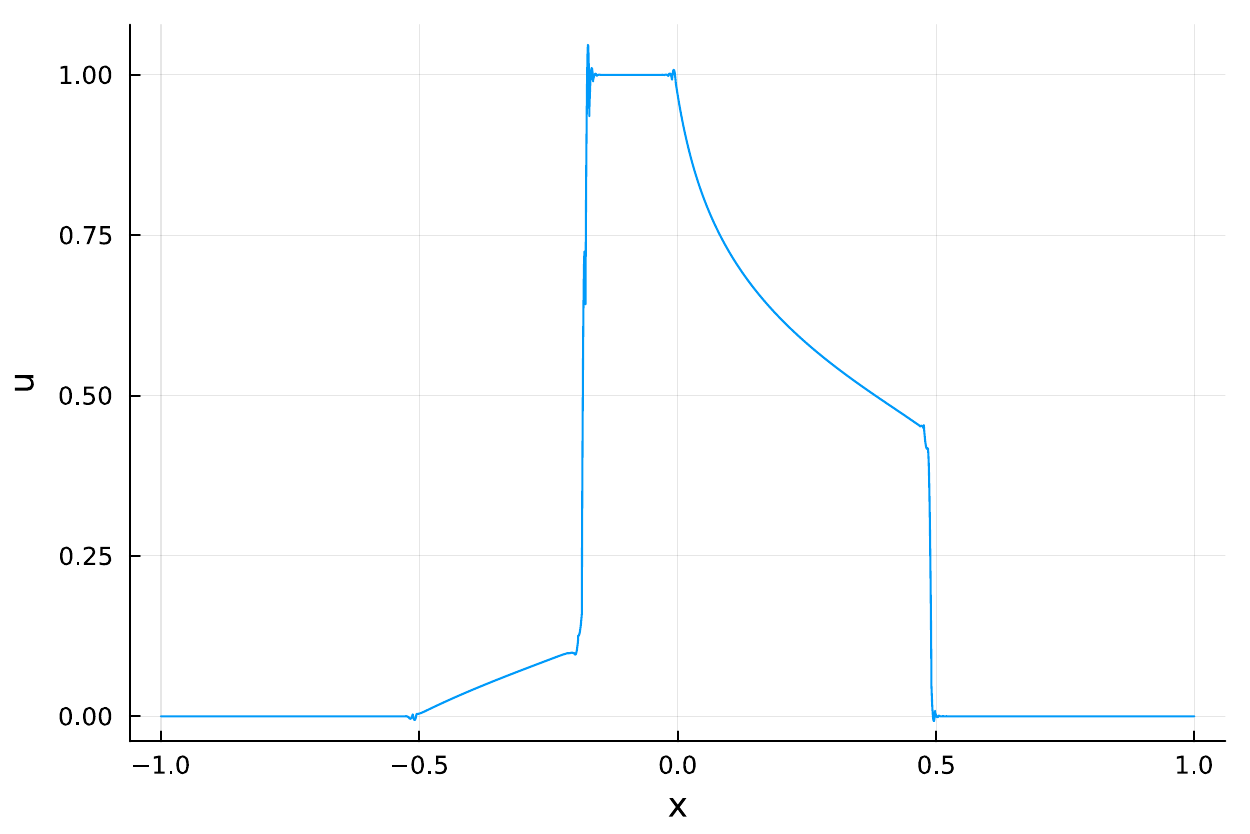}
         \caption*{$\mathbb{P}^2$ with $\epsilon_1=0.1$, $\epsilon_2=10$.}
     \end{subfigure}
     \hfill
     \begin{subfigure}[b]{0.2\textwidth}
         \centering
         \includegraphics[width=\textwidth]{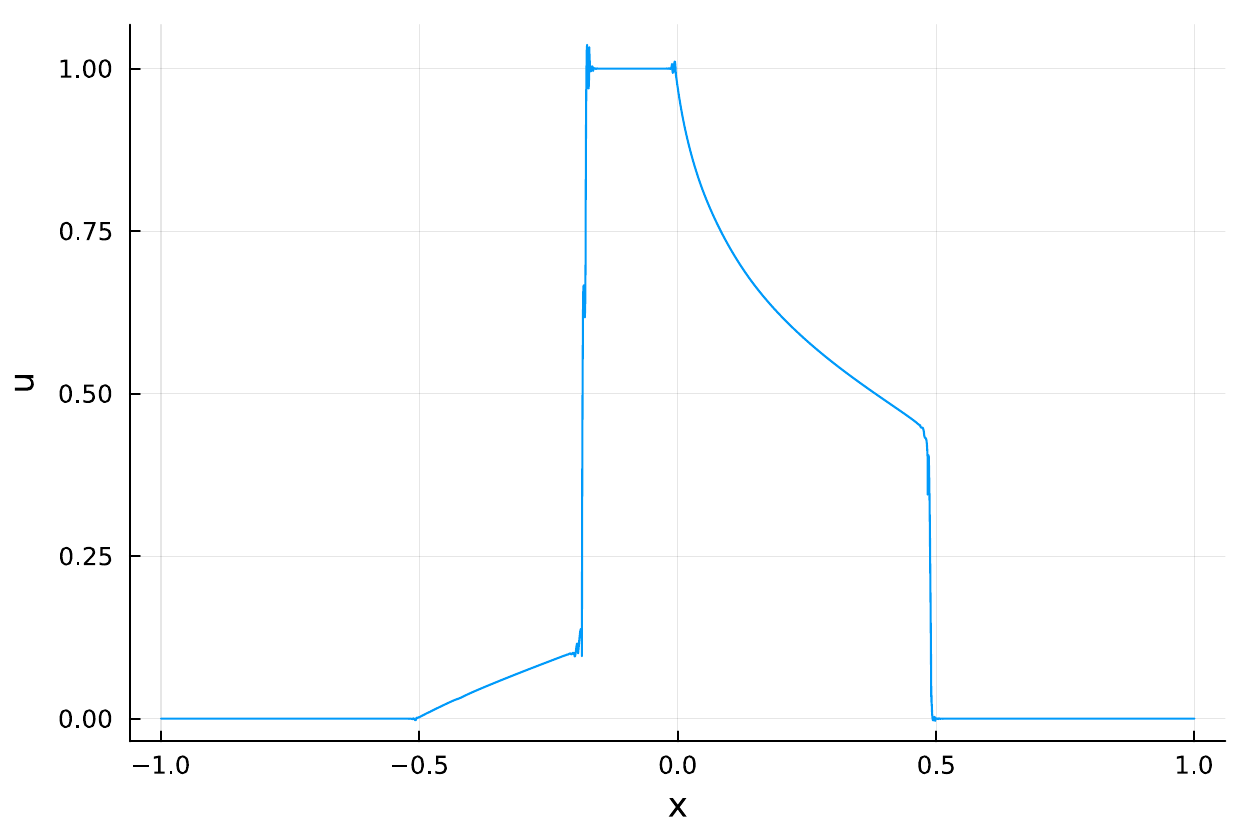}
         \caption*{$\mathbb{P}^3$ with $\epsilon_1=0.001$, $\epsilon_2=10$.}
     \end{subfigure}
     \hfill
     \begin{subfigure}[b]{0.2\textwidth}
         \centering
         \includegraphics[width=\textwidth]{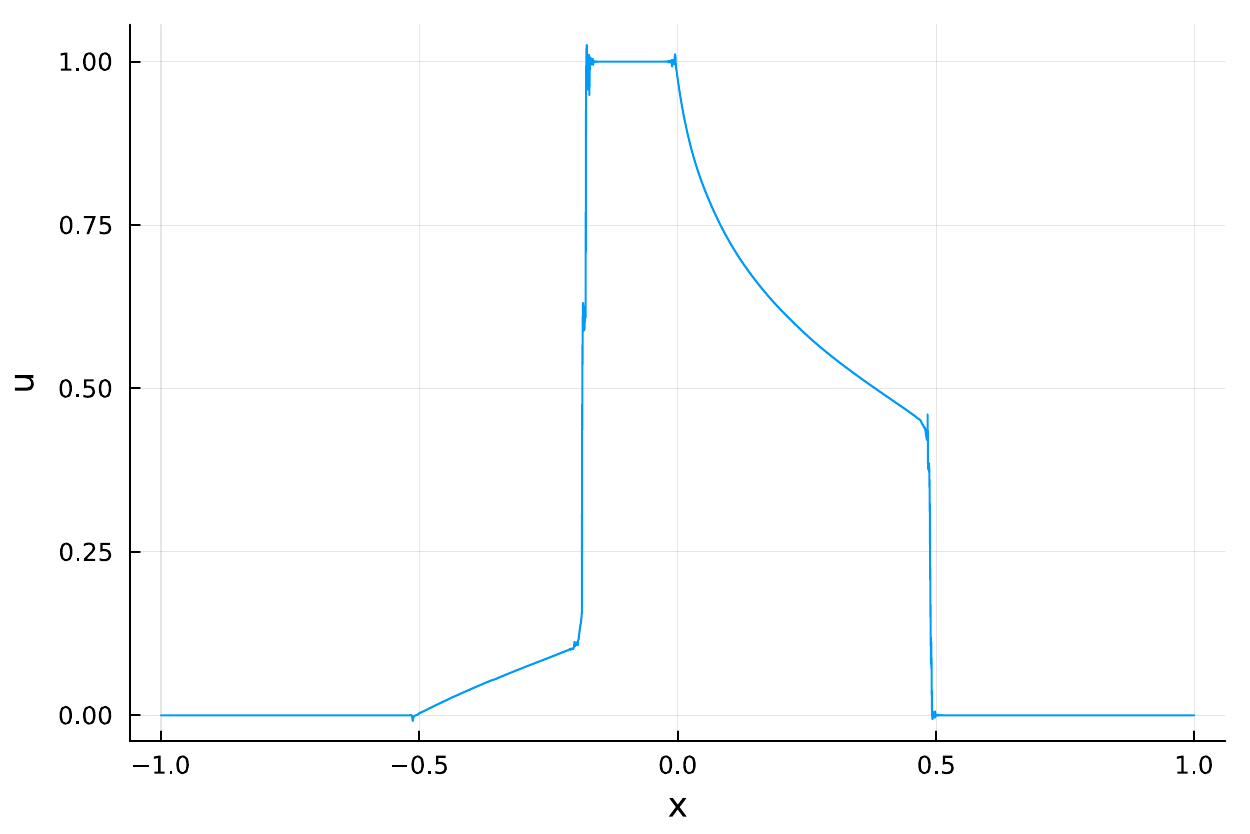}
         \caption*{$\mathbb{P}^4$ with $\epsilon_1=2.0\times 10^{-5}$, $\epsilon_2=10$.}
     \end{subfigure}
     \caption{\small Solutions of \eqref{scheme D} with $k=1,2,3,4$ and $r=k+1$. We still use a uniform mesh with $N=275$. Now, all of the above solutions agree with the correct solution.}
     \label{fig:OPDG k+1 ex1}
\end{figure}

\begin{figure}[htbp]
     \centering
     \begin{subfigure}[b]{0.2\textwidth}
         \centering
         \includegraphics[width=\textwidth]{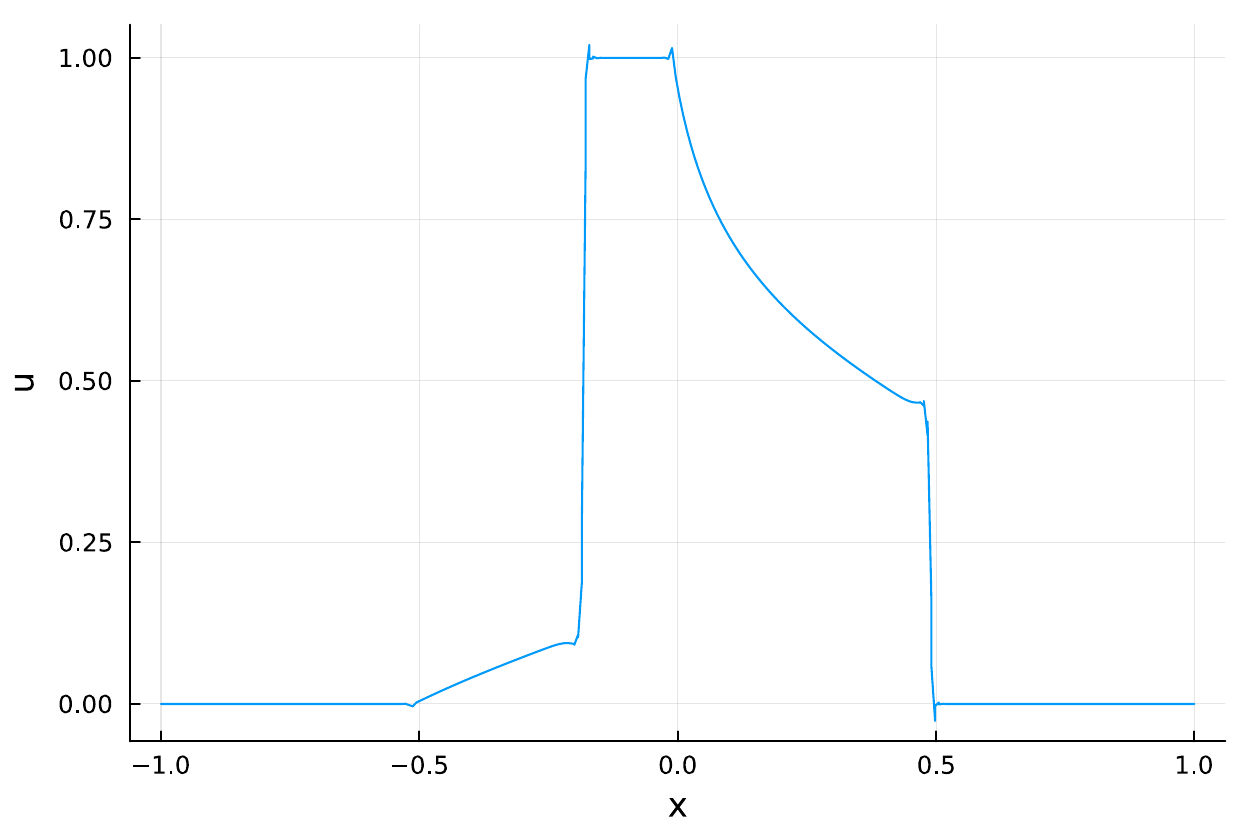}
         \caption*{$\mathbb{P}^1$ with $\epsilon_1=1$, $\epsilon_2=10$.}
     \end{subfigure}
     \hfill
     \begin{subfigure}[b]{0.2\textwidth}
         \centering
         \includegraphics[width=\textwidth]{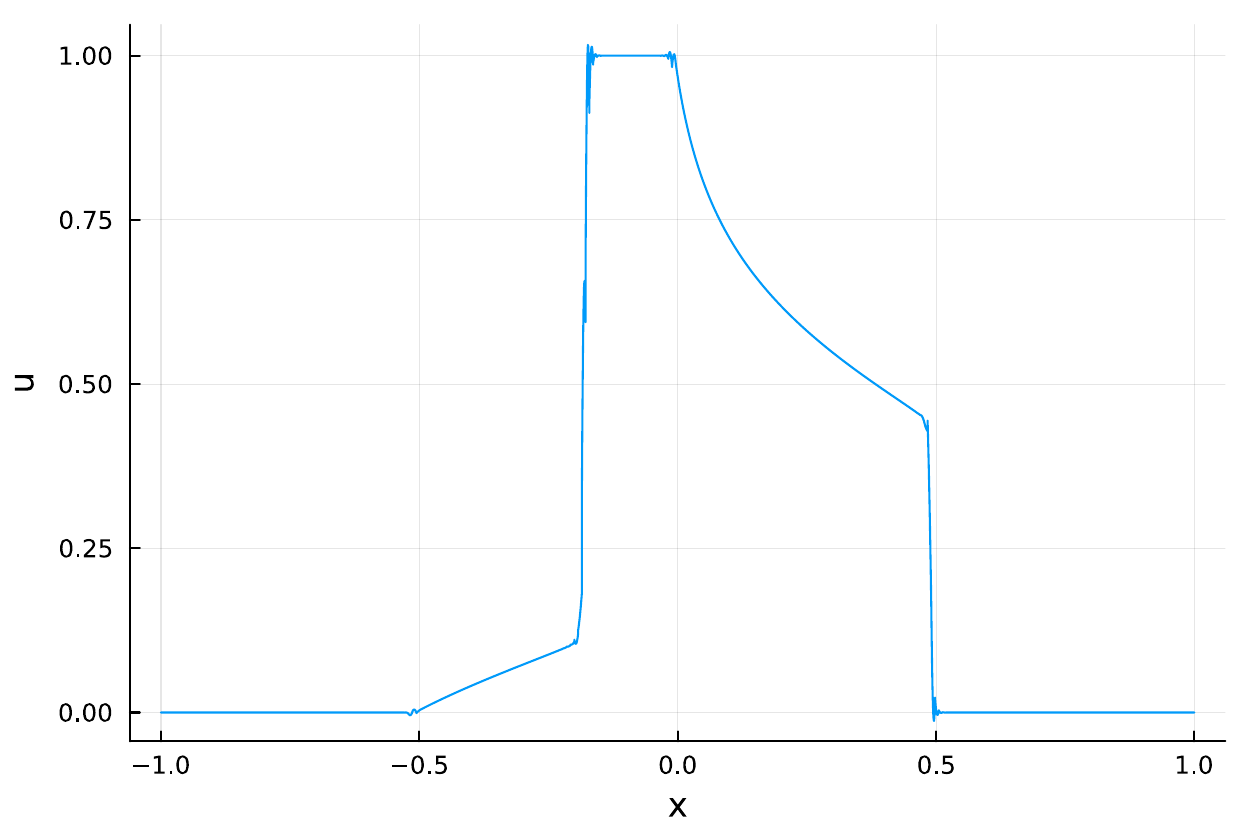}
         \caption*{$\mathbb{P}^2$ with $\epsilon_1=0.075$, $\epsilon_2=10$.}
     \end{subfigure}
     \hfill
     \begin{subfigure}[b]{0.2\textwidth}
         \centering
         \includegraphics[width=\textwidth]{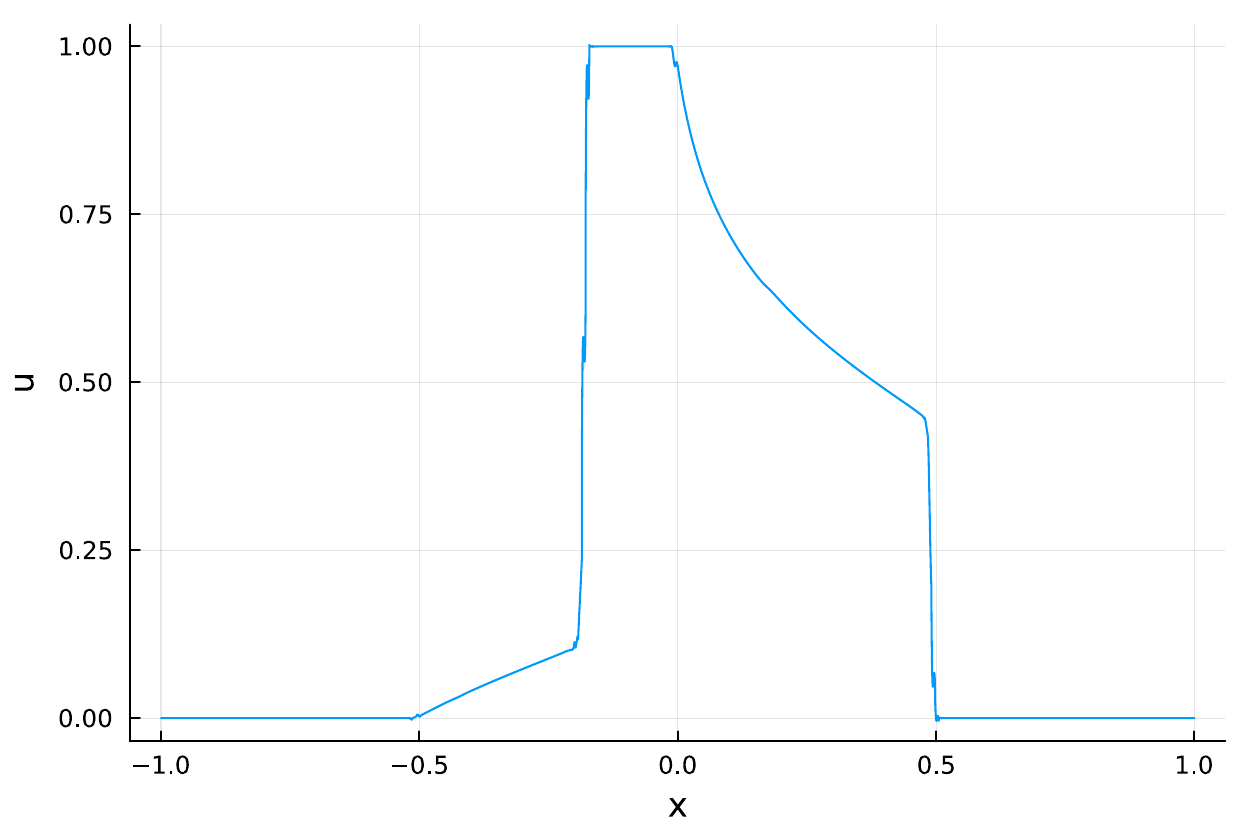}
         \caption*{$\mathbb{P}^3$ with $\epsilon_1=0.005$, $\epsilon_2=10$.}
     \end{subfigure}
     \hfill
     \begin{subfigure}[b]{0.2\textwidth}
         \centering
         \includegraphics[width=\textwidth]{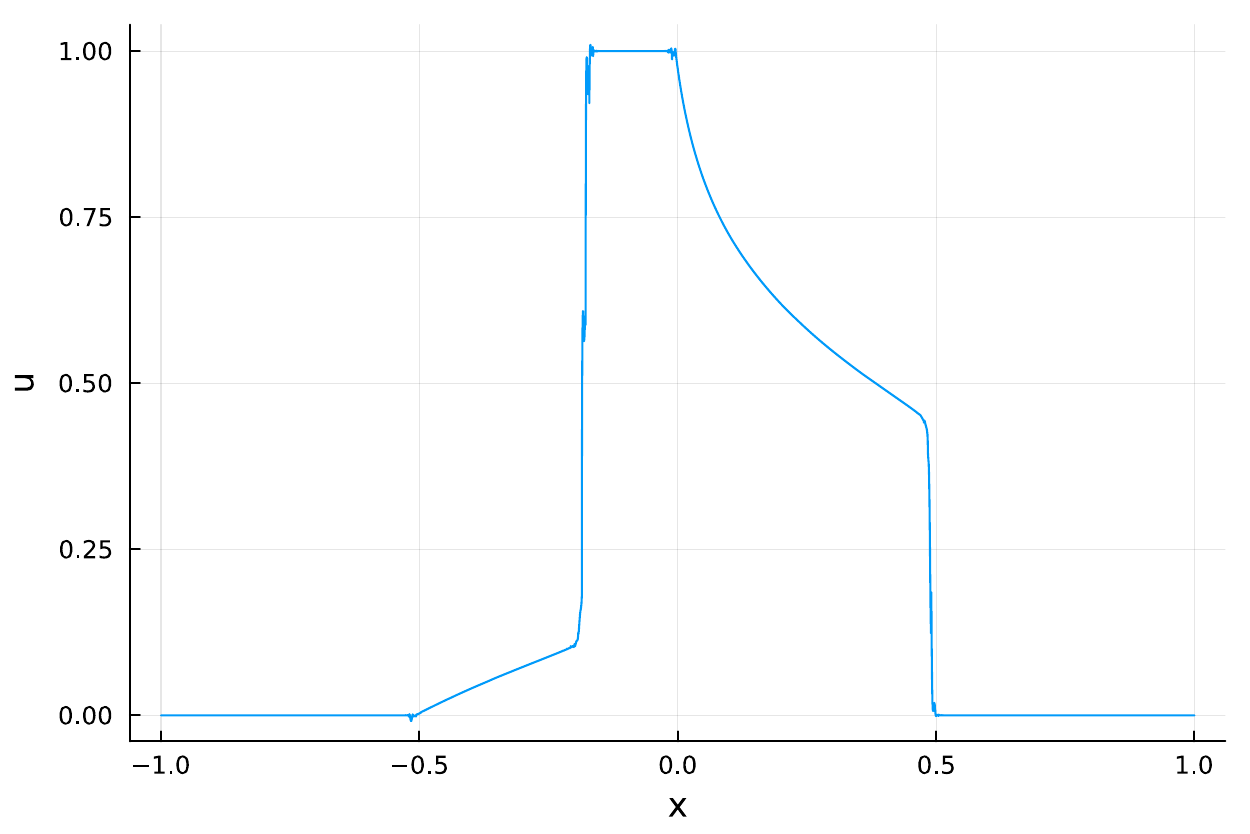}
         \caption*{$\mathbb{P}^4$ with $\epsilon_1=5.0\times 10^{-6}$, $\epsilon_2=10$.}
     \end{subfigure}
     \caption{\small Solutions of \eqref{scheme D} with $k=1,2,3,4$ and $r=k+2$. We still use a uniform mesh with $N=275$. All of the above solutions agree with the correct solution.}
     \label{fig:OPDG k+2 ex1}
\end{figure}
\FloatBarrier
If we compute the ratio of the observed $\epsilon_1$ and $\epsilon_2$ from Figures \ref{fig:OPDG k+1 ex1} and \ref{fig:OPDG k+2 ex1}, and then compare it with the value on the right-hand side of \eqref{quotient rule}, we would find that they indeed share the same order of magnitude asymptotically, as is shown in Table \ref{tab:ratio example 1}. 
\begin{table}[htbp]
\begin{tabular}{c|c|c}
\hline
$(k,r)$ & $\frac{\epsilon_1}{\epsilon_2}$ & $\frac{C_k C(k,r)}{\beta_k \max|f'(u)|}$ \\
\hline
(1,2) & $2.0\times 10^{-1}$   & $2.3\times10^{-2}$ \\
(1,3) & $1.0\times 10^{-1}$  & $6.6\times10^{-3}$ \\
(2,3) & $1.0\times 10^{-2}$  & $1.2\times10^{-3}$ \\
(2,4) & $7.5\times 10^{-3}$ & $3.0\times10^{-4}$ \\
(3,4) & $1.0\times 10^{-4}$  & $5.0\times10^{-5}$ \\
(3,5) & $5.0\times 10^{-4}$ & $1.1\times10^{-5}$ \\
(4,5) & $2.0\times 10^{-6}$ & $1.9\times10^{-6}$ \\
(4,6) & $5.0\times 10^{-7}$ & $4.0\times10^{-7}$ \\
\hline
\end{tabular}
\caption{}
\label{tab:ratio example 1}
\end{table}
With this smallest $\epsilon_1$, we can reduce $\epsilon_2$ significantly such that the scheme still converges, as shown in Figure \ref{fig:OPDG k+2 small constants ex1} for \eqref{scheme D} with $r=k+2$. In particular, when $k=4$ we may set $\epsilon_1=5.0\times 10^{-6}$ and $\epsilon_2=1.0\times 10^{-6}$.
\begin{figure}[htbp]
     \centering
     \begin{subfigure}[b]{0.2\textwidth}
         \centering
         \includegraphics[width=\textwidth]{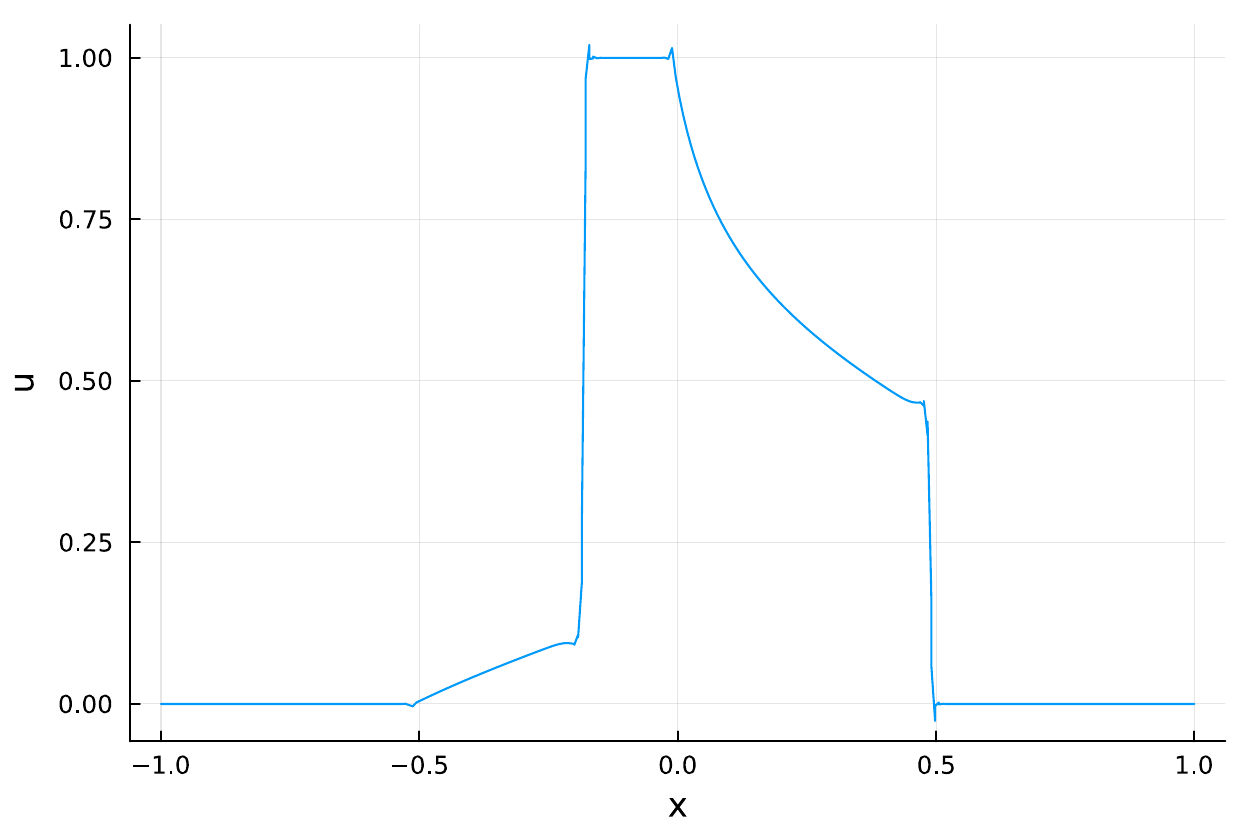}
         \caption*{$\mathbb{P}^1$ with $\epsilon_1=1$, $\epsilon_2=10$.}
     \end{subfigure}
     \hfill
     \begin{subfigure}[b]{0.2\textwidth}
         \centering
         \includegraphics[width=\textwidth]{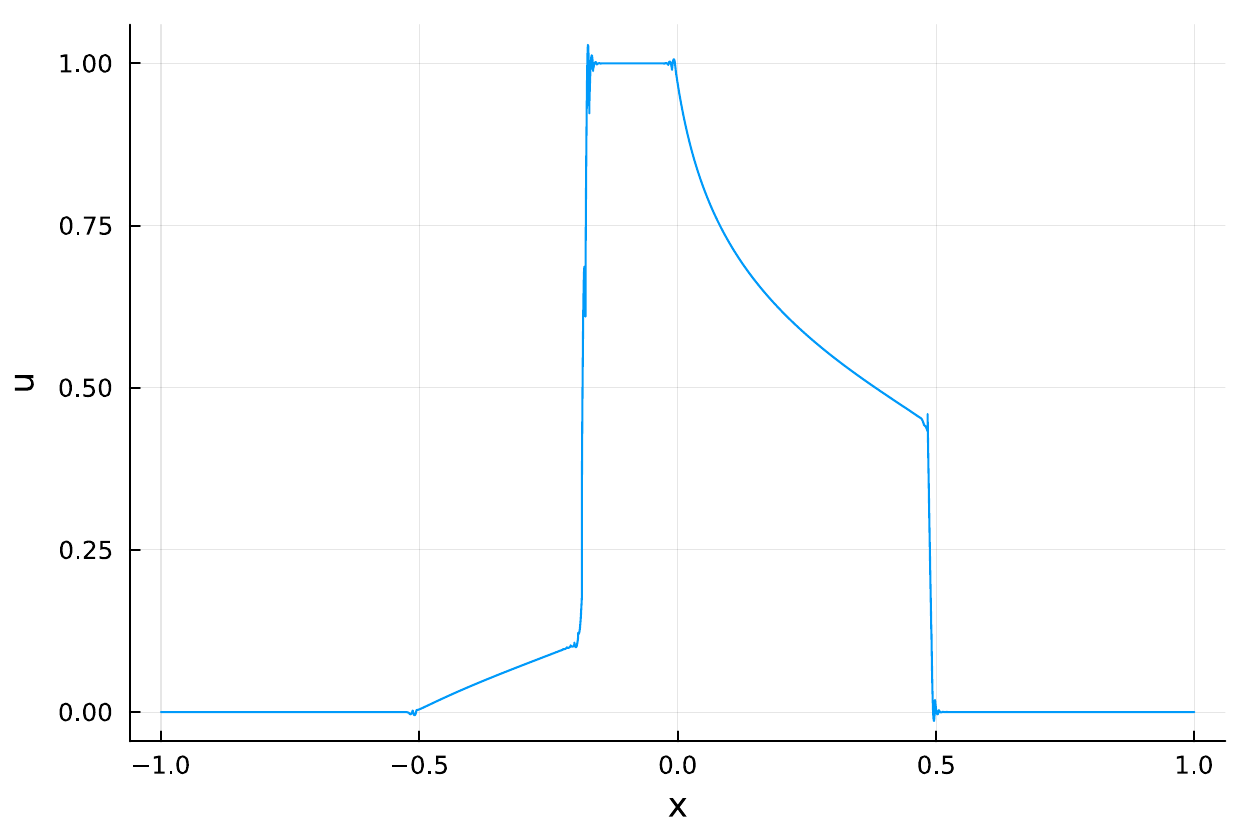}
         \caption*{$\mathbb{P}^2$ with $\epsilon_1=0.075$, $\epsilon_2=0.01$.}
     \end{subfigure}
     \hfill
     \begin{subfigure}[b]{0.2\textwidth}
         \centering
         \includegraphics[width=\textwidth]{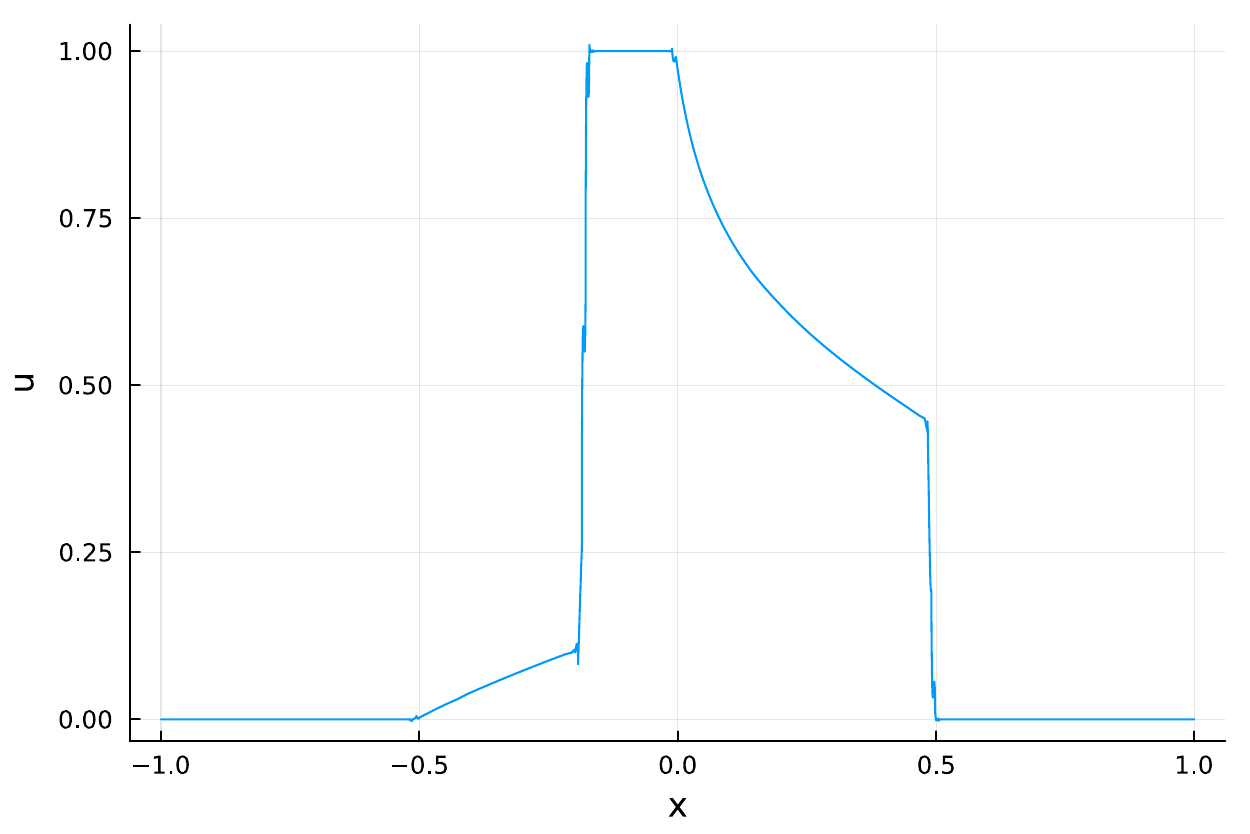}
         \caption*{$\mathbb{P}^3$ with $\epsilon_1=0.005$, $\epsilon_2=0.001$.}
     \end{subfigure}
     \hfill
     \begin{subfigure}[b]{0.2\textwidth}
         \centering
         \includegraphics[width=\textwidth]{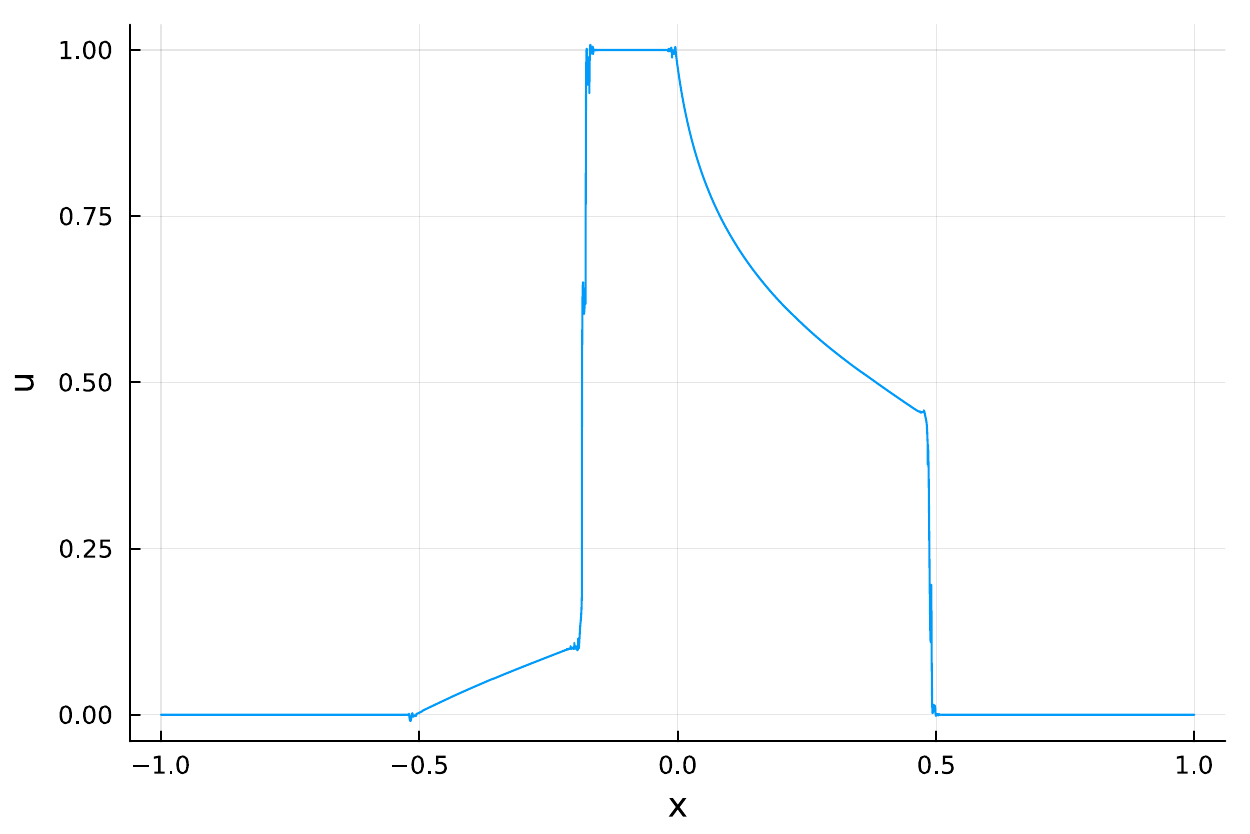}
         \caption*{$\mathbb{P}^4$ with $\epsilon_1=5\times 10^{-6}$, $\epsilon_2=1\times 10^{-6}$.}
     \end{subfigure}
     \caption{\small Solutions of \eqref{scheme D} with $k=1,2,3,4$ and $r=k+2$. We still use a uniform mesh with $N=275$. All of the above solutions agree with the correct solution.}
     \label{fig:OPDG k+2 small constants ex1}
\end{figure}
\end{expl}

\FloatBarrier
\begin{expl}
Consider the non-convex flux
\begin{equation*}
    f(u)=\sin(u).
\end{equation*}
The computational domain is $I=[-5,5]$ and the initial condition is
\begin{equation*}
    u_0(x)=\begin{cases}
        \frac{\pi}{64},& x < 0,\\
        \frac{255 \pi}{64},& x\ge 0.
    \end{cases}
\end{equation*}
We evolve the solution up to $T=4$. The reference solution is shown in Figure \ref{fig:ex sine ref}, which was computed by the first-order monotone scheme with Godunov flux and a uniform mesh of size $N=20000$. This example is borrowed from \cite{Qiu}, where it is used to demonstrate the failure of convergence for the classical DG scheme.
\begin{figure}[htbp]
     \centering
         \includegraphics[width=0.47\textwidth]{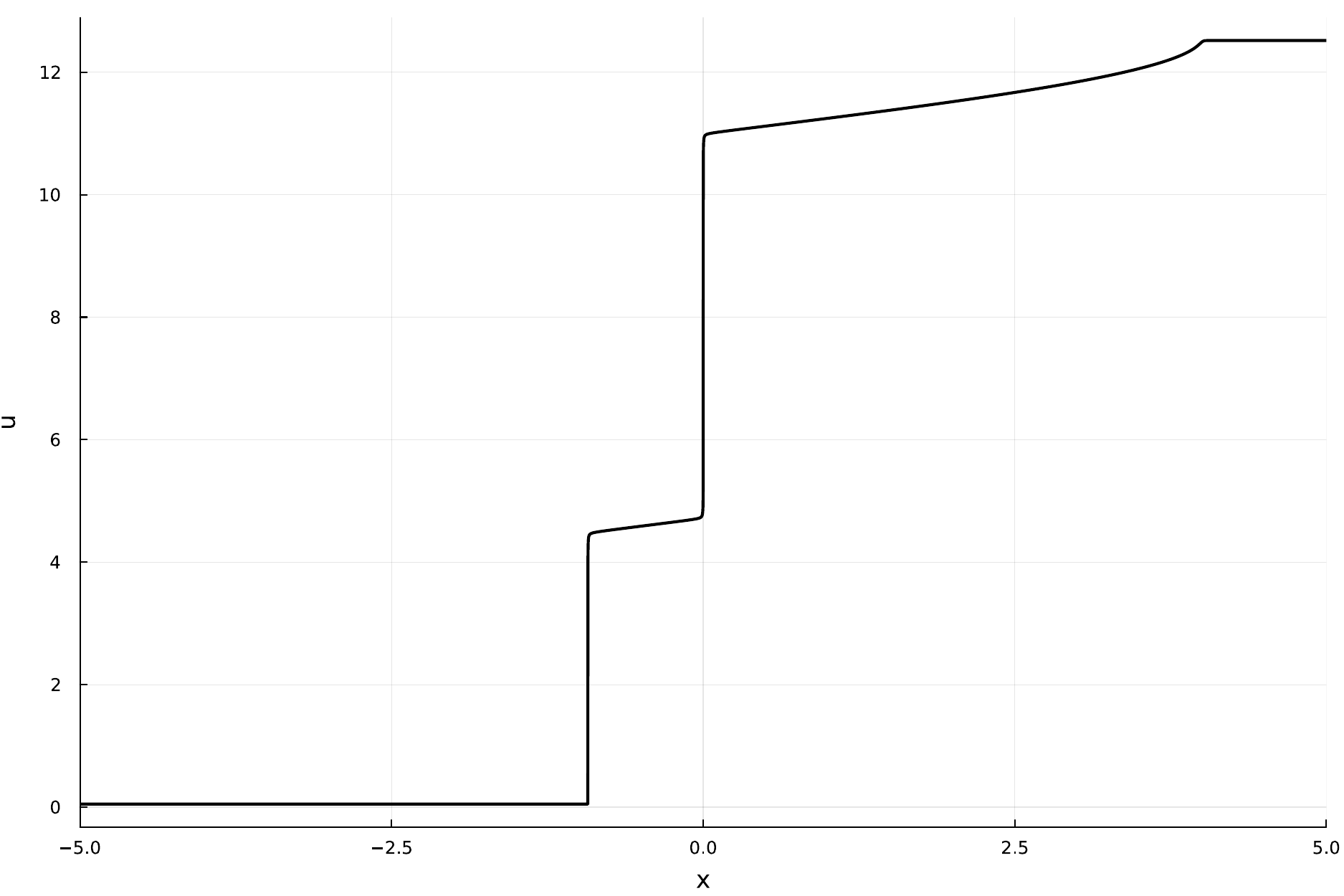}
     \caption{}
     \label{fig:ex sine ref}
\end{figure}

Next, we compute the solution using \eqref{scheme D} with a uniform mesh of size $N=275$. The numerical solutions with $\epsilon_1=0$, $\epsilon_2=1$ and $k=1,2,3,4$ are shown in Figure \ref{fig:OFDG ex sine}. The global Lax–Friedrichs flux with $\max|f'(u)|=1$ and classical RK4 were used in all tests. None of them converges to the correct solution.
\begin{figure}[htbp]
     \centering
     \begin{subfigure}[b]{0.2\textwidth}
         \centering
         \includegraphics[width=\textwidth]{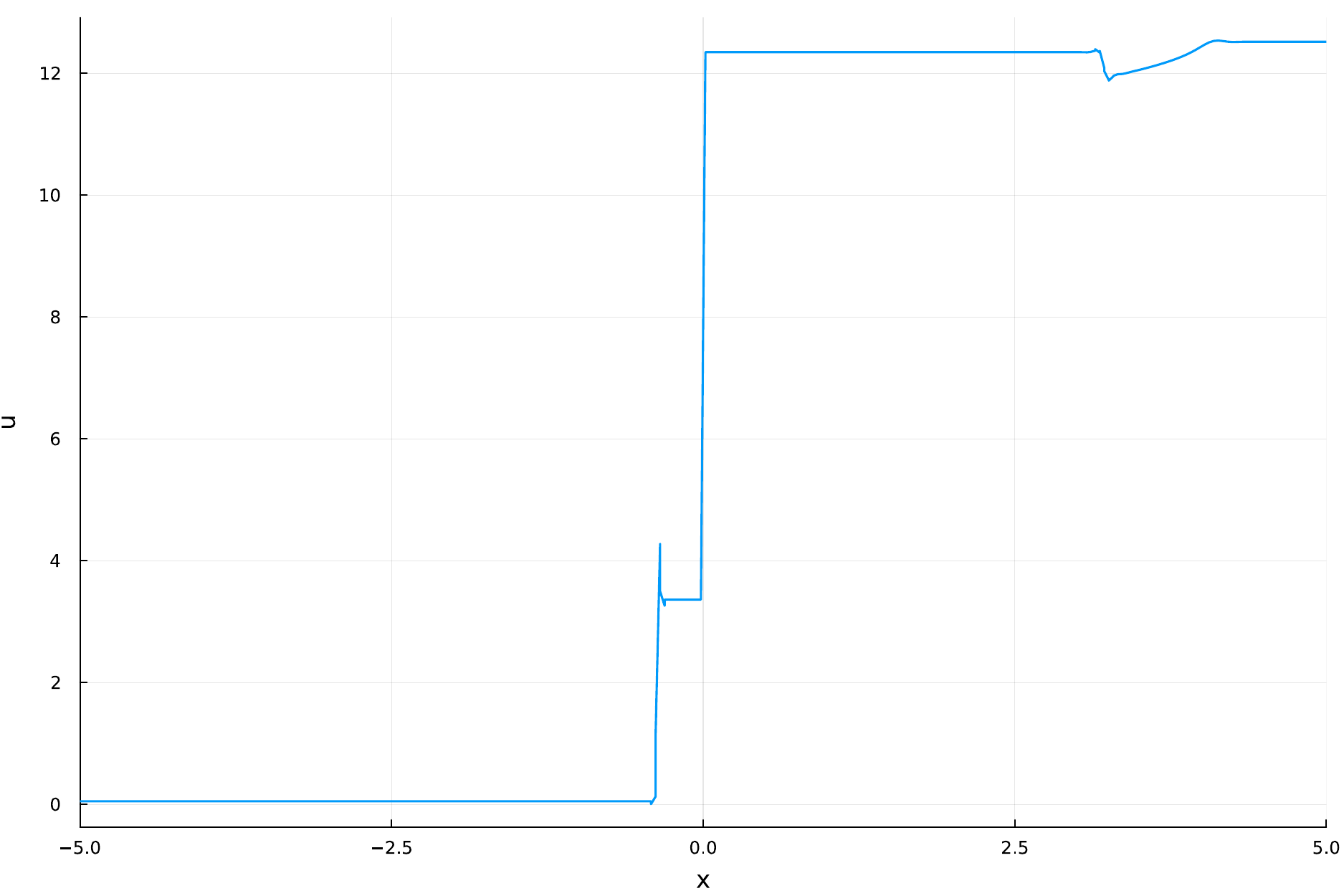}
         \caption*{$\mathbb{P}^1$}
     \end{subfigure}
     \hfill
     \begin{subfigure}[b]{0.2\textwidth}
         \centering
         \includegraphics[width=\textwidth]{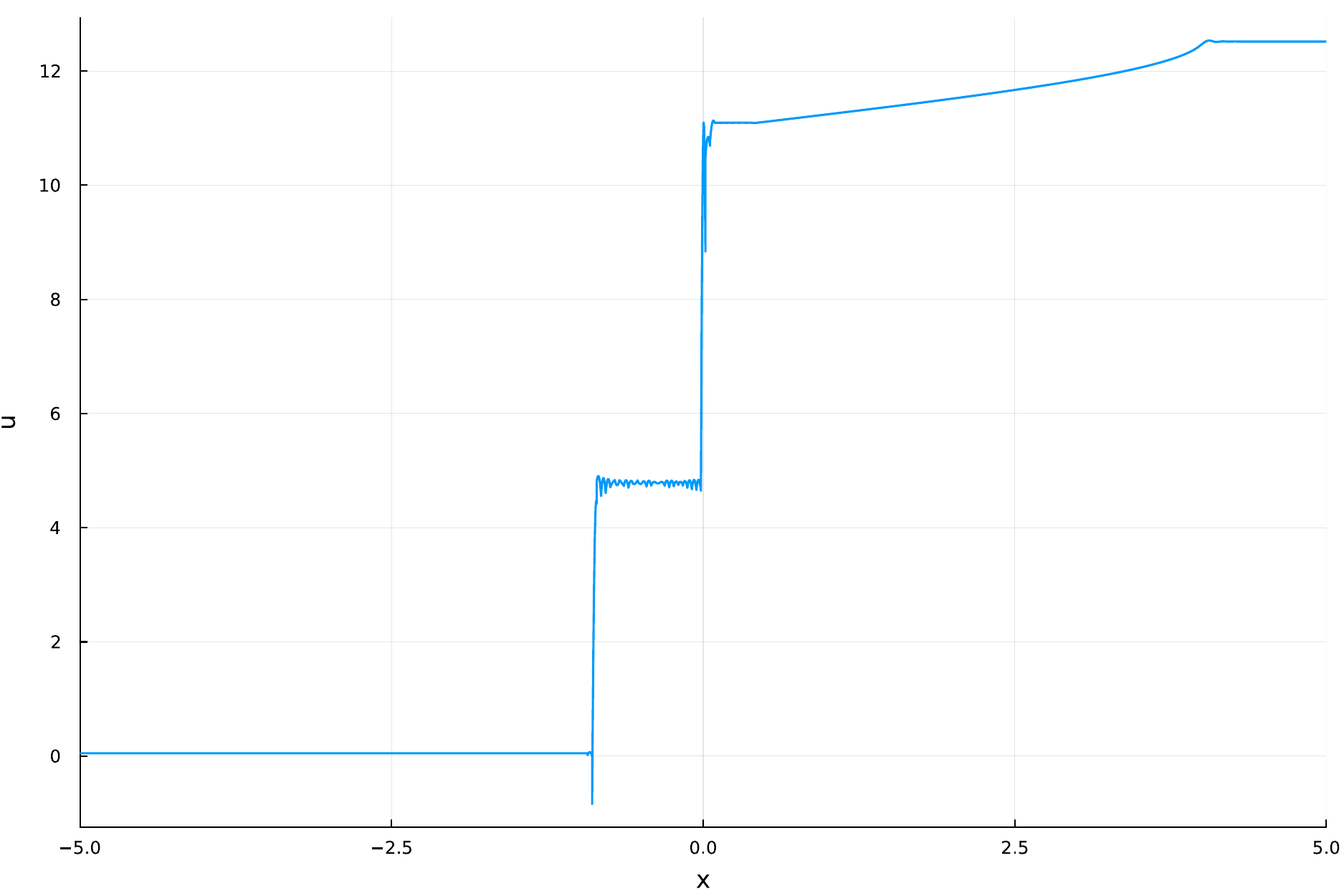}
         \caption*{$\mathbb{P}^2$}
     \end{subfigure}
     \hfill
     \begin{subfigure}[b]{0.2\textwidth}
         \centering
         \includegraphics[width=\textwidth]{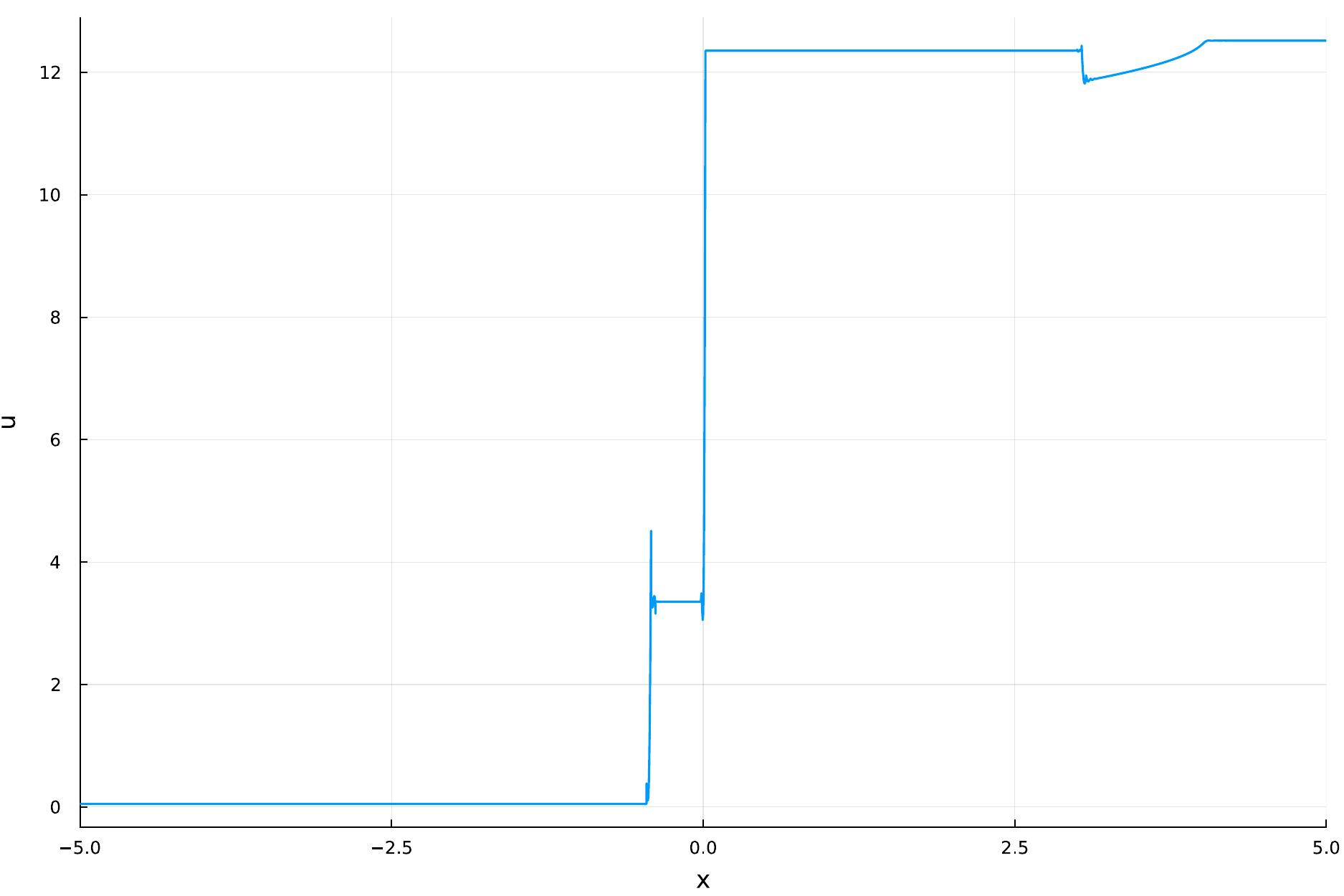}
         \caption*{$\mathbb{P}^3$}
     \end{subfigure}
     \hfill
     \begin{subfigure}[b]{0.2\textwidth}
         \centering
         \includegraphics[width=\textwidth]{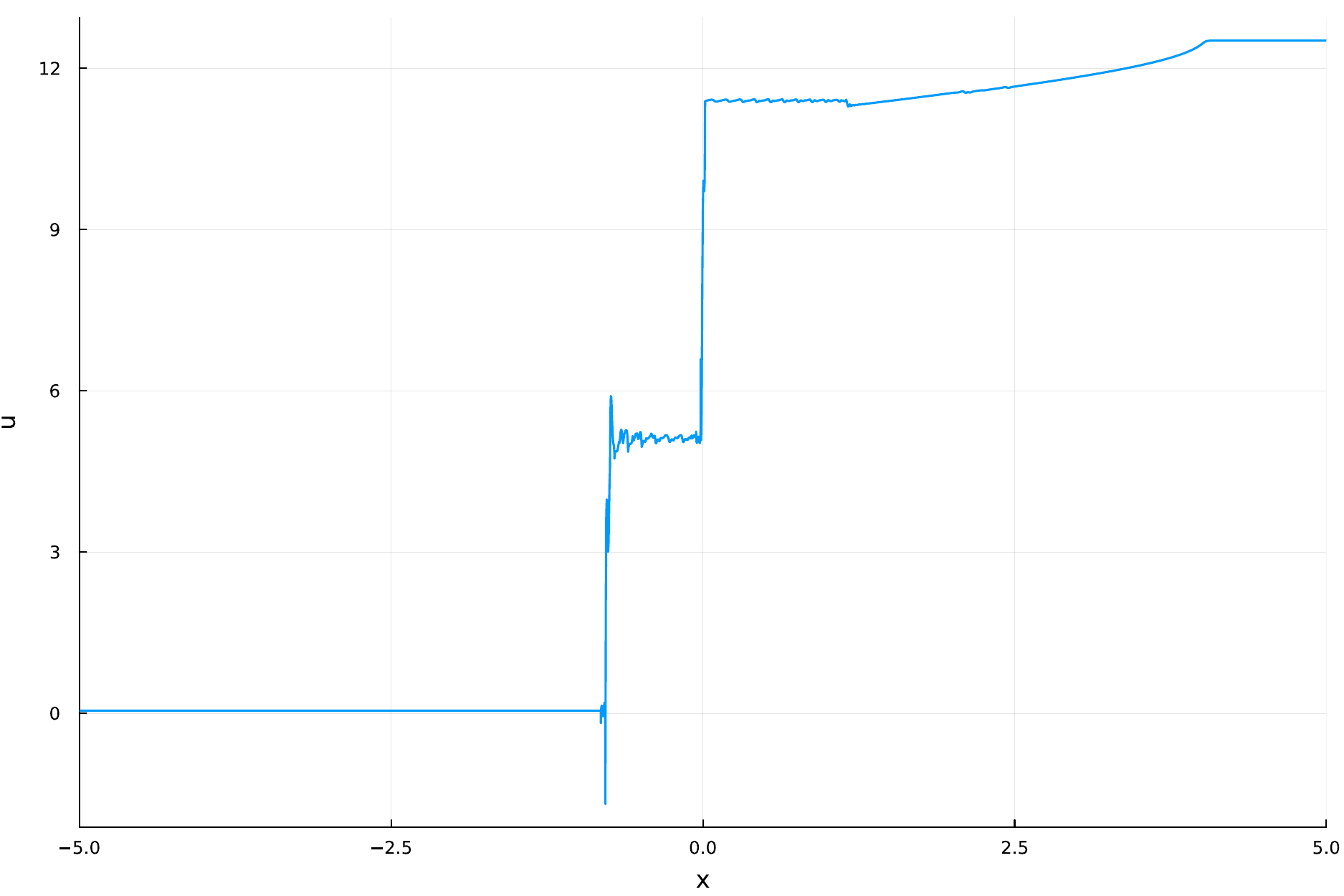}
         \caption*{$\mathbb{P}^4$}
     \end{subfigure}
     \caption{\small Solutions of \eqref{scheme D} with $\epsilon_1=0$, $\epsilon_2=1$ and $k=1,2,3,4$. We use a uniform mesh with $N=275$. None of the above solutions converges to the correct solution.}
     \label{fig:OFDG ex sine}
\end{figure}

Then, while keeping $\epsilon_2=1$, we set $\epsilon_1=1$ for $k=1,2$, $\epsilon_1=0.1$ for $k=3$, and $\epsilon_1=0.01$ for $k=4$. The results for $r=k+1$ and $r=k+2$ are shown in Figures \ref{fig:OPDG k+1 ex sine} and \ref{fig:OPDG k+2 ex sine} respectively. The scheme now converges correctly to the entropy solution.
\begin{figure}[htbp]
     \centering
     \begin{subfigure}[t]{0.2\textwidth}
         \centering
         \includegraphics[width=\textwidth]{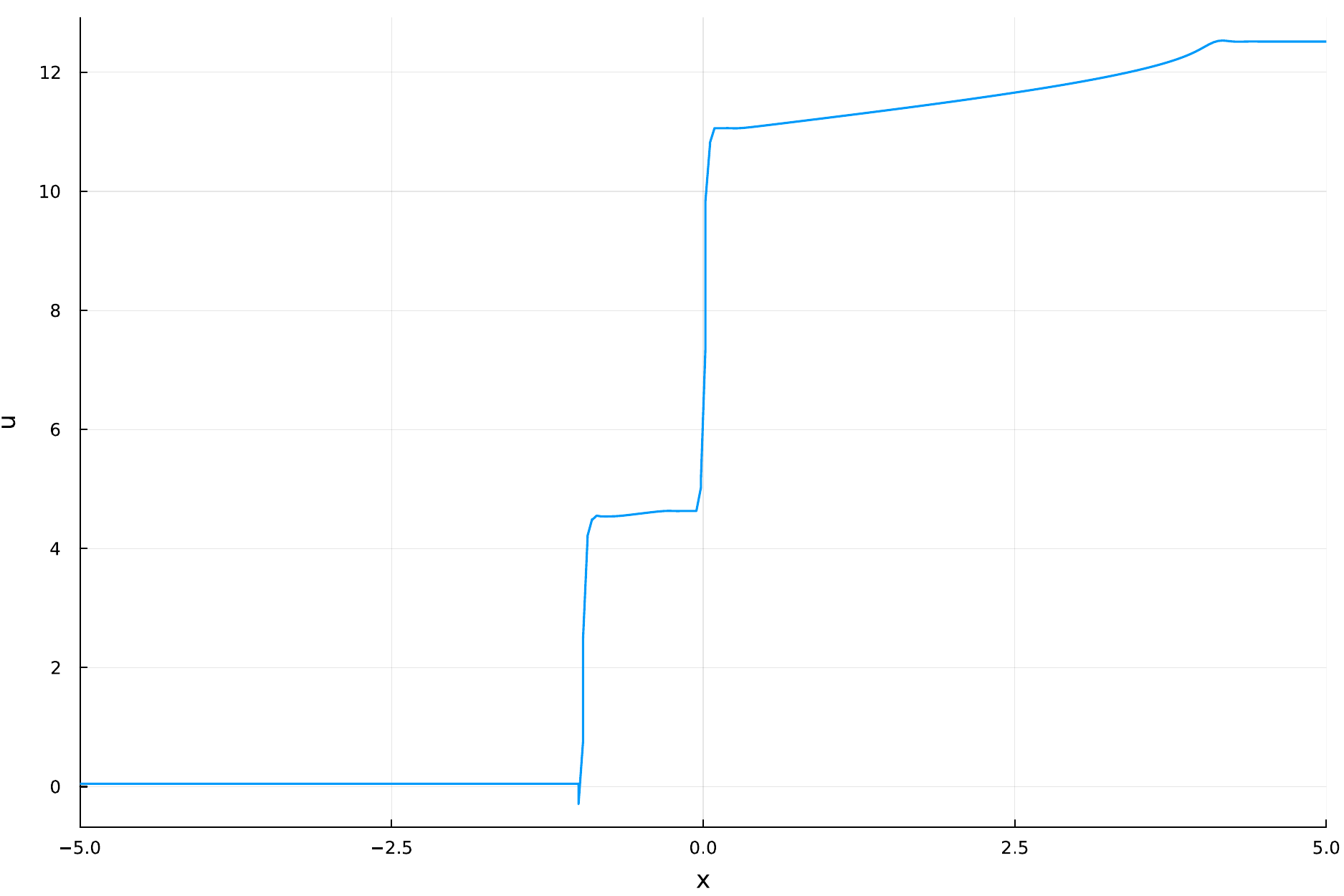}
         \caption*{$\mathbb{P}^1$ with $\epsilon_1=1$, $\epsilon_2=1$.}
     \end{subfigure}
     \hfill
     \begin{subfigure}[t]{0.2\textwidth}
         \centering
         \includegraphics[width=\textwidth]{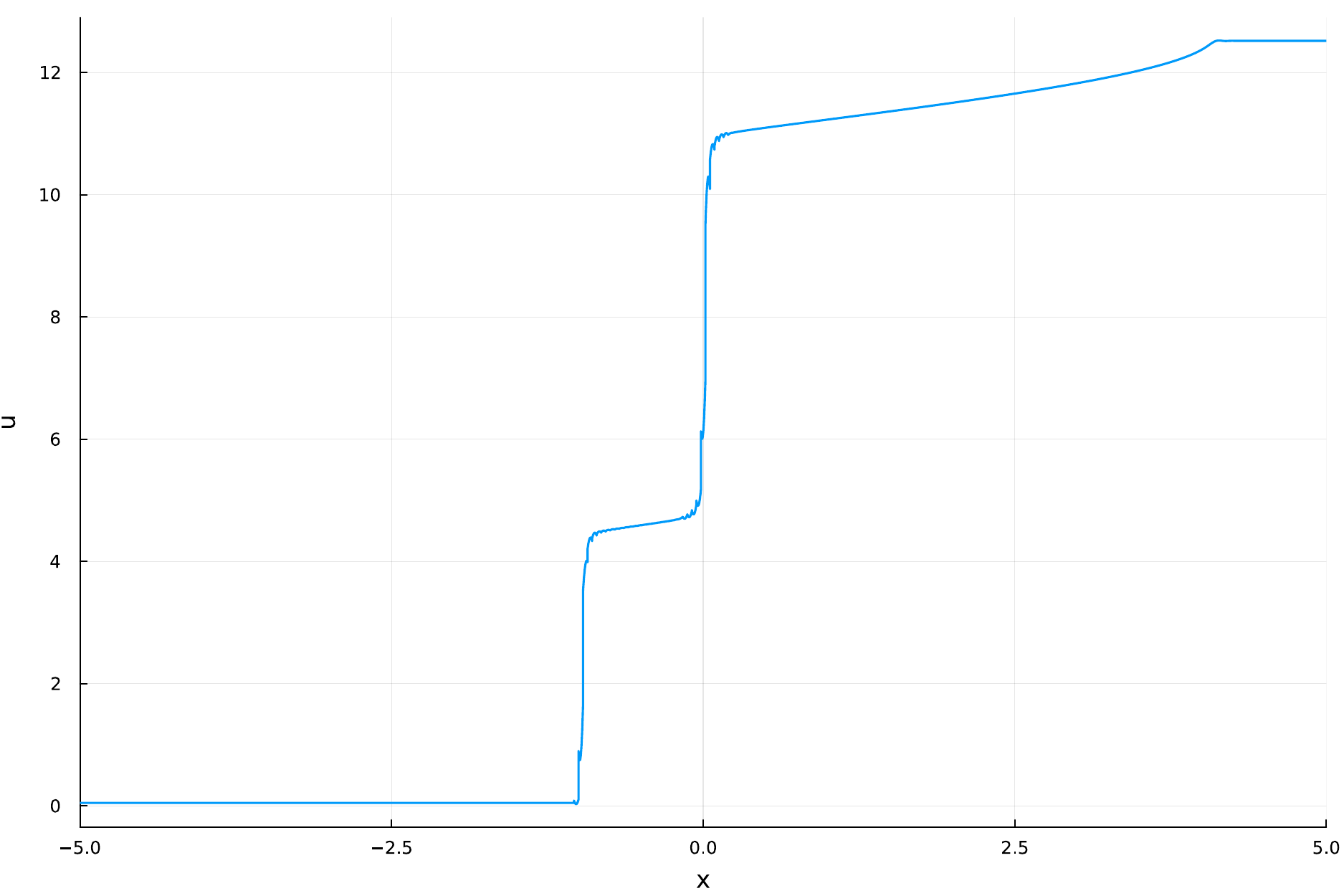}
         \caption*{$\mathbb{P}^2$ with $\epsilon_1=1$, $\epsilon_2=1$.}
     \end{subfigure}
     \hfill
     \begin{subfigure}[t]{0.2\textwidth}
         \centering
         \includegraphics[width=\textwidth]{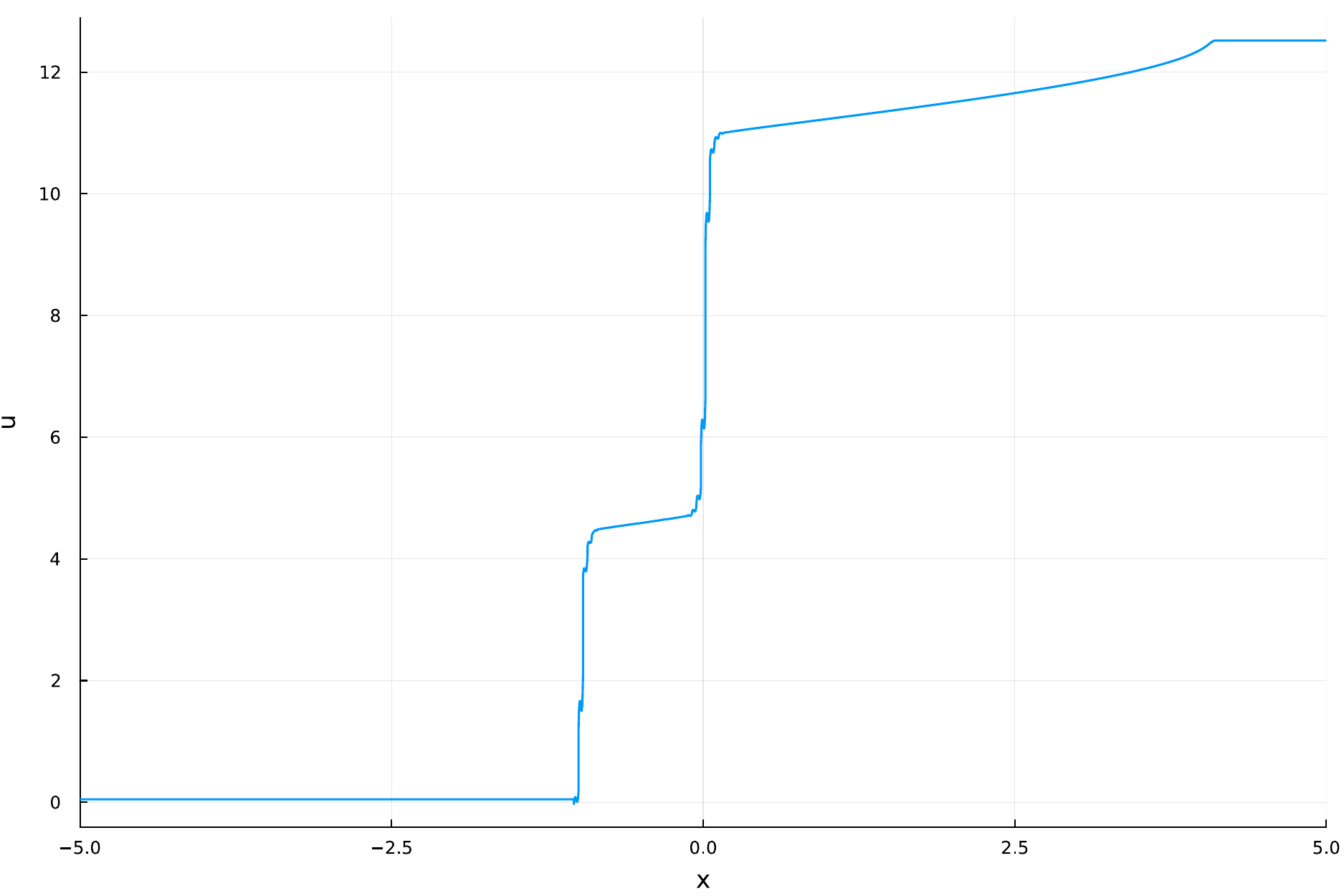}
         \caption*{$\mathbb{P}^3$ with $\epsilon_1=0.1$, $\epsilon_2=1$.}
     \end{subfigure}
     \hfill
     \begin{subfigure}[t]{0.2\textwidth}
         \centering
         \includegraphics[width=\textwidth]{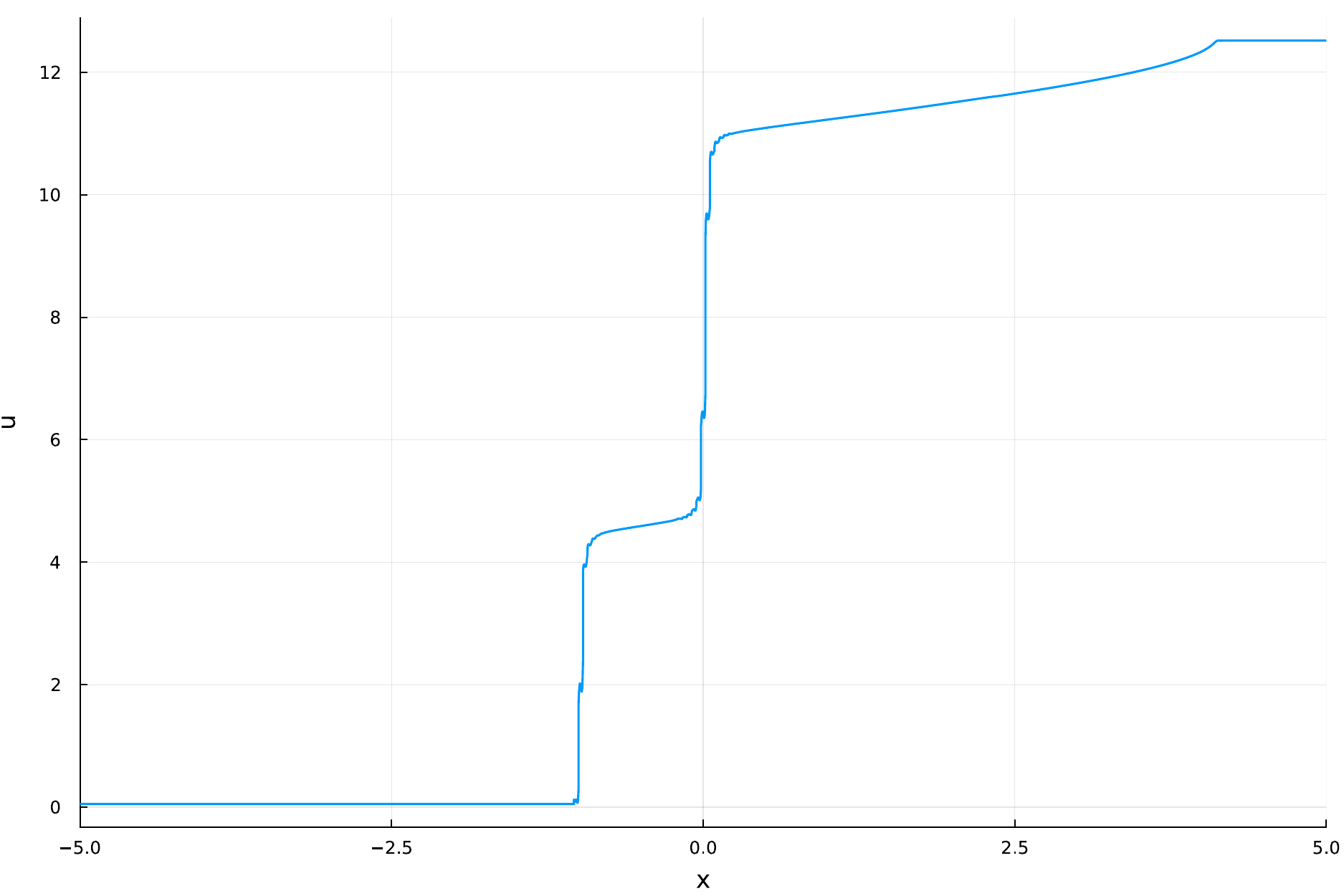}
         \caption*{$\mathbb{P}^4$ with $\epsilon_1=0.01$, $\epsilon_2=1$.}
     \end{subfigure}
     \caption{\small Solutions of \eqref{scheme D} for $k=1,2,3,4$ and $r=k+1$. We still use a uniform mesh with $N=275$. Now we have convergence to the correct solution.}
     \label{fig:OPDG k+1 ex sine}
\end{figure}

\begin{figure}[htbp]
     \centering
     \begin{subfigure}[t]{0.2\textwidth}
         \centering
         \includegraphics[width=\textwidth]{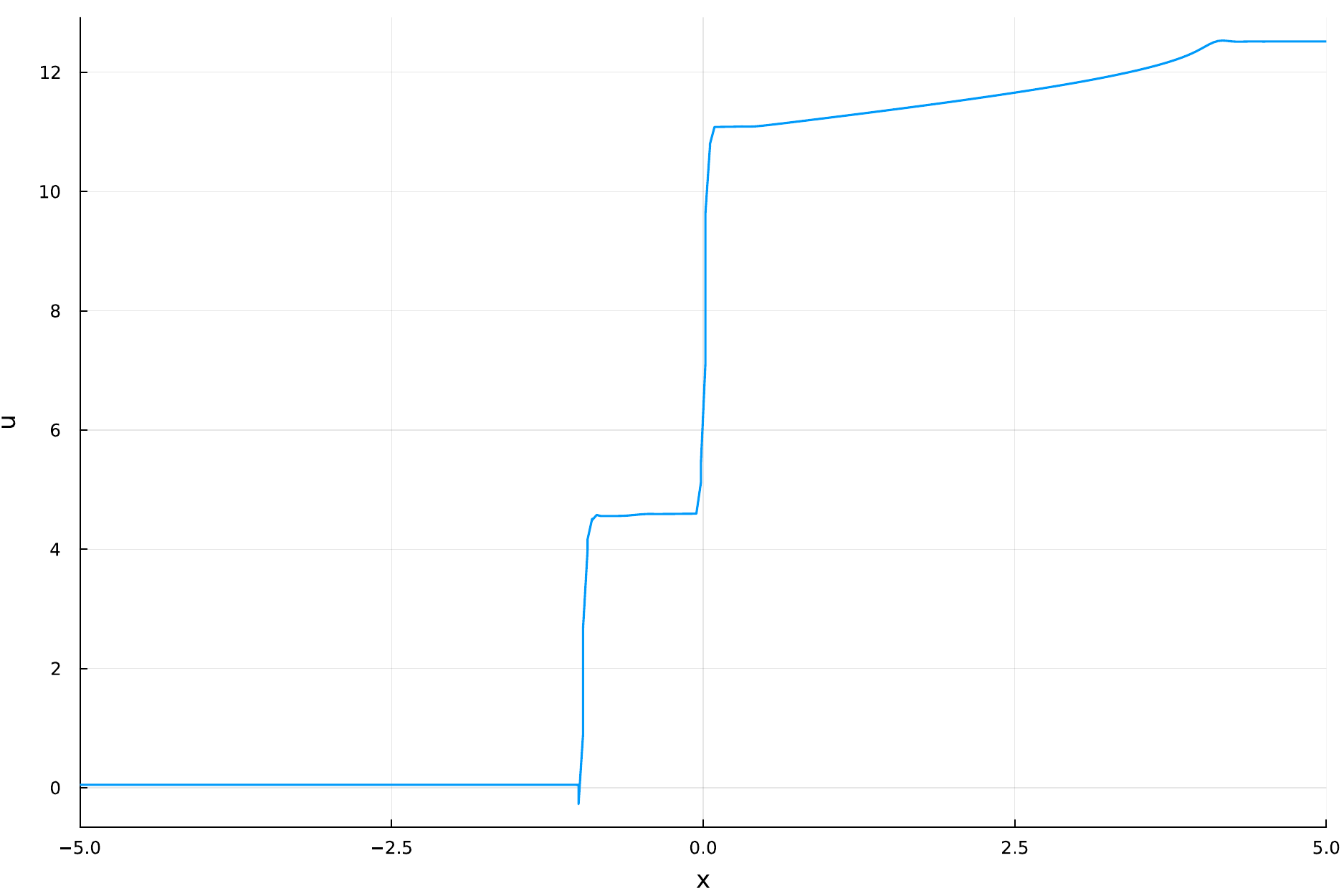}
         \caption*{$\mathbb{P}^1$ with $\epsilon_1=1$, $\epsilon_2=1$.}
     \end{subfigure}
     \hfill
     \begin{subfigure}[t]{0.2\textwidth}
         \centering
         \includegraphics[width=\textwidth]{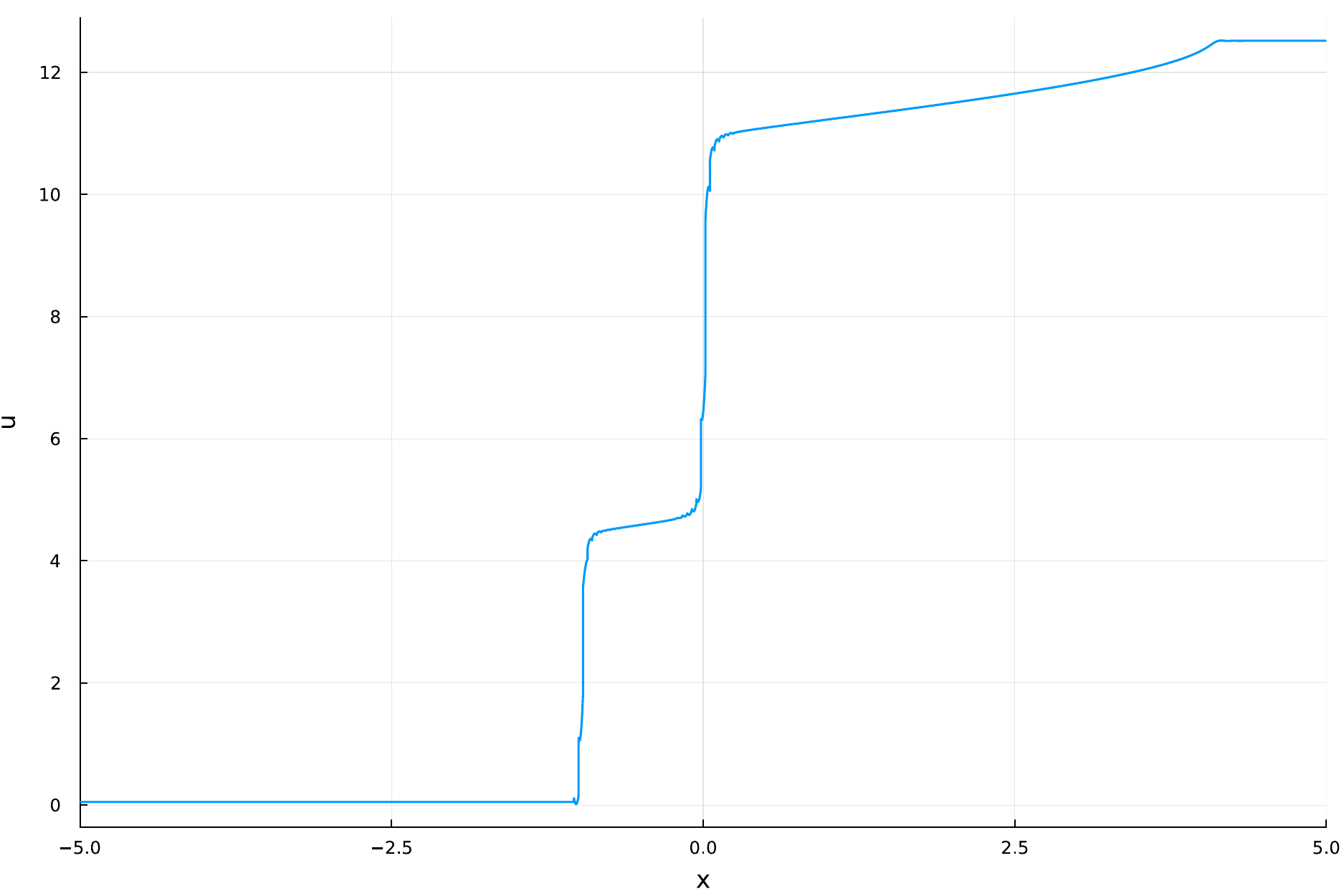}
         \caption*{$\mathbb{P}^2$ with $\epsilon_1=1$, $\epsilon_2=1$.}
     \end{subfigure}
     \hfill
     \begin{subfigure}[t]{0.2\textwidth}
         \centering
         \includegraphics[width=\textwidth]{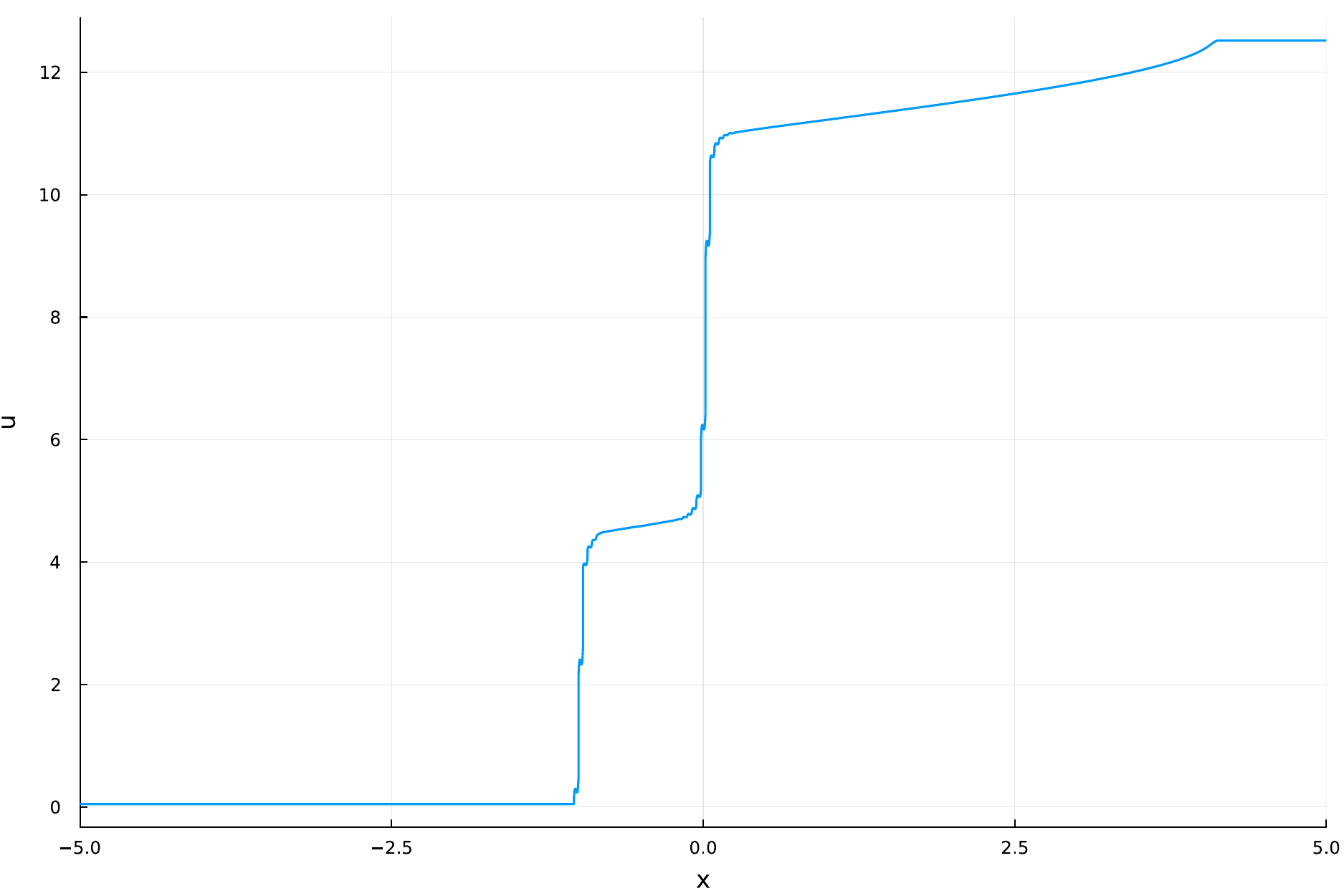}
         \caption*{$\mathbb{P}^3$ with $\epsilon_1=0.1$, $\epsilon_2=1$.}
     \end{subfigure}
     \hfill
     \begin{subfigure}[t]{0.2\textwidth}
         \centering
         \includegraphics[width=\textwidth]{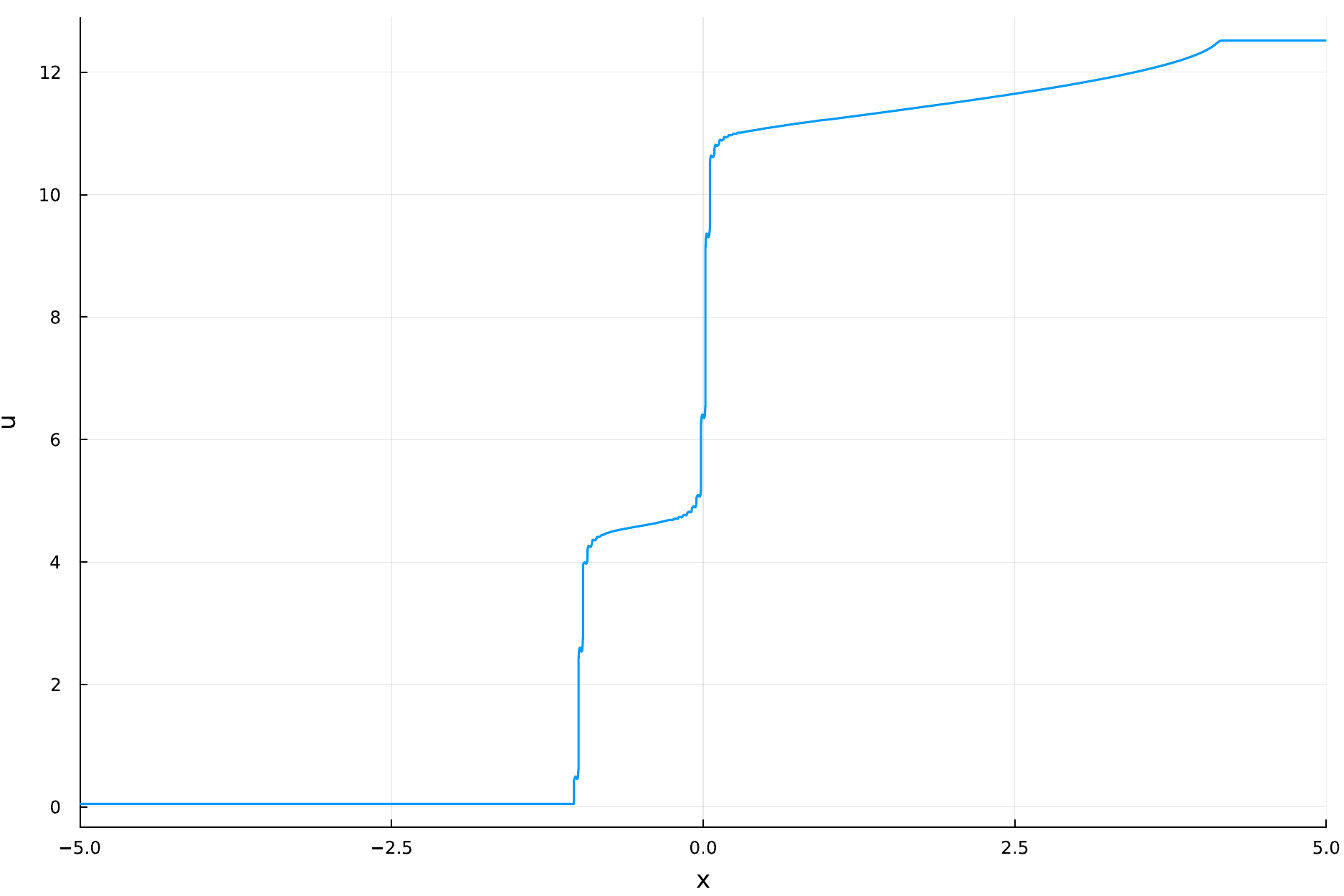}
         \caption*{$\mathbb{P}^4$ with $\epsilon_1=0.01$, $\epsilon_2=1$.}
     \end{subfigure}
     \caption{\small Solutions of \eqref{scheme D} for $k=1,2,3,4$ and $r=k+2$. We still use a uniform mesh with $N=275$. Now we have convergence to the correct solution.}
     \label{fig:OPDG k+2 ex sine}
\end{figure}

\end{expl}

\begin{expl}
Consider a Riemann problem for one-dimensional Euler equations known as the \emph{Sod shock tube problem}. The heat capacity ratio is taken as $\gamma=1.4$. The initial condition is
\begin{equation*}
    (\rho,u,p)=\begin{cases}
        (1,0,1),& x < 0.5,\\
        (0.125,0,0.1),& x \ge 0.5.
    \end{cases}
\end{equation*}
The computational domain is $I=[0,1]$ with transmissive boundary conditions. We use a uniform mesh of size $N=100$. The solution is computed up to $T=0.2$ using \eqref{scheme D'} with $k=1,2,3$ and $r=k+1$. The local Lax–Friedrichs flux and SSP--RK3 were used in all tests. We set $\epsilon_2=10$ for all tests. In Figure \ref{fig:Sod ex2}, the solutions on the left are obtained with $\epsilon_1=0$ for all $k=1,2,3$. On the right, we set $\epsilon_1=10$ for $k=1$, $\epsilon_1=1$ for $k=2$, and $\epsilon_1=0.01$ for $k=3.$ In this example, no huge difference can be noted between $\epsilon_1=0$ and $\epsilon_1>0$. The reference solution is computed by the fifth order finite difference WENO method with N = 4096.
\begin{figure}[htbp]
     \centering
     \begin{subfigure}[b]{0.47\textwidth}
         \centering
         \includegraphics[width=\textwidth]{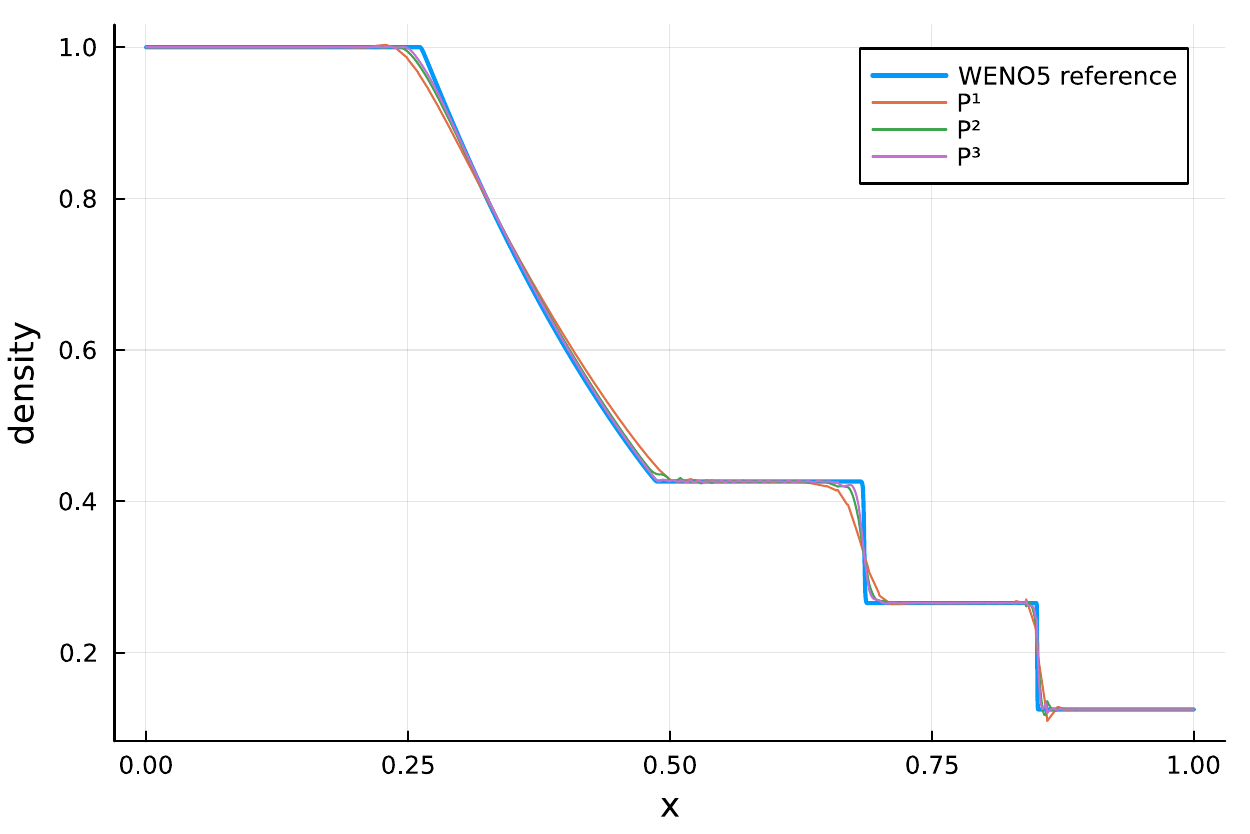}
         \caption*{Density $\rho$.}
     \end{subfigure}
     \hfill
     \begin{subfigure}[b]{0.47\textwidth}
         \centering
         \includegraphics[width=\textwidth]{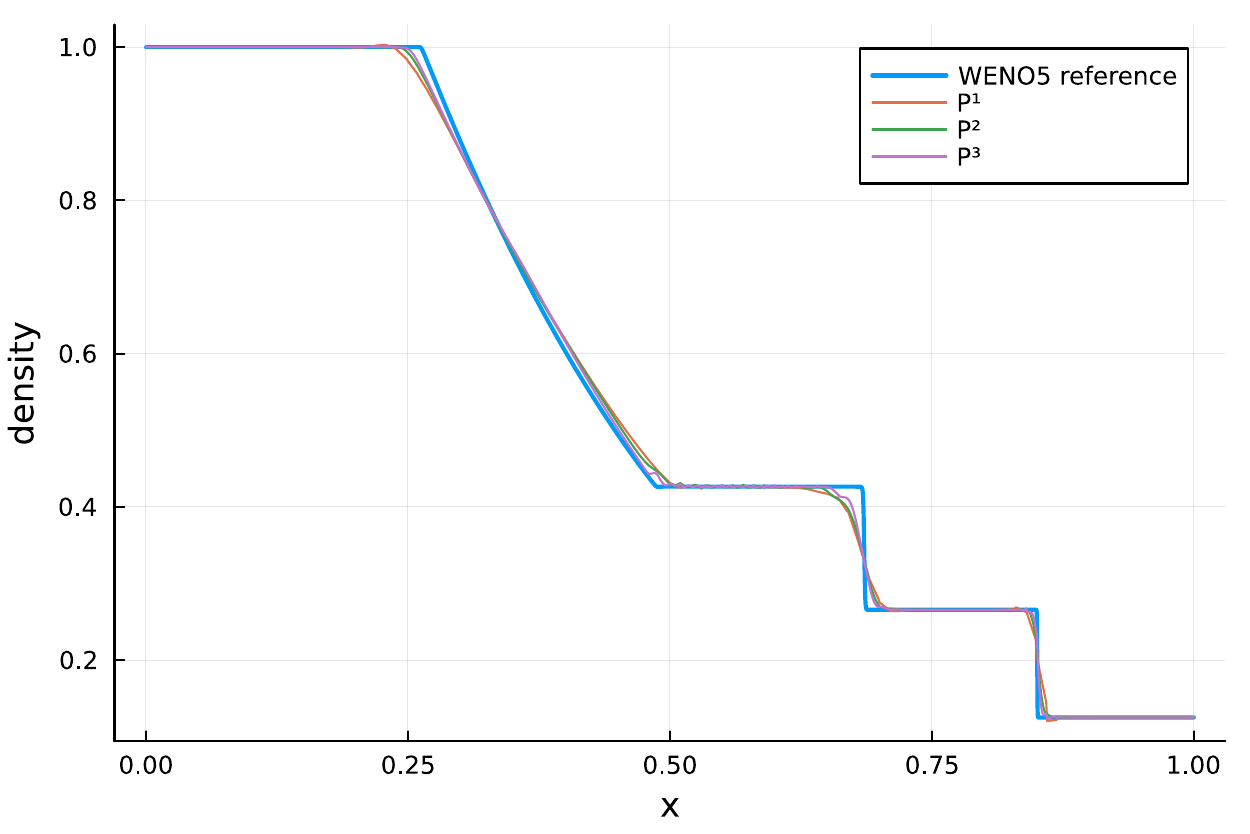}
         \caption*{Density $\rho$.}
     \end{subfigure}
     \hfill
     \begin{subfigure}[b]{0.47\textwidth}
         \centering
         \includegraphics[width=\textwidth]{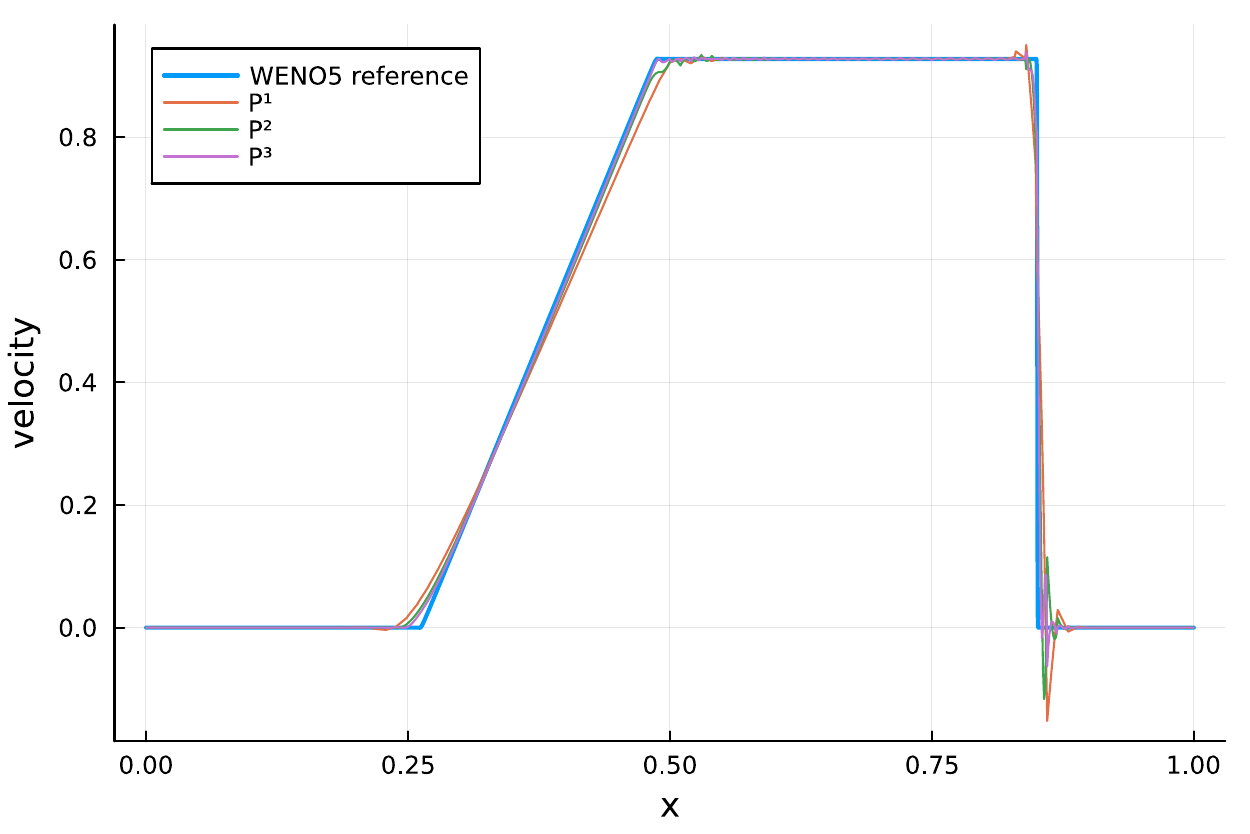}
         \caption*{Velocity $u$.}
     \end{subfigure}
     \hfill
     \begin{subfigure}[b]{0.47\textwidth}
         \centering
         \includegraphics[width=\textwidth]{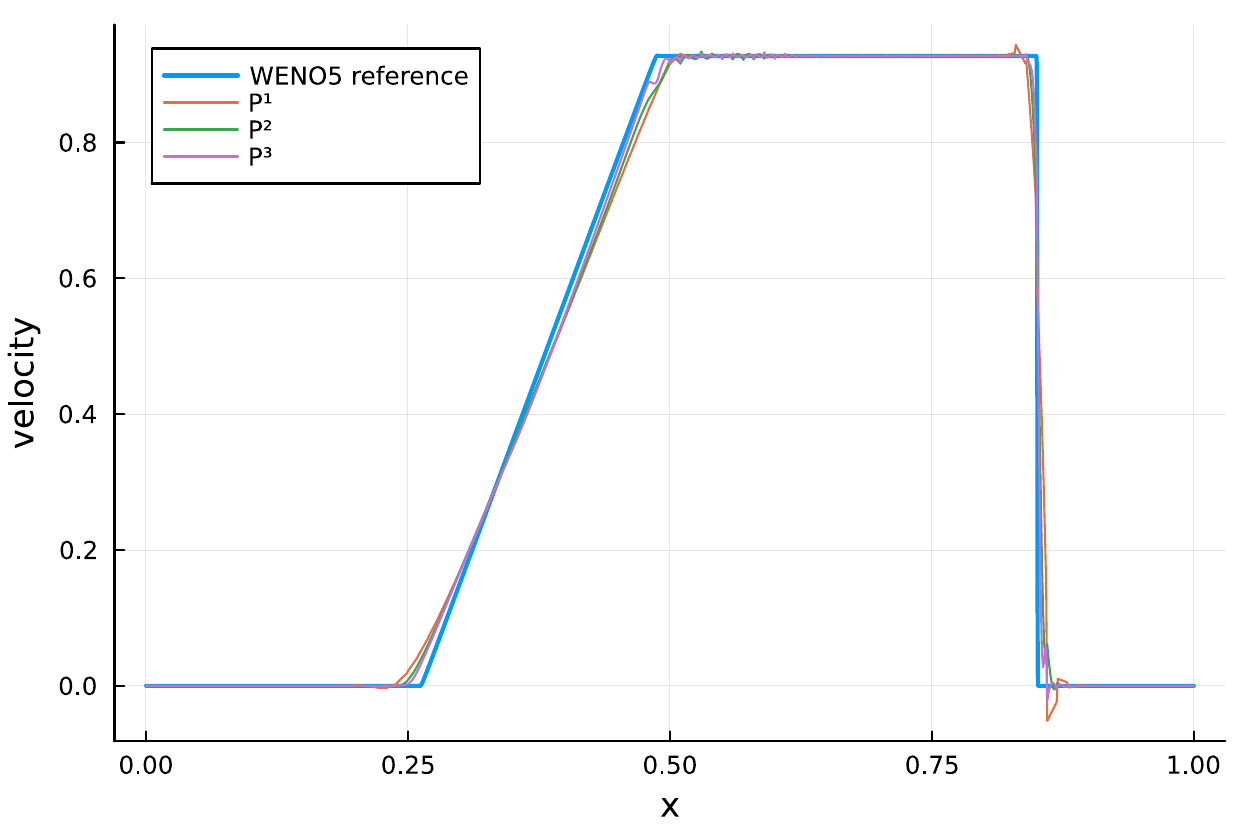}
         \caption*{Velocity $u$.}
     \end{subfigure}
     \hfill
      \begin{subfigure}[b]{0.47\textwidth}
         \centering
         \includegraphics[width=\textwidth]{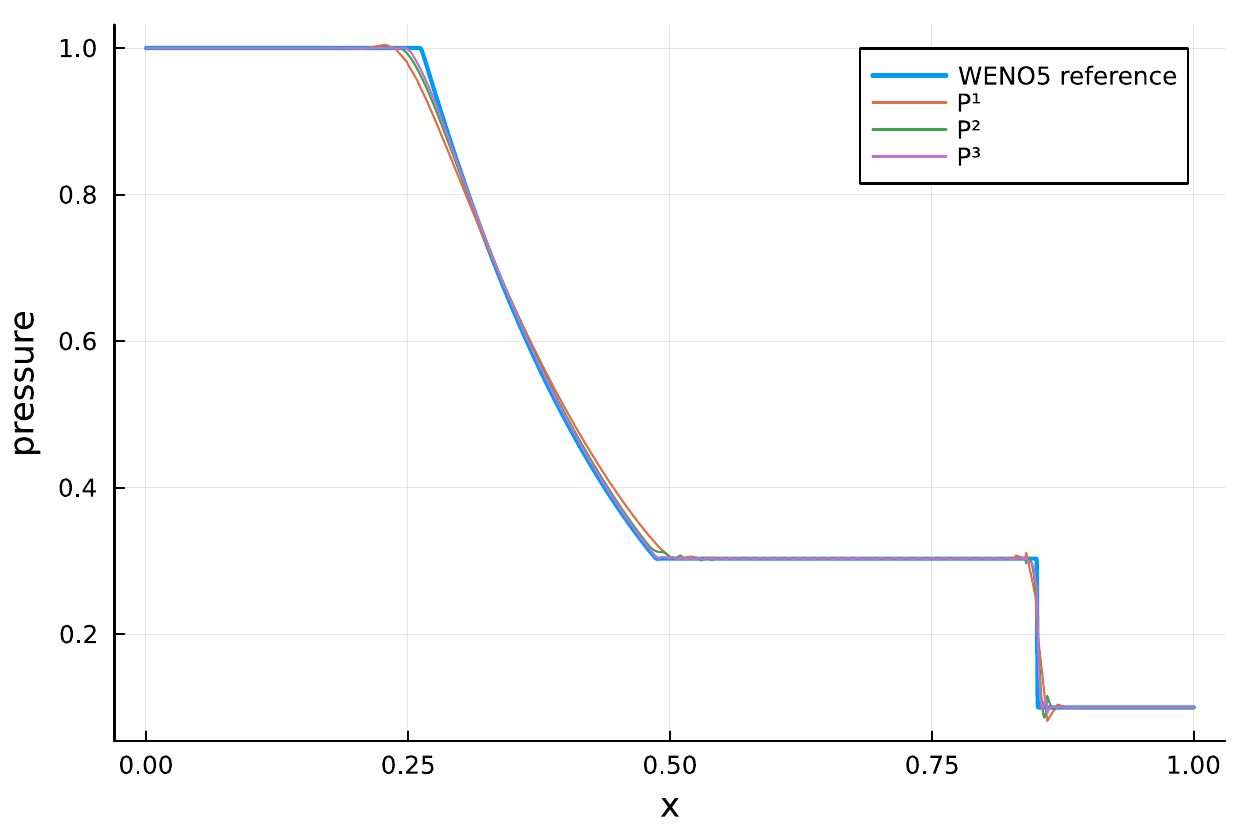}
         \caption*{Pressure $p$.}
     \end{subfigure}
     \hfill
     \begin{subfigure}[b]{0.47\textwidth}
         \centering
         \includegraphics[width=\textwidth]{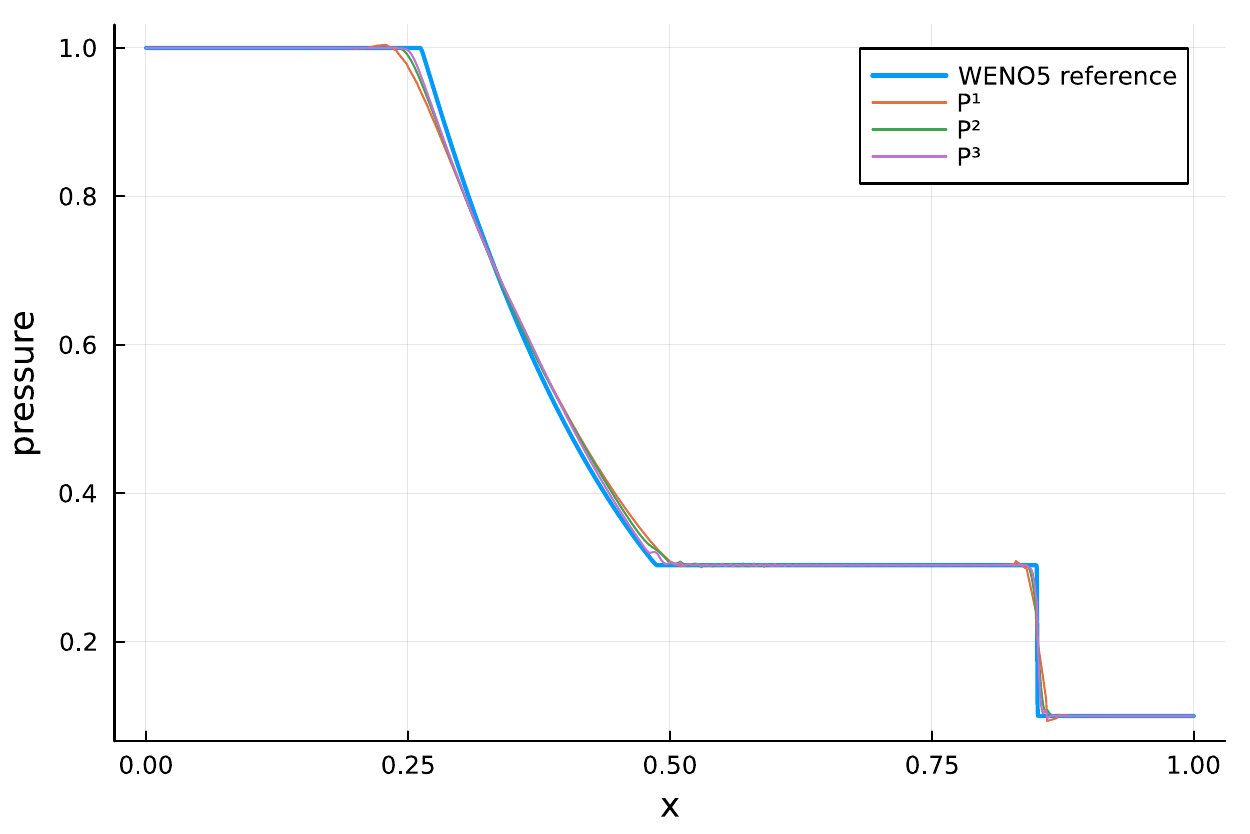}
         \caption*{Pressure $p$.}
     \end{subfigure}
     \caption{Solutions of \eqref{scheme D'} with $k=1,2,3$ and $r=k+1$. We use a uniform mesh with $N=100$. We set $\epsilon_2=10$ for all tests. The solutions on the left are obtained with $\epsilon_1=0$ for all $k=1,2,3$. On the right, we set $\epsilon_1=10$ for $k=1$, $\epsilon_1=1$ for $k=2$, and $\epsilon_1=0.01$ for $k=3.$}
     \label{fig:Sod ex2}
\end{figure}

\end{expl}

\begin{expl}

Consider the Shu-Osher problem for one-dimensional Euler equations defined on $I=[-5,5]$. The initial condition is
\begin{equation*}
    (\rho,u,p)=\begin{cases}
        (3.857143,2.629369,10.33333),& x < -4,\\
        (1+0.2 \sin (5x),0,1),& x \ge -4.
    \end{cases}
\end{equation*}
The fixed post-shock state is imposed at the left boundary, and outflow boundary conditions are used on the right. The heat capacity ratio is taken as $\gamma=1.4$. We use a uniform mesh of size $N=400$. The solution is computed up to $T=1.8$ using \eqref{scheme D'} with $k=1,2,3$ and $r=k+1$. The local Lax–Friedrichs flux and SSP--RK3 were used in all tests. We set $\epsilon_2=10$ for all tests. In Figure \ref{fig:Shu-Osher ex3}, the solutions on the left are obtained with $\epsilon_1=0$ for all $k=1,2,3$. On the right, we set $\epsilon_1=2$ for $k=1$, $\epsilon_1=1$ for $k=2$, and $\epsilon_1=0.01$ for $k=3.$ Again, no huge difference can be noted between $\epsilon_1=0$ and $\epsilon_1>0$. The reference solution is computed by the fifth order finite difference WENO method with N = 4096.
\begin{figure}[htbp]
     \centering
     \begin{subfigure}[b]{0.47\textwidth}
         \centering
         \includegraphics[width=\textwidth]{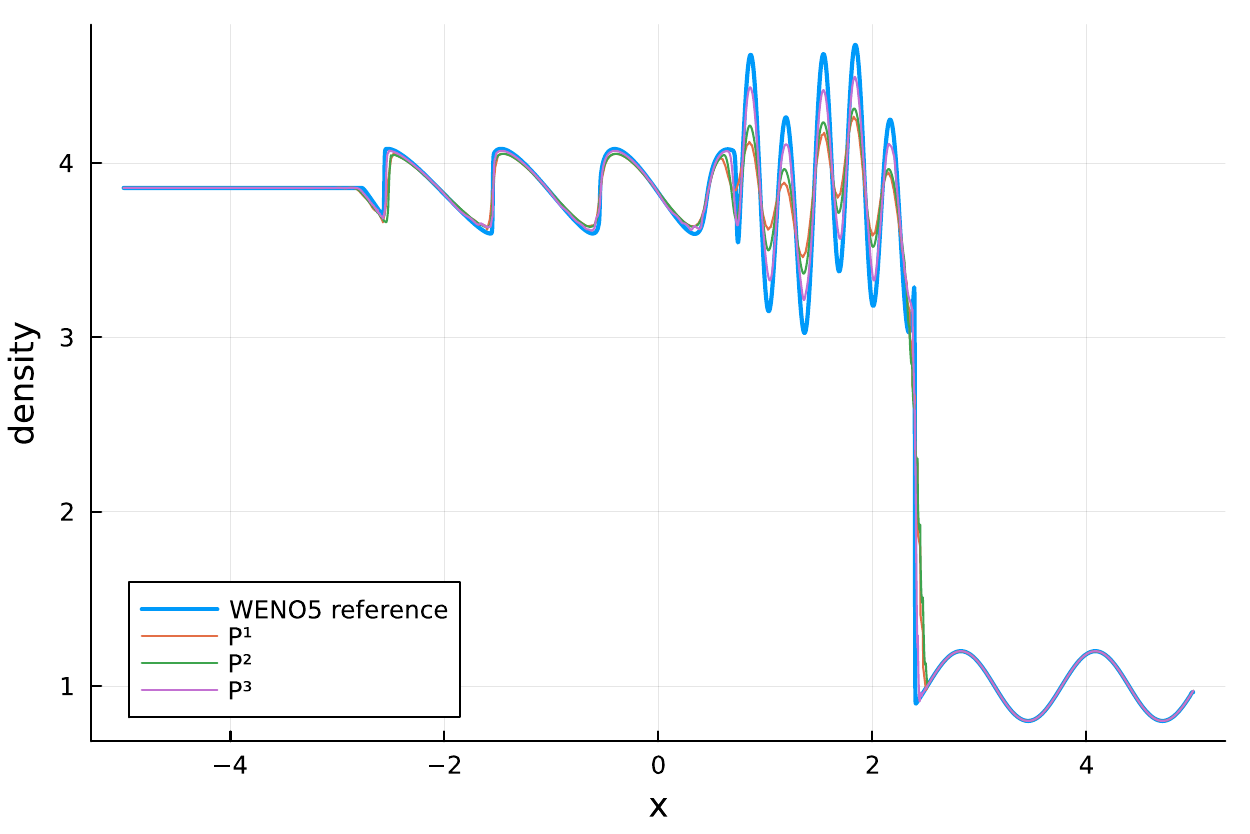}
         \caption*{Density $\rho$.}
     \end{subfigure}
     \hfill
     \begin{subfigure}[b]{0.47\textwidth}
         \centering
         \includegraphics[width=\textwidth]{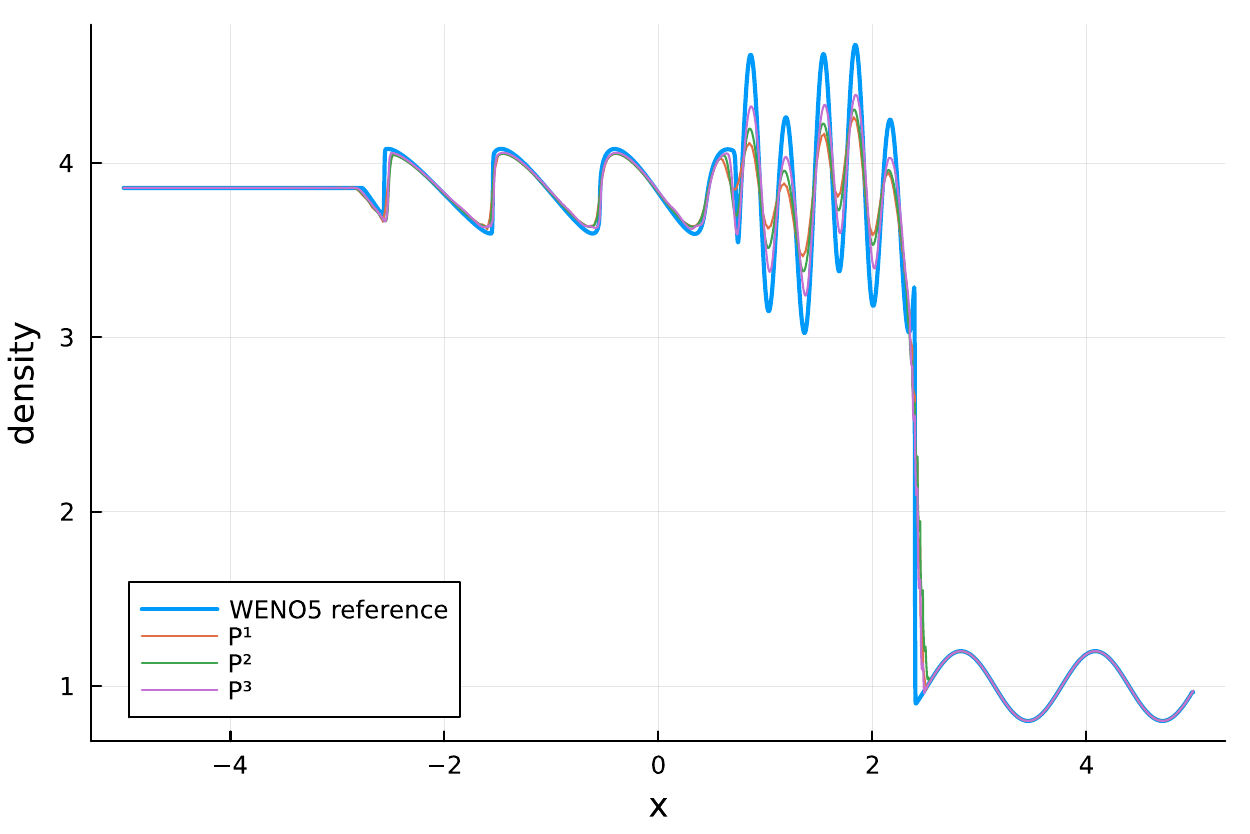}
         \caption*{Density $\rho$.}
     \end{subfigure}
     \hfill
     \begin{subfigure}[b]{0.47\textwidth}
         \centering
         \includegraphics[width=\textwidth]{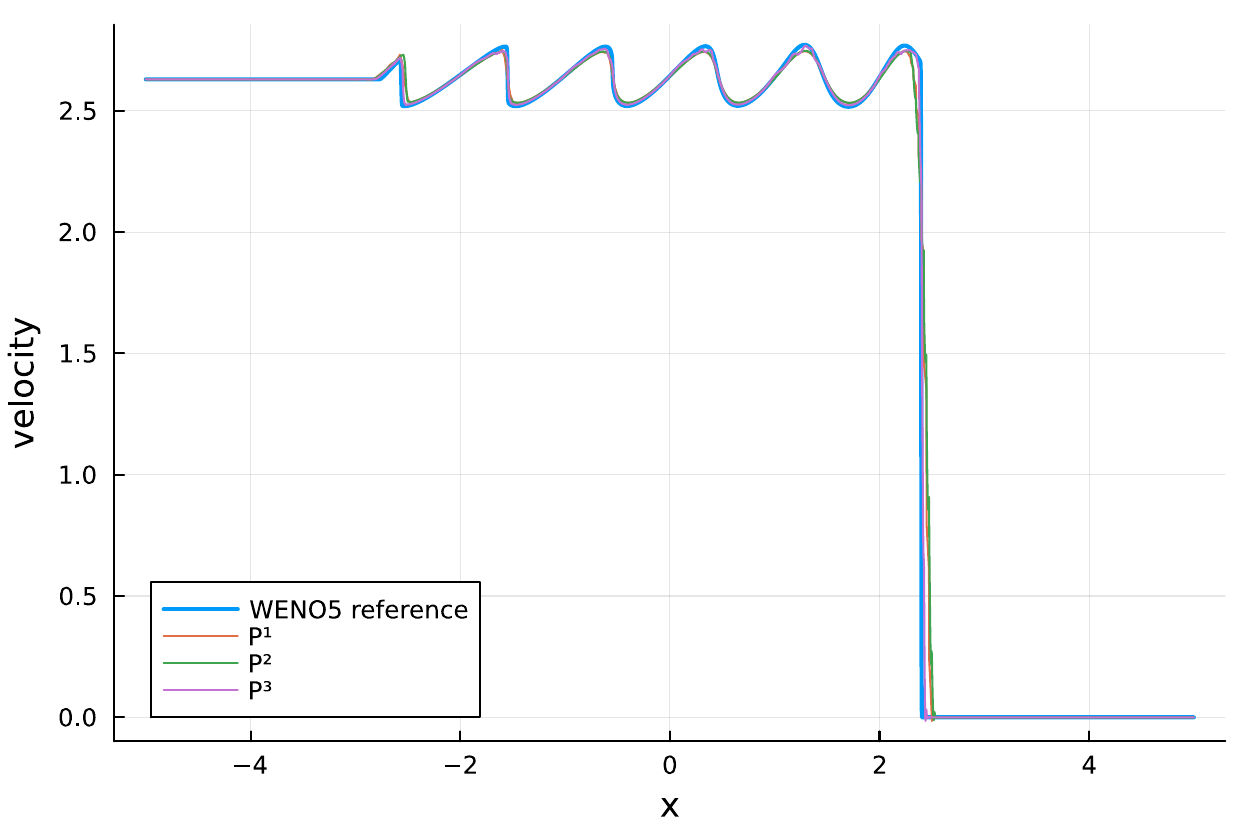}
         \caption*{Velocity $u$.}
     \end{subfigure}
     \hfill
     \begin{subfigure}[b]{0.47\textwidth}
         \centering
         \includegraphics[width=\textwidth]{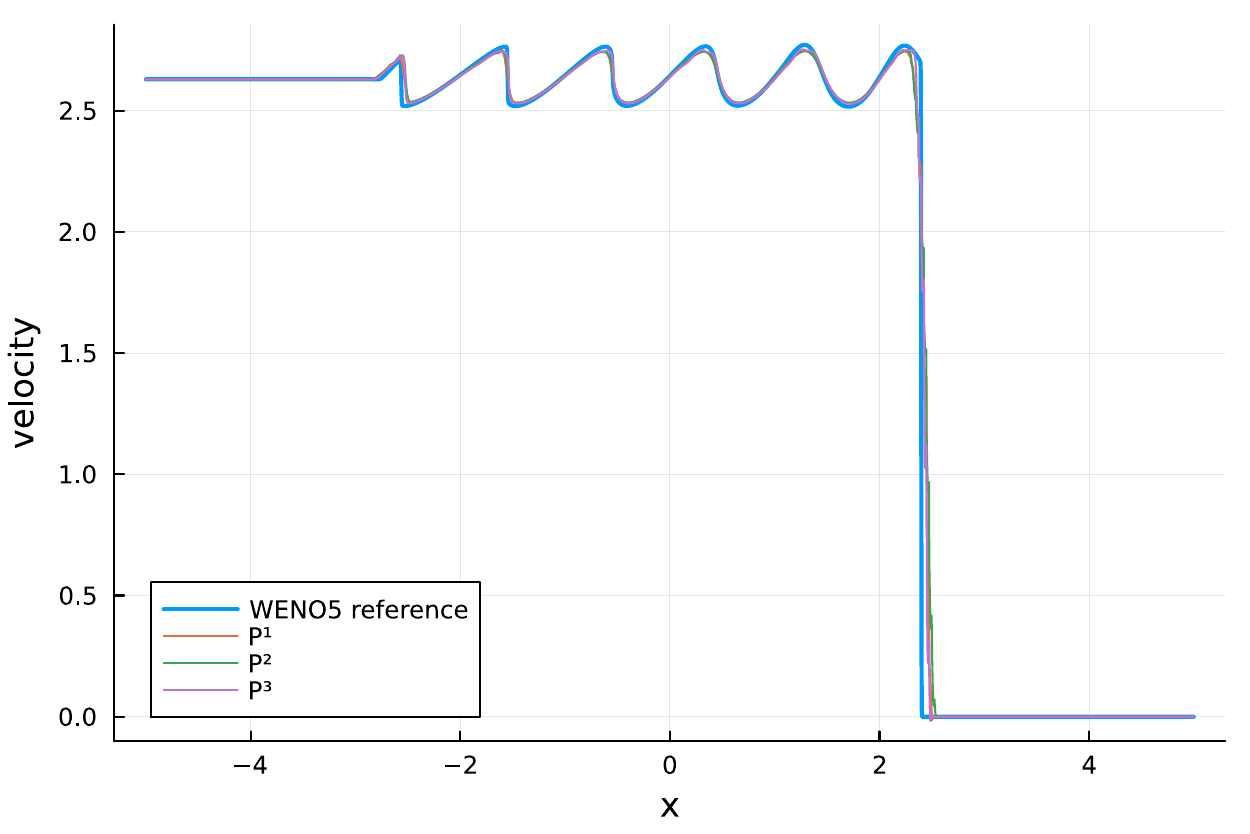}
         \caption*{Velocity $u$.}
     \end{subfigure}
     \hfill
      \begin{subfigure}[b]{0.47\textwidth}
         \centering
         \includegraphics[width=\textwidth]{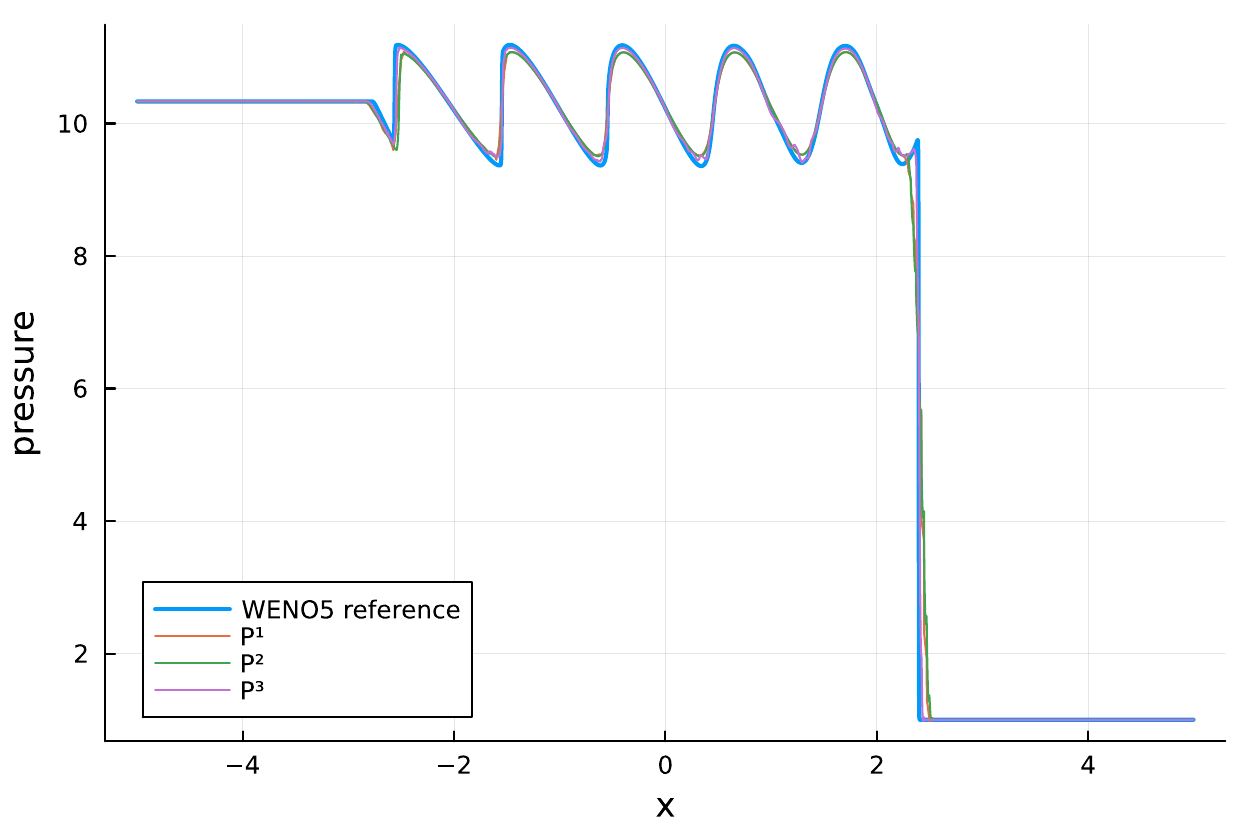}
         \caption*{Pressure $p$.}
     \end{subfigure}
     \hfill
     \begin{subfigure}[b]{0.47\textwidth}
         \centering
         \includegraphics[width=\textwidth]{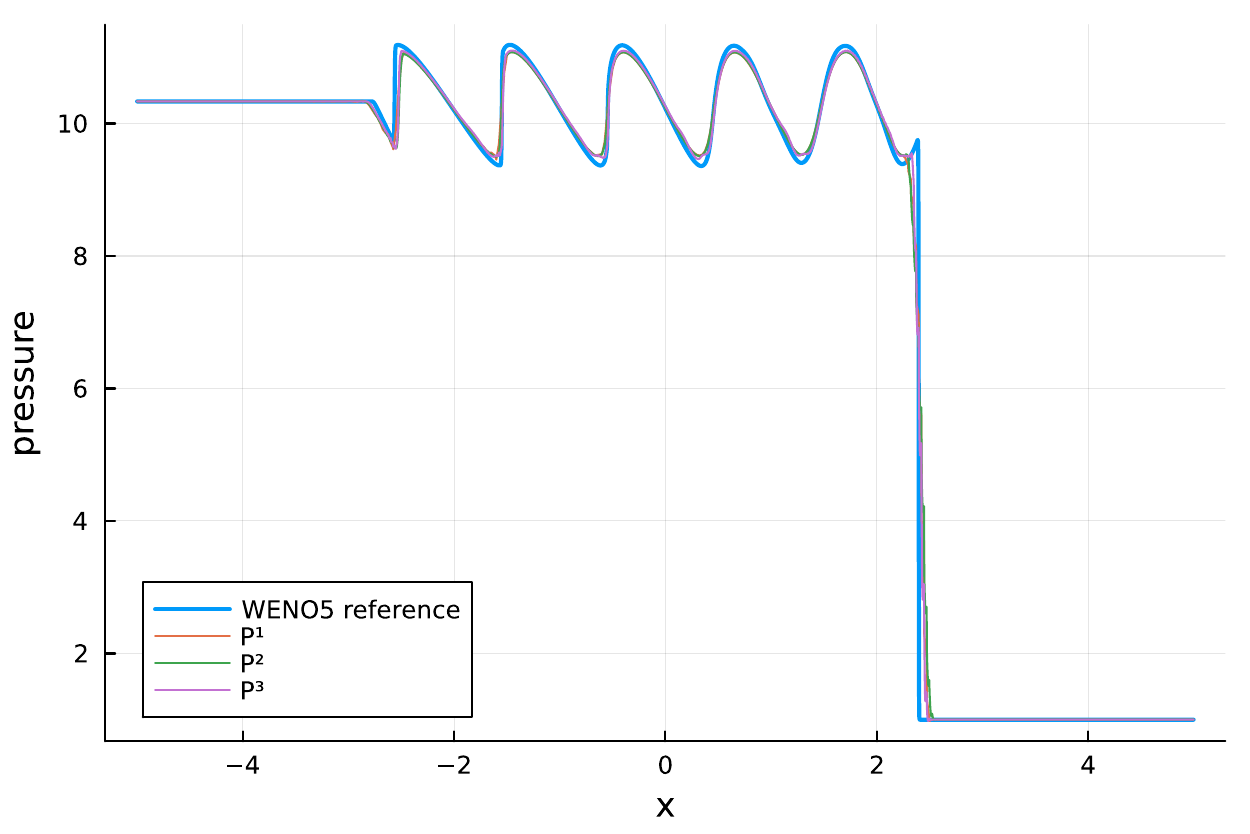}
         \caption*{Pressure $p$.}
     \end{subfigure}
     \caption{Solutions of \eqref{scheme D'} with $k=1,2,3$ and $r=k+1$. We use a uniform mesh with $N=400$. We set $\epsilon_2=10$ for all tests. The solutions on the left are obtained with $\epsilon_1=0$ for all $k=1,2,3$. On the right, we set $\epsilon_1=2$ for $k=1$, $\epsilon_1=1$ for $k=2$, and $\epsilon_1=0.01$ for $k=3.$}
     \label{fig:Shu-Osher ex3}
\end{figure}

\end{expl}

\begin{expl}
    In this test, we compute the double Mach reflection problem designed by \cite{WOODWARD1984115}. It involves a Mach 10 shock impinging on a reflecting wall at a $60^\circ$ angle. The computational domain is $\Omega=[0,4]\times[0,1]$, with the initial condition
\begin{equation*}
    (\rho_0,u_0,v_0,p_0)=\begin{cases}
  (8,\frac{33\sqrt{3}}{8},-\frac{33}{8},116.5), & \text{if } y > \sqrt{3}(x-\frac{1}{6}) \quad \text{(post-shock)}\\
  (1.4,0,0,1), & \text{if } y < \sqrt{3}(x-\frac{1}{6})\quad \text{(pre-shock)}.
\end{cases}
\end{equation*}
The inflow and outflow boundary conditions are prescribed at the left and right boundary, respectively. The top boundary values are set to equal the exact solution, which is time-dependent. On the bottom boundary, the (constant) post shock values are prescribed on the segment $\{0\le x < \frac{1}{6},y=0\}$, while reflective boundary conditions are imposed on the segment $\{\frac{1}{6}<x\le4,y=0\}$. We use a rectangular mesh of size $h_x=h_y=\frac{1}{240}$. The numerical solution is computed up to $T=0.2$ using \eqref{scheme D''} with $\epsilon_1=0$, $\epsilon_2=0.4$. SSP--RK3 is used for time integration. No positivity-preserving limiter is needed. The computed values of $\rho$ and $p$ are shown in Figure \ref{fig:ex4}. One can see that the scheme already has enough viscosity even without the flux derivative term in the damping coefficient $D''_K$. In addition, the results are very similar to those of the OFDG scheme from \cite{OFDGsystem}, although the damping coefficients are not exactly the same.

\FloatBarrier
\begin{figure}[htbp]
     \centering
     \begin{subfigure}[b]{\textwidth}
         \centering
         \includegraphics[width=\textwidth]{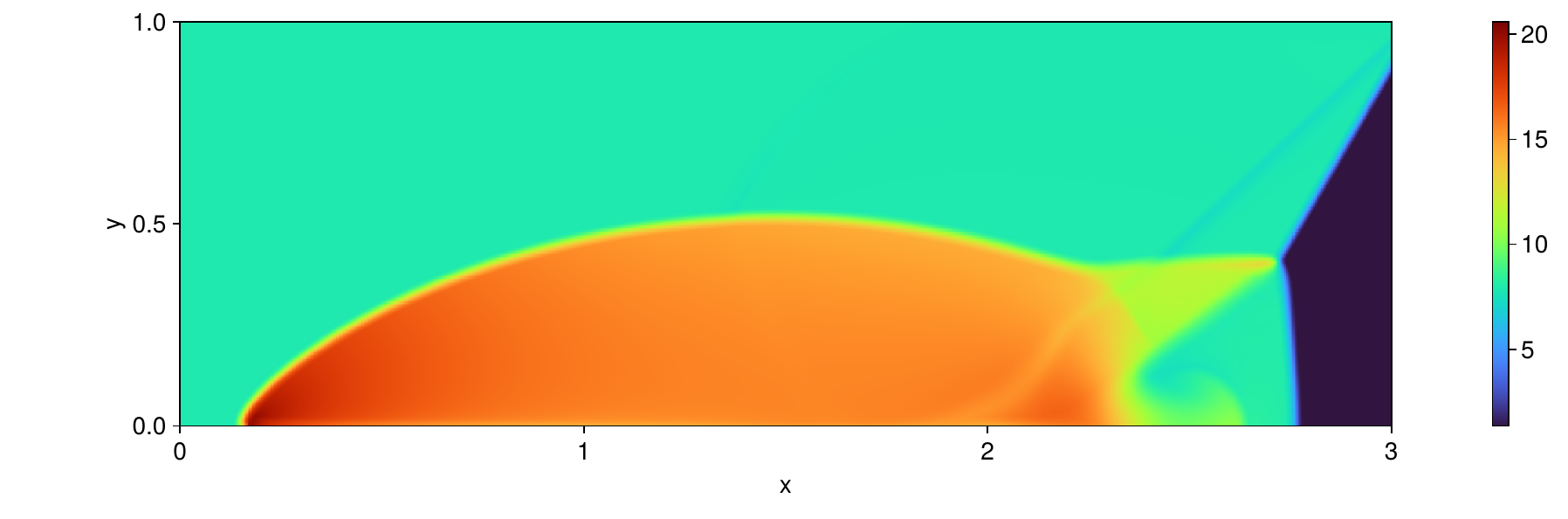}
         \caption{Density $\rho$.}
     \end{subfigure}
     \hfill
     \begin{subfigure}[b]{\textwidth}
         \centering
         \includegraphics[width=\textwidth]{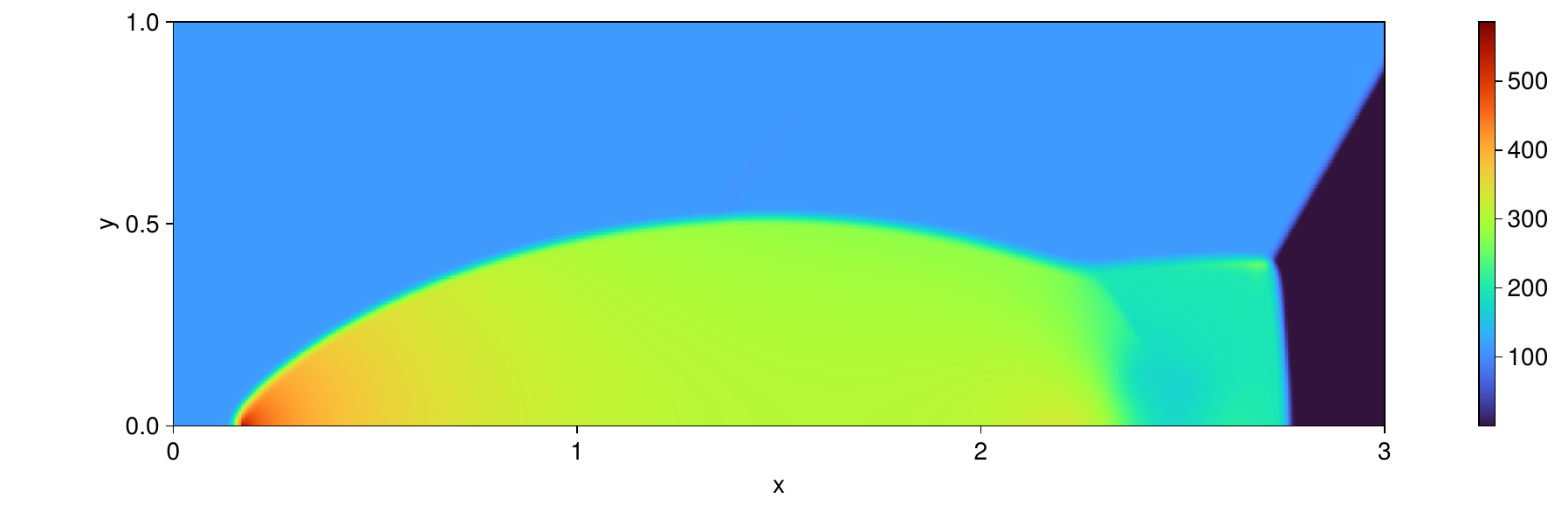}
         \caption{Pressure $p$.}
     \end{subfigure}
     \caption{The numerical solution at $T=0.2$ using \eqref{scheme D''} with $\epsilon_1=0$, $\epsilon_2=0.4$.}
     \label{fig:ex4}
\end{figure}

\end{expl}

\FloatBarrier
\begin{expl}
In this test, we compute a Mach 10 shock diffracting at a $120^\circ$ degree \cite{zhang2012maximum,CHEN2017427}. The computational domain and the unstructured triangular mesh with $h = \frac{1}{4}$ are presented in Figure \ref{fig:ex5.0}. The shock is initially located at $x=3.4$ and $6 \le y \le 11$, moving into undisturbed air with a density of 1.4 and a pressure of 1. Boundary conditions are inflow at the left/top boundary (in accordance with the exact shock motion), and outflow at the right/bottom boundary. The numerical solution is computed up to $T=0.9$ using \eqref{scheme D''} with $\epsilon_1=0$, $\epsilon_2=0.4$ and $h=\frac{1}{40}$. We use the third-order modified exponential Runge–Kutta method from \cite{OFDGsystem}. No positivity-preserving limiter is needed. The computed values of $\rho$ and $p$ are shown in Figure \ref{fig:ex5}. Again, the scheme already has enough viscosity without the flux derivative term in the damping coefficient $D''_K$, and the performance is very close to the OFDG scheme from \cite{OFDGsystem}.

\begin{figure}[htbp]
     \centering
         \includegraphics[width=0.6\textwidth]{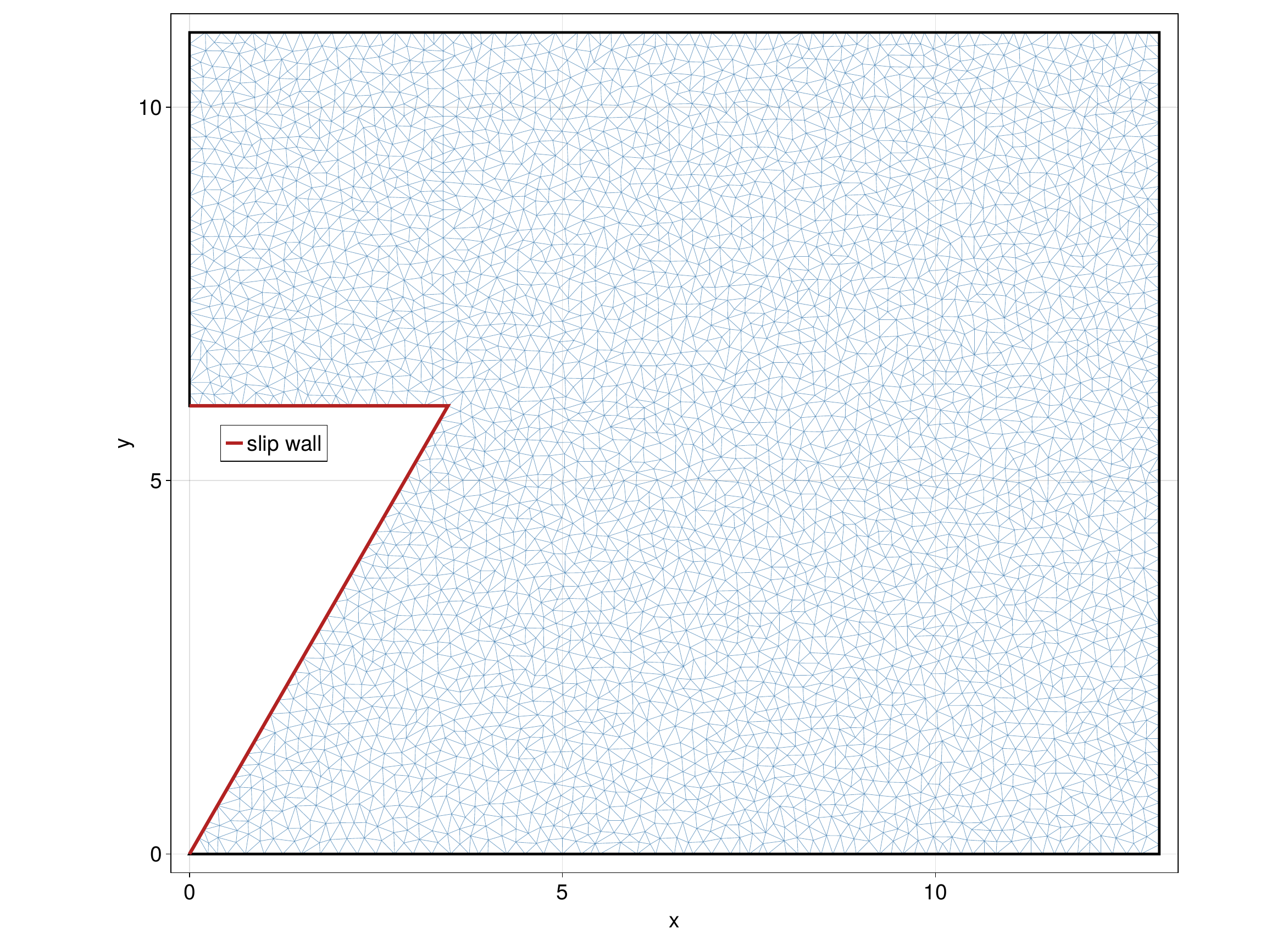}
     \caption{Illustration of the computational domain and the triangular mesh with $h=\frac{1}{4}$.}
     \label{fig:ex5.0}
\end{figure}

\begin{figure}[htbp]
     \centering
     \begin{subfigure}[b]{0.47\textwidth}
         \centering
         \includegraphics[width=\textwidth]{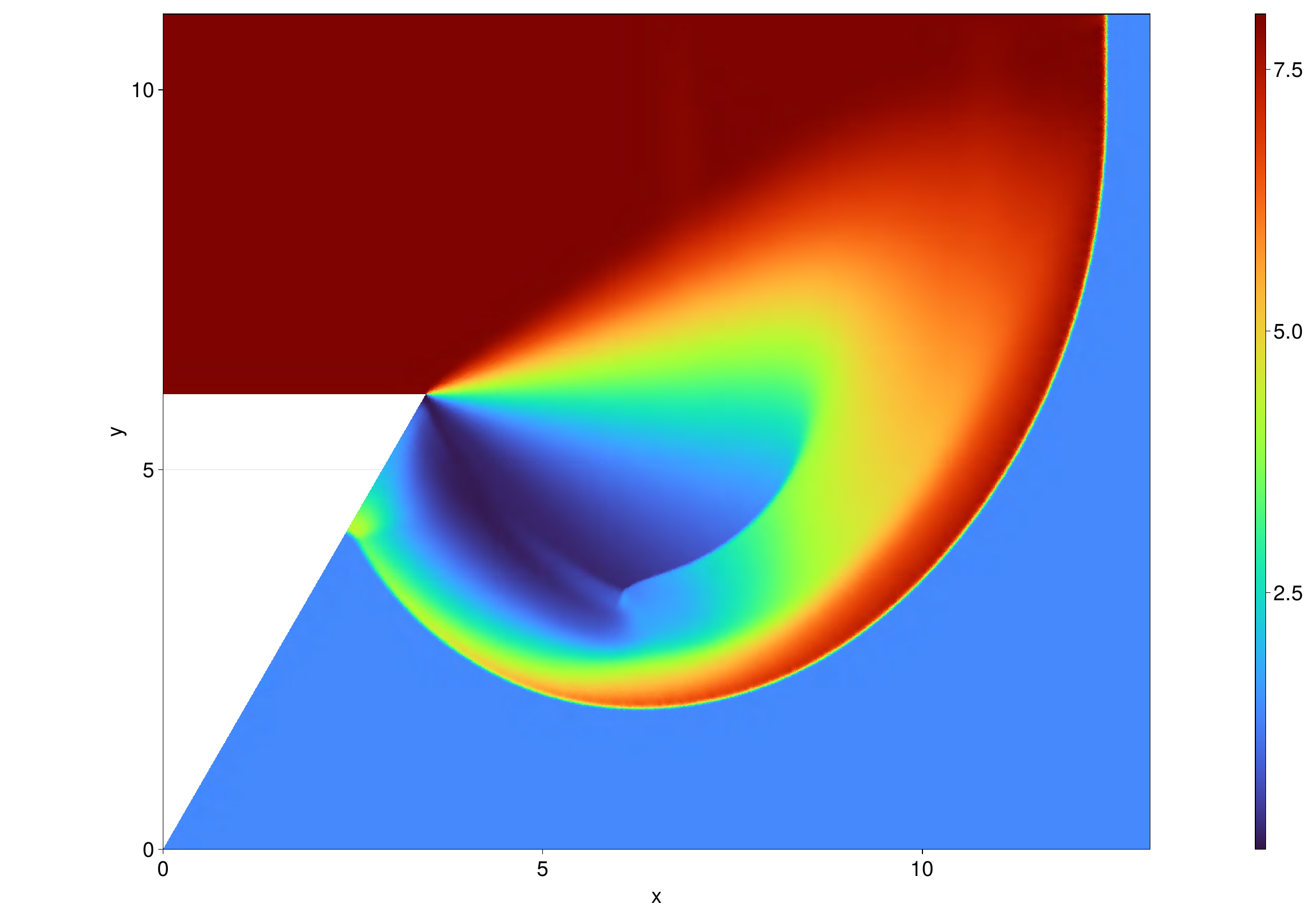}
         \caption{Density $\rho$.}
     \end{subfigure}
     \hfill
     \begin{subfigure}[b]{0.47\textwidth}
         \centering
         \includegraphics[width=\textwidth]{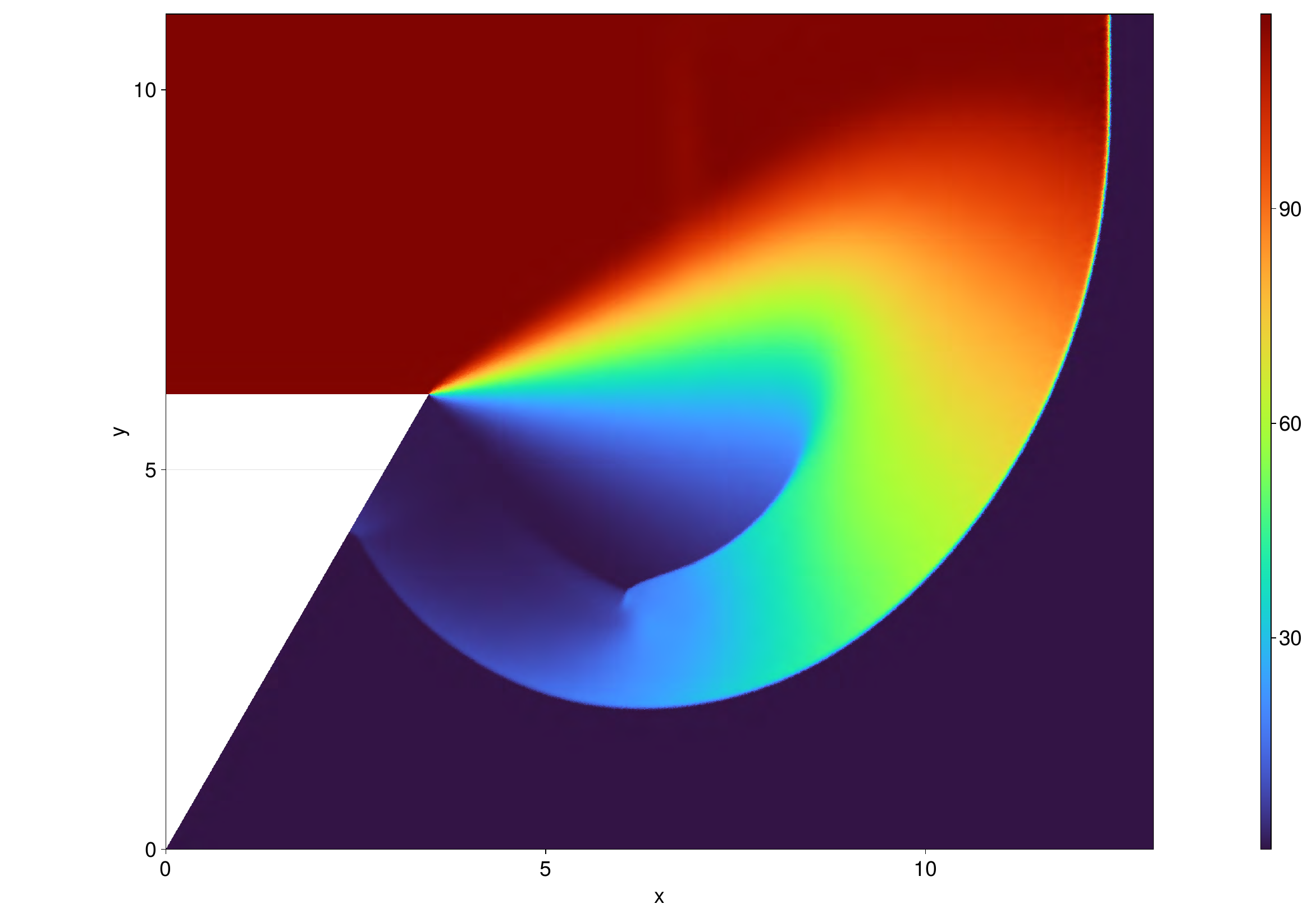}
         \caption{Pressure $p$.}
     \end{subfigure}
     \caption{The numerical solution at $T=0.9$ using \eqref{scheme D''} with $\epsilon_1=0$, $\epsilon_2=0.4$ and $h=\frac{1}{40}$.}
     \label{fig:ex5}
\end{figure}

\end{expl}

\FloatBarrier
\section{Concluding remarks}\label{section: concluding remarks}

In this paper, we have introduced an operator framework for constructing high-order DG methods for nonlinear hyperbolic conservation laws. The framework first gives an alternative derivation of the classical semi-discrete DG method from the infinitesimal action of an evolution operator followed by the $L^2$-projection. Replacing the usual linear projection by an entropy-contracting nonlinear projection then produces the Operator DG methods studied here. The resulting correction to the classical DG scheme has a particularly simple form: it damps only the nonconstant polynomial modes within each cell and therefore preserves cell averages and conservation, much like the OFDG schemes. This construction led to the schemes \eqref{scheme B}--\eqref{scheme D} for one-dimensional scalar equations, their counterparts \eqref{scheme B'}--\eqref{scheme D'} for one-dimensional systems, and the multidimensional schemes \eqref{scheme B''} and \eqref{scheme D''} on general convex, shape-regular meshes.

For 1D scalar conservation laws, the damping coefficients can be chosen so that a single numerical solution satisfies infinitely many local entropy inequalities when an E-flux is used. For systems, the same mechanism is compatible with a prescribed strictly convex entropy and an entropy-stable numerical flux. The analysis also shows that the additional entropy dissipation does not destroy high-order accuracy: for smooth 1D scalar solutions, we proved theoretically the optimal $L^2$ error bound of order $k+1$ with an upwind flux and order $k+\frac12$ with a general monotone flux. For discontinuous solutions of 1D scalar conservation laws with a strictly convex flux, the entropy dissipation supplies the compactness estimates needed in the compensated-compactness argument. Under the stated uniform boundedness assumption, the numerical solutions converge strongly to the unique entropy solution. Thus, the damping term is not merely a device for suppressing oscillations; it also provides the analytical mechanism that links the high-order approximation to the correct entropy solution.

The numerical experiments support these conclusions and clarify the roles of the two parts of the practical damping coefficient $D_j$. The smooth one- and two-dimensional tests exhibit the expected high-order accuracy. For the non-convex scalar problems, using only the interface-jump contribution can fail to approximate the correct entropy solution when the amount of damping is moderate, whereas restoring a suitable flux-derivative contribution recovers the correct solution. At the same time, the Sod, Shu--Osher, double Mach reflection, and shock-diffraction computations show that the simplified system schemes can robustly resolve strong shocks and fine-scale structures. In the multidimensional Euler tests, the interface contribution alone was sufficient for the chosen parameters, and no positivity-preserving limiter was required.

Several questions remain open. A fully discrete entropy and convergence theory that includes the time integrator would complement the present semi-discrete analysis. It would also be desirable to relax the uniform $L^\infty$ assumption in the compensated-compactness argument, to extend strong-convergence and optimal-error results to systems and multiple space dimensions, and to understand entropy selection in order to ensure the convergence for general non-convex conservation laws. Nonlinear projections other than $\tilde \pi$ need to be explored for generating schemes with additional properties; the operator framework also needs to be extended to other types of PDEs. On the computational side, efficient evaluation or approximation of the flux-derivative term, and adaptive selection of $\epsilon_1$ and $\epsilon_2$ are natural directions for making the Operator DG schemes more robust and economical for large-scale applications.

\section*{Disclosure on the use of artificial intelligence}
During the preparation of this work, ChatGPT was used for (1) improving the readability and language quality; (2) assisting with the proof of compensated compactness presented in Section \ref{section: strong convergence}, in particular the proof of Lemma \ref{lemma: SC Hminus1}; (3) programming the Julia codes for numerical tests; (4) computing the optimal coefficients $C_k$, $C(k,r)$ and $\mathcal{A}_k$ recorded in Tables \ref{tab:Ck}, \ref{tab:Ckr} and \ref{tab:improved beta}. All mathematical derivations, results, and proofs were independently verified and validated by the human authors.

\begin{funding}
This research was supported in part by NSF grant DMS-2309249.
\end{funding}

\nocite{*}
\bibliographystyle{amsplain}
\bibliography{ref.bib}

\begin{appendices}
\appendix

\numberwithin{table}{section}

\section{Operator for generating the classical DG method with HLL flux}\label{section:HLL classic}

Although Theorem \ref{theorem 1} holds only for the Godunov flux, the statement can actually be generalized to the class of HLL fluxes, which includes the local Lax–Friedrichs flux. Previously, we assumed $S_{\Delta t}$ as the true solution operator, which would produce the exact Godunov flux in the expressions \eqref{theorem 1 eq 3} and \eqref{theorem 1 eq 4}. Now, the idea is to construct a modified version of $S_{t}$, denoted by $S^{\mathrm{HLL}}_{t}$, such that it satisfies the following three properties:
\begin{description}
    \item[\textbf{(a.1)}] For sufficiently small $\Delta t>0$, $S^{\mathrm{HLL}}_{\Delta t}u=S_{\Delta t}u$ on the interior of each $I_j$, except for two small regions at the end-points with length $\mathcal{O}(\Delta t)$.
    \item[\textbf{(a.2)}] \eqref{theorem 1 eq 3} and \eqref{theorem 1 eq 4} now gives the expressions $\hat f^{\mathrm{HLL}}_{j-\frac{1}{2}}-f(u)^+_{j-\frac{1}{2}}$ and $-\hat f^{\mathrm{HLL}}_{j+\frac{1}{2}}+f(u)^-_{j+\frac{1}{2}}$, where $\hat f^{\mathrm{HLL}}$ is the HLL flux.
    \item[\textbf{(a.3)}] Under periodic or compactly supported boundary conditions, $S^{\mathrm{HLL}}_{\Delta t}$ decreases all entropies: for any convex function $U(\cdot)$, \[\int_I U(S^{\mathrm{HLL}}_{\Delta t} u(x))\ud x \le \int_I U(u(x))\ud x,\quad \forall \Delta t \ge 0,\quad u\in \mathbb{V}^k.\] This is a property shared by $S_{\Delta t}$.
    
\end{description}
Clearly, if such an $S^{\mathrm{HLL}}_{\Delta t}$ can be found, the proof of Theorem \ref{theorem 1} would still go through with $S_{\Delta t}$ replaced by $S^{\mathrm{HLL}}_{\Delta t}$ and the Godunov flux replaced by $\hat f^{\mathrm{HLL}}$. Moreover, if $S^{\mathrm{HLL}}_{\Delta t}$ is locally defined, we may expect the resulting scheme to satisfy certain local entropy inequalities on account of \textbf{(a.3)}. We now proceed to construct such an $S^{\mathrm{HLL}}_{\Delta t}$.

As in the proof of Theorem \ref{theorem 1}, suppose $u\in \mathbb{V}^k$. Let $S_{\Delta t} u$ denote the true solution of the conservation law with the initial condition $u$ after evolving a time $\Delta t>0$ sufficiently small, such that the evolutions of discontinuities at different cell interfaces don't interact, and no new discontinuity in $u$ and its derivatives is formed. Let $\lambda^-$ and $\lambda^+$ denote the lower and upper bounds for all wave speeds of the Riemann solution with left and right states $u^-$ and $u^+$. The HLL flux is defined by:
\begin{equation}\label{HLL}
    \hat f^{\mathrm{HLL}} (u^-,u^+):=\begin{cases}
    \displaystyle
        f(u^-)=f(u^+), & a^-=a^+=0 \\[3mm]
        \frac{a^+ f(u^-) - a^- f(u^+)+ a^- a^+ (u^+ - u^-)}{a^+ - a^-}, & \text{otherwise},
    \end{cases}
\end{equation}
where $a^-=\min\{0,\lambda^-\}\le 0$ and $a^+=\max\{0,\lambda^+\} \ge 0$. The local Lax–Friedrichs flux is a special case by choosing $-\lambda^-=\lambda^+$. Let $I_{j}^\pm(\Delta t)$ be the same as in \eqref{polluted regions}, and define
\begin{equation}\label{HLL averaging interval scalar}
    I_{j-\frac{1}{2}}(\Delta t):= I_{j-1}^-(\Delta t)\cup I_{j}^+(\Delta t).
\end{equation}
In each $I_{j-\frac{1}{2}}(\Delta t)$, we replace the value of $S_{\Delta t} u$ by its average over $I_{j-\frac{1}{2}}(\Delta t)$. We call this modified function $S^{\mathrm{HLL}}_{\Delta t} u$. Namely,
\begin{equation}\label{def tilde S}
    S^{\mathrm{HLL}}_{\Delta t} u(x):=
    \begin{cases}
    \displaystyle
  \frac{1}{\left|I_{j-\frac{1}{2}}(\Delta t)\right|}\int_{I_{j-\frac{1}{2}}(\Delta t)} S_{\Delta t} u(x) \ud x, & \text{if } |I_{j-\frac{1}{2}}(\Delta t)| > 0 \text{ and } x \in I_{j-\frac{1}{2}}(\Delta t) \\[3mm]
  S_{\Delta t} u(x), & \text{otherwise.}
\end{cases}
\end{equation}
Since the intervals \eqref{HLL averaging interval scalar} are disjoint for sufficiently small $\Delta t$, this defines $S^{\mathrm{HLL}}_{\Delta t} u$ unambiguously. 

With $S^{\mathrm{HLL}}_{\Delta t}$ thus defined, \textbf{(a.1)} is easily verified, since we only modified $S_{\Delta t} u$ in the region $I_{j-\frac{1}{2}}(\Delta t)$ with length $\mathcal{O}(\Delta t)$ around each interface. $S^{\mathrm{HLL}}_{\Delta t}$ also satisfies \textbf{(a.3)}, since taking average can only decrease the total entropy by Jensen's inequality. Moreover, $S^{\mathrm{HLL}}_{\Delta t}$ satisfies \textbf{(a.2)}, as can be seen in the proof of the following analogue of Theorem \ref{theorem 1}:
\begin{theorem}\label{theorem 2}
For any $u\in \mathbb{V}^k$, we have
\begin{equation}\label{beautiful 2}
    \lim_{\Delta t\rightarrow 0}\frac{\pi S^{\mathrm{HLL}}_{\Delta t}u-u}{\Delta t}=F[u],
\end{equation}
where $F[\cdot]\in \mathbb{V}^k$ is from the right-hand side of \eqref{DG ODE} with $\hat f^{\mathrm{HLL}}$.
\end{theorem}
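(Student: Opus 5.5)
The proof follows that of Theorem \ref{theorem 1} almost verbatim; only the two interface limits change. Fix $u\in\mathbb V^k$, a test function $v\in\mathbb V^k$, and $\Delta t$ small enough that the regions $I_{j-\frac12}(\Delta t)$ are disjoint and no new discontinuities form. Since $v|_{I_j}$ is a polynomial, I would write
\[
\int_{I_j}\frac{\pi S^{\mathrm{HLL}}_{\Delta t}u-u}{\Delta t}\,v\ud x=\int_{I_j}\frac{S^{\mathrm{HLL}}_{\Delta t}u-u}{\Delta t}\,v\ud x .
\]
I would then split $I_j$ into the interior part and the polluted regions $I_j^\pm(\Delta t)$. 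By property \textbf{(a.1)}, $S^{\mathrm{HLL}}_{\Delta t}u=S_{\Delta t}u$ on the interior part, so that contribution gives $-\int_{I_j}f(u)_xv\ud x+\mathcal O(\Delta t)$ exactly as before. On the polluted regions, which have length $\mathcal O(\Delta t)$, $v$ may be replaced by its endpoint values. We are then left with
\[
H^{\mathrm{HLL},+}_{j-\frac12}:=\lim_{\Delta t\to0}\frac1{\Delta t}\int_{I_j^+(\Delta t)}\bigl(S^{\mathrm{HLL}}_{\Delta t}u-u\bigr)\ud x ,
\]
together with its counterpart $H^{\mathrm{HLL},-}_{j+\frac12}$ on $I_j^-(\Delta t)$.

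The key step is computing these limits. As in the proof of Theorem \ref{theorem 1}, I would replace $u$ near $x_{j-\frac12}$ by the Riemann data $\tilde u$ of \eqref{theorem 1 Riemann}; the error is $\mathcal O(\Delta t)$ pointwise on a region of size $\mathcal O(\Delta t)$, so it does not affect the limit. The averaged value on $I_{j-\frac12}(\Delta t)=[x_{j-\frac12}+a^-\Delta t,\,x_{j-\frac12}+a^+\Delta t]$ is then a constant $u^*_{\mathrm{HLL}}$. The integral conservation law over this region, using that the region contains all the waves, gives
\[
(a^+-a^-)\,u^*_{\mathrm{HLL}}=a^+u^+-a^-u^--\bigl(f(u^+)-f(u^-)\bigr),
\]
where $u^\pm=u^\pm_{j-\frac12}$. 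This identity follows from integrating \eqref{Riemann eq} over $[a^-,a^+]$ in the self-similar variable. Therefore
\[
H^{\mathrm{HLL},+}_{j-\frac12}=a^+\bigl(u^*_{\mathrm{HLL}}-u^+\bigr),
\]
and substituting the formula for $u^*_{\mathrm{HLL}}$ shows by direct algebra that this equals $\hat f^{\mathrm{HLL}}(u^-,u^+)-f(u^+)$ with $\hat f^{\mathrm{HLL}}$ as in \eqref{HLL}. The computation at $x_{j+\frac12}$ is symmetric and gives $-\hat f^{\mathrm{HLL}}_{j+\frac12}+f(u)^-_{j+\frac12}$; this verifies property \textbf{(a.2)}. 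The degenerate case $a^-=a^+=0$ needs a separate remark: the averaging region is empty, so $S^{\mathrm{HLL}}_{\Delta t}=S_{\Delta t}$ there, and the Godunov computation applies, giving a stationary shock with $f(u^-)=f(u^+)$. The only real care needed is the bookkeeping of signs and the use of $a^\pm$ rather than $\lambda^\pm$.

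Finally, I would integrate by parts as in Theorem \ref{theorem 1} to recover \eqref{DG1} with $\hat f^{\mathrm{HLL}}$. Since $v\in\mathbb V^k$ is arbitrary and both sides of \eqref{beautiful 2} lie in $\mathbb V^k$, the identity holds pointwise. It then holds uniformly, because both sides are piecewise polynomials on the same mesh.
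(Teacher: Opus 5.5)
Your proposal is correct and follows essentially the same route as the paper. You reduce to the two interface limits via \textbf{(a.1)}, replace $u$ by the Riemann data, and integrate \eqref{Riemann eq} over $[a^-,a^+]$ to evaluate the average over $I_{j-\frac12}(\Delta t)$; writing the limit as $a^+(u^*_{\mathrm{HLL}}-u^+)$ with a named intermediate state is just a repackaging of the paper's $\frac{a^+}{a^+-a^-}\int_{a^-}^{a^+}(R(\xi)-u^+)\ud\xi$. (In the degenerate case $a^-=a^+=0$ the paper simply notes that both sides vanish, since $I_j^+(\Delta t)$ then has zero length and $\hat f^{\mathrm{HLL}}=f(u^-)=f(u^+)$, so your detour through the Godunov computation is unnecessary.)
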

\begin{proof}
Fix an arbitrary $j$. Following the proof of Theorem \ref{theorem 1}, we first obtain
\begin{align}
\lim_{\Delta t\rightarrow 0} \int_{I_j} \frac{\pi S^{\mathrm{HLL}}_{\Delta t} u(x) - u(x)}{\Delta t}\cdot v(x) \ud x = &-\int_{I_j} f(u)_x\cdot v(x) \ud x + v_{j-\frac{1}{2}}^+\lim_{\Delta t\rightarrow 0} \int_{I_j^+(\Delta t)} \frac{S^{\mathrm{HLL}}_{\Delta t} u(x) - u(x)}{\Delta t} \ud x \nonumber \\
&+ v_{j+\frac{1}{2}}^-\lim_{\Delta t\rightarrow 0} \int_{I_j^-(\Delta t)} \frac{S^{\mathrm{HLL}}_{\Delta t} u(x) - u(x)}{\Delta t}\ud x \label{theorem 2 eq 1}
\end{align}
for any $v \in \mathbb{V}^k$. Up to $\mathcal{O}(\Delta t)$, the value of $S_{\Delta t} u$ near $x_{j-\frac{1}{2}}$ can be approximated by the Riemann solution $S_{\Delta t} \tilde u$ defined in \eqref{theorem 1 Riemann}. Let $\lambda^-_{j-\frac{1}{2}}$ and $\lambda^+_{j-\frac{1}{2}}$ denote the lower and upper bounds for all wave speeds of this Riemann solution. Let $a^-=\min\{0,\lambda^-_{j-\frac{1}{2}}\}\le 0$ and $a^+=\max\{0,\lambda^+_{j-\frac{1}{2}}\} \ge 0$. If $(a^+-a^-) \neq 0$, by definition \eqref{def tilde S} we have
\begin{equation*}
    \int_{I_j^+(\Delta t)} S^{\mathrm{HLL}}_{\Delta t} u(x) \ud x = \frac{a^+}{a^+-a^-}\int_{I_{j-\frac{1}{2}}(\Delta t)} S_{\Delta t} u(x) \ud x = \frac{a^+}{a^+-a^-} \int_{I_{j-\frac{1}{2}}(\Delta t)} S_{\Delta t} \tilde u(x) \ud x + \mathcal{O}(\Delta t^2),
\end{equation*}
where $\tilde u(x)$ is defined in \eqref{theorem 1 Riemann}. Hence,
\begin{align}
    \int_{I_j^+(\Delta t)} \frac{S^{\mathrm{HLL}}_{\Delta t} u(x) - u(x)}{\Delta t} \ud x &= \frac{a^+}{a^+-a^-}\int_{I_{j-\frac{1}{2}}(\Delta t)} \frac{S_{\Delta t} \tilde u(x) - u^+_{j-\frac{1}{2}}}{\Delta t} \ud x + \mathcal{O}(\Delta t) \nonumber \\
    &=\frac{a^+}{a^+-a^-} \int_{a^-}^{a^+} R(\xi) - u^+_{j-\frac{1}{2}} \ud \xi + \mathcal{O}(\Delta t). \label{theorem 2 eq 2}
\end{align}
where $R(\xi)$ is from \eqref{self-similar R}. Integrating the conservation law for the self-similar Riemann solution \eqref{Riemann eq} yields
\begin{align*}
\int_{a^-}^{a^+} R(\xi) \ud \xi = a^+ u^+_{j-\frac{1}{2}} - a^- u^-_{j-\frac{1}{2}}-f(u^+_{j-\frac{1}{2}})+f(u^-_{j-\frac{1}{2}}).
\end{align*}
Applying this relation in \eqref{theorem 2 eq 2}, we obtain
\begin{equation}\label{theorem 2 eq 3}
\lim_{\Delta t\rightarrow 0} \int_{I_j^+(\Delta t)} \frac{S^{\mathrm{HLL}}_{\Delta t} u(x) - u(x)}{\Delta t} \ud x= \hat f^{\mathrm{HLL}}_{j-\frac{1}{2}}-f(u)^+_{j-\frac{1}{2}},
\end{equation}
where $\hat f^{\mathrm{HLL}}_{j-\frac{1}{2}}=\hat f^{\mathrm{HLL}} (u^-_{j-\frac{1}{2}},u^+_{j-\frac{1}{2}})$ is the HLL flux from \eqref{HLL}. If $(a^+-a^-) = 0$, namely $a^-=a^+=0$, then \eqref{theorem 2 eq 3} holds trivially since both sides are zero. In the same way, we can also prove
\begin{equation}\label{theorem 2 eq 4}
    \lim_{\Delta t\rightarrow 0} \int_{I_j^-(\Delta t)} \frac{S_{\Delta t}^{\mathrm{HLL}} u(x) - u(x)}{\Delta t} \ud x= -\hat f^{\mathrm{HLL}}_{j+\frac{1}{2}}+f(u)^-_{j+\frac{1}{2}}.
\end{equation}
The theorem then follows from applying \eqref{theorem 2 eq 3} and \eqref{theorem 2 eq 4} in \eqref{theorem 2 eq 1} and integrate by parts. As a remark, \eqref{theorem 2 eq 3} and \eqref{theorem 2 eq 4} imply that $S^{\mathrm{HLL}}_{\Delta t}$ satisfies \textbf{(a.2)}, which enables us to extend the result of Theorem \ref{theorem 1} to the HLL flux. 
\end{proof}
By property \textbf{(a.3)}, Corollary \ref{L2 stable Godunov} still holds with Godunov flux replaced by $\hat f^{\mathrm{HLL}}$, without any change to the proof. In addition, since on each cell $I_j$, the scheme only uses local information from $I_j$ and its two neighboring cells, the local square-entropy inequality also holds for $\hat f^{\mathrm{HLL}}$, see \cite{J} or Theorem \ref{theorem: Local Entropy Inequality B}. Finally, Corollary \ref{operator limit} can also be shown to hold in this case.

\section{Improved estimate of \texorpdfstring{$\beta_k$}{beta_k} from Lemma \ref{estimate2}}\label{section:estimate beta_k}
In this section, we improve the estimate on the constant $\beta_k$ from Lemma \ref{estimate2}. The bound $\beta_k\le (k+1)^4$ obtained in the proof of Lemma \ref{estimate2} is the product of two separate sharp inequalities, but the corresponding optimizers do not coincide. A better estimate is obtained by using one single inequality.

Let $\xi\in[-1,1]$ be the reference coordinate on $I_j$, namely
\begin{equation*}
    x=x_j+\frac{h_j}{2}\xi,
\end{equation*}
and let $P_i$ denote the standard Legendre polynomial of degree $i$, normalized by $P_i(1)=1$. The reproducing kernel for evaluation at the left endpoint of the $L^2$-projection onto $\mathbb P^k([-1,1])$ is
\begin{equation}\label{eq:left projection kernel}
    K_k^-(\xi):=\sum_{i=0}^k \frac{2i+1}{2}(-1)^iP_i(\xi).
\end{equation}
Indeed, for every $g\in L^2(-1,1)$,
\begin{equation}\label{eq:projection endpoint kernel}
    (\hat\pi_k g)(-1)=\int_{-1}^1 K_k^-(\xi)g(\xi)\ud\xi,
\end{equation}
where $\hat\pi_k$ is the $L^2$-projection onto $\mathbb P^k([-1,1])$.

We define
\begin{equation}\label{eq:def Ak beta}
    \mathcal{A}_k:=\sup_{\substack{p\in\mathbb P^k([-1,1])\\
    \int_{-1}^1p(\xi)\ud\xi=0,\ p\ne0}}
    \frac{\displaystyle\int_{-1}^1|K_k^-(\xi)|\,|p(\xi)-p(-1)|^2\ud\xi}
    {\displaystyle\int_{-1}^1|p(\xi)|^2\ud\xi}.
\end{equation}
Since the admissible space in \eqref{eq:def Ak beta} is finite dimensional, the supremum is attained.

\begin{proposition}\label{prop:improved beta}
For $k\ge1$, the constant in Lemma \ref{estimate2} may be chosen so that
\begin{equation}\label{eq:improved beta bound}
    \beta_k\le 2\mathcal{A}_k.
\end{equation}
The same constant applies to the left and right endpoints.
\end{proposition}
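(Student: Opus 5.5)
We redo the proof of Lemma \ref{estimate2}, but keep the endpoint kernel intact instead of bounding it term by term; this avoids multiplying two sharp inequalities whose optimizers differ. Work on the reference interval via $x=x_j+\frac{h_j}{2}\xi$. Put $p(\xi):=u(x_j+\frac{h_j}{2}\xi)-\bu_j$, so $p\in\mathbb P^k([-1,1])$, $\int_{-1}^1 p\,\ud\xi=0$, and $u(x)-u^+_{j-\frac12}=p(\xi)-p(-1)$. Define
\[
g(\xi):=U'(u)-U'(u^+_{j-\frac12})-U''(u^+_{j-\frac12})(u-u^+_{j-\frac12})
\]
as a function of $\xi$. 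The subtracted linear part is a polynomial of degree $k$ vanishing at $\xi=-1$, so it is reproduced by the projection and contributes zero at the endpoint. The $L^2$-projection commutes with the affine map, so
\[
\pi\circ[U'(u)-U'(u^+_{j-\frac12})](x^+_{j-\frac12})=(\hat\pi_k g)(-1)=\int_{-1}^1K_k^-(\xi)g(\xi)\ud\xi
\]
by \eqref{eq:projection endpoint kernel}. One should check that \eqref{eq:left projection kernel} is indeed the reproducing kernel. This follows from $\|P_i\|_{L^2(-1,1)}^2=2/(2i+1)$ and $P_i(-1)=(-1)^i$.

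Next we bound the integrand pointwise. Taylor's theorem gives $|g(\xi)|\le\frac12\max_{x\in I_j}|U'''(u(x))|\,|p(\xi)-p(-1)|^2$. We interpret the maximum over the segment joining $u^+_{j-\frac12}$ and $u(x)$, which lies in the range of $u$ on $I_j$, exactly as in Lemma \ref{estimate2}. Hence
\[
|(\hat\pi_k g)(-1)|\le\tfrac12\max|U'''|\int_{-1}^1|K_k^-|\,|p-p(-1)|^2\ud\xi\le\tfrac12\max|U'''|\,\mathcal A_k\int_{-1}^1|p|^2\ud\xi,
\]
where the second inequality is the definition \eqref{eq:def Ak beta}. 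Here $p\ne0$ can be assumed, since otherwise both sides vanish.

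Changing variables back gives $\int_{-1}^1|p|^2\ud\xi=\frac{2}{h_j}\int_{I_j}|u-\bu_j|^2\ud x$. The bound therefore becomes $\frac{\mathcal A_k}{h_j}\max|U'''|\int_{I_j}|u-\bu_j|^2\ud x=\frac{2\mathcal A_k}{2h_j}\max|U'''|\int_{I_j}|u-\bu_j|^2\ud x$. This is the form of Lemma \ref{estimate2} with $\beta_k=2\mathcal A_k$.

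For the right endpoint, the kernel is $K_k^+(\xi)=K_k^-(-\xi)$, because $P_i(-\xi)=(-1)^iP_i(\xi)$. The substitution $p(\xi)\mapsto p(-\xi)$ preserves both the zero-mean constraint and the $L^2$ norm, so the supremum defining the constant is again $\mathcal A_k$. The same argument then applies verbatim.

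The only genuinely delicate points are the exact normalization bookkeeping, namely the factor $2/h_j$ from scaling and the $\tfrac12$ from Taylor's theorem, and verifying the kernel formula. Finiteness and attainment of $\mathcal A_k$ are immediate because the admissible space is finite dimensional and the integrand is continuous. Finally, one can compare with the earlier bound: $\sum_i\|P_{i,j}\|_\infty^2$ together with $\|p-p(-1)\|\le(k+1)\|p\|$ shows $2\mathcal A_k\le(k+1)^4$, consistent with Lemma \ref{estimate2}.
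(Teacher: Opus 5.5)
Your proposal is correct and follows essentially the same route as the paper. You pass to the reference interval with $p=u-\bu_j$, discard the linear Taylor term because it lies in $\mathbb P^k$ and vanishes at the endpoint, bound $(\hat\pi_k g)(-1)$ through $|K_k^-|$ and the quadratic Taylor remainder, invoke the definition of $\mathcal A_k$, rescale by $2/h_j$, and treat the right endpoint via $K_k^+(\xi)=K_k^-(-\xi)$ and $p(\xi)\mapsto p(-\xi)$. Your extra checks are correct additions that the paper leaves implicit: the kernel identity, that the Taylor segment stays inside $u(I_j)$ by continuity, and the consistency bound $2\mathcal A_k\le(k+1)^4$.
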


\begin{proof}
We prove the estimate at $x^+_{j-\frac12}$; the other endpoint follows by symmetry. Set
\begin{equation*}
    p(\xi):=u\left(x_j+\frac{h_j}{2}\xi\right)-\bu_j.
\end{equation*}
Then $p\in\mathbb P^k([-1,1])$ and
\begin{equation*}
    \int_{-1}^1p(\xi)\ud\xi=0.
\end{equation*}
Moreover,
\begin{equation*}
    u(x)-u^+_{j-\frac12}=p(\xi)-p(-1).
\end{equation*}
Let
\begin{equation*}
    M_j:=\max_{x\in I_j}|U'''(u(x))|.
\end{equation*}
Taylor's theorem gives, pointwise on $I_j$,
\begin{align*}
\left|U'(u)-U'(u^+_{j-\frac12})
-U''(u^+_{j-\frac12})(u-u^+_{j-\frac12})\right| \le \frac{M_j}{2}|u-u^+_{j-\frac12}|^2.
\end{align*}
Again, we use the fact that $U''(u^+_{j-\frac12})(u-u^+_{j-\frac12})\in\mathbb P^k(I_j)$ and vanishes at $x^+_{j-\frac12}$, so its projected value at that endpoint is zero. After mapping to $[-1,1]$ and using \eqref{eq:projection endpoint kernel}, we therefore obtain
\begin{align*}
\left|\pi\circ[U'(u)-U'(u^+_{j-\frac12})](x^+_{j-\frac12})\right|\le \frac{M_j}{2}\int_{-1}^1
|K_k^-(\xi)|\,|p(\xi)-p(-1)|^2\ud\xi \le \frac{M_j\mathcal{A}_k}{2}\int_{-1}^1|p(\xi)|^2\ud\xi.
\end{align*}
The change of variables gives
\begin{equation*}
    \int_{-1}^1|p(\xi)|^2\ud\xi
    =\frac{2}{h_j}\int_{I_j}|u(x)-\bu_j|^2\ud x.
\end{equation*}
Consequently,
\begin{equation*}
\left|\pi\circ[U'(u)-U'(u^+_{j-\frac12})](x^+_{j-\frac12})\right|
\le \frac{\mathcal{A}_k}{h_j}M_j
\int_{I_j}|u(x)-\bu_j|^2\ud x.
\end{equation*}
Comparison with the normalization in Lemma \ref{estimate2} yields $\beta_k\le2\mathcal{A}_k$.

For the right endpoint, the corresponding kernel is
\begin{equation*}
    K_k^+(\xi)=\sum_{i=0}^k\frac{2i+1}{2}P_i(\xi)=K_k^-(-\xi).
\end{equation*}
The transformation $p(\xi)\mapsto p(-\xi)$ preserves both the zero-average condition and the $L^2$ norm, and maps the right-endpoint quotient onto the quotient in \eqref{eq:def Ak beta}. Hence the same constant $\mathcal{A}_k$ applies.
\end{proof}

The constant $\mathcal{A}_k$ can be computed from a $k\times k$ generalized eigenvalue problem. Since $p$ has zero average, write
\begin{equation*}
    p(\xi)=\sum_{m=1}^k a_mP_m(\xi),
    \qquad \mathbf{a}=(a_1,\ldots,a_k)^T.
\end{equation*}
Then
\begin{equation*}
    p(\xi)-p(-1)
    =\sum_{m=1}^k a_m\left(P_m(\xi)-(-1)^m\right).
\end{equation*}
Define the symmetric matrices $M^{(k)}$ and $G^{(k)}$ by
\begin{align}
M^{(k)}_{mn}
&:=\int_{-1}^1 |K_k^-(\xi)|
\left(P_m(\xi)-(-1)^m\right)
\left(P_n(\xi)-(-1)^n\right)\ud\xi,
\label{eq:matrix M beta}\\
G^{(k)}_{mn}
&:=\frac{2}{2m+1}\delta_{mn},
\qquad 1\le m,n\le k.
\label{eq:matrix G beta}
\end{align}
By Legendre orthogonality,
\begin{equation*}
    \int_{-1}^1|p(\xi)|^2\ud\xi=\mathbf{a}^TG^{(k)}\mathbf{a},
\end{equation*}
whereas the numerator in \eqref{eq:def Ak beta} is $\mathbf{a}^TM^{(k)}\mathbf{a}$. Therefore
\begin{equation}\label{eq:Ak generalized eigenvalue}
    \mathcal{A}_k=\max_{\mathbf{a}\ne0}\frac{\mathbf{a}^TM^{(k)}\mathbf{a}}{\mathbf{a}^TG^{(k)}\mathbf{a}}
    =\lambda_{\max}\!\left(M^{(k)},G^{(k)}\right).
\end{equation}
Thus $2\lambda_{\max}(M^{(k)},G^{(k)})$ is an explicit computable upper bound for $\beta_k$. Solving the symmetric generalized eigenvalue problem \eqref{eq:Ak generalized eigenvalue} then gives the values in Table \ref{tab:improved beta}.

\begin{table}[htbp]
\centering
\begin{tabular}{c|c|c|c}
\hline
$k$ & $\mathcal{A}_k$ & $2\mathcal{A}_k$ & $(k+1)^4$ \\
\hline
1 & $\frac{59}{27}\approx 2.1851851852$ & $4.3703703704$ & $16$ \\
2 & $6.0975144965$ & $12.1950289930$ & $81$ \\
3 & $14.3482787131$ & $28.6965574262$ & $256$ \\
4 & $26.3960313043$ & $52.7920626087$ & $625$ \\
5 & $44.2424749310$ & $88.4849498620$ & $1296$ \\
6 & $67.2523038234$ & $134.5046076468$ & $2401$ \\
7 & $97.2694197715$ & $194.5388395430$ & $4096$ \\
\hline
\end{tabular}
\caption{Improved computable bounds $\beta_k\le2\mathcal{A}_k$ from \eqref{eq:improved beta bound}, compared with the bound $(k+1)^4$ used in Lemma \ref{estimate2}.}
\label{tab:improved beta}
\end{table}

\end{appendices}

\end{document}